\documentclass[a4paper,reqno,10pt]{amsart}
    \DeclareMathSizes{12}{12}{7}{6}
\usepackage{geometry}
\usepackage{amssymb,amsmath,mathrsfs,bm}
\usepackage{paralist}
\usepackage[usenames]{color}
\usepackage[all]{xy}
\usepackage{url}
\usepackage{braket}
\usepackage{graphicx}
\usepackage{transparent}
\usepackage{xcolor}
\colorlet{blue}{black}
\colorlet{red}{black}
\colorlet{magenta}{black}
\DeclareRobustCommand{\darkblueedit}[1]{#1}
\usepackage[shortlabels]{enumitem}
\usepackage{mathtools}
\usepackage{tikz}
\usepackage{tikz-cd}

\usepackage{wasysym} %
\usepackage{stmaryrd} %
\makeatletter

\@addtoreset{equation}{section}
\makeatother
\usepackage{amsthm}
\usepackage{aliascnt}
\newtheorem{theorem}{Theorem}[subsection]
\newaliascnt{lemma}{theorem}
\newtheorem{lemma}[lemma]{Lemma}
\aliascntresetthe{lemma}

\newaliascnt{proposition}{theorem}
\newtheorem{proposition}[proposition]{Proposition}
\aliascntresetthe{proposition}

\newaliascnt{corollary}{theorem}
\newtheorem{corollary}[corollary]{Corollary}
\aliascntresetthe{corollary}

\theoremstyle{definition}
\newaliascnt{definition}{theorem}
\newtheorem{definition}[definition]{Definition}
\aliascntresetthe{definition}

\theoremstyle{definition}
\newaliascnt{remark}{theorem}
\newtheorem{remark}[remark]{Remark}
\aliascntresetthe{remark}

\theoremstyle{definition}
\newaliascnt{fact}{theorem}
\newtheorem{fact}[fact]{Fact}
\aliascntresetthe{fact}

\newaliascnt{setup}{theorem}
\theoremstyle{definition}

\aliascntresetthe{setup} 

\newaliascnt{example}{theorem}
\newtheorem{example}[example]{Example}
\aliascntresetthe{example}

\newaliascnt{condition}{theorem}

\aliascntresetthe{condition}

\newaliascnt{construction}{theorem}

\aliascntresetthe{construction}

\newaliascnt{question}{theorem}

\aliascntresetthe{question}

\newaliascnt{conjecture}{theorem}

\aliascntresetthe{conjecture}

\newaliascnt{claim}{theorem}
\newtheorem{claim}[claim]{Claim}
\aliascntresetthe{claim}

\theoremstyle{plain}
\newtheorem{introthm}{Theorem}

\usepackage[hyperfootnotes=false]{hyperref}
\usepackage[capitalise,noabbrev]{cleveref}
\crefname{theorem}{Theorem}{Theorems}
\crefname{lemma}{Lemma}{Lemmas}
\crefname{proposition}{Proposition}{Propositions}
\crefname{corollary}{Corollary}{Corollaries}
\crefname{definition}{Definition}{Definitions}
\crefname{remark}{Remark}{Remarks}
\crefname{example}{Example}{Examples}
\crefname{condition}{Condition}{Conditions}
\crefname{construction}{Construction}{Constructions}
\crefname{claim}{Claim}{Claims}
\crefname{mainthm}{Theorem}{Theorems}
\crefname{maincor}{Corollary}{Corollaries}
\crefname{setup}{Setup}{Setups}
\crefname{fact}{Fact}{Facts}
\crefname{question}{Question}{Questions}
\crefname{conjecture}{Conjecture}{Conjectures}
\crefname{introthm}{Theorem}{Theorems}

\def\Ob{\operatorname{Ob}}

\def\Ab{\mathsf{Ab}}
\def\add{\operatorname{\mathsf{add}}}

\newcommand{\op}{\mathsf{op}}

\newcommand{\Ker}{\operatorname{Ker}}

\newcommand{\id}{\mathsf{id}}
\newcommand{\xto}{\xrightarrow}

\newcommand{\dg}{{\rm dg}}

\newcommand{\lra}{\longrightarrow}
\newcommand{\dra}{\dashrightarrow}

\newcommand{\wtil}[1]{\widetilde{#1}}
\newcommand{\ovl}[1]{\overline{#1}}

\newcommand{\Db}{\mathcal{D}^{\rm b}}

	\newcommand{\sse}{\subseteq}
	
    \newcommand{\fs}{\mathfrak{s}}
    \newcommand{\ft}{\mathfrak{t}}

	\newcommand{\BE}{\mathbb{E}}
	\newcommand{\BF}{\mathbb{F}}

	\newcommand{\BZ}{\mathbb{Z}}

	\newcommand{\CA}{\mathcal{A}}
	\newcommand{\CB}{\mathcal{B}}
	\newcommand{\CC}{\mathcal{C}}
	\newcommand{\CD}{\mathcal{D}}

	\newcommand{\CH}{\mathcal{H}}
	\newcommand{\CI}{\mathcal{I}}
	
	\newcommand{\CK}{\mathcal{K}}
	\newcommand{\CL}{\mathcal{L}}
	
	\newcommand{\CN}{\mathcal{N}}
	
	\newcommand{\CP}{\mathcal{P}}
	
	\newcommand{\CR}{\mathcal{R}}
	\newcommand{\CS}{\mathcal{S}}
	\newcommand{\CT}{\mathcal{T}}
	\newcommand{\CU}{\mathcal{U}}
	\newcommand{\CV}{\mathcal{V}}
	\newcommand{\CW}{\mathcal{W}}

    \newcommand{\A}{\mathscr{A}}

	\renewcommand{\SS}{\mathscr{S}}

\newcommand{\Ar}[3]{\ar[from=#1,to=#2,#3]}

\usetikzlibrary{matrix,arrows,decorations.pathmorphing,positioning,decorations.pathreplacing}
\tikzset{commutative diagrams/.cd, 
mysymbol/.style = {start anchor=center, end anchor = center, draw = none}}
\tikzset{
labl/.style={anchor=north, rotate=90, inner sep=1mm}
}

\let\amph=& %
	
\tikzcdset{every label/.append style = {font = \footnotesize}}

\begin{document}
\setlength{\baselineskip}{15pt}
\title
[Extended heart construction (I)]{Extended heart construction (I): the heart of $n$-cotorsion pairs on triangulated categories}

\author[Mochizuki]{Nao Mochizuki}
	\address{\darkblueedit{Graduate School of Mathematics, Nagoya University, Furo-cho, Chikusa-ku, Nagoya 464-8602, Japan}}
	\email{mochizuki.nao.n8@s.mail.nagoya-u.ac.jp} %

\author[Nakaoka]{Hiroyuki Nakaoka}
	\address{\darkblueedit{Department of Mathematical Sciences, University of the Ryukyus, 1 Senbaru, Nishihara, Okinawa 903-0213, Japan}}
    \email{hiroyuki.nakaoka\_2030@nagoya-u.jp}

\author[Ogawa]{Yasuaki Ogawa}
	\address{Faculty of Engineering Science, Kansai University, Suita-shi, Osaka 564-8680, Japan}
	\email{y\_ogawa@kansai-u.ac.jp} %

\keywords{%
Triangulated category, extended heart, $n$-cotorsion pair, pretriangulated/extriangulated category}
\subjclass[2020]{%
Primary 18G80; Secondary 18E10, 18E35}

\begin{abstract}
\color{blue}
The heart of a $t$-structure and the ideal quotient by a cluster tilting subcategory are classical constructions that produce abelian categories from triangulated categories.
Their higher analogues, namely $n$-extended hearts and ideal quotient categories by $(n+1)$-cluster tilting subcategories, are generally no longer abelian, but are known to carry both pretriangulated and extriangulated structures when the underlying triangulated category is algebraic.

In this article, we introduce the notion of an abelian $n$-truncated category as a common framework for such higher constructions.
We extend the heart construction for cotorsion pairs to $n$-cotorsion pairs on arbitrary triangulated categories, and prove that the resulting extended heart naturally carries compatible pretriangulated and extriangulated structures forming an abelian $n$-truncated category.
This construction simultaneously generalizes the $n$-extended heart of a $t$-structure and the ideal quotient by an $(n+1)$-cluster tilting subcategory.
It may also be regarded as a higher-dimensional generalization of the general heart construction for cotorsion pairs on triangulated categories.

Finally, we show that the heart can be realized as an extriangulated localization of a suitable relative extriangulated structure on the ambient triangulated category.
\normalcolor
\end{abstract}
\maketitle
\tableofcontents

\section*{Introduction}\label{sec:intro}

\color{blue}
Classically, it is well known that the heart $t^{\ge0}\cap t^{\le0}$ of a $t$-structure $(t^{\le0},t^{\ge0})$ on a triangulated category $\CT$ is an abelian category \cite{BBD82}.
Likewise, the ideal quotient of a triangulated category by a $2$-cluster tilting subcategory is also an abelian category. 
This was first proved for $2$-Calabi--Yau triangulated categories by Keller and Reiten \cite{KR07}, and later extended to arbitrary triangulated categories by Koenig and Zhu \cite{KZ08}.

Higher analogues of these constructions arise naturally.
A $t$-structure gives rise to the \emph{$n$-extended heart} $t^{\ge0}\cap t^{\le n-1}$, while an $(n+1)$-cluster tilting subcategory $\CC\subset\CT$ again gives rise to the ideal quotient $\CT/[\CC]$.
In contrast to the classical case, these categories are no longer abelian in general when $n>1$.
Nevertheless, when $\CT$ is algebraic, it follows from \cite{Moc25,Moc26} that they naturally carry both a pretriangulated structure in the sense of Beligiannis--Reiten and an extriangulated structure introduced by H.N. and Palu \cite{NP19}.
This motivates the development of a common framework for describing the structures shared by these two higher constructions without assuming that the underlying triangulated category is algebraic.
In this first paper, we establish the foundations of this framework. An enhanced version of the theory will be developed in a subsequent paper \cite{MNO}.

In recent years, extended hearts associated with $t$-structures and extended module categories have been studied extensively from various perspectives, including silting and tilting theory, torsion pairs, semibricks and wide subcategories, and Auslander--Reiten theory (\cite{Gup24,Zho24,AMP25,GZ25,WZ25,MP26,Run26,HZ26}).
They have also been investigated from the viewpoint of DG-categories, revealing close connections with abelian $n$-truncated DG-categories, Auslander correspondence, and higher cluster tilting theory (\cite{Moc25,Moc26b,Plo26,Moc26}).

To capture the common structures arising in these higher constructions, we first introduce the notion of an \emph{abelian $n$-truncated category}, which may be viewed as a higher-dimensional analogue of an abelian category.
We then extend the heart construction for cotorsion pairs introduced in \cite{Nak11} to the setting of $n$-cotorsion pairs (\cite[Definition~3.1]{HZ22}).
We show that the resulting heart (\cref{def:heart}) naturally carries a pretriangulated structure and an extriangulated structure, which together form an abelian $n$-truncated category.

Our main theorem is the following.
\begin{introthm}(\cref{thm:quasitri})
For any $n$-cotorsion pair $(\CU,\CV)$ on a triangulated category $\CT$, the heart $\CH/[\CW]$ of $(\CU,\CV)$ is equipped with a pretriangulated structure
$(\Sigma,\Omega,\vartriangleright,\vartriangleleft)$ and an extriangulated structure $(\BF,\ft)$, which together form an abelian $n$-truncated category.
\end{introthm}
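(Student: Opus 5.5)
We would follow the strategy of \cite{Nak11} for cotorsion pairs and argue in four stages. First we construct the structures. Let $(\CU,\CV)$ be an $n$-cotorsion pair with $\CW=\CU\cap\CV$ and $\CH$ the subcategory defining the heart (\cref{def:heart}). For an object $X$ of $\CT$ we use the defining property of $n$-cotorsion pairs: $X$ admits a filtration by triangles whose successive pieces lie in $\CU[-i]$ and $\CV[-j]$ for suitable ranges of shifts. From it we build a ``coreflection then reflection'' functor $H\colon\CT\to\CH/[\CW]$, generalising the functor of \cite{Nak11}. We then define the two pretriangulated shifts on $\CH/[\CW]$ by $\Sigma=H\circ[1]$ and $\Omega=H\circ[-1]$, each restricted to $\CH$. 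The adjunction $\Sigma\dashv\Omega$ should come from the truncation triangles.

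Second, we verify the pretriangulated axioms. Right triangles $A\to B\to C\to\Sigma A$ are taken to be the images under $H$ of triangles $A\to B\to C'\to A[1]$ in $\CT$ with $A,B\in\CH$. Left triangles are defined dually. The Beligiannis--Reiten axioms should follow by transporting the octahedral axiom through $H$. The key input is a lemma that $H$ sends triangles in $\CT$ to right or left exact sequences in the appropriate ranges. We prove this lemma using $\mathrm{Hom}$-vanishing between $\CU$ and $\CV[>0]$, restricted to shifts $\le n$.

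Third, we define the extriangulated structure. Set $\BF(C,A)$ to be the subgroup of $\CT(C,A[1])$ consisting of morphisms whose cone lies in $\CH$ up to $\CW$, modulo those factoring through $\CW$. Realisations $\ft$ come from the triangles themselves. We check (ET1)--(ET4) by the usual cone arguments, using that $\CH$ is extension-closed in the relevant sense.

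Fourth, we check the compatibility axioms defining an abelian $n$-truncated category. These require that every morphism is ``$n$-exact'' in both structures, so that $\BF$-conflations agree with the distinguished left/right triangles satisfying the expected truncation conditions. They also require that morphism groups vanish in degrees $\ge n$, e.g. $\CH/[\CW](X,\Sigma^{k}Y)$ appropriately for $k\ge n$.

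We expect the main obstacle to be the well-definedness and functoriality of $H$ modulo $[\CW]$ when the filtration has length $n$. Uniqueness of truncations no longer holds on the nose, only up to maps factoring through $\CW$. We would first try an induction on $n$, using \cite[Definition~3.1]{HZ22} to split off one layer $\CU[-1]$ at a time and reducing each step to the $n=1$ case from \cite{Nak11}.
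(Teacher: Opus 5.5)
Your four-stage outline matches the paper's overall architecture, but the pretriangulated structure you define breaks down as soon as $\CW\neq0$. Setting $\Sigma=H\circ[1]$, and taking right triangles to be $H$-images of triangles $A\to B\to C'\to A[1]$, works in the $t$-structure case, where $\CW=0$. In general $H\circ[1]$ does not even descend to $\CH/[\CW]$. Take $n=1$ and $(\CU,\CV)=(\CC,\CC)$ for a nonzero $2$-cluster tilting subcategory $\CC$. Then $\CH/[\CW]=\CT/[\CC]$ and $H$ is the quotient functor. For $0\neq C\in\CC$ the morphism $\id_C$ vanishes in the heart, while $H(\id_{C[1]})\neq0$: indeed $C[1]\in\CC$ would force $\CT(C[1],C[1])=0$ by $\CT(\CC,\CC[1])=0$. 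Moreover, this heart is abelian, so $\Sigma$ must be $0$ and right triangles must be cokernel sequences. Your recipe applied to $C\to0\to C[1]\xrightarrow{-\id}C[1]$ instead produces $0\to0\to C[1]\to\Sigma 0$ with $C[1]\not\cong0$.

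The missing idea is to replace the shift by a $\CW$-modified cone. Choose a triangle $X\xrightarrow{a_X}W^X\xrightarrow{b_X}X\langle1\rangle\xrightarrow{c_X}X[1]$ with $W^X\in\CW$ and $X\langle1\rangle$ one step shorter in the $\CU$-filtration (\cref{def:functor_<1>}), and put $\Sigma X=(X\langle1\rangle)_+$. Then $\Sigma^n=0$ comes from $X\langle n\rangle\in\CU$. The right triangle of $f$ is built from the cone $M_f$ of $\begin{bsmallmatrix}f\\ a_X\end{bsmallmatrix}\colon X\to Y\oplus W^X$, reflected into $\CH$ via $(\ )_+$, rather than from the cone of $f$. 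The comparison $H(c_X)\colon H(X\langle1\rangle)\to H(X[1])$ is then only a monomorphism (\cref{lem:H(f)_monom}). That monomorphism determines the third maps of right triangles and the connecting maps $\partial^{\pm}$, and it drives the verification of (RT3) and (RT4).

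The extriangulated structure needs the same kind of repair. As stated, your $\BF$ is not a bifunctor on $\CH/[\CW]$. In the example above $\CH=\CT$, so your condition on cones is vacuous, and $\BF(C[1],C)=\CT(C[1],C[1])/[\CC]\neq0$ even though $C\cong0$ in the heart. More generally, for $a\in[\CW]$ the element $a_{\ast}\delta=a[1]\circ\delta$ factors through $\CW[1]$, not through $\CW$. What works is the relative structure $\BE_{\CN}=\BE^L_{\CN}\cap\BE^R_{\CN}$ attached to $\CN=\add(\CU\ast\CV)$. The subcategory $\CH$ is extension-closed in $(\CT,\BE_{\CN},\fs_{\CN})$ and $\CW$ is projective-injective there (\cref{prop:H_is_extension-closed,cor:proj-inj_in_H}), so the ideal quotient inherits $(\BF,\ft)$. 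For $X,Z\in\CH$ one has $\BE_{\CN}(Z,X)=\{h\in\CT(Z,X[1])\mid h\in[\CU_{-n+1}^0]\cap[\CV_1^n]\}$ (\cref{lem:char_for_Sn-deflation}).

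Your fourth stage also contains no argument for the substantive part of the theorem. This consists of three claims:
\begin{enumerate}
\item every $n$-epimorphism is a $\ft$-deflation, and dually;
\item the second map of every right triangle is a $\ft$-deflation, and dually;
\item the compatibility identity $\Sigma(\partial^-(\delta))=-\partial^+(\delta)\circ\varepsilon$ holds.
\end{enumerate}
The paper obtains the first two from the criterion ``$\overline{f}$ is an $n$-epimorphism iff $M_f\in\CU_{-n+1}^0$'', together with the construction of an $\fs_{\CN}$-triangle $X'\to W\oplus Y\to Z$ with $X'\in\CH$ (\cref{prop:equivalence_Sigma_epi,lem:cocone_in_H2}). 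The third is \cref{lem:the_equation}.

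By contrast, the obstacle you single out is harmless. The functors $(\ )_+$ and $(\ )_-$ are adjoints to the inclusions of $\CT^{\pm}/[\CW]$ into $\CT/[\CW]$, so $H$ is well defined and functorial directly, with no induction on $n$.
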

Besides recovering \cite[Theorem~6.4]{Nak11} when $n=1$, this theorem shows that the constructed extriangulated structure is naturally viewed as part of the higher abelian structure carried by the heart through its compatibility with the pretriangulated structure.
This extended heart construction simultaneously generalizes the $n$-extended heart of a $t$-structure and the quotient category associated with an $(n+1)$-cluster tilting subcategory.
Indeed, if $(t^{\le0},t^{\ge0})$ is a $t$-structure on $\CT$, then $(\CU,\CV)=(t^{\le-1},t^{\ge n})$ forms an $n$-cotorsion pair whose heart agrees with $t^{\ge 0}\cap t^{\le n-1}$.
On the other hand, a full subcategory $\CC\subset\CT$ is an $(n+1)$-cluster tilting subcategory if and only if the pair
$(\CC,\CC)$ is an $n$-cotorsion pair (\cite[Theorem~3.1]{HZ22}).
Thus, $(n+1)$-cluster tilting subcategories may be regarded as degenerate cases of $n$-cotorsion pairs.
In this case the heart agrees with $\CT/[\CC]$.

Section~\ref{section:compatibility} develops the abstract framework needed for the main theorem.
We first formulate what it means for a category to carry \emph{compatible} pretriangulated and extriangulated structures.
We then introduce the notions of an (\emph{$n$-truncated}) \emph{quasi-triangulated} category and an \emph{abelian $n$-truncated} category, which describe situations in which these two structures are compatible in a stronger sense.

A quasi-triangulated category simultaneously generalizes both quasi-abelian categories and triangulated categories; indeed, both arise as special cases of this notion
(see \cref{ex:quasi-tri_tri_abel}). 
More precisely, a quasi-triangulated category consists of an extriangulated structure together with a compatible pretriangulated structure consisting of left and right triangles, which play the roles of kernels and cokernels, respectively.
An abelian $n$-truncated category is then defined to be a quasi-triangulated category satisfying $\Sigma^n=\Omega^n=0$, in which deflations and inflations coincide with $n$-epimorphisms and $n$-monomorphisms, respectively.
When $n=1$, a $1$-truncated quasi-triangulated category is precisely a quasi-abelian category, while an abelian $1$-truncated category is precisely
an abelian category.

Section~\ref{section:heart} is devoted to the heart construction for $n$-cotorsion pairs and culminates in the proof of \cref{thm:quasitri}.
We define the heart $\CH/[\CW]$ of an $n$-cotorsion pair in Section~\ref{subsection:heart}.
When $n=1$, this recovers the heart of a cotorsion pair introduced in \cite{Nak11}, which is known to be an abelian category.
To extend this result to arbitrary positive integers $n$, we first introduce a suitable relative extriangulated structure on $\CT$, from which the heart $\CH/[\CW]$ naturally inherits an extriangulated structure in Section~\ref{subsection:ET_heart}.
To clarify the intrinsic role of the extriangulated structure on the heart, we next construct a pretriangulated structure on $\CH/[\CW]$ in Section~\ref{subsection:PT_heart}, and prove in Section~\ref{subsection:nA_heart} that these two structures are compatible.
In fact, we prove that they together form an abelian $n$-truncated category in our main theorem stated above.

We also expect the notion of an abelian $n$-truncated category to provide an axiomatic framework for the underlying $1$-categorical structures appearing in the homotopy categories of abelian $(n,1)$-categories (\cite[Definition~6.2.4]{Ste23}) and abelian $n$-truncated DG-categories (\cite[Definition~3.12]{Moc25}).
Supporting this expectation, the main theorem together with the known realization theorems implies that the homotopy category $h\mathcal{A}$ of any small abelian $(n,1)$-category $\mathcal{A}$ and the homotopy category $H^0(\A)$ of any small abelian $n$-truncated DG-category $\A$ are both equivalent to abelian $n$-truncated categories (\cref{cor:homotopy_category_abelian_truncated}).

Finally, in Section~\ref{subsec:via_extri_quotient} we apply the results of \cite{Oga24} to describe the heart from the viewpoint of localization.
More precisely, we show that the natural functor $H\colon \CT\to\CH/[\CW]$ can be realized as the extriangulated localization of $\CT$ associated with the relative extriangulated structure, with respect to $\Ker H$.
\bigskip

\normalcolor

\color{blue}
Throughout this article, let $(\CT,[1],\Delta)$ denote a triangulated category. Thus, $\CT$ is an additive category, $[1]\colon\CT\to\CT$ is an auto-equivalence, and $\Delta$ denotes the class of distinguished triangles. Also, let $[-1]$ be a quasi-inverse of the shift functor $[1]$, and we fix natural isomorphisms
\[
\eta^{\CT}\colon \id_{\CT}\overset{\cong}{\Longrightarrow}[-1]\circ [1]
\quad\text{and}\quad
\varepsilon^{\CT}\colon [1]\circ [-1]\overset{\cong}{\Longrightarrow}\id_{\CT},
\]
which are respectively the unit and counit of the adjunction $[1]\dashv[-1]$.
These data will be fixed throughout this 
\color{blue}
article. 
\normalcolor
For a pair of full subcategories $\CA,\CA'\subset\CT$, we denote by $\CA\ast\CA'$ the full subcategory of $\CT$ consisting of all objects $X\in\CT$ for which there exists a distinguished triangle $A\to X\to A'\to A[1]$ in $\CT$ with $A\in\CA$ and $A'\in\CA'$.
For a subcategory $\CA\subset\CT$, we put $\CA_{p}^{q}=\CA[p]*\cdots *\CA[q]$ for any integers $p\leq q$.

\normalcolor

\section{Compatibility between extriangulated and pretriangulated structures}
\label{section:compatibility}

\subsection{Conventions for Triangulated Categories}

\color{blue}

We shall occasionally regard the triangulated category
$(\CT,[1],\Delta)$ as an extriangulated category introduced in
\cite{NP19}, and consider its relative theory (which is equivalent to
considering the proper classes of triangles introduced by Beligiannis
\cite{Bel20}; see, for example, \cite[Appendix~A]{Sak23} for details) and related constructions.
For the definition of extriangulated categories and their basic properties,
we refer the reader to \cite{NP19}, \cite{LN19}, and \cite{INP24}.

When $(\CT,[1],\Delta)$ is regarded as an extriangulated category $(\CT,\BE,\fs)$, the biadditive functor
$\BE\colon\CT^{\op}\times\CT\to\Ab$
is given by
$\BE(C,A)=\CT(C,A[1])$
for all objects $A,C\in\CT$, where $\Ab$ denotes the category of small
abelian groups.
Moreover, for morphisms $a\in\CT(A,A')$ and
$c\in\CT(C',C)$, the induced homomorphisms
\[ a_{\ast}=\BE(C,a)\colon\BE(C,A)\to\BE(C,A'),
\quad
c^{\ast}=\BE(c,A)\colon\BE(C,A)\to\BE(C',A) \]
are given respectively by
\[ a[1]\circ-\colon\CT(C,A[1])\to\CT(C,A'[1]),
\quad
-\circ c\colon\CT(C,A[1])\to\CT(C',A[1]). \]
Finally, for $\delta\in\BE(C,A)=\CT(C,A[1])$, the corresponding
$\fs$-triangle
$A\xrightarrow{f}B\xrightarrow{g}C\overset{\delta}{\dashrightarrow}$
(called an $\BE$-triangle in the terminology of \cite{NP19} and
\cite{LN19}) is induced by the distinguished triangle
$A\xrightarrow{f}B\xrightarrow{g}C\xrightarrow{\delta}A[1]$.

In this article, 
\normalcolor
we will define the \emph{heart} of an \emph{$n$-cotorsion pair} (\cref{def:heart}) and endow it with a pretriangulated structure together with a compatible extriangulated structure. \color{blue}
In this section, 
\normalcolor
we develop the framework needed to formulate the compatibility between these two structures.

\color{blue}
Since we also consider 
\normalcolor
pretriangulated categories in the sense of \cite{BR07} later, we explicitly describe here how the triangulated category $\CT$, together with the above choice of $[-1]$, $\eta^{\CT}$, and $\varepsilon^{\CT}$, can be regarded as a pretriangulated category.
First, a distinguished right triangle is simply a distinguished triangle
$X\xrightarrow{f}Y\xrightarrow{g}Z\xrightarrow{h}X[1]$ in $\CT$, that is, a triangle belonging to $\Delta$.
Next, we define a distinguished left triangle to be a sequence
\[
Z[-1]\xrightarrow{e}X\xrightarrow{f}Y\xrightarrow{g}Z
\]
such that
$X\xrightarrow{f}Y\xrightarrow{g}Z
\xrightarrow{-(e[1])\circ\eta^{\CT}_Z}
X[1]$
is a distinguished triangle in $\CT$.
The minus sign in the last morphism is put so that conditions {\rm (iv)} and {\rm (v)} of \cite[Definition~1.1]{BR07} hold naturally. 
Denote by $\nabla$ the class of distinguished left triangles. 
In other words, a sequence 
$X\xrightarrow{f}Y\xrightarrow{g}Z\xrightarrow{h}X[1]$
is a distinguished triangle in $\CT$ if and only if
$Z[-1]\xrightarrow{-\varepsilon^{\CT}_X\circ(h[-1])}
X\xrightarrow{f}Y\xrightarrow{g}Z$
belongs to $\nabla$.

\begin{remark}
Suppose moreover that the shift functor $[1]$ is an isomorphism of categories and that $[-1]$ is its strict inverse.
\begin{enumerate}
\item
If both $\eta^{\CT}$ and $\varepsilon^{\CT}$ are chosen to be the identity natural transformations, then $\nabla$ consists of sequences of the form
\[
Z[-1]\xrightarrow{-h[-1]}X\xrightarrow{f}Y\xrightarrow{g}Z,
\]
obtained by rotating distinguished triangles $X\xrightarrow{f}Y\xrightarrow{g}Z\xrightarrow{h}X[1]$.
Writing $Z'=Z[-1]$, this sequence becomes
$Z'\xrightarrow{-h[-1]}X\xrightarrow{f}Y\xrightarrow{g}Z'[1]$,
which is again a distinguished triangle.
\item
On the other hand, if both $\eta^{\CT}$ and $\varepsilon^{\CT}$ are chosen to be minus the identity natural transformations, then $\nabla$ consists of sequences of the form
\[
Z[-1]\xrightarrow{h[-1]}X\xrightarrow{f}Y\xrightarrow{g}Z,
\]
arising from distinguished triangles
$X\xrightarrow{f}Y\xrightarrow{g}Z\xrightarrow{h}X[1]$.
Although this way of defining $\nabla$ is also natural, such a sequence is, in general, not itself a distinguished triangle in $\Delta$. Rather, it differs from a distinguished triangle by a sign change in one of its morphisms.
\end{enumerate}
Thus the resulting class $\nabla$ depends on the choice of $\eta^{\CT}$ and $\varepsilon^{\CT}$ and, although our exposition will occasionally favor the convention in {\rm (1)}, the choice between {\rm (1)} and {\rm (2)} is essentially a matter of convention.
\end{remark}

In view of the above, we shall also use the following abbreviated terminology in order to make dual arguments easier to follow.

\begin{definition}
Let $(\CT,[1],\Delta)$ and $[-1],\eta^{\CT},\varepsilon^{\CT}$ be fixed as above.
Throughout this paper, we shall often use the following abbreviated terminology. We shall say that the sequence
\begin{equation}\label{rotated_triangle}
Z[-1]\xrightarrow{e}X\xrightarrow{f}Y\xrightarrow{g}Z
\end{equation}
is a distinguished triangle in $\CT$. More precisely, by this we mean that
$X\xrightarrow{f}Y\xrightarrow{g}Z
\xrightarrow{-(e[1])\circ\eta^{\CT}_Z}
X[1]$
is a distinguished triangle in $\CT$.
Equivalently, (\ref{rotated_triangle}) is a distinguished left triangle when $(\CT,[1],\Delta)$ is regarded as a pretriangulated category via the construction described above.
\end{definition}

\subsection{Compatibility and quasi-triangulated categories}
We define the compatibility between the extriangulated and pretriangulated structures, as well as the notion of a quasi-triangulated category, as follows.
\begin{definition}\label{def:compatible_ET_PT}
Let $\CB$ be an additive category.
\begin{enumerate}
\item Assume that $(\CB,\Sigma,\Omega,\vartriangleright,\vartriangleleft)$ is a pretriangulated category in the sense of \cite{BR07}, with the unit $\eta$ and the counit $\varepsilon$ for the adjunction $\Sigma\dashv\Omega$.
An extriangulated structure $(\BF,\ft)$ on $\CB$ is said to be \emph{compatible with} the pretriangulated structure $(\Sigma,\Omega,\vartriangleright,\vartriangleleft)$ if it has a pair of natural transformations $\partial^-\colon\BF\Rightarrow\CB(\Omega(-),-)$ and $\partial^+\colon\BF\Rightarrow\CB(-,\Sigma(-))$ between functors $\CB^\op\times \CB\to\Ab$ that satisfies the following conditions.
\begin{enumerate}
    \item  For any $\ft$-triangle $A\xrightarrow{f}B\xrightarrow{g}C\overset{\delta}{\dashrightarrow}$, the sequence
\[
A\xrightarrow{f}B\xrightarrow{g}C\xrightarrow{\partial^+_{C,A}(\delta)}\Sigma A
\]
belongs to $\vartriangleright$, and the sequence
\[
\Omega C\xrightarrow{\partial^-_{C,A}(\delta)}A\xrightarrow{f}B\xrightarrow{g}C
\]
belongs to $\vartriangleleft$.
    \item {\color{blue}For any $\delta\in \BF(C,A)$, the equality $\Sigma(\partial^-_{C,A}(\delta))=-\partial^+_{C,A}(\delta)\circ\varepsilon_C$ (or equivalently, $\Omega(\partial^+_{C,A}(\delta))=-\eta_A\circ \partial^-_{C,A}(\delta)$) holds.}
\end{enumerate}
\item Assume that $\CB$ is equipped with a pretriangulated structure $(\Sigma,\Omega,\vartriangleright,\vartriangleleft)$ and an extriangulated structure $(\BF,\ft)$ compatible with it. The category $\CB$ with these structures is called a \emph{quasi-triangulated} category if it moreover satisfies the following conditions.
\begin{itemize}
\item For any $A\xrightarrow{f}B\xrightarrow{g}C\xrightarrow{h}\Sigma A
$ in $\vartriangleright$, the morphism $g$ is a $\ft$-deflation.
\item For any $\Omega C\xrightarrow{e}A\xrightarrow{f}B\xrightarrow{g}C$ in $\vartriangleleft$, the morphism $f$ is a $\ft$-inflation.
\end{itemize}
\end{enumerate}
\end{definition}

\color{blue}
\begin{remark}\label{rem:negative_extensions}
Assume that an extriangulated structure $(\BF,\ft)$ on a
pretriangulated category
$(\CB,\Sigma,\Omega,\vartriangleright,\vartriangleleft)$ is compatible
with the pretriangulated structure. Set $\BF^0:=\CB(-,-)$ and, for each
integer $r\geq 1$, define
\[
\BF^{-r}(C,A):=\CB(C,\Omega^r A)
\cong \CB(\Sigma^r C,A),
\]
where the isomorphism is induced by the adjunction
$\Sigma^r\dashv\Omega^r$. Then $\BF^{-1}$ is a negative first
extension of $(\CB,\BF,\ft)$ in the sense of
\cite[Definition~2.3]{AET23}. Indeed, for a $\ft$-triangle
\[
A\xrightarrow{f}B\xrightarrow{g}C
\overset{\delta}{\dashrightarrow},
\]
the two connecting morphisms required in the definition are
\[
\begin{split}
\BF^{-1}(W,C)=\CB(W,\Omega C)
&\xrightarrow{\CB(W,\partial^-_{C,A}(\delta))}
\CB(W,A),\\
\BF^{-1}(A,W)\cong\CB(\Sigma A,W)
&\xrightarrow{\CB(\partial^+_{C,A}(\delta),W)}
\CB(C,W).
\end{split}
\]
Their required exactness follows from the left and right triangles
associated with the above $\ft$-triangle.

More generally, this construction extends arbitrarily far in the
negative direction. For every $r\geq 1$, the morphisms induced by
$\Omega^{r-1}\partial^-_{C,A}(\delta)$ and
$\Sigma^{r-1}\partial^+_{C,A}(\delta)$ fit into exact sequences
\[
\begin{split}
\BF^{-r}(W,A)&\longrightarrow\BF^{-r}(W,B)
\longrightarrow\BF^{-r}(W,C)
\longrightarrow\BF^{-(r-1)}(W,A),\\
\BF^{-r}(C,W)&\longrightarrow\BF^{-r}(B,W)
\longrightarrow\BF^{-r}(A,W)
\longrightarrow\BF^{-(r-1)}(C,W).
\end{split}
\]
\end{remark}
\normalcolor

\begin{definition}
\label{def:n-trun_pretr}
Let $(\CB,\Sigma,\Omega,\vartriangleright,\vartriangleleft)$ be a
pretriangulated category, and let $n$ be a positive integer.
If $\Sigma^n=0$, or equivalently, if $\Omega^n=0$, then we call $\CB$
\emph{$n$-truncated}.
\end{definition}

\color{blue}

\begin{remark}
A $1$-truncated quasi-triangulated category is precisely a
quasi-abelian category. Indeed, $1$-truncatedness means that
$\Sigma=\Omega=0$, so distinguished right and left triangles reduce
to cokernel and kernel sequences, respectively. The
quasi-triangulated axioms then imply that cokernels are stable under
pullbacks and kernels are stable under pushouts. Conversely, the
canonical exact structure of a quasi-abelian category, together with
the zero suspension and loop functors, yields a $1$-truncated
quasi-triangulated structure.
\end{remark}

\begin{example}\label{ex:quasi-tri_tri_abel}
Triangulated categories and quasi-abelian categories form two basic
classes of quasi-triangulated categories.
\begin{enumerate}
    \item Every triangulated category, endowed with its standard
    pretriangulated and extriangulated structures described above, is
    quasi-triangulated. Notice that a nonzero triangulated category is
    not $n$-truncated for any positive integer $n$, since its suspension
    functor is an autoequivalence.

    \item Every quasi-abelian category, endowed with its canonical exact
    structure \cite[Remark~1.1.11]{Sch99} and the zero suspension and
    loop functors, is a $1$-truncated quasi-triangulated category.
\end{enumerate}
\end{example}

\normalcolor

\begin{definition}
\label{def:n-mono_pretr}
Let $(\CB,\Sigma,\Omega,\vartriangleright,\vartriangleleft)$ be a
pretriangulated category, let $n$ be a positive integer, and let
$f\colon X\to Y$ be a morphism in $\CB$.
\begin{enumerate}
    \item We call $f$ an \emph{$n$-monomorphism} if $\Omega^{n-1}f$ is a
    monomorphism and $\Omega^i f$ is an isomorphism for every
    $i>n-1$.
    \item Dually, we call $f$ an \emph{$n$-epimorphism} if $\Sigma^{n-1}f$ is an
    epimorphism and $\Sigma^i f$ is an isomorphism for every
    $i>n-1$.
\end{enumerate}
\end{definition}

\color{blue}

The following proposition generalizes the fact that every morphism in
a quasi-abelian category admits a factorization into a regular
epimorphism followed by a monomorphism and, dually, a factorization
into an epimorphism followed by a regular monomorphism; see
\cite[Remark~1.1.2(b) and Proposition~1.1.4]{Sch99}.

\begin{proposition}
\label{prop:factorization_quasi_tr}
Let $\CB$ be a quasi-triangulated category with pretriangulated structure $(\Sigma,\Omega,\vartriangleright,\vartriangleleft)$ and extriangulated structure $(\BF,\ft)$. 
Then every morphism
$f\colon X\to Y$ admits factorizations
\[
f=m\circ e=i\circ p,
\]
where $e\colon X\to D$ is a $\ft$-deflation and
$m\colon D\to Y$ is a $1$-monomorphism, while
$p\colon X\to D'$ is a $1$-epimorphism and
$i\colon D'\to Y$ is a $\ft$-inflation.
\end{proposition}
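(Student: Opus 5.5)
The plan is to obtain both factorizations from the pretriangulated structure, via the axiom that the first morphism of a left triangle is a $\ft$-inflation (and dually for right triangles). I will describe $f=m\circ e$ only; the factorization $f=i\circ p$ is the exact dual, using right triangles and $\ft$-deflations.

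\emph{Construction.} Embed $f$ in a left triangle $\Omega Y\xrightarrow{k}K\xrightarrow{j}X\xrightarrow{f}Y$ in $\vartriangleleft$. By the quasi-triangulated axiom, $j$ is a $\ft$-inflation, so there is a $\ft$-triangle $K\xrightarrow{j}X\xrightarrow{e}D\overset{\delta}{\dashrightarrow}$; in particular $e$ is a $\ft$-deflation. Since $f\circ j=0$ and $\CB(D,Y)\to\CB(X,Y)\to\CB(K,Y)$ is exact, we get $f=m\circ e$ for some $m\colon D\to Y$. By compatibility (\cref{def:compatible_ET_PT}(1)(a)), the $\ft$-triangle also yields a left triangle $\Omega D\xrightarrow{\partial^-(\delta)}K\xrightarrow{j}X\xrightarrow{e}D$. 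The identities $\id_K,\id_X$ and $m$ then form a morphism between the two left triangles, and by the axioms of \cite{BR07} the remaining component is $\Omega m\colon\Omega D\to\Omega Y$, possibly after adjusting.

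\emph{Proof that $m$ is a $1$-monomorphism.} We need $m$ monic and $\Omega^i m$ an isomorphism for all $i\ge1$.
\begin{itemize}
\item For $\Omega^i m$: apply $\CB(W,-)$ to both left triangles and use the long exact sequences of \cref{rem:negative_extensions}, extended leftwards by $\Omega^{r}$. The maps on $K$, $X$ and on all $\Omega^r K$, $\Omega^r X$ are identities. A five-lemma argument then gives that $\CB(W,\Omega^i D)\to\CB(W,\Omega^i Y)$ is bijective for all $i\ge1$. The input is that $\Omega^{i}$ of a left triangle is again a (rotated) left triangle, which holds in any pretriangulated category. Yoneda then gives $\Omega^i m$ invertible.
\item For $m$ mono: the same diagram chase only shows that $\CB(W,m)$ is injective on morphisms $W\to D$ that factor through $e$. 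This is exactly where the $\ft$-structure has to be used.
\end{itemize}

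\emph{Main obstacle: monicity of $m$.} I would take a left triangle $\Omega Y\to K_m\xrightarrow{u}D\xrightarrow{m}Y$ and show $u=0$; this suffices, since then any $a$ with $m a=0$ factors through $u$ and so vanishes.
\begin{itemize}
\item The map $u$ is a $\ft$-inflation. Pull the deflation $e$ back along $u$ using (ET4)$^{\op}$ / the homotopy-pullback property of \cite{NP19}. This gives $X'\xrightarrow{x}X$ and a $\ft$-deflation $e'\colon X'\to K_m$ with $e\circ x=u\circ e'$.
\item Then $f x=m u e'=0$, so $x$ factors through $j$. Hence $u e'=e x$ factors through $e j=0$, i.e.\ $u\circ e'=0$.
\item To conclude $u=0$, I would try to choose the pullback so that the connecting data identify $u$ with a morphism vanishing on the deflation. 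The alternative is to use the right triangle $K\to X\to D\xrightarrow{\partial^+(\delta)}\Sigma K$: show that $u$ factors through $\partial^+(\delta)$ composed with something killed by $m$, and then apply the already-established isomorphism statements for $\Sigma$/$\Omega$ via the adjunction and condition (b).
\end{itemize}
I expect this step to require the most care. In the $n=1$ quasi-abelian case it reduces to the standard fact that the coimage-to-image map is monic, which suggests the pullback-stability argument above is the right route.
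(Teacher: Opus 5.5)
Your skeleton matches the paper's: a left triangle on $f$, the inflation $j$, the $\ft$-triangle $K\xrightarrow{j}X\xrightarrow{e}D\overset{\delta}{\dashrightarrow}$, a five-lemma argument, and monicity via pulling $\delta$ back along $u$. However, both steps that actually carry the proof are missing.

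\textbf{First gap: the choice of $m$.} The claim that $(\Omega m,\id_K,\id_X,m)$ is a morphism of left triangles ``possibly after adjusting'' is where the work lies, and it is false for an arbitrary $m$ with $m\circ e=f$. In a morphism of left triangles the first component is forced to be $\Omega m$. What the axiom supplies is the \emph{second} component: some $\alpha\colon K\to K$ with $j\circ\alpha=j$ and $\alpha\circ\partial^-(\delta)=k\circ\Omega m$. Nothing makes $\alpha=\id_K$, nor even invertible, which your five-lemma step needs. The choice of $m$ genuinely matters. In a triangulated category take $f\colon 0\to Y$; then $e=0$, so every $m\colon D\cong Y\to Y$ satisfies $m\circ e=f$, yet only isomorphisms are $1$-monomorphisms there. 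The paper repairs this as follows.
\begin{itemize}
\item Since $j\circ(\id_K-\alpha)=0$, write $\id_K-\alpha=k\circ t$ with $t\colon K\to\Omega Y$.
\item Let $r\colon\Sigma K\to Y$ be the adjoint of $t$, so $\Omega r\circ\eta_K=t$.
\item Replace $m$ by $m-r\circ\partial^+(\delta)$. This leaves $m\circ e$ unchanged, because $\partial^+(\delta)\circ e=0$.
\item Compatibility condition (b), $\Omega\partial^+(\delta)=-\eta_K\circ\partial^-(\delta)$, then gives for the new $m$
\[
k\circ\Omega m=\alpha\circ\partial^-(\delta)+(\id_K-\alpha)\circ\partial^-(\delta)=\partial^-(\delta).
\]
\end{itemize}
So condition (b) is needed already here, not in the monicity step.

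\textbf{Second gap: monicity of $m$.} You leave this open. Your pullback argument stops at $u\circ e'=0$, and that yields nothing because $\ft$-deflations need not be epimorphisms. The missing idea is to feed the isomorphisms $\Omega m$ and $\Omega^2 m$, which you have already established, into the long exact sequence of the left triangle $\Omega Y\to K_m\xrightarrow{u}D\xrightarrow{m}Y$.
\begin{itemize}
\item Since $\Omega m$ is invertible, $\CB(-,u)$ is injective. So $u$ is a monomorphism, and $u\circ e'=0$ gives $e'=0$.
\item Since $\Omega m$ and $\Omega^2 m$ are both invertible, $\Omega K_m=0$.
\item Take the pullback specifically as a realization $K\xrightarrow{j'}X'\xrightarrow{e'}K_m\overset{u^{\ast}\delta}{\dashrightarrow}$ of $u^{\ast}\delta$, so that its first term is still $K$.
\item Compatibility (a) turns this into a left triangle $\Omega K_m\to K\xrightarrow{j'}X'\xrightarrow{e'}K_m$ with $\Omega K_m=0$ and $e'=0$. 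Hence $\CB(-,j')$ is bijective and $j'$ is an isomorphism.
\item An $\ft$-triangle whose inflation is invertible has zero third term, so $K_m=0$ and $m$ is a monomorphism.
\end{itemize}
Neither of your proposed continuations (re-choosing the pullback, or factoring $u$ through $\partial^+(\delta)$) is needed once this is in place.
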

\begin{proof}
We prove only the assertion concerning the factorization
$f=m\circ e$; the assertion concerning $f=i\circ p$ follows by
duality.
Take a left triangle
\[
\Omega Y\xrightarrow{b}K\xrightarrow{k}X\xrightarrow{f}Y.
\]
By the definition of a quasi-triangulated category, $k$ is a $\ft$-inflation. Hence, there is a $\ft$-triangle
\[
K\xrightarrow{k}X\xrightarrow{e}D\overset{\delta}{\dashrightarrow}.
\]
Accordingly, we obtain the following left and right triangles:
\[
\Omega D\xrightarrow{\partial^-(\delta)}K\xrightarrow{k}X\xrightarrow{e}D
, \qquad
K\xrightarrow{k}X\xrightarrow{e}D
\xrightarrow{\partial^+(\delta)}\Sigma K.
\]

Since $f\circ k=0$, there is a morphism $m'\colon D\to Y$ such that $m'\circ e=f$.
The axioms of left triangulated category imply that there is a morphism $\alpha\colon K\to K$ making the following diagram commute:
\[
\begin{tikzcd}[column sep=large]
\Omega D
  \arrow[r,"\partial^-(\delta)"]
  \arrow[d,"\Omega m'"']
&
K
  \arrow[r,"k"]
  \arrow[d,"\alpha"]
&
X
  \arrow[r,"e"]
  \arrow[d,equal]
&
D
  \arrow[d,"m'"]
\\
\Omega Y
  \arrow[r,"b"]
&
K
  \arrow[r,"k"]
&
X
  \arrow[r,"f"]
&
Y.
\end{tikzcd}
\]

Since $k\circ(\id_K-\alpha)=0$, we have a morphism $t\colon K\to\Omega Y$ such that $b\circ t=\id_K-\alpha$. 
Let $r\colon\Sigma K\to Y$ be the morphism corresponding to $t$ under the adjunction $\Sigma\dashv\Omega$. Thus $\Omega r\circ\eta_K=t$ holds,
where $\eta_K\colon K\to\Omega\Sigma K$ is the unit of the adjunction. 
Define $m:=m'-r\circ\partial^+(\delta)\colon D\to Y$.

By the compatibility between the two structures, we have $\Omega\partial^+(\delta)=-\eta_K\circ\partial^-(\delta)$.
It follows that
\begin{align*}
b\circ\Omega m
&=
b\circ\Omega m'
-b\circ\Omega r\circ\Omega\partial^+(\delta)\\
&=
b\circ\Omega m'
+b\circ\Omega r\circ\eta_K\circ\partial^-(\delta)\\
&=
b\circ\Omega m'
+b\circ t\circ\partial^-(\delta)\\
&=
\alpha\circ\partial^-(\delta)
+(\id_K-\alpha)\circ\partial^-(\delta)\\
&=
\partial^-(\delta).
\end{align*}
Moreover, $m\circ e=m'\circ e-r\circ\partial^+(\delta)\circ e=f$, because $\partial^+(\delta)\circ e=0$. Hence, the following diagram is commutative:
\[
\begin{tikzcd}[column sep=large]
\Omega D
  \arrow[r,"\partial^-(\delta)"]
  \arrow[d,"\Omega m"']
&
K
  \arrow[r,"k"]
  \arrow[d,equal]
&
X
  \arrow[r,"e"]
  \arrow[d,equal]
&
D
  \arrow[d,"m"]
\\
\Omega Y
  \arrow[r,"b"]
&
K
  \arrow[r,"k"]
&
X
  \arrow[r,"f"]
&
Y.
\end{tikzcd}
\]

For each $i\geq1$, this morphism of left triangles induces the following commutative diagram, whose rows are exact:
\[
\begin{tikzcd}[column sep=small]
\CB(-,\Omega^iK)
  \arrow[r]
  \arrow[d,equal]
&
\CB(-,\Omega^iX)
  \arrow[r]
  \arrow[d,equal]
&
\CB(-,\Omega^iD)
  \arrow[r]
  \arrow[d,"{\CB(-,\Omega^im)}"]
&
\CB(-,\Omega^{i-1}K)
  \arrow[r]
  \arrow[d,equal]
&
\CB(-,\Omega^{i-1}X)
  \arrow[d,equal]
\\
\CB(-,\Omega^iK)
  \arrow[r]
&
\CB(-,\Omega^iX)
  \arrow[r]
&
\CB(-,\Omega^iY)
  \arrow[r]
&
\CB(-,\Omega^{i-1}K)
  \arrow[r]
&
\CB(-,\Omega^{i-1}X).
\end{tikzcd}
\]
Applying the five lemma objectwise, we see that $\CB(-,\Omega^im)$ is an isomorphism. 
Therefore, by the Yoneda lemma, $\Omega^im$ is an isomorphism for every $i\geq1$. It remains to show that $m$ is a monomorphism.

Take a left triangle
\[
\Omega Y\xrightarrow{p}Q\xrightarrow{q}D\xrightarrow{m}Y.
\]
Since $\Omega m$ is an isomorphism, exactness of
\[
\CB(-,\Omega D)
\xrightarrow{\CB(-,\Omega m)}
\CB(-,\Omega Y)
\xrightarrow{\CB(-,p)}
\CB(-,Q)
\xrightarrow{\CB(-,q)}
\CB(-,D)
\]
shows that $\CB(-,q)$ is a monomorphism. Hence, $q$ is a monomorphism.
Taking a $\ft$-triangle
\begin{equation}\label{tKZQ}
K\xrightarrow{u}Z\xrightarrow{v}Q\overset{q^{\ast}\delta}{\dashrightarrow},
\end{equation}
we obtain the following morphism of $\ft$-triangles:
\[
\begin{tikzcd}[column sep=large]
K
  \arrow[r,"u"]
  \arrow[d,equal]
&
Z
  \arrow[r,"v"]
  \arrow[d,"l"]
&
Q
  \arrow[r,dashed,"q^*\delta"]
  \arrow[d,"q"]
&
{}
\\
K
  \arrow[r,"k"]
&
X
  \arrow[r,"e"]
&
D
  \arrow[r,dashed,"\delta"]
&
{}.
\end{tikzcd}
\]
Then $f\circ l=m\circ e\circ l=m\circ q\circ v=0$ because $m\circ q=0$. Since
\[
\Omega Y\longrightarrow K\xrightarrow{k}X\xrightarrow{f}Y
\]
is a left triangle, there is a morphism $s\colon Z\to K$ such that $k\circ s=l$. Therefore, $q\circ v=e\circ l=e\circ k\circ s=0$. 
Since $q$ is a monomorphism, it follows that $v=0$.

The long exact sequence associated with the left triangle
\[
\Omega Y\xrightarrow{p}Q\xrightarrow{q}D\xrightarrow{m}Y
\]
contains the exact sequence
\[
\CB(-,\Omega^2D)
\xrightarrow{\CB(-,\Omega^2m)}
\CB(-,\Omega^2Y)
\longrightarrow
\CB(-,\Omega Q)
\longrightarrow
\CB(-,\Omega D)
\xrightarrow{\CB(-,\Omega m)}
\CB(-,\Omega Y).
\]
Since both $\Omega m$ and $\Omega^2m$ are isomorphisms, this exact sequence shows that $\CB(-,\Omega Q)=0$. Hence, the Yoneda lemma gives $\Omega Q=0$.

Finally, the left triangle associated with \eqref{tKZQ} %
is
\[
\Omega Q
\xrightarrow{\partial^-(q^*\delta)}
K\xrightarrow{u}Z\xrightarrow{v}Q.
\]
It induces the exact sequence
\[
\CB(-,\Omega Q)
\xrightarrow{\CB(-,\partial^-(q^*\delta))}
\CB(-,K)
\xrightarrow{\CB(-,u)}
\CB(-,Z)
\xrightarrow{\CB(-,v)}
\CB(-,Q).
\]
Since $\Omega Q=0$ and $v=0$, the natural transformation $\CB(-,u)$ is an isomorphism. By the Yoneda lemma, $u$ is an isomorphism. Since \eqref{tKZQ} 
is a $\ft$-triangle, it follows that $Q=0$. Thus, $m$ is a monomorphism and therefore a $1$-monomorphism.
\end{proof}

\begin{proposition}
\label{prop:fac_system}
Let $\CB$ be a quasi-triangulated category with pretriangulated structure $(\Sigma,\Omega,\vartriangleright,\vartriangleleft)$ and extriangulated structure $(\BF,\ft)$. Let $f\colon X\to Y$ be a morphism in $\CB$. If there are two factorizations
\[
f = m_1 \circ e_1 = m_2 \circ e_2,
\qquad
e_i\colon X\to D_i,\quad m_i\colon D_i\to Y
\quad (i=1,2),
\]
where $e_1,e_2$ are $\ft$-deflations and $m_1,m_2$ are
$1$-monomorphisms, then there exists a unique isomorphism
$u\colon D_1\to D_2$ making the following diagram commute:
\[
\begin{tikzcd}
X \arrow[r,"e_1"]\arrow[d,equal] & D_1 \arrow[d,"u"] \arrow[r,"m_1"] & Y \arrow[d,equal] \\
X \arrow[r,"e_2"] & D_2 \arrow[r,"m_2"] & Y.
\end{tikzcd}
\]
Dually, suppose that
\[
f=i_1\circ p_1=i_2\circ p_2,\qquad
p_j\colon X\to D_j',\quad i_j\colon D_j'\to Y
\quad (j=1,2),
\]
where $p_1,p_2$ are $1$-epimorphisms and $i_1,i_2$ are
$\ft$-inflations. Then there exists a unique isomorphism
$v\colon D_1'\to D_2'$ such that
\[
v\circ p_1=p_2
\qquad\text{and}\qquad
i_2\circ v=i_1.
\]
\end{proposition}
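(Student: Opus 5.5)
We prove only the first assertion; the dual one follows by the same argument applied to the opposite structures. The approach is the usual orthogonality argument for factorization systems: a $\ft$-deflation should have the left lifting property against a $1$-monomorphism, with a unique lift. Uniqueness of $u$ is immediate. If $u,u'$ both satisfy $u\circ e_1=e_2=u'\circ e_1$, then $(u-u')\circ e_1=0$. Since $e_1$ is a $\ft$-deflation, it is an epimorphism in the weak sense: it has a weak kernel, but it need not be epic. So we argue through $m_2$ instead, using $m_2\circ(u-u')=m_1-m_1=0$; as $m_2$ is a monomorphism, $u=u'$. Hence only existence, and invertibility, need work.

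For existence, take a $\ft$-triangle $K_1\xrightarrow{k_1}X\xrightarrow{e_1}D_1\overset{\delta_1}{\dashrightarrow}$. By compatibility, its right triangle $K_1\to X\to D_1\to\Sigma K_1$ lies in $\vartriangleright$, so $\CB(-,W)$ applied to it is exact. Thus $e_1$ is a weak cokernel of $k_1$, and it suffices to show $e_2\circ k_1=0$. We have $m_2\circ e_2\circ k_1=f\circ k_1=m_1\circ e_1\circ k_1=0$. Since $m_2$ is a monomorphism, $e_2\circ k_1=0$. Hence there is $u\colon D_1\to D_2$ with $u\circ e_1=e_2$. Then $m_2\circ u\circ e_1=m_1\circ e_1$, so $(m_2\circ u-m_1)\circ e_1=0$.

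We must upgrade this to $m_2\circ u=m_1$, and this is the main obstacle. By exactness of $\CB(-,Y)$ on the right triangle, $m_2 u-m_1=r\circ\partial^+(\delta_1)$ for some $r\colon\Sigma K_1\to Y$. We aim to modify $u$ by a correction term, as in the proof of \cref{prop:factorization_quasi_tr}. Apply $\Omega$. The map $\Omega m_2$ is an isomorphism, since $m_2$ is a $1$-monomorphism. Also $\Omega\partial^+(\delta_1)=-\eta_{K_1}\circ\partial^-(\delta_1)$ by compatibility. Consider the left triangle of $\delta_2$. Since $\Omega m_1$ and $\Omega m_2$ are isomorphisms and $\Omega^i e_j$ fits into long exact sequences, we will try to show that $r\circ\partial^+(\delta_1)$ factors as $m_2\circ c$ with $c\circ e_1=0$. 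Concretely, set $r'=(\Omega m_2)^{-1}\circ\Omega r\circ\eta_{K_1}$, so that $r'\colon K_1\to\Omega D_2$, and let $r''\colon\Sigma K_1\to D_2$ be its adjoint. Naturality of the adjunction gives $m_2\circ r''=r$. Replacing $u$ by $u-r''\circ\partial^+(\delta_1)$ preserves $u\circ e_1=e_2$, because $\partial^+(\delta_1)\circ e_1=0$, and it now gives $m_2\circ u=m_1$. The step to check carefully is the adjunction identity $m_2\circ r''=r$, which uses that $\Omega m_2$ is invertible.

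Finally, we show that $u$ is an isomorphism. Symmetrically, there is $u'\colon D_2\to D_1$ with $u'\circ e_2=e_1$ and $m_1\circ u'=m_2$. Then $m_1\circ u'u=m_1$, so $u'u=\id$ because $m_1$ is a monomorphism; likewise $uu'=\id$.
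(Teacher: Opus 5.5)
Your proposal is correct and follows essentially the same route as the paper: you obtain $u$ with $u\circ e_1=e_2$ from the right triangle of $\delta_1$ using that $m_2$ is monic, write $m_2u-m_1=r\circ\partial^+(\delta_1)$, lift $r$ through $m_2$ via the adjunction bijection $\CB(\Sigma K_1,D_2)\cong\CB(\Sigma K_1,Y)$ coming from invertibility of $\Omega m_2$ (your $r''$), correct $u$ by $r''\circ\partial^+(\delta_1)$, and deduce uniqueness and invertibility from monicity of $m_1,m_2$. The digression about $\Omega\partial^+(\delta_1)=-\eta_{K_1}\circ\partial^-(\delta_1)$ and the left triangle of $\delta_2$ plays no role in the argument and can simply be deleted.
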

\begin{proof}
We prove only the assertion for factorizations into an
$\ft$-deflation followed by a $1$-monomorphism. The assertion for the
dual factorizations follows by duality.
Take a $\ft$-triangle $K_1\xrightarrow{k_1}X\xrightarrow{e_1}D_1\overset{\delta_1}{\dashrightarrow}$.
By the compatibility, this induces a right triangle
\[
K_1\xrightarrow{k_1}X\xrightarrow{e_1}D_1
\xrightarrow{\partial^+(\delta_1)}\Sigma K_1.
\]
Then $m_2\circ e_2\circ k_1=m_1\circ e_1\circ k_1=0$ holds. Since $m_2$ is a $1$-monomorphism, it is in particular a monomorphism. This implies that $e_2\circ k_1=0$. Thus, there exists a morphism $u_0\colon D_1\to D_2$ such that $u_0\circ e_1=e_2$.

Then $(m_2\circ u_0-m_1)\circ e_1=m_2\circ e_2-m_1\circ e_1=0$ holds. Thus, we obtain a morphism $a\colon\Sigma K_1\to Y$ such that
$m_2\circ u_0-m_1=a\circ \partial^+(\delta_1)$. Since $\Omega m_2$ is an isomorphism, the homomorphism
\[
\CB(\Sigma K_1,m_2)\colon
\CB(\Sigma K_1,D_2)\to
\CB(\Sigma K_1,Y)
\]
is an isomorphism by the adjunction $\Sigma\dashv\Omega$.
Thus, there exists a morphism
$b\colon\Sigma K_1\to D_2$ such that $m_2\circ b=a$.
Define a morphism $u\colon D_1\to D_2$ by
\[
u:=u_0-b\circ \partial^+(\delta_1).
\]
Then we have $m_2\circ u=m_2\circ u_0-a\circ \partial^+(\delta_1)=m_1$, and also $u\circ e_1=e_2$ since $\partial^+(\delta_1)\circ e_1=0$.

Since $m_2$ is a monomorphism, the morphism $u$ is uniquely determined by the commutativity of the diagram.  
By similar arguments, we can construct a morphism $v\colon D_2\to D_1$ such that $v\circ e_2=e_1$ and $m_1\circ v=m_2$. Then one can easily check that $v\circ u=\id_{D_1}$ and $u\circ v=\id_{D_2}$,
and this shows that $u$ is an isomorphism.
\end{proof}

\begin{example}
Let $f\colon X\to Y$ be a morphism in $\CB$.
\begin{enumerate}
    \item Suppose that $\CB$ is a triangulated category endowed with
    its standard quasi-triangulated structure. In this case every
    morphism is both a $\ft$-deflation and a $\ft$-inflation, while
    every $1$-monomorphism and every $1$-epimorphism is an
    isomorphism. Hence, the two factorizations in
    \cref{prop:factorization_quasi_tr} are represented respectively by
    \[
    X\xrightarrow{f}Y\xrightarrow{\id_Y}Y
    \qquad\text{and}\qquad
    X\xrightarrow{\id_X}X\xrightarrow{f}Y.
    \]

    \item Suppose that $\CB$ is a quasi-abelian category endowed with
    its canonical $1$-truncated quasi-triangulated structure. In this
    case the $\ft$-deflations are the strict epimorphisms (\cite[Definition~1.1.1]{Sch99}), hence regular
    epimorphisms, and the $\ft$-inflations are the strict
    monomorphisms, hence regular monomorphisms. Moreover, the
    $1$-monomorphisms and $1$-epimorphisms are precisely the
    monomorphisms and epimorphisms, respectively. Thus, the two
    factorizations in \cref{prop:factorization_quasi_tr} are the
    canonical factorizations
    \[
    X\twoheadrightarrow\operatorname{Coim}f
    \rightarrowtail Y
    \qquad\text{and}\qquad
    X\twoheadrightarrow\operatorname{Im}f
    \rightarrowtail Y,
    \]
    where the first morphism in the first factorization is the
    cokernel of $\ker f$, while the second morphism in the second
    factorization is the kernel of $\operatorname{coker}f$; see
    \cite[Remark~1.1.2(b) and Proposition~1.1.4]{Sch99}.
\end{enumerate}
\end{example}

\normalcolor

\begin{lemma}
\label{lem:chara_n-mono_ker}
Let $(\CB,\Sigma,\Omega,\vartriangleright,\vartriangleleft)$ be a
pretriangulated category, and let $n$ be a positive integer.
Consider a left triangle
$$
\Omega C \xrightarrow{h} A
\xrightarrow{f} B
\xrightarrow{g} C.
$$
Then $g$ is an $n$-monomorphism if and only if
$\Omega^{n-1}A=0$.
\end{lemma}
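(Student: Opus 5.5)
The plan is to read both directions off the long exact sequence of representable functors attached to the left triangle. By the axioms of a left triangulated category in the sense of \cite{BR07}, applying $\Omega$ to a left triangle gives a left triangle up to sign. Signs do not affect exactness, so for every object $W$ the sequence
\[
\cdots\to\CB(W,\Omega^{i}A)\xrightarrow{(\Omega^i f)_*}\CB(W,\Omega^{i}B)\xrightarrow{(\Omega^i g)_*}\CB(W,\Omega^{i}C)\to\CB(W,\Omega^{i-1}A)\xrightarrow{(\Omega^{i-1} f)_*}\CB(W,\Omega^{i-1}B)\to\cdots\to\CB(W,B)\xrightarrow{g_*}\CB(W,C)
\]
is exact. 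The connecting maps are induced by $\Omega^{i-1}h$ up to sign. This is the same exact sequence already used in the proofs of \cref{prop:factorization_quasi_tr} and in \cref{rem:negative_extensions}. By the Yoneda lemma, a morphism $\phi$ is a monomorphism (resp.\ an isomorphism) exactly when $\CB(-,\phi)$ is objectwise injective (resp.\ bijective), and an object is zero exactly when its representable functor vanishes.

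For ($\Leftarrow$), assume $\Omega^{n-1}A=0$. Since $\Omega$ is additive, $\Omega^{i}A=0$ for all $i\ge n-1$. For $i\ge n$, the map $\CB(-,\Omega^iB)\to\CB(-,\Omega^iC)$ sits between the zero terms $\CB(-,\Omega^iA)$ and $\CB(-,\Omega^{i-1}A)$, so it is bijective and $\Omega^i g$ is an isomorphism. For $i=n-1$, the term $\CB(-,\Omega^{n-1}A)$ preceding $\CB(-,\Omega^{n-1}B)$ vanishes, so $\CB(-,\Omega^{n-1}g)$ is injective and $\Omega^{n-1}g$ is a monomorphism. Hence $g$ is an $n$-monomorphism.

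For ($\Rightarrow$), assume $\Omega^{n-1}g$ is a monomorphism and $\Omega^n g$ is an isomorphism. Consider the exact segment
\[
\CB(-,\Omega^{n}B)\to\CB(-,\Omega^{n}C)\to\CB(-,\Omega^{n-1}A)\to\CB(-,\Omega^{n-1}B)\to\CB(-,\Omega^{n-1}C).
\]
The first map is surjective, so the connecting map into $\CB(-,\Omega^{n-1}A)$ is zero. The last map is injective, so exactness at $\CB(-,\Omega^{n-1}B)$ forces $\CB(-,\Omega^{n-1}f)$ to be zero. Exactness at $\CB(-,\Omega^{n-1}A)$ then says that the kernel of a zero map, namely all of $\CB(-,\Omega^{n-1}A)$, equals the image of a zero map. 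Hence $\CB(-,\Omega^{n-1}A)=0$, and the Yoneda lemma gives $\Omega^{n-1}A=0$. When $n=1$ the same argument runs with $\Omega^0=\id$ and the tail $\CB(-,A)\to\CB(-,B)\to\CB(-,C)$.

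The only point requiring care is the justification of the infinite long exact sequence, in particular that the $\Omega$-iterates of the given left triangle remain left triangles up to sign. I expect this to be the main (mild) obstacle, and I will cite the standard consequence of the Beligiannis--Reiten axioms for it.
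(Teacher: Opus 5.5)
Your proposal is correct and matches the paper's proof: both directions are read off the exact segment $\CB(-,\Omega^nB)\to\CB(-,\Omega^nC)\to\CB(-,\Omega^{n-1}A)\to\CB(-,\Omega^{n-1}B)\to\CB(-,\Omega^{n-1}C)$ and its analogues for $i>n-1$, using the long exact sequence of the left triangle together with the Yoneda lemma. Your justification of that long exact sequence via iterated rotation of the left triangle supplies a step the paper takes for granted.
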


\begin{proof}
For every object $X\in\CB$, the long exact sequence associated with
the given left triangle contains
$$
\begin{aligned}
\CB(X,\Omega^n B)
&\xrightarrow{\CB(X,\Omega^n g)}
\CB(X,\Omega^n C)
\longrightarrow
\CB(X,\Omega^{n-1}A)\\
&\xrightarrow{\CB(X,\Omega^{n-1}f)}
\CB(X,\Omega^{n-1}B)
\xrightarrow{\CB(X,\Omega^{n-1}g)}
\CB(X,\Omega^{n-1}C).
\end{aligned}
$$

Suppose first that $g$ is an $n$-monomorphism. Then
$\Omega^{n-1}g$ is a monomorphism and $\Omega^n g$ is an
isomorphism. By these assumptions, we have an isomorphism $\CB(X,\Omega^n g)$ and monomorphism $\CB(X,\Omega^{n-1}g)$. This implies that $\CB(X,\Omega^{n-1}A)=0$. 
Since this holds for every $X\in\CB$, we conclude that $\Omega^{n-1}A=0$.

Conversely, suppose that $\Omega^{n-1}A=0$. Then
$\Omega^i A=0$ for every $i\geq n-1$. This immediately shows that
$\CB(X,\Omega^{n-1}g)$ is injective for every $X\in\CB$. Hence $\Omega^{n-1}g$ is a monomorphism. Moreover, for every $i> n-1$, the exact sequence
$$
\CB(X,\Omega^i A)
\longrightarrow
\CB(X,\Omega^i B)
\xrightarrow{\CB(X,\Omega^i g)}
\CB(X,\Omega^i C)
\longrightarrow
\CB(X,\Omega^{i-1}A)
$$
has zero outer terms. Thus $\CB(X,\Omega^i g)$ is an isomorphism
for every $X\in\CB$. By the Yoneda lemma, $\Omega^i g$ is an
isomorphism for every $i>n-1$. Therefore $g$ is an
$n$-monomorphism.
\end{proof}

\begin{lemma}
\label{lem:defl_are_n-epi}
Let $n$ be a positive integer, and let $(\CB,\Sigma,\Omega,\vartriangleright,\vartriangleleft)$ be an $n$-truncated pretriangulated category.
Assume that an extriangulated structure $(\BF,\ft)$ on $\CB$ 
\color{blue}
satisfies condition {\rm (a)} in \cref{def:compatible_ET_PT}.
\normalcolor
Then every $\ft$-deflation is an
$n$-epimorphism. Dually, every $\ft$-inflation is an
$n$-monomorphism.
\end{lemma}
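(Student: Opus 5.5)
Since the two assertions are dual, I will prove only that an $\ft$-inflation is an $n$-monomorphism; the statement for deflations then follows by passing to the opposite category. The plan is to reduce everything to \cref{lem:chara_n-mono_ker}, which says that if $\Omega C\xrightarrow{h}A\xrightarrow{f}B\xrightarrow{g}C$ is a left triangle, then $g$ is an $n$-monomorphism exactly when $\Omega^{n-1}A=0$.

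Let $f\colon A\to B$ be an $\ft$-inflation. By definition there is a $\ft$-triangle $A\xrightarrow{f}B\xrightarrow{g}C\overset{\delta}{\dashrightarrow}$. Condition (a) of \cref{def:compatible_ET_PT} then gives a left triangle
\[
\Omega C\xrightarrow{\partial^-(\delta)}A\xrightarrow{f}B\xrightarrow{g}C .
\]
The morphism $f$ sits in the middle of this triangle, not at the last position where \cref{lem:chara_n-mono_ker} applies. So I need to rotate the left triangle to the right in order to put $f$ last.

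For this I use the rotation axiom of left triangulated categories from \cite{BR07}. It says that if $\Omega C\xrightarrow{h}A\xrightarrow{f}B\xrightarrow{g}C$ is a left triangle, then so is
\[
\Omega B\xrightarrow{\pm\Omega g}\Omega C\xrightarrow{h}A\xrightarrow{f}B .
\]
The sign convention does not affect the argument, since only the vanishing of an object will matter. I expect this step, checking the precise form of the rotation axiom, to be the only real obstacle.

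Applying \cref{lem:chara_n-mono_ker} to the rotated triangle, with $\Omega C$ now in the position of the lemma's $A$, we find that $f$ is an $n$-monomorphism if and only if $\Omega^{n-1}(\Omega C)=\Omega^nC=0$. This holds because $\CB$ is $n$-truncated, so $\Omega^n=0$. Hence $f$ is an $n$-monomorphism.

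The dual statement is proved the same way. A $\ft$-deflation $g$ sits in the right triangle $A\xrightarrow{f}B\xrightarrow{g}C\xrightarrow{\partial^+(\delta)}\Sigma A$, again by condition (a). Rotating it to the left puts $g$ in the first position. The dual of \cref{lem:chara_n-mono_ker} then reduces the claim to $\Sigma^n A=0$, which again holds by $n$-truncatedness.
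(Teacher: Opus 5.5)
Your proof is correct and is essentially the paper's argument, dualized: the paper proves the deflation case by rotating $A\xrightarrow{f}B\xrightarrow{g}C\xrightarrow{\partial^+(\delta)}\Sigma A$ to $B\xrightarrow{g}C\xrightarrow{\partial^+(\delta)}\Sigma A\xrightarrow{-\Sigma f}\Sigma B$ and applying the dual of \cref{lem:chara_n-mono_ker} with $\Sigma^{n-1}(\Sigma A)=\Sigma^nA=0$, which is exactly your last paragraph, and it then handles inflations by duality. The rotation axiom for left triangles does hold in the form $\Omega B\xrightarrow{-\Omega g}\Omega C\xrightarrow{h}A\xrightarrow{f}B$, so the step you flagged as the main obstacle goes through.
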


\begin{proof}
Let $g\colon B\to C$ be a $\ft$-deflation. Then there exists an
$\ft$-triangle
$$
A\xrightarrow{f}B\xrightarrow{g}C
\overset{\delta}{\dashrightarrow}.
$$
By the compatibility of the extriangulated and pretriangulated
structures, this $\ft$-triangle induces a right triangle
$$
A\xrightarrow{f}B\xrightarrow{g}C
\xrightarrow{\partial^+_{C,A}(\delta)}\Sigma A.
$$
Rotating this right triangle, we obtain a right triangle
$$
B\xrightarrow{g}C
\xrightarrow{\partial^+_{C,A}(\delta)}
\Sigma A\xrightarrow{-\Sigma f}\Sigma B.
$$
Since $\Sigma^n=0$, we have $\Sigma^{n-1}(\Sigma A)=\Sigma^n A=0$. 
Therefore, the dual of Lemma~\ref{lem:chara_n-mono_ker} shows that
$g$ is an $n$-epimorphism. 

The assertion for $\ft$-inflations follows dually.
\end{proof}

\begin{definition}
\label{def:abel_n_trun_cat}
Let $\CB$ be a quasi-triangulated category equipped with the
pretriangulated structure
$(\Sigma,\Omega,\vartriangleright,\vartriangleleft)$
and the extriangulated structure $(\BF,\ft)$, and let $n$ be a
positive integer.
\begin{enumerate}
    \item We call $\CB$ an \emph{$n$-truncated quasi-triangulated
    category} if its underlying pretriangulated category is
    $n$-truncated.

    \item We call $\CB$ an \emph{abelian $n$-truncated category} if it satisfies the following
    conditions:
    \begin{enumerate}
        \item $\CB$ is an $n$-truncated quasi-triangulated category.
        \item The $\ft$-deflations are precisely the
        $n$-epimorphisms.
        \item The $\ft$-inflations are precisely the
        $n$-monomorphisms.
    \end{enumerate}
\end{enumerate}
\end{definition}

\color{blue}

\begin{remark}
\label{rem:abel_1_trun}
An abelian $1$-truncated category is precisely an abelian category.
Indeed, $1$-monomorphisms and $1$-epimorphisms are ordinary
monomorphisms and epimorphisms, respectively, and a $1$-truncated
quasi-triangulated category is quasi-abelian. Thus, the additional
conditions in Definition~\ref{def:abel_n_trun_cat}\textnormal{(2)}
amount to requiring every monomorphism and every epimorphism to be
regular, which is equivalent to $\CB$ being abelian.
\end{remark}

\normalcolor

\begin{remark}
By Lemma~\ref{lem:defl_are_n-epi}, every $\ft$-deflation is an
$n$-epimorphism and every $\ft$-inflation is an $n$-monomorphism if $\CB$ is an $n$-truncated quasi-triangulated category. 
Thus, conditions~\textnormal{(2)(b)} and~\textnormal{(2)(c)} in
Definition~\ref{def:abel_n_trun_cat} amount to requiring the
converse implications.
\end{remark}

\begin{remark}
\label{rem:defl_n_epi_reg_epi}
Let $\CB$ be an abelian $n$-truncated category, and let $f$ be a
morphism in $\CB$. Then the following conditions are equivalent:
\begin{enumerate}
    \item[(i)] $f$ is a $\ft$-inflation.
    \item[(ii)] $f$ fits into a left triangle $\Omega C\to A\xrightarrow{f}B\to C$.
    \item[(iii)] $f$ is an $n$-monomorphism.
\end{enumerate}
The equivalence of (i) and (ii) follows
from the definition of a quasi-triangulated category
(Definition~\ref{def:compatible_ET_PT}\textnormal{(2)}), while the
equivalence of (i) and (iii) follows from
the definition of an abelian $n$-truncated category
(Definition~\ref{def:abel_n_trun_cat}).

Dually, for any morphism $g$ in $\CB$, the following conditions are
equivalent:
\begin{enumerate}
    \item[(i)] $g$ is a $\ft$-deflation.
    \item[(ii)] $g$ fits into a right triangle $A\to B\xrightarrow{g}C\to\Sigma A$.
    \item[(iii)] $g$ is an $n$-epimorphism.
\end{enumerate}
\end{remark}

\color{blue}

\begin{corollary}
\label{cor:n_epi_1_mono_factorization}
Let $\CB$ be an abelian $n$-truncated category. Then every morphism
$f\colon X\to Y$ admits a factorization
\[
f=m\circ e,\qquad
X\xrightarrow{e}D\xrightarrow{m}Y,
\]
where $e$ is an $n$-epimorphism and $m$ is a $1$-monomorphism.
Dually, it admits a factorization
\[
f=i\circ p,\qquad
X\xrightarrow{p}D'\xrightarrow{i}Y,
\]
where $p$ is a $1$-epimorphism and $i$ is an $n$-monomorphism.
Each factorization is unique up to a unique isomorphism of the
intermediate object.
\end{corollary}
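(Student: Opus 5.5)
This is a direct combination of \cref{prop:factorization_quasi_tr}, \cref{prop:fac_system}, and the defining conditions of an abelian $n$-truncated category (\cref{def:abel_n_trun_cat}). The plan has three steps, each handled by one earlier result.

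\emph{Existence.} Since an abelian $n$-truncated category is in particular quasi-triangulated, \cref{prop:factorization_quasi_tr} applies to $f\colon X\to Y$. It gives a factorization $f=m\circ e$ with $e$ a $\ft$-deflation and $m$ a $1$-monomorphism. By \cref{lem:defl_are_n-epi}, which holds because $\CB$ is $n$-truncated and condition (a) of \cref{def:compatible_ET_PT} is satisfied, every $\ft$-deflation is an $n$-epimorphism. Hence $e$ is an $n$-epimorphism. Dually, \cref{prop:factorization_quasi_tr} gives $f=i\circ p$ with $p$ a $1$-epimorphism and $i$ a $\ft$-inflation, and \cref{lem:defl_are_n-epi} makes $i$ an $n$-monomorphism.

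\emph{Uniqueness.} Suppose $f=m_1\circ e_1=m_2\circ e_2$, where each $e_j$ is an $n$-epimorphism and each $m_j$ is a $1$-monomorphism. Condition (2)(b) of \cref{def:abel_n_trun_cat} says the $\ft$-deflations are exactly the $n$-epimorphisms, so each $e_j$ is a $\ft$-deflation. We are then in the setting of \cref{prop:fac_system}, which produces a unique isomorphism $u\colon D_1\to D_2$ with $u\circ e_1=e_2$ and $m_2\circ u=m_1$. The dual factorization is handled the same way, using condition (2)(c) of \cref{def:abel_n_trun_cat} to turn the $n$-monomorphisms $i_j$ into $\ft$-inflations, and then the dual half of \cref{prop:fac_system}.

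There is no real obstacle. The one point needing care is that uniqueness needs both directions of the correspondence. We use the converse implication, that an $n$-epimorphism is a $\ft$-deflation, which is exactly what the abelian $n$-truncated hypothesis supplies beyond \cref{lem:defl_are_n-epi}. Without this converse, \cref{prop:fac_system} could not be applied to arbitrary $n$-epimorphism/$1$-monomorphism factorizations.
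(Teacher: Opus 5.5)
Your proposal is correct and follows essentially the same route as the paper. Existence comes from \cref{prop:factorization_quasi_tr} together with ``$\ft$-deflation $\Rightarrow$ $n$-epimorphism'', where you cite \cref{lem:defl_are_n-epi} and the paper cites the equivalent \cref{rem:defl_n_epi_reg_epi}; uniqueness uses the converse supplied by \cref{def:abel_n_trun_cat} and then \cref{prop:fac_system}.
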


\begin{proof}
We prove only the assertion concerning the factorization with an $n$-epimorphism followed by a $1$-monomorphism; the assertion concerning the factorization with a $1$-epimorphism followed by an $n$-monomorphism follows by duality.
By \cref{prop:factorization_quasi_tr}, the morphism $f$ admits a
factorization $f=m\circ e$, where $e$ is a $\ft$-deflation and $m$
is a $1$-monomorphism. By \cref{rem:defl_n_epi_reg_epi}, the morphism
$e$ is an $n$-epimorphism. Conversely, every $n$-epimorphism is an
$\ft$-deflation by the same remark. Therefore, the uniqueness
assertion follows from \cref{prop:fac_system}.
\end{proof}

\normalcolor
\section{The heart of an \texorpdfstring{$n$}{n}-cotorsion pair}\label{section:heart}
Similarly to the previous section, let $(\CT,[1],\Delta)$ be a triangulated category, and fix natural isomorphisms $\eta^{\CT}$ and $\varepsilon^{\CT}$ as the unit and counit of the adjunction $[1]\dashv[-1]$. We also fix a positive integer $n$ throughout the rest of the paper.

If a morphism $X\xto{f}Y$ in $\CT$ factors through some object in a subcategory $\CI\sse\CT$, we also say it \emph{factors through $\CI$} with a slight abuse of terminology. 
We denote by $[\CI]$ the ideal of $\CT$ generated by morphisms that factor through $\CI$. 
\normalcolor

\subsection{Definition of the heart}
\label{subsection:heart}
In this subsection, we will define the \emph{heart} of any $n$-cotorsion pair. 
The notion of an $n$-cotorsion pair (that is, a pair which is both a left and a right $n$-cotorsion pair) was introduced for abelian categories by Huerta, Mendoza, and P\'erez in \cite{HMP21}, and was later generalized to extriangulated categories by He and Zhou in \cite{HZ22}.
Throughout this paper, when we refer to an $n$-cotorsion pair in a triangulated category, we always mean the notion obtained by specializing the definition of \cite{HZ22} to triangulated categories, rather than the weaker notion introduced in \cite{CZ25} (see \cite[Example~3.5]{CZ25} and the preceding paragraph for an explanation of the possible difference between the two notions).

\begin{definition}[\cite{HZ22}, Definition~3.1 specialized to triangulated categories]
Let $\CT$ be a triangulated category, as before.
A pair of subcategories $(\CU,\CV)$ of $\CT$ is called an \emph{$n$-cotorsion pair} if the following conditions are satisfied.
\begin{enumerate}
\item $\CU,\CV\subset\CT$ are closed under direct summands.
\item $\BE^k(\CU,\CV)=0$ for all $1\le k\le n$. Here, $\BE^k(\CU,\CV)=0$ means that $\BE^k(U,V)=0$ for all $U\in\CU$ and $V\in\CV$, where $\BE^k$ denotes the bifunctor $\CT(-,-[k])$.
\item $\CT=\CU\ast\CV_1^n=\CU_{-n}^{-1}\ast\CV$.
\end{enumerate}
\end{definition}

\normalcolor

\begin{definition}\label{def:heart}
Let $(\CU,\CV)$ be an $n$-cotorsion pair on $\CT$. %
We associate the following full subcategories of $\CT$.
\begin{itemize}
\item $\CW=\CU\cap\CV$.
\item $\CT^-=\CU_{-n}^0=\CU[-n]\ast\CU[-n+1]\ast\cdots\ast\CU$.
\item $\CT^+=\CV_0^n=\CV\ast\cdots\ast\CV[n-1]\ast\CV[n]$.
\item $\CH=\CT^-\cap\CT^+$.
\item $\CN=\add(\CU\ast\CV)$.
\end{itemize}
\color{blue}
The \emph{heart} of the $n$-cotorsion pair $(\CU,\CV)$ is defined to be the ideal quotient $\CH/[\CW]$.
\normalcolor
\end{definition}

\begin{remark}
When $n=1$, an $n$-cotorsion pair is nothing but a cotorsion pair on $\CT$ in the sense of \cite[Definition~2.1]{Nak11}. Moreover, its heart coincides with the heart introduced in \cite[Definition~3.7]{Nak11}.
\color{blue}
We also remark that the heart of a cotorsion pair in an extriangulated category has been studied in \cite{LN19}.
\normalcolor
\end{remark}

\color{blue}
\begin{remark}
In \cite{WY25}, Wang and Yao introduced another notion of heart for a twin
\emph{left} $n$-cotorsion pair $((\CU',\CV'),(\CU,\CV))$ (denoted by $((\CS,\CT),(\CU,\CV))$ in their notation) on a triangulated category $\CT$, and proved that the
heart in their sense is preabelian. They also proved that it is abelian
when $(\CU',\CV')=(\CU,\CV)$. If we regard an $n$-cotorsion pair
$(\CU,\CV)$ as the twin \emph{left} $n$-cotorsion pair
$((\CU,\CV),(\CU,\CV))$, then the subcategories
$\CW,\CC^-,\CC^+$ of $\CT$ defined in
\cite[Definition~2.2]{WY25} can be interpreted in our setting as follows.
\begin{itemize}
\item Their $\CW$ coincides with our $\CW$ by
\cref{lem:inclusions_for_n-CT} below.
\item Their $\CC^-$ coincides with our $\CT^-$ by
\cref{lem:equation_T-T+} below.
\item Their $\CC^+$ is equal to $\CU[-1]\ast\CW$.
\end{itemize}
Consequently, the heart $(\CC^-\cap\CC^+)/[\CW]$ they define is a full subcategory of the heart
$\CH/[\CW]$ considered in this article. In general, this inclusion is
strict, except in the case $n=1$.
\end{remark}
\normalcolor

\begin{example}\label{ex:t_structure_n_cotorsion_pair}
The following are examples of $n$-cotorsion pairs.
\begin{enumerate}
\item $(\CU,\CV)=(t^{\le-1},t^{\ge n})$ for any $t$-structure $(t^{\le0},t^{\ge0})$ on $\CT$. In this case, we have $\CW=0$, $\CT^-=t^{\le n-1}$, $\CT^+=t^{\ge 0}$, and $\CH/[\CW]=\CH=t^{\ge 0}\cap t^{\le n-1}$. 
\item $(\CU,\CV)=(\CC,\CC)$ for any $(n+1)$-cluster tilting subcategory $\CC$ of $\CT$. See \cite[Theorem~3.1]{HZ22} for the details. In this case, we have $\CW=\CN=\CC$, $\CT^-=\CT^+=\CH=\CT$, and $\CH/[\CW]=\CT/[\CC]$. 
\end{enumerate}
\end{example}

\begin{lemma}\label{lem:inclusions_for_n-CT}
For any $n$-cotorsion pair $(\CU,\CV)$, the following holds.
\begin{enumerate}
\item For any integer $j$ with $0\le j\le n$, we have $\CU\cap\CV_0^j=\CU_{-j}^0\cap\CV=\CW$.
\item $\CU\subset\CU[-1]\ast\CW\subset\CT^-$ and $\CV\subset\CW\ast\CV[1]\subset\CT^+$ hold. 
\end{enumerate}
\end{lemma}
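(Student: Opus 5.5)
We will prove \textup{(1)} by a splitting argument, and \textup{(2)} from the approximation triangles in condition \textup{(3)} of the definition of an $n$-cotorsion pair combined with \textup{(1)}. Throughout, we use that $\CU,\CV$ contain $0$, so that $\CV\subset\CV_0^j$ and $\CU\subset\CU_{-j}^0$ by inserting zero objects in the iterated extensions.

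\emph{Proof of \textup{(1)}.} First, $\CW=\CU\cap\CV\subset\CU\cap\CV_0^j$. Conversely, let $X\in\CU\cap\CV_0^j$ and pick a triangle $V\to X\xrightarrow{g} V'\to V[1]$ with $V\in\CV$ and $V'\in\CV_1^j$. Since $\BE^k(\CU,\CV)=0$ for $1\le k\le j\le n$, and $\CT(X,-)$ is half-exact on triangles, an induction on the length of the iterated extension gives $\CT(X,\CV_1^j)=0$. Hence $g=0$, the triangle splits, and $X$ is a direct summand of $V$. As $\CV$ is closed under summands, $X\in\CV$, so $X\in\CW$. The equality $\CU_{-j}^0\cap\CV=\CW$ is dual: split a triangle $U'\to X\to U$ with $U'\in\CU_{-j}^{-1}$ using $\CT(\CU_{-j}^{-1},X)=0$.

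Along the way we record the perpendicular characterization: $X\in\CU$ if and only if $\CT(X,V[k])=0$ for all $V\in\CV$ and $1\le k\le n$. The forward implication is condition \textup{(2)}. For the converse, decompose $X$ via $\CT=\CU\ast\CV_1^n$; the map $X\to V''$ with $V''\in\CV_1^n$ vanishes, so $X$ is a summand of an object of $\CU$. The dual characterization holds for $\CV$.

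\emph{Proof of \textup{(2)}.} We treat $\CU\subset\CU[-1]\ast\CW\subset\CT^-$; the statement for $\CV$ is dual. The second inclusion is immediate: $\CW\subset\CU$ and $\CU[-1]\ast\CU\subset\CU_{-n}^0$, using $n\ge1$ and zeros. For the first, let $U\in\CU$ and take a triangle
\[
A\to U\to V\to A[1]
\]
from $\CT=\CU_{-n}^{-1}\ast\CV$, with $A\in\CU_{-n}^{-1}$ and $V\in\CV$. Rotating gives $V\in U\ast A[1]\subset\CU_{-n+1}^0$, so $V\in\CW$ by \textup{(1)} with $j=n-1$. It then remains to show $A\in\CU[-1]$, i.e. $A[1]\in\CU$, and for this we use the perpendicular characterization. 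The object $A[1]$ lies in $\CU\ast\CW$ (from the triangle $U\to V\to A[1]\to U[1]$). For $1\le k\le n$ we get the exact sequence
\[
\CT(V,V'[k])\to\CT(A[1],V'[k])\to\CT(U,V'[k])\ \text{-type terms},
\]
where we must use carefully that $V\in\CW\subset\CU$ (so $\CT(V,V'[k])=0$ for $1\le k\le n$) and that the remaining term is $\CT(U[1],V'[k])=\CT(U,V'[k-1])$.

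\emph{Main obstacle.} The delicate point is the boundary case $k=1$, where $\CT(U,V')$ need not vanish. Our first attempt would be to show that the connecting map $\CT(U,V')\to\CT(A[1],V'[1])$ is zero using the factorization of $A[1]\to U[1]$. If that fails, we would instead choose the triangle more cleverly: take a $\CW$-approximation $U\to W$ arising from $\CT=\CU\ast\CV_1^n$ applied to $U[1]$ and shifted back, and run an induction on the length $n$ using the octahedral axiom, so that the remaining extension terms $\CU[-i]$ with $i\ge2$ are absorbed.
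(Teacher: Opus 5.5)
Your proof of (1) is the paper's splitting argument, and the easy inclusions $\CU[-1]\ast\CW\subset\CT^-$ and $\CW\ast\CV[1]\subset\CT^+$ are fine. The genuine gap is the key inclusion $\CU\subset\CU[-1]\ast\CW$ (and its dual), which your proposal does not establish. Starting from a triangle $A\to U\to V\to A[1]$ with $A\in\CU_{-n}^{-1}$ and $V\in\CV$, obtained from $\CT=\CU_{-n}^{-1}\ast\CV$, you run into two problems.

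First, the step $V\in\CU\ast\CU_{-n+1}^0\subset\CU_{-n+1}^0$ presupposes that $\CU_{-n+1}^0$ absorbs extensions by $\CU$. Nothing established at this point gives this. In the paper, extension-closedness of $\CU_{-n+1}^0$ only comes out of Proposition~\ref{prop:cotors_1-n}, whose proof relies on the present lemma. So you cannot yet apply (1) with $j=n-1$, and $V\in\CW$ is unsupported. It can be rescued: $\CU$ and $\CU_{-n+1}^0$ both lie in the extension-closed class ${}^{\perp}(\CV[1])$, so $V\in{}^{\perp}(\CV[1])\cap\CV$. Splitting a triangle from $\CT=\CU_{-n+1}^0\ast\CV[1]$ and then using $\CT(\CU_{-n+1}^{-1},\CV)=0$ shows ${}^{\perp}(\CV[1])\cap\CV\subset\CU$, hence $V\in\CW$. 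But you do not give this argument.

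Second, you leave the case $k=1$ open, offering only a conditional fallback. That case is actually trivial: $\CT(A[1],V'[1])\cong\CT(A,V')=0$, since $A\in\CU_{-n}^{-1}$ and $\BE^i(\CU,\CV)=0$ for $1\le i\le n$. As written, however, the argument stops short of proving $A[1]\in\CU$.

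The paper avoids all of this by using the other half of condition (3). Apply $\CT=\CU\ast\CV_1^n$ to $U[1]$ and shift back to get a triangle $U'[-1]\to U\to R\to U'$ with $U'\in\CU$ and $R\in\CV_0^{n-1}$. The $\CU[-1]$-term then comes for free. The triangle $U\to R\to U'\to U[1]$ shows $R\in\CU\ast\CU=\CU$, because $\CU={}^{\perp}(\CV_1^n)$ is extension-closed; this is just your own perpendicular characterization. Hence $R\in\CU\cap\CV_0^{n-1}=\CW$ by (1) with $j=n-1$.

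Your fallback names exactly this triangle, but calls $U\to R$ a ``$\CW$-approximation'', which assumes $R\in\CW$ — the very thing that needs proof. It then proposes an unspecified octahedral induction on $n$, which is unnecessary once you see that $R$ is an extension of objects of $\CU$.
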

\begin{proof}
{\rm (1)} $\CU\cap\CV_0^j\supset\CW$ is obvious. Conversely, if $U\in\CU$ belongs to $\CV_0^j$, then there is a distinguished triangle $V\to U\xrightarrow{x}X\to V[1]$ with $V\in\CV$ and $X\in\CV_1^j$. Since $\CT(U,X)=0$, we have $x=0$ and $U$ becomes a direct summand of $V$. Since $\CV\subset\CT$ is closed under taking direct summands, we obtain $U\in\CU\cap\CV=\CW$.

{\rm (2)} $\CU[-1]\ast\CW\subset\CT^-$ is obvious. Let us show $\CU\subset\CU[-1]\ast\CW$. For any $U\in\CU$, if we take a distinguished triangle
\[
U'[-1]\to U\to R\to U'
\]
with $U'\in\CU$ and $R\in\CV_0^{n-1}$, then we have $R\in\CU$ since $\CU\subset\CT$ is extension-closed. Thus, by {\rm (1)}, $R\in\CU\cap\CV_0^{n-1}=\CW$ follows. 
$\CV\subset\CW\ast\CV[1]\subset\CT^+$ can be shown in a dual manner.
\end{proof}

The following lemma will be used later.
\begin{lemma}\label{lem:for_adjoint_pair_+-}
Let $X\xrightarrow{a}W\xrightarrow{b}Y\xrightarrow{c}X[1]$ and $X'\xrightarrow{a'}W'\xrightarrow{b'}Y'\xrightarrow{c'}X'[1]$ be distinguished triangles with $W,W'\in\CW$.
\begin{enumerate}
\item Assume that $\CT(Y[-1],W')=0$ holds. 
\color{blue}Then, 
\normalcolor
for any $x\in\CT(X,X')$, there exists $y_x\in\CT(Y,Y')$ such that $x[1]\circ c=c'\circ y_x$. Moreover, $\overline{y_x}$ is uniquely determined in $\CT/[\CW]$ by $\overline{x}\in(\CT/[\CW])(X,X')$. This gives a well-defined additive group homomorphism
\begin{equation}\label{correspondence_+-1}
(\CT/[\CW])(X,X')\to(\CT/[\CW])(Y,Y')\, ;\, \overline{x}\mapsto\overline{y_x}.
\end{equation}
\item Assume that $\CT(W,X'[1])=0$ holds. %
Dually to {\rm (1)}, we have a well-defined additive group homomorphism
\begin{equation}\label{correspondence_+-2}
(\CT/[\CW])(Y,Y')\to(\CT/[\CW])(X,X')\, ;\, \overline{y}\mapsto\overline{x_y}.
\end{equation}
\item Assume that $\CT(W,X'[1])=0$ and $\CT(Y[-1],W')=0$ hold. Then the maps $(\ref{correspondence_+-1})$ and $(\ref{correspondence_+-2})$ are mutually inverses to each other, hence 
\color{blue}
give
\normalcolor
a bijection.
\end{enumerate}
\end{lemma}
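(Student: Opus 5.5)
Part (1) is proved by completing a morphism of triangles. Given $x\in\CT(X,X')$, look at $a'\circ x\colon X\to W'$ and ask whether it extends along $a$. By the long exact sequence for $\CT(-,W')$ applied to the first triangle, the obstruction to writing $a'\circ x=w\circ a$ lies in $\CT(Y[-1],W')$ (rotate to $Y[-1]\to X\xrightarrow{a}W$). More precisely, a morphism $X\to W'$ factors through $a$ if and only if its composite with $-c[-1]\colon Y[-1]\to X$ vanishes, and the assumption $\CT(Y[-1],W')=0$ makes this automatic. So we get $w\colon W\to W'$ with $w\circ a=a'\circ x$. Axiom (TR3) then gives $y_x\colon Y\to Y'$ with $y_x\circ b=b'\circ w$ and $c'\circ y_x=x[1]\circ c$, which proves existence.

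For uniqueness modulo $[\CW]$, first note that by additivity it suffices to treat the following situation: $x$ factors through $\CW$ and $y\colon Y\to Y'$ satisfies $c'\circ y=x[1]\circ c$; we must show $y\in[\CW]$. Write $x=x_2\circ x_1$ with $x_1\colon X\to W_0$, $x_2\colon W_0\to X'$ and $W_0\in\CW$. The morphism $x_1$ extends along $a$, again because $\CT(Y[-1],W_0)=0$; note this vanishing follows from $\CT(Y[-1],W')=0$ only if the hypothesis is read as holding for objects of $\CW$, and I expect it should in fact be derived from the cotorsion-pair orthogonality in how $Y$ arises. Granting it, we get $x_1=s\circ a$ for some $s\colon W\to W_0$, so $x=x_2\circ s\circ a$ and hence $x[1]\circ c=(x_2 s a)[1]\circ c=0$, since $a[1]\circ c=0$. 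Therefore $c'\circ y=0$, so $y$ factors through $b'\colon W'\to Y'$, i.e. $y\in[\CW]$. The honest weak point here is the extension of $x_1$, i.e. the vanishing of $\CT(Y[-1],W_0)$. If I cannot justify it, I will instead handle the case where $x$ factors through $W$ via $a$ directly and show the general case reduces to that using the available orthogonality.

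Additivity of the resulting assignment is immediate: $y_{x+x'}-y_x-y_{x'}$ satisfies the defining condition for the zero morphism, so by the uniqueness just proved it lies in $[\CW]$.

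Part (2) is the dual argument. For $y\colon Y\to Y'$, the composite $y\circ b$ lifts through $b'$, since the obstruction is $c'\circ y\circ b\in\CT(W,X'[1])=0$. TR3 then gives $x_y$ with $a'\circ x_y=w\circ a$ and $x_y[1]\circ c=c'\circ y$, and uniqueness modulo $[\CW]$ follows dually: if $y\in[\CW]$ then $x_y$ factors through $a$.

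For part (3), the commuting square $x[1]\circ c=c'\circ y_x$ exhibits $x$ itself as a valid choice of $x_{y_x}$. So by the uniqueness in (2), $\overline{x_{y_x}}=\overline{x}$, and symmetrically $\overline{y_{x_y}}=\overline{y}$. The main obstacle is the uniqueness step in (1) and its dual, specifically making sure the right vanishing hypothesis is available for an arbitrary object of $\CW$ through which $x$ factors.
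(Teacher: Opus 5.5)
The step you flagged is a real gap, and it cannot be closed from the hypothesis as written. With vanishing assumed only against the particular $W'$ (resp.\ $W$), the uniqueness in (1), its dual in (2), and the bijection in (3) are false whenever $\CW\neq 0$ (for instance for $(\CC,\CC)$ with $\CC$ a nonzero $(n+1)$-cluster tilting subcategory).

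Here is a counterexample. Take $0\neq W_0\in\CW$ and let both triangles be $W_0\to 0\to W_0[1]\xrightarrow{-\id}W_0[1]$, so that $X=X'=W_0$, $W=W'=0$ and $Y=Y'=W_0[1]$.
\begin{itemize}
\item Both hypotheses $\CT(Y[-1],W')=0$ and $\CT(W,X'[1])=0$ hold trivially.
\item Since $c'$ is invertible, $y_x=x[1]$ is forced.
\item The morphisms $x=\id_{W_0}$ and $x=0$ have the same class in $\CT/[\CW]$, but they give $\id_{W_0[1]}$ and $0$.
\item If $\id_{W_0[1]}\in[\CW]$, then $W_0[1]$ is a summand of an object of $\CW$, hence lies in $\CW\subset\CU$. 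Since $W_0\in\CV$, this gives $\CT(W_0[1],W_0[1])=\BE^1(W_0[1],W_0)=0$, forcing $W_0=0$.
\end{itemize}
So your fallback of reducing to the case where $x$ factors through $a$ cannot succeed. Your appeal to ``how $Y$ arises'' also has no content inside the lemma, where $Y$ is arbitrary; the repair has to be made in the hypothesis.

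The fix is to assume $\CT(Y[-1],\CW)=0$ in (1) and $\CT(\CW,X'[1])=0$ in (2), i.e.\ vanishing against every object of $\CW$. Existence only uses $W'$ (resp.\ $W$), but uniqueness uses the object $W_0$ through which $x$ (resp.\ $y$) factors. This stronger hypothesis is what actually holds where the lemma is applied:
\begin{itemize}
\item In Proposition~\ref{prop:functor_<1>} (which uses only (1)) and in Lemma~\ref{lem:adjoint_+-} (which uses (3)), the object in the role of $Y[-1]$ is $X\langle1\rangle[-1]\in\CU_{-n}^{-1}$.
\item In Lemma~\ref{lem:adjoint_+-}, the object in the role of $X'[1]$ is $Y\langle-1\rangle[1]\in\CV_1^{n}$.
\item Both of these are orthogonal to all of $\CW$.
\end{itemize}
Under that reading your proof is complete, and it is exactly the routine argument the paper dismisses as ``straightforward''. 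Extending $x_1$ along $a$ gives $x[1]\circ c=0$, hence $c'\circ y=0$, hence $y$ factors through $b'$. Your parts (2) and (3) are fine as written.
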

\begin{proof}
{\rm (1)} is straightforward.
\normalcolor
{\rm (2)} can be shown dually.
{\rm (3)} is obvious from {\rm (1)} and {\rm (2)}.
\end{proof}

In the rest of this article, let $(\CU,\CV)$ be an $n$-cotorsion pair.

\begin{lemma}\label{lem:equation_T-T+}
For any integer $j$ with $1\le j\le n$, we have
$\CU_{-j}^0=\CU[-j]\ast\CW_{-j+1}^0$ and $\CV_0^j=\CW_0^{j-1}\ast\CV[j]$.
In particular, we have
$\CT^-=\CU[-n]\ast\CW_{-n+1}^0$ and $\CT^+=\CW_0^{n-1}\ast\CV[n]$.
\end{lemma}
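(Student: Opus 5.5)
We prove $\CU_{-j}^0=\CU[-j]\ast\CW_{-j+1}^0$ by induction on $j$; the statement for $\CV$ is dual, and the ``in particular'' part is the case $j=n$. The inclusion $\supseteq$ is immediate. Indeed, $\CW\subset\CU$ gives $\CW_{-j+1}^0\subset\CU_{-j+1}^0$, and $\ast$ is associative in a triangulated category (octahedral axiom), so
\[
\CU[-j]\ast\CW_{-j+1}^0\subset\CU[-j]\ast\CU_{-j+1}^0=\CU_{-j}^0 .
\]
So the work is in showing $\subseteq$.

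For $j=1$, take $X\in\CU[-1]\ast\CU$. By \cref{lem:inclusions_for_n-CT}(2) we have $\CU\subset\CU[-1]\ast\CW$, hence
\[
X\in\CU[-1]\ast\CU[-1]\ast\CW .
\]
Since $\CU$ is extension-closed (it is the left part of a cotorsion-type pair), $\CU[-1]\ast\CU[-1]=\CU[-1]$. Therefore $X\in\CU[-1]\ast\CW$. Extension-closedness of $\CU$ was already used in the proof of \cref{lem:inclusions_for_n-CT}, so we may take it as available.

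For the inductive step, assume $\CU_{-j}^0=\CU[-j]\ast\CW_{-j+1}^0$ for some $j<n$. Then, by associativity,
\[
\CU_{-j-1}^0=\CU[-j-1]\ast\CU_{-j}^0=\CU[-j-1]\ast\CU[-j]\ast\CW_{-j+1}^0 .
\]
Next we shift the case $j=1$ by $[-j]$, which gives $\CU[-j-1]\ast\CU[-j]=\CU[-j-1]\ast\CW[-j]$. Substituting this and regrouping yields
\[
\CU[-j-1]\ast\CW[-j]\ast\CW_{-j+1}^0=\CU[-j-1]\ast\CW_{-j}^0,
\]
which is the claim for $j+1$.

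The only delicate point is associativity of $\ast$, which is standard via the octahedral axiom. Beyond that, the essential input is $\CU\subset\CU[-1]\ast\CW$, which was already proved in \cref{lem:inclusions_for_n-CT} using the cotorsion condition $\CT=\CU_{-n}^{-1}\ast\CV$. For the dual $\CV$ statement, we use $\CV\subset\CW\ast\CV[1]$ together with extension-closedness of $\CV$, giving $\CV\ast\CV[1]=\CW\ast\CV[1]$, and run the same induction on the right.
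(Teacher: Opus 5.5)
Your proof is correct and is the paper's argument: the paper obtains the nontrivial inclusion by applying $\CU\subset\CU[-1]\ast\CW$ inductively, and the $\CV$-statement dually from $\CV\subset\CW\ast\CV[1]$. Your write-up makes explicit the two ingredients the paper leaves implicit, namely associativity of $\ast$ and the extension-closedness of $\CU$ (and of $\CV$).
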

\begin{proof}
$\CU_{-j}^0\supset\CU[-j]\ast\CW_{-j+1}^0$ is obvious. Also,
$\CU_{-j}^0\subset\CU[-j]\ast\CW_{-j+1}^0$ is shown by applying $\CU\subset\CU[-1]\ast\CW$ inductively. Similarly, $\CV_0^j=\CW_0^{j-1}\ast\CV[j]$ follows from $\CV\subset\CW\ast\CV[1]$. 
\end{proof}

\begin{lemma}\label{lem:T-_*_<1>}
The following holds.
\begin{enumerate}
\item $\CT^-\ast\CU_{-n+1}^0=\CT^-$.
\item $\CV_0^{n-1}\ast\CT^+=\CT^+$.
\end{enumerate}
\end{lemma}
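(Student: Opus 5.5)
For (1), the inclusion $\CT^-\subset\CT^-\ast\CU_{-n+1}^0$ is immediate, since $0\in\CU_{-n+1}^0$ and every $X\in\CT^-$ sits in the triangle $X\to X\to 0\to X[1]$. For the reverse inclusion I will use \cref{lem:equation_T-T+}, which gives $\CT^-=\CU[-n]\ast\CW_{-n+1}^0$ and also $\CU_{-n+1}^0=\CU[-n+1]\ast\CW_{-n+2}^0$ (the case $j=n-1$; if $n=1$ this is just $\CU$). Since $\ast$ is associative in a triangulated category (octahedral axiom), the whole argument reduces to manipulating $\ast$-products of shifts of $\CU$ and $\CW$.

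The key steps, in order, are as follows.
\begin{enumerate}
\item Observe that $\CW\subset\CU$ and $\CU$ is extension-closed, so $\CU_{-n}^0\ast\CU_{-n+1}^0$ can be rewritten as a longer $\ast$-product of shifts of $\CU$, namely $\CU[-n]\ast\cdots\ast\CU\ast\CU[-n+1]\ast\cdots\ast\CU$.
\item Absorb the trailing factors $\CU[-n+1]\ast\cdots\ast\CU$ into the front portion. The cleanest route is to push each $\CU[-k]$, for $0\le k\le n-1$, leftwards past the objects of $\CW[-l]$, using the replacement $\CU[-k]\subset\CU[-k-1]\ast\CW[-k]$ from \cref{lem:inclusions_for_n-CT}(2).
\item Check that each $\CW[-l]$ commutes past $\CU[-k]$ in the sense $\CW[-l]\ast\CU[-k]\subset\CU[-k]\ast\CW[-l]$ whenever the connecting morphism vanishes. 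If $X\in\CW[-l]\ast\CU[-k]$, the connecting morphism lies in $\CT(U[-k],W[-l+1])=\BE^{k-l+1}(U,W)$. This vanishes for $1\le k-l+1\le n$, and then the triangle splits.
\end{enumerate}

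The main obstacle is the bookkeeping in step 2: the vanishing $\BE^{m}(\CU,\CW)=0$ is only available for $1\le m\le n$, so the relevant index ranges have to be tracked carefully.

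A more robust alternative for step 2 is induction on the number of $\CU$-factors. Concretely, one shows $\CT^-\ast\CU[-k]\subset\CT^-$ for each $0\le k\le n-1$, and then iterates this using associativity. To prove it, take $X\in\CU[-n]\ast\CW_{-n+1}^0$ and write $\CW_{-n+1}^0=\CW_{-n+1}^{-k-1}\ast\CW_{-k}^0$. Applying $\CT(\CU[-k],-[1])$ kills $\CW_{-k}^0$-components, since the shifts involved are $1\le\cdot\le k+1\le n$. This lets one commute $\CU[-k]$ past $\CW_{-k}^0$, leaving $\CU[-n]\ast\CW_{-n+1}^{-k-1}\ast\CU[-k]\ast\CW_{-k}^0$. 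Then $\CW_{-n+1}^{-k-1}\ast\CU[-k]\subset\CU_{-n+1}^{-k}$, which lies in $\CU[-n]\ast\CW_{-n+1}^{-k}$ by \cref{lem:equation_T-T+} applied to the shift $[-k]$ of $\CU_{-(n-k)}^{0}$. Therefore $X\ast\CU[-k]$ lands in $\CU[-n]\ast\CU[-n]\ast\CW_{-n+1}^{0}$, and $\CU[-n]\ast\CU[-n]=\CU[-n]$ by extension-closedness.

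The commutation step in this alternative is the one I expect to require care. Commuting $A\ast B\subset B\ast A$ when $\CT(B,A[1])=0$ is the standard splitting, and I would verify the needed $\operatorname{Hom}$-vanishing degree by degree.

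Finally, (2) follows by the dual argument, using $\CV\subset\CW\ast\CV[1]$ and $\CT^+=\CW_0^{n-1}\ast\CV[n]$.
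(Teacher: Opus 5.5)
Your proposal is correct and follows essentially the same route as the paper, which also writes $\CT^-=\CU[-n]\ast\CW_{-n+1}^0$ via \cref{lem:equation_T-T+} and iterates the splitting inclusion $\CW[i]\ast\CU[j]\subset\CU[j]\ast\CW[i]$ for $0\le i-j\le n-1$ (your step 3 with $i=-l$, $j=-k$), handling (2) dually. Your explicit use of $\CU[-k]\subset\CU[-k-1]\ast\CW[-k]$ and of extension-closedness to merge repeated factors spells out the bookkeeping that the paper leaves to the phrase ``iterated application''.
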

\begin{proof}
{\rm (1)} By Lemma~\ref{lem:equation_T-T+}, we have $\CT^-=\CU[-n]\ast\CW_{-n+1}^0$. For any $i,j\in\BZ$ satisfying $0\le i-j\le n-1$, we have
\begin{equation}\label{incl_to_be_iterated}
\CW[i]\ast\CU[j]\subset \CU[j]\ast\CW[i]
\end{equation}
since $\CT(\CU[j],\CW[i+1])=0$. An iterated application of $(\ref{incl_to_be_iterated})$ shows $\CT^-\ast\CU_{-n+1}^0\subset\CT^-$. The reverse inclusion $\CT^-\ast\CU_{-n+1}^0\supset\CT^-$ is obvious.
{\rm (2)} can be shown in a dual manner.
\end{proof}

\begin{lemma}\label{lem:cotors_1-n}
For any $0\le i\le j\le k\le n$, we have
$(\add\CU_i^j)\cap(\add\CV_j^k)=\CU_i^j\cap\CV_j^k=\CW[j]$.
In particular, we have $\CU\cap\CT^+=\CT^-\cap\CV=\CW$.
\end{lemma}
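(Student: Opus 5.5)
We prove the chain of inclusions
\[
\CW[j]\subset\CU_i^j\cap\CV_j^k\subset(\add\CU_i^j)\cap(\add\CV_j^k)\subset\CW[j].
\]
The first two inclusions are immediate: $\CW[j]\subset\CU[j]\subset\CU_i^j$ and $\CW[j]\subset\CV[j]\subset\CV_j^k$, using $0\in\CU[p]$ in the remaining factors of the iterated extensions. The real content is the last inclusion.

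After shifting by $[-j]$, it suffices to show $(\add\CU_{i-j}^0)\cap(\add\CV_0^{k-j})\subset\CW$ with $-n\le i-j\le 0\le k-j\le n$. By Lemma~\ref{lem:equation_T-T+} (note $j-i\le n$), we have $\CU_{i-j}^0=\CU[i-j]\ast\CW_{i-j+1}^0$.

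Let $X\in(\add\CU_{-p}^0)\cap(\add\CV_0^q)$ with $0\le p,q\le n$. Then $X\oplus X'\in\CU_{-p}^0$ for some $X'$. I first want the orthogonality statement
\[
\CT(\CU_{-p}^0,\CV_1^q)=0 .
\]
This follows from $\BE^{m}(\CU,\CV)=0$ for $1\le m\le n$: a morphism $U[-a]\to V[b]$ with $0\le a\le p$ and $1\le b\le q$ is an element of $\BE^{a+b}(U,V)$. The problem is that $a+b$ can be as large as $p+q$, which may exceed $n$, so this vanishing only holds when $p+q\le n$.

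For this reason I would not attack the general case head-on but argue by induction using the decompositions. Take a triangle
\[
V\to X\xrightarrow{x}R\to V[1],\qquad V\in\CV,\ R\in\CV_1^q .
\]
If $\CT(X,R)=0$, then $x=0$ and $X$ is a summand of $V$, hence $X\in\CV$. Symmetrically, from $X\in\add\CU_{-p}^0$ and $\CU_{-p}^{-1}\ast\CU$, we would get $X\in\CU$. Then $X\in\CU\cap\CV=\CW$.

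To make the vanishing available, I plan to reduce $q$ (respectively $p$) using Lemma~\ref{lem:inclusions_for_n-CT}(1), i.e. $\CU\cap\CV_0^q=\CW$. For this, write $\CV_0^q=\CW_0^{q-1}\ast\CV[q]$ by Lemma~\ref{lem:equation_T-T+}. Since $\CW$-parts are both in $\CU$ and in $\CV$, one can peel off $\CW$ components and induct on $p+q$.

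The cleanest concrete route is the following. Show $\CT(\CU[-p],\CV[b])=0$ only for $p+b\le n$. Then handle the $\CW$-layers using the fact that $\CW$ is in both classes: any morphism from a $\CW[-a]$-layer into $\CV_1^q$ vanishes, since $\CW\subset\CU$ and $a+b\le$ ? This still needs $a+b\le n$, so I expect to have to use the hypothesis $i\le j\le k$ crucially. That hypothesis gives $p=j-i\le j$ and $q=k-j\le n-j$, so $p+q\le k-i\le n$. This is exactly what makes the orthogonality $\CT(\CU_{-p}^0,\CV_1^q)=0$ hold outright.

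With orthogonality in hand, the plan is:
\begin{enumerate}
\item[(a)] Orthogonality gives $x=0$, so $X\in\add\CV$. Since $\CV$ is closed under summands, $X\in\CV$.
\item[(b)] Dually, $\CT(\CU_{-p}^{-1},\CV_0^q)=0$, since the degrees $a+b$ lie in $[1,p+q]\subset[1,n]$. Using the triangle $U'[-1]\to \cdots$ coming from $X\oplus X'\in\CU_{-p}^{-1}\ast\CU$, the same argument gives $X\in\CU$ after passing to summands.
\end{enumerate}
One technical point in (a): $x$ is defined on $X\oplus X'$, not on $X$. To deal with this I take the triangle for $X\oplus X'$ rather than for $X$, and apply the $\add\CV_0^q$ side to $X$ via $X\oplus X''\in\CV_0^q$, mapping out of the $\CU$-side object. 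The main obstacle is managing these summands on both sides simultaneously. I would resolve it by arguing at the level of morphisms: the identity of $X$ factors through objects of both decompositions, and orthogonality kills the relevant components.

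The particular case $\CU\cap\CT^+=\CT^-\cap\CV=\CW$ is then $(i,j,k)=(0,0,n)$ and $(0,n,n)$ after shifting by $[-n]$.
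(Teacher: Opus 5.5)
Your final argument is correct and is essentially the paper's: the key point is the orthogonality $\CT(\CU_i^{j-1},\CV_j^k)=0$ (and its mirror $\CT(\CU_i^{j},\CV_{j+1}^k)=0$), which holds because $k-i\le n$. You then use it to split $X$ off as a direct summand of the end term $U[j]$ (resp.\ the initial term $V[j]$) of a triangle for $X\oplus X'$; the paper does the $\CU$-side explicitly and says ``dually'' for the $\CV$-side. Note only that in (a) it is the restriction $x\circ\iota_X$ to the summand $X$ that vanishes, not $x$ itself, and that the worry about $p+q>n$ and the induction detour are unnecessary, since $p+q=k-i\le n$ holds from the start.
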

\begin{proof}
$(\add\CU_i^j)\cap(\add\CV_j^k)\supset\CU_i^j\cap\CV_j^k\supset\CW[j]$ is obvious. 

Let $X\in(\add\CU_i^j)\cap(\add\CV_j^k)$ be any object.
Since $X\in\add\CU_i^j$, there exist $X'\in\CT$ and a distinguished triangle
\[
L\xrightarrow{f}X\oplus X'\xrightarrow{[g_1\ g_2]}U[j]\to L[1]
\]
with $L\in\CU_i^{j-1}$ and $U\in\CU$. By $X\in\add\CV_j^k$ we have $\CT(L,X)=0$. Thus the projection $[1\ 0]\colon X\oplus X'\to X$ should satisfy $[1\ 0]\circ f=0$, hence it factors through $[g_1\ g_2]$. This shows that $g_1$ is a split monomorphism, which implies $X\in\add(\CU[j])=\CU[j]$. A dual argument shows $X\in\CV[j]$, hence we obtain $X\in\CU[j]\cap\CV[j]=\CW[j]$. This shows $(\add\CU_i^j)\cap(\add\CV_j^k)\subset\CW[j]$.
\end{proof}

\begin{proposition}\label{prop:cotors_1-n}
For any $0\le k\le {n-1}$, the pair $(\CU_0^k,\CV_k^{n-1})$ is a cotorsion pair.
\end{proposition}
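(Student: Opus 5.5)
The plan is to check the defining conditions of a cotorsion pair (the case $n=1$ of the definition above) for $\CA:=\CU_0^k$ and $\CB:=\CV_k^{n-1}$. These are: $\CT(\CA,\CB[1])=0$; the two decompositions $\CT=\CA\ast\CB[1]$ and $\CT=\CA[-1]\ast\CB$; and closure of $\CA$ and $\CB$ under direct summands. Throughout I use freely that $\ast$ is associative, by the octahedral axiom.

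\emph{Ext-vanishing.} Take $0\le a\le k\le b\le n-1$. Then $\CT(\CU[a],\CV[b+1])=\BE^{b-a+1}(\CU,\CV)$ with $1\le b-a+1\le n$, so it vanishes. Vanishing of $\CT(-,-)$ is preserved under extensions in each variable, hence $\CT(\CU_0^k,\CV_{k+1}^{n})=0$.

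\emph{Decompositions.} For the first one, start from $\CT=\CU\ast\CV_1^n$. Shifting \cref{lem:equation_T-T+} by $[1]$ and applying it to the first block gives $\CV_1^{n}\subset\CW_1^{k}\ast\CV_{k+1}^n$. (Concretely, iterate $\CV\subset\CW\ast\CV[1]$ from \cref{lem:inclusions_for_n-CT}(2) $k$ times.) Hence
\[
\CT=\CU\ast\CW_1^k\ast\CV_{k+1}^n\subset\CU_0^k\ast\CV_{k+1}^n .
\]
For the second one, shift $\CT=\CU\ast\CV_1^n$ by $[-1]$ to get $\CT=\CU[-1]\ast\CV_0^{n-1}$. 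Then use $\CV_0^{n-1}\subset\CW_0^{k-1}\ast\CV_k^{n-1}$, obtained in the same way. This yields $\CT\subset\CU_{-1}^{k-1}\ast\CV_k^{n-1}$.

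\emph{Closure under summands.} This is the step I expect to be the main obstacle; it is dual for $\CB$, so I treat $\CA$. Let $X\oplus X'\in\CU_0^k$. By the first decomposition there is a triangle $A\to X\to B\to A[1]$ with $A\in\CU_0^k$ and $B\in\CV_{k+1}^n$. The map $X\to B$ is zero, since it is a component of a map out of $X\oplus X'$ and Ext-vanishing applies. Hence $A\cong X\oplus B[-1]$. Then $B[-1]\in(\add\CU_0^k)\cap\CV_k^{n-1}=\CW[k]$ by \cref{lem:cotors_1-n} with $(i,j,k)=(0,k,n-1)$. So $A\cong X\oplus W[k]$ with $W\in\CW$.

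It remains to split off $W[k]$ inside the filtration, and I plan an induction on $k$. The case $k=0$ is closure of $\CU$. For $k>0$, take a triangle $L\to A\xrightarrow{g}U[k]\to L[1]$ with $L\in\CU_0^{k-1}$ and $U\in\CU$. The projection $\pi\colon A\to W[k]$ satisfies $\pi|_L=0$, because $\CT(\CU[a],\CW[k])=\BE^{k-a}(\CU,\CW)=0$ for $0\le a\le k-1$. Therefore $\pi$ factors through $g$. It follows that $W[k]$ is a direct summand of $U[k]$ compatibly with the splitting of $A$. Write $U\cong U'\oplus W$, where $U'\in\CU$ by summand closure of $\CU$. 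An octahedral argument (or a direct comparison of triangles) should then give a triangle $L\to X\to U'[k]\to L[1]$, so $X\in\CU_0^k$. The delicate point is to check that the factorization of $\pi$ through $g$ really produces a compatible splitting on both $A$ and $U[k]$. If necessary, I would instead run the argument of \cref{lem:cotors_1-n}: the split-monomorphism trick applied to $[g_1\ g_2]$.
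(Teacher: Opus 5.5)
Your proposal is correct and is essentially the paper's proof. The decomposition $\CT=\CU_0^k\ast\CV_{k+1}^n$ comes from the same layer-by-layer use of \cref{lem:equation_T-T+}. The paper rewrites $\CU_0^{n-1}=\CU\ast\CW_1^{n-1}$ inside $\CT=\CU_0^{n-1}\ast\CV[n]$, where you rewrite $\CV_1^n$ inside $\CT=\CU\ast\CV_1^n$. Your second decomposition is just the $[-1]$-shift of the first. Your summand step is the paper's proof of $\CU_0^k={}^{\perp}(\CV_{k+1}^n)$; note that you only ever use $\CT(X,\CV_{k+1}^n)=0$. It runs identically down to $A\cong X\oplus W[k]$ via \cref{lem:cotors_1-n}.

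The one divergence is how $W[k]$ is removed. The paper reads $X\in\CW[k-1]\ast\CU_0^k$ off the split triangle $W[k-1]\xrightarrow{0}X\to A\to W[k]$. It then pushes $\CW[k-1]$ leftwards past $\CU[0],\dots,\CU[k-1]$ using \eqref{incl_to_be_iterated} and absorbs it into $\CU[k-1]$, so no compatibility question arises.

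Your route also closes, and it needs no induction, since the inductive hypothesis is never used. Let $\iota$ and $j$ be the inclusions of $W[k]$ and $X$ into $A$, and write $\pi=\pi'\circ g$.
\begin{itemize}
\item The map $s=g\circ\iota$ is a split monomorphism with retraction $\pi'$. Hence $U[k]\cong C\oplus W[k]$, with $\pi'$ as the projection onto $W[k]$ and some projection $q$ onto $C$ satisfying $q\circ s=0$.
\item Since $\pi'\circ g\circ j=\pi\circ j=0$ and $q\circ g\circ\iota=q\circ s=0$, the map $g$ is diagonal: $g=g_0\oplus\id_{W[k]}$ with $g_0=q\circ g\circ j$.
\item Compare the given triangle with the direct sum of a triangle on $g_0$ and the trivial triangle on $\id_{W[k]}$. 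This yields a triangle $L\to X\xrightarrow{g_0}C\to L[1]$.
\item Finally, $C[-k]\in\CU$ because it is a summand of $U$, so $X\in\CU_0^k$.
\end{itemize}
So the ``delicate point'' you flag is a routine matrix check. The paper's absorption trick is simply shorter.
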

\begin{proof}
Since this is obvious when $n=1$, we assume $n\ge2$.  
First we show $\CT=\CU_0^k\ast\CV_k^{n-1}[1]$. By Lemma~\ref{lem:equation_T-T+} we have $\CU_0^{n-1}=\CU\ast\CW_1^{n-1}$. Thus we obtain
\[
\CT=\CU_0^{n-1}\ast\CV[n]
=(\CU\ast\CW_1^{n-1})\ast\CV[n]
=(\CU\ast\CW_1^k)\ast(\CW_{k+1}^{n-1}\ast\CV[n])
\subset \CU_0^k\ast\CV_{k+1}^n,
\]
hence $\CT=\CU_0^k\ast\CV_{k+1}^n=\CU_0^k\ast\CV_k^{n-1}[1]$.

Next, we show $\CU_0^k={}^{\perp}(\CV_k^{n-1}[1])$. Note that $\CU_0^k\subset {}^{\perp}(\CV_k^{n-1}[1])$ is obvious by $\BE(\CU,\CV[i])=0$ $(1\le i\le n)$. Let us show the converse.
Take any object $X\in {}^{\perp}(\CV_k^{n-1}[1])={}^{\perp}(\CV_{k+1}^n)$. By $\CT=\CU_0^k\ast\CV_{k+1}^n$, there is a distinguished triangle
$
R[-1]\to L\to X\xrightarrow{g}R
$
with $L\in\CU_0^k$ and $R\in\CV_{k+1}^n$.
Since $X\in {}^{\perp}(\CV_{k+1}^n)$ we have $g=0$. Thus the triangle splits, and we have $X\oplus R[-1]\cong L\in \CU_0^k$.
We obtain $R\in\CV_{k+1}^n\cap(\add\CU_1^{k+1})=\CW[k+1]$, by Lemma~\ref{lem:cotors_1-n}. By the isomorphism $X\oplus R[-1]\cong L$, there is also a split triangle $X\to L\to R[-1]\xrightarrow{0}X[1]$. Thus we have
\[
X\in\CW[k-1]\ast\CU_0^k=(\CW[k-1]\ast\CU_0^{k-1})\ast\CU[k]%
\subset\CU_0^k
\]
by $(\ref{incl_to_be_iterated})$, 
which shows $\CU_0^k\supset {}^{\perp}(\CV_k^{n-1}[1])$.
\end{proof}

\begin{remark}
By Lemma~\ref{lem:cotors_1-n} and Proposition~\ref{prop:cotors_1-n},
$\CP=\big((\CU,\CV_0^{n-1}),(\CU_{-n+1}^0,\CV)\big)$ is a concentric twin cotorsion pair in the sense of \cite[Definition 3.3]{Nak18}.
\end{remark}

\begin{corollary}\label{cor:TUT}
Let $L[-1]\to X\to X'\to L$ be a distinguished triangle in $\CT$ with $L\in\CU_{-n+1}^0$. Then $X\in\CT^-$ holds if and only if $X'\in\CT^-$.
\end{corollary}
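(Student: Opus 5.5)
Both implications come directly from Lemma~\ref{lem:T-_*_<1>}(1), i.e.\ $\CT^-\ast\CU_{-n+1}^0=\CT^-$, applied to the given triangle and to its rotations.

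For ``$X\in\CT^-\Rightarrow X'\in\CT^-$'', rotate the given triangle to the distinguished triangle $X\to X'\to L\to X[1]$. This exhibits $X'\in\CT^-\ast\CU_{-n+1}^0$, since $X\in\CT^-$ and $L\in\CU_{-n+1}^0$. Lemma~\ref{lem:T-_*_<1>}(1) then gives $X'\in\CT^-$.

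For ``$X'\in\CT^-\Rightarrow X\in\CT^-$'', rotate once more to get the distinguished triangle $L[-1]\to X\to X'\to L$. This shows $X\in\CU_{-n+1}^0[-1]\ast\CT^-=\CU_{-n}^{-1}\ast\CT^-$, so it suffices to prove $\CU_{-n}^{-1}\ast\CT^-\subset\CT^-$. I would argue with the description $\CT^-=\CU[-n]\ast\CW_{-n+1}^0$ from Lemma~\ref{lem:equation_T-T+}, together with $\CU\subset\CU[-1]\ast\CW$ from Lemma~\ref{lem:inclusions_for_n-CT}(2), which gives $\CU[j]\subset\CU[j-1]\ast\CW[j]$ for every $j$.

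The main step is to move the factors $\CU[j]$ with $-n\le j\le-1$ to the left, past the $\CU[-n]$ and $\CW[i]$ factors of $\CT^-$. Using associativity of $\ast$, I would write
\[
\CU_{-n}^{-1}\ast\CT^-=\CU[-n]\ast\cdots\ast\CU[-1]\ast\CU[-n]\ast\CW[-n+1]\ast\cdots\ast\CW[0].
\]
First, split each $\CU[j]$ with $j>-n$ into $\CU[-n]\ast\CW_{-n+1}^{j}$ by repeatedly applying $\CU[j]\subset\CU[j-1]\ast\CW[j]$ (as in Lemma~\ref{lem:equation_T-T+}). This leaves only factors $\CU[-n]$ and $\CW[i]$ with $-n+1\le i\le 0$. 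Next, each $\CU[-n]$ must be moved left past the $\CW[i]$'s in front of it. The swap $\CW[i]\ast\CU[-n]\subset\CU[-n]\ast\CW[i]$ holds by \eqref{incl_to_be_iterated}, because $0\le i+n\le n-1$ for $-n+1\le i\le -1$. When $i=0$ it fails, since $\CT(\CU[-n],\CW[1])$ need not vanish. However, a $\CW[0]$ never sits in front of a $\CU[-n]$ that originated from $\CU_{-n}^{-1}$, because those pieces only produce $\CW[i]$ with $i\le -1$. Once all $\CU[-n]$'s are on the left, they merge into a single $\CU[-n]$, since $\CU$ is extension-closed. What remains is a string of $\CW[i]$'s with $-n+1\le i\le 0$. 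These reorder into $\CW[-n+1]\ast\cdots\ast\CW[0]$ because $\CT(\CW[i],\CW[i'+1])=0$ whenever $i\le i'$ and $i'-i\le n-1$, and like shifts merge by extension-closedness of $\CW$. The result is $\CU[-n]\ast\CW_{-n+1}^0=\CT^-$.

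The main obstacle is this bookkeeping of the shifts: one must check that every swap needed has the required vanishing, in particular that no $\CW[0]$ ever has to pass a $\CU[-n]$. If it becomes messy, an alternative is to prove by induction on the length that $\CU[j]\ast\CT^-\subset\CT^-$ for each $-n\le j\le-1$ and then iterate. Another alternative is a $\perp$-characterization of $\CT^-$ via Proposition~\ref{prop:cotors_1-n}, if one is available.
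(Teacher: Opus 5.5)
Your first implication is exactly the paper's. For the converse you make the same reduction to $X\in\CU_{-n}^{-1}\ast\CT^-$, but then you take a different route.

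The paper writes $\CT^-=\CU_{-n}^{-1}\ast\CU$. It then uses that $\CU_{-n}^{-1}=\CU_0^{n-1}[-n]$ is extension-closed, being a shift of the left half of the cotorsion pair $(\CU_0^{n-1},\CV_{n-1}^{n-1})$ from \cref{prop:cotors_1-n}. This gives $\CU_{-n}^{-1}\ast\CU_{-n}^{-1}\ast\CU\subset\CU_{-n}^{-1}\ast\CU=\CT^-$ in one line.

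Your bookkeeping argument is also correct. Every swap $\CW[i]\ast\CU[-n]\subset\CU[-n]\ast\CW[i]$ you need has $-n+1\le i\le -1$, because the only $\CW[0]$ is the last factor of $\CT^-$. The final reordering of the $\CW$'s only uses $\CT(\CW,\CW[k])=0$ for $1\le k\le n$.

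Your route is more elementary. It uses only \eqref{incl_to_be_iterated}, \cref{lem:equation_T-T+}, associativity of $\ast$, and extension-closedness of $\CU$ and $\CW$. In effect, it reproves by hand the extension-closedness that the paper extracts from the $\perp$-description in \cref{prop:cotors_1-n}.

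The paper's version is shorter and conceptually cleaner, since that proposition is established anyway. Yours avoids the cotorsion-pair machinery at the cost of tracking shifts.
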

\begin{proof}
If $X\in\CT^-$, then $X'\in\CT^-\ast\CU_{-n+1}^0=\CT^-$ holds by \darkblueedit{Lemma~\ref{lem:T-_*_<1>}}. Conversely if $X'\in\CT^-$, then
$X\in\CU_{-n}^{-1}\ast\CT^-=\CU_{-n}^{-1}\ast\CU_{-n}^{-1}\ast\CU=\CU_{-n}^{-1}\ast\CU=\CT^-$ holds since $\CU_{-n}^{-1}=\CU_0^{n-1}[-n]\subset\CT$ is extension-closed by Proposition~\ref{prop:cotors_1-n}.
\end{proof}
\normalcolor

\begin{lemma}\label{lem:T-toT-}
Assume that $U[-1]\xrightarrow{u}X\xrightarrow{f}X'\xrightarrow{m}U$ is a distinguished triangle with $U\in\CU$. The following holds.
\begin{enumerate}
\item %
$X\in\CT^-$ if and only if $X'\in\CT^-$.
\normalcolor
\item For any $Y\in\CT^+$, the map
\begin{equation}\label{map_to_be_surjective}
(\CT/[\CW])(\overline{f},Y)\colon(\CT/[\CW])(X',Y)\to(\CT/[\CW])(X,Y)
\end{equation}
is injective.
\item Assume moreover that $u\in[\CU]$ and $X'\in\CT^+$. Then, for any $Y\in\CT^+$, the map
\[
\CT(f,Y)\colon\CT(X',Y)\to\CT(X,Y)
\]
is surjective. Consequently, the map $(\ref{map_to_be_surjective})$ in {\rm (2)} becomes bijective in this case.
\end{enumerate}
\end{lemma}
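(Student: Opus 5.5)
For (1), I will apply Corollary~\ref{cor:TUT}. The triangle $U[-1]\xrightarrow{u}X\xrightarrow{f}X'\xrightarrow{m}U$ has $U\in\CU\subset\CU_{-n+1}^0$, so it is exactly the situation there with $L=U$. Hence $X\in\CT^-$ if and only if $X'\in\CT^-$.

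For (2), take $y\colon X'\to Y$ with $Y\in\CT^+$ and $y\circ f\in[\CW]$. Write $y\circ f=w_2\circ w_1$ with $w_1\colon X\to W$, $w_2\colon W\to Y$ and $W\in\CW$. The plan is to show that $y$ itself factors through $\CW$.
\begin{itemize}
\item Since $W\in\CV$ and $U[-1]\in\CU[-1]$, we have $\CT(U[-1],W)=\BE(U,W)=0$. So $w_1\circ u=0$, and $w_1$ extends along $f$: there is $w_1'\colon X'\to W$ with $w_1'\circ f=w_1$.
\item Then $(y-w_2\circ w_1')\circ f=0$, so $y-w_2\circ w_1'$ factors through $m$: we get $y-w_2\circ w_1'=z\circ m$ for some $z\colon U\to Y$.
\item It therefore suffices to show that every morphism $U\to Y$ with $U\in\CU$ and $Y\in\CT^+$ factors through $\CW$.
\end{itemize}
For the last point, use Lemma~\ref{lem:equation_T-T+}, which gives $\CT^+=\CW\ast\CV_1^n$. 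Choose a triangle $W'\to Y\to V'\to W'[1]$ with $W'\in\CW$ and $V'\in\CV_1^n$. Since $\CT(\CU,\CV[i])=0$ for $1\le i\le n$, we get $\CT(U,V')=0$. Hence the composite $U\to Y\to V'$ vanishes, and $z$ factors through $W'\to Y$. This proves injectivity of $(\CT/[\CW])(\overline{f},Y)$.

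For (3), assume $u\in[\CU]$ and $X'\in\CT^+$, and take $x\colon X\to Y$. The goal is to show $x\circ u=0$; then $x$ extends along $f$ by the long exact sequence of the triangle, which gives surjectivity. Write $u=u_2\circ u_1$ with $u_1\colon U[-1]\to U_0$, $u_2\colon U_0\to X$ and $U_0\in\CU$. By the last point of (2), $x\circ u_2$ factors through some $W''\in\CW$, say $x\circ u_2=b\circ a$ with $a\colon U_0\to W''$. Then $a\circ u_1\colon U[-1]\to W''$ lies in $\CT(U[-1],W'')=\BE(U,W'')=0$. Hence $x\circ u=b\circ a\circ u_1=0$, as required.

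Bijectivity of (\ref{map_to_be_surjective}) then follows by combining this surjectivity with (2). The hypothesis $X'\in\CT^+$ does not seem to be needed for this argument. The main obstacle I anticipate is just the bookkeeping in (2), namely making sure the correction term $w_2\circ w_1'$ is chosen so that what remains factors through $m$. The vanishing $\CT(U[-1],W)=0$ for $W\in\CW$ is what makes both (2) and (3) work.
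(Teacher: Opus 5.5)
Your proposal is correct and follows essentially the same route as the paper. Part (1) is Corollary~\ref{cor:TUT}. Part (2) lifts the $\CW$-factorization along $f$ using $\CT(U[-1],\CW)=0$ and absorbs the leftover map $U\to Y$ into $\CW$ via $\CT^+=\CW\ast\CV_1^n$. Part (3) shows $x\circ u=0$ in the same way.

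The only difference is cosmetic: the paper first routes $e\circ f$ through the fixed map $W\to Y$ coming from $\CT^+=\CW\ast\CV_1^n$. Your remark that the hypothesis $X'\in\CT^+$ is never used applies equally to the paper's proof.
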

\begin{proof}
{\rm (1)} This is immediate from Corollary~\ref{cor:TUT}.
\normalcolor

{\rm (2)} By $Y\in\CT^+$, there is a distinguished triangle $W\xrightarrow{a}Y\to Y'\to W[1]$ with $W\in\CW$ and $Y'\in\CV_1^n$. 
Let $e\in\CT(X',Y)$ be any morphism satisfying $\overline{e}\circ\overline{f}=0$. This means that $e\circ f$ factors through some object in $\CW$. Since $Y'\in\CV_1^n$, it implies that there exists $w_1\in\CT(X,W)$ such that $e\circ f=a\circ w_1$. By $\CT(U[-1],W)=0$ we have $w_1\circ u=0$, hence we obtain a morphism $w_2\in\CT(X',W)$ satisfying $w_1=w_2\circ f$. Then, since $(e-a\circ w_2)\circ f=a\circ w_1-a\circ w_1=0$, there exists $e'\in\CT(U,Y)$ such that $e-a\circ w_2=e'\circ m$. By $\CT(U,Y')=0$, there exists $w_3\in\CT(U,W)$ such that $e'=a\circ w_3$. Then we have $e=a\circ(w_2+w_3\circ m)$, which means $\overline{e}=0$ as desired.

{\rm (3)} Similarly as in {\rm (2)}, there is a distinguished triangle $W\to Y\to Y'\to W[1]$ with $W\in\CW$ and $Y'\in\CV_1^n$. Since $U[-1]\xrightarrow{u}X\xrightarrow{f}X'\xrightarrow{m}U$ is a distinguished triangle, 
\[
\CT(X',Y)\xrightarrow{\CT(f,Y)}\CT(X,Y)\xrightarrow{\CT(u,Y)}\CT(U[-1],Y)
\]
is exact.
Let $x\in\CT(X,Y)$ be any morphism.
Since $u\in [\CU]$ and $Y'\in\CV_1^n$, \darkblueedit{the morphism} $x\circ u$ factors through $W$. Thus $x\circ u=0$ follows from $\CT(U[-1],W)=0$. This shows $\CT(u,Y)=0$, hence $\CT(f,Y)$ is surjective by the above exactness. The latter part is obvious.
\end{proof}

\begin{definition}\label{def:functor_+}
For each object $X\in\CT$, we associate an object $X_+\in\CT^+$ in the following steps.
\begin{enumerate}
\renewcommand{\labelenumi}{(\roman{enumi})} 
\item Choose a distinguished triangle 
\begin{equation}\label{triangle_i}
U_X\xrightarrow{u}X\xrightarrow{t}R_X\to U_X[1]
\end{equation}
with $U_X\in\CU$ and $R_X\in\CV_1^n$, by using $\CT\subset\CU\ast\CV_1^n$.
\item Then, choose a distinguished triangle
\begin{equation}\label{triangle_ii}
U'_X[-1]\xrightarrow{u'} U_X\to W_X\to U'_X
\end{equation}
with $U'_X\in\CU$ and $W_X\in\CW$, by using $\CU\subset\CU[-1]\ast\CW$.
\item Complete $u\circ u'$ into a distinguished triangle
\begin{equation}\label{triangle_iii}
U'_X[-1]\xrightarrow{u\circ u'} X\xrightarrow{l_X} X_+\xrightarrow{m_X} U'_X.
\end{equation}
\end{enumerate}
By the octahedron axiom, we obtain a commutative diagram in $\CT$
\[
\begin{tikzpicture}[>=stealth]
\node (1) at (-0.98,1.8) {$U'_X[-1]$};
\node (2) at (-1.19,0) {$U_X$};
\node (3) at (-1.4,-1.7) {$W_X$};
\node (4) at (0,0) {$X$};
\node (5) at (0.45,-0.9) {$X_+$};
\node (6) at (2.4,0) {$R_X$};
\draw[->] (1) -- node[left,font=\scriptsize] {$u'$} (2);
\draw[->] (1) -- node[right,font=\scriptsize] {$u\circ u'$} (4);
\draw[->] (2) -- (3);
\draw[->] (2) -- node[below,font=\scriptsize] {$u$} (4);
\draw[->] (3) -- (5);
\draw[->] (4) -- node[left,font=\scriptsize] {$l_X$} (5);
\draw[->] (4) -- node[above,font=\scriptsize] {$t$} (6);
\draw[->] (5) -- node[below,font=\scriptsize] {$t'$} (6);
\end{tikzpicture}
\]
in which 
\begin{equation}\label{triangle_iv}
W_X\to X_+\xrightarrow{t'} R_X\xrightarrow{w} W_X[1]
\end{equation}
is a distinguished triangle, hence indeed we have $X_+\in\CT^+$.
\end{definition}

\begin{proposition}\label{prop:functor_+}
Let $X\in\CT$ be any object, and let $l_X\colon X\to X_+$ be the morphism in $\CT$ obtained in Definition~\ref{def:functor_+}. 
Then, the pair $(X_+,\overline{l_X})$ is a reflection of $X$ along the inclusion functor $\CT^+/[\CW]\hookrightarrow\CT/[\CW]$, in the sense of \cite[Definition~3.1.1]{Borc94}.
In fact, the following holds for any $Y\in\CT^+$.
\begin{enumerate}
\item $\CT(l_X,Y)\colon\CT(X_+,Y)\to\CT(X,Y)$ is surjective.
\item $(\CT/[\CW])(\overline{l_X},Y)\colon(\CT/[\CW])(X_+,Y)\to(\CT/[\CW])(X,Y)$ is bijective.
\end{enumerate}
\end{proposition}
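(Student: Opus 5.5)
The plan is to apply Lemma~\ref{lem:T-toT-}(3) to the triangle (\ref{triangle_iii}), rotated into the form
\[
U'_X[-1]\xrightarrow{u\circ u'}X\xrightarrow{l_X}X_+\xrightarrow{m_X}U'_X .
\]
Here $U'_X\in\CU$, and $X_+\in\CT^+$ by the triangle (\ref{triangle_iv}), since $\CT^+=\CW_0^{n-1}\ast\CV[n]\supset\CW\ast\CV_1^n$ by Lemma~\ref{lem:equation_T-T+}. Moreover $u\circ u'$ factors through $U_X\in\CU$, so $u\circ u'\in[\CU]$. All hypotheses of Lemma~\ref{lem:T-toT-}(3) therefore hold with $U=U'_X$, $f=l_X$ and $X'=X_+$.

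Since part (3) of that lemma is stated for a general $X$ (parts (2) and (3) do not require $X\in\CT^-$), it yields directly that $\CT(l_X,Y)$ is surjective for every $Y\in\CT^+$. This is assertion (1). Lemma~\ref{lem:T-toT-}(2) gives injectivity of $(\CT/[\CW])(\overline{l_X},Y)$, and surjectivity follows from (1) by passing to the quotient. This is assertion (2).

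I will nevertheless re-check the surjectivity argument in the present setting, since it is the only substantive point. Take $x\colon X\to Y$ and a triangle $W\to Y\to Y'\to W[1]$ with $W\in\CW$ and $Y'\in\CV_1^n$. The composite $x\circ u\circ u'$ factors through $x\circ u\colon U_X\to Y$. Because $\CT(U_X,Y')=0$, the map $x\circ u$ lifts to $W$. Then $x\circ u\circ u'$ factors through $U'_X[-1]\to W$, which vanishes since $\CT(\CU[-1],\CW)=\CT(\CU,\CW[1])=0$. Exactness of $\CT(X_+,Y)\to\CT(X,Y)\to\CT(U'_X[-1],Y)$ then gives the desired preimage.

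Finally, the reflection property in the sense of \cite[Definition~3.1.1]{Borc94} is just the universal property that every morphism $\overline{x}\colon X\to Y$ with $Y\in\CT^+$ factors uniquely through $\overline{l_X}$ in $\CT/[\CW]$. This is exactly the bijectivity in (2), using that $\CT^+/[\CW]$ is a full subcategory of $\CT/[\CW]$. I expect no real obstacle. The only care needed is to confirm $X_+\in\CT^+$ and the vanishing $\CT(\CU[-1],\CW)=0$, which comes from $\BE^1(\CU,\CV)=0$.
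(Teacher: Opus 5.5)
Your proposal is correct and follows the paper's proof: the paper also obtains both assertions directly from Lemma~\ref{lem:T-toT-}(3) applied to the triangle (\ref{triangle_iii}), using $u\circ u'\in[\CU]$ and $X_+\in\CT^+$. Your direct re-check of surjectivity just reproduces the proof of that lemma. The reflection property then follows from the bijectivity in (2), as you say.
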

\begin{proof}
Since $u\circ u'\in [\CU]$ and $X_+\in\CT^+$, this directly follows from Lemma~\ref{lem:T-toT-} {\rm (3)}.
\end{proof}

\begin{corollary}\label{cor:functor_+}
The assignment $X\mapsto X_+$ in Definition~\ref{def:functor_+} gives a functor 
$(\ )_+\colon\CT/[\CW]\to \CT^+/[\CW]$, which is left adjoint to the inclusion $\CT^+/[\CW]\hookrightarrow\CT/[\CW]$. In particular, this functor is additive, and determined uniquely up to a natural isomorphism, regardless of the choices made in Definition~\ref{def:functor_+}.
\end{corollary}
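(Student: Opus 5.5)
The corollary will follow formally from Proposition~\ref{prop:functor_+} and the standard theory of reflections (as in \cite{Borc94}). For each object $X$ of $\CT/[\CW]$ we fix the choices made in Definition~\ref{def:functor_+}, which give an object $X_+\in\CT^+/[\CW]$ and a morphism $\overline{l_X}\colon X\to X_+$. By Proposition~\ref{prop:functor_+}(2), the map
\[
(\CT/[\CW])(\overline{l_X},Y)\colon(\CT^+/[\CW])(X_+,Y)\to(\CT/[\CW])(X,Y)
\]
is bijective for every $Y\in\CT^+$. Here we use that $\CT^+/[\CW]$ is a full subcategory of $\CT/[\CW]$, since both are ideal quotients by the same ideal $[\CW]$.

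To define $(\ )_+$ on morphisms, take $\overline{x}\colon X\to X'$ in $\CT/[\CW]$. Applying the bijection with $Y=X'_+$ gives a unique $\overline{x}_+\colon X_+\to X'_+$ in $\CT^+/[\CW]$ with
\[
\overline{x}_+\circ\overline{l_X}=\overline{l_{X'}}\circ\overline{x}.
\]
Uniqueness yields functoriality: both $\overline{\id_{X_+}}$ and $(\overline{\id_X})_+$ satisfy the defining equation for $\id_X$. Likewise, $(\overline{y}\circ\overline{x})_+$ and $\overline{y}_+\circ\overline{x}_+$ both satisfy the defining equation for $\overline{y}\circ\overline{x}$.

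Naturality of the bijections in $X$ and $Y$ then gives the adjunction $(\ )_+\dashv\mathrm{incl}$. Naturality in $Y$ is immediate, because the bijection is precomposition with $\overline{l_X}$. Naturality in $X$ is exactly the defining square $\overline{x}_+\circ\overline{l_X}=\overline{l_{X'}}\circ\overline{x}$. Thus $\overline{l}$ is the unit of the adjunction.

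Additivity holds because any left adjoint between additive categories preserves finite biproducts and is therefore additive. One can also check it directly: the map $\overline{x}\mapsto\overline{x}_+$ is the composite of the group homomorphism $\overline{l_{X'}}\circ-$ with the inverse of an additive bijection, so it is additive.

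Finally, independence of the choices follows from the uniqueness of adjoints up to unique natural isomorphism. Concretely, if other choices give $(X_+^{\prime},\overline{l'_X})$, then both pairs are reflections of $X$. Hence there is a unique isomorphism $X_+\cong X_+^{\prime}$ compatible with the unit morphisms, and these isomorphisms are natural in $X$ by the same uniqueness argument.

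No step is a real obstacle; the only point needing care is that the bijection in Proposition~\ref{prop:functor_+}(2) holds in the quotient category, not in $\CT$. In particular, $\overline{x}_+$ is well defined only modulo $[\CW]$, which is precisely why the functor lands in $\CT^+/[\CW]$ rather than $\CT^+$.
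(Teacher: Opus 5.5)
Your proposal is correct and follows the same route as the paper, which deduces the corollary formally from the reflections provided by Proposition~\ref{prop:functor_+}, citing \cite[Proposition~3.1.3]{Borc94}. You have written out that formal argument explicitly: the definition on morphisms via the universal property, functoriality and naturality from uniqueness, additivity, and independence of the choices from the uniqueness of reflections.
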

\begin{proof}
As in \cite[Proposition~3.1.3]{Borc94}, this is a formal consequence of Proposition~\ref{prop:functor_+}.
\end{proof}

\begin{proposition}\label{prop:functor_+_property}
Let $X\in\CT$ be any object. With respect to the construction given in Definition~\ref{def:functor_+}, the following holds.
\begin{enumerate}
\item %
$X_+\in \CH$ if and only if $X\in\CT^-$. 
\normalcolor
\item $X_+\in\CW$ if and only if $X\in\CU$. 
\item $X_+\in\CV$ if and only if $X\in\CN$. 
\end{enumerate}
By {\rm (1)}, the restriction of $(\ )_+$ to $\CT^-/[\CW]$ induces an additive functor
\[
(\ )_+\colon\CT^-/[\CW]\longrightarrow\CH/[\CW],
\]
which we denote by the same symbol.
\end{proposition}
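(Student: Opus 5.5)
Parts (1) and (2) will be read off directly from the triangles of Definition~\ref{def:functor_+}. Part (3) will be handled by a $\operatorname{Hom}$-orthogonality characterization of $\CV$, transported along triangle (\ref{triangle_iii}). Throughout I use that $X_+\in\CT^+$ always holds (triangle (\ref{triangle_iv})). I also use that $\CU$ is extension-closed and closed under summands: by Proposition~\ref{prop:cotors_1-n} with $k=0$, $(\CU,\CV_0^{n-1})$ is a cotorsion pair.

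\emph{Step 1: part (1).} Apply Lemma~\ref{lem:T-toT-}(1) to triangle (\ref{triangle_iii}),
\[
U'_X[-1]\xrightarrow{u\circ u'}X\xrightarrow{l_X}X_+\xrightarrow{m_X}U'_X,
\qquad U'_X\in\CU .
\]
This gives $X\in\CT^-$ if and only if $X_+\in\CT^-$. Since $X_+\in\CT^+$ holds automatically, this is equivalent to $X_+\in\CH$.

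\emph{Step 2: part (2).} Suppose $X\in\CU$. Triangle (\ref{triangle_iii}) gives $X_+\in X\ast U'_X\subset\CU\ast\CU=\CU$. Hence $X_+\in\CU\cap\CT^+=\CW$ by Lemma~\ref{lem:cotors_1-n}.

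Conversely, suppose $X_+\in\CW$. The octahedron in Definition~\ref{def:functor_+} gives $t=t'\circ l_X$ with $t'\colon X_+\to R_X$. Since $X_+\in\CU$ and $R_X\in\CV_1^n$, we have $\CT(X_+,R_X)=0$, so $t'=0$ and therefore $t=0$. Triangle (\ref{triangle_i}) then splits, so $X$ is a direct summand of $U_X\in\CU$, and $X\in\CU$.

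\emph{Step 3: part (3).} First I characterize $\CV$ inside $\CT^+$. Since $\CT=\CU_{-n}^{-1}\ast\CV$ and $\BE^k(\CU,\CV)=0$ for $1\le k\le n$, the standard splitting argument of Proposition~\ref{prop:cotors_1-n} gives
\[
\CV=\{Y\in\CT \mid \CT(U[-k],Y)=0\ \text{for all } U\in\CU,\ 1\le k\le n\}.
\]

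For ``$\Leftarrow$'', let $X\in\CU\ast\CV$, say $U_0\to X\to V_0\to U_0[1]$. Take $Z\in\CU[-k]$ with $1\le k\le n$ and apply $\CT(Z,-)$ to triangle (\ref{triangle_iii}). Since $\CT(\CU[-k],\CU'_X)$-type terms and $\CT(\CU[-k],V_0)=0$ enter the long exact sequence, the aim is to show that every morphism $Z\to X_+$ lifts along $l_X$ modulo a map that kills it. Concretely, I expect $\CT(Z,l_X)$ to be surjective and its composite with $X\to V_0$ to be controlled. For a summand of such an $X$, additivity of $(\ )_+$ (Corollary~\ref{cor:functor_+}) suffices, because $\CT(\CU[-k],-)$ vanishes on $\CW$ for $k\ge1$ and so descends to $\CT/[\CW]$.

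For ``$\Rightarrow$'', suppose $X_+\in\CV$. Then $X\in U'_X[-1]\ast X_+$. Rewriting $U'_X[-1]\in\CU[-1]\ast\CW$-type pieces via Lemma~\ref{lem:inclusions_for_n-CT} and swapping them past $X_+$ using $\CT(\CU[-1],\CV[1])$-vanishing (as in (\ref{incl_to_be_iterated})), I would aim to exhibit $X$ as a summand of an object in $\CU\ast\CV$.

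I expect Step 3 to be the main obstacle. The long exact sequences involve $\CT(\CU[-k],\CU'_X)$, which need not vanish, and the choices in Definition~\ref{def:functor_+} make $X_+$ well defined only up to isomorphism in $\CT/[\CW]$. My first attempt would therefore be to reduce to the well-chosen triangle (\ref{triangle_i}) with $R_X\in\CV[1]\ast\cdots$ coming from $X\in\CU\ast\CV$. I would then check that membership of $X_+$ in $\CV$ is invariant under isomorphism in $\CT^+/[\CW]$, using the orthogonality characterization above together with the fact that $\CT(\CU[-k],\CW)=0$ for $k\ge1$.
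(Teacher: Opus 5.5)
Your Steps 1 and 2 are correct and are the paper's own arguments. Your way of passing to direct summands in (3) is a valid substitute for the paper's summand remark: for $Z\in\CU[-k]$, $1\le k\le n$, the functor $\CT(Z,-)$ kills $\CW$ and so descends to $\CT/[\CW]$. Step 3 itself, however, is not a proof in either direction. In both directions the missing ingredient is triangle (\ref{triangle_iv}), $W_X\to X_+\xrightarrow{t'}R_X\xrightarrow{w}W_X[1]$, together with the factorization $t=t'\circ l_X$.

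\emph{The direction ``$\Rightarrow$''.} The information $X\in U'_X[-1]\ast X_+\subset\CU[-1]\ast\CV$ is too weak, so no rearrangement of $\ast$-pieces can succeed. For the $n$-cotorsion pair $(t^{\le-1},t^{\ge n})$ we have $\CU[-1]=t^{\le0}$. A nonzero object of $t^{\le0}\cap t^{\ge0}$ then lies in $\CU[-1]\ast\CV$ but not in $\CN$, because objects of $\CN$ have vanishing $H^0,\dots,H^{n-1}$. You must use that $u\circ u'$ factors through $U_X\in\CU$. The paper does this by applying the octahedron to $X\xrightarrow{t}R_X\xrightarrow{w}W_X[1]$. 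This produces $P$ with distinguished triangles $U_X\to P\to X_+\to U_X[1]$ and $W_X\to P\to X\xrightarrow{w\circ t}W_X[1]$. Since $w\circ t=w\circ t'\circ l_X=0$, we get $P\cong X\oplus W_X$. Hence $X\oplus W_X\in\CU\ast\CV$ as soon as $X_+\in\CV$.

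\emph{The direction ``$\Leftarrow$''.} Surjectivity of $\CT(Z,l_X)$ cannot be read off from (\ref{triangle_iii}), because $\CT(Z,U'_X)$ need not vanish; you note this yourself but supply no other mechanism. The fix runs as follows.
\begin{itemize}
\item Since $\CT(U'_X[-1],V_0)=0$, the map $X\to V_0$ factors as $h\circ l_X$ for some $h\colon X_+\to V_0$.
\item The octahedron for this composite shows that the cocone $C$ of $h$ lies in $U_0\ast U'_X\subset\CU$.
\item Now take $g\colon Z\to X_+$ with $Z\in\CU[-k]$, $1\le k\le n$. Then $h\circ g=0$ because $\CT(Z,V_0)=0$, so $g$ factors through $C\to X_+$.
\item The map $C\to X_+$ factors through $W_X\to X_+$, because $\CT(C,R_X)=0$.
\item Finally $\CT(Z,W_X)=0$, so $g=0$.
\end{itemize}
The paper phrases the same argument as: $t'$ factors through $h$, so $\CT(Z,t')=0$, and (\ref{triangle_iv}) then gives $\CT(Z,X_+)=0$. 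With these two arguments in place, your orthogonality description of $\CV$ completes part (3).
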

\begin{proof}
{\rm (1)} 
Since $X_+\in\CT^+$, this is immediate 
from Lemma~\ref{lem:T-toT-} {\rm (1)}.
\normalcolor

{\rm (2)} Since $(\ref{triangle_iii})$ is a distinguished triangle and $\CU\subset\CT$ is extension-closed, $X\in\CU$ implies $X_+\in\CU$. Then, we obtain $X_+\in\CT^+\cap\CU=\CW$
by \cref{lem:cotors_1-n}.
\normalcolor
Conversely, if $X_+\in\CW$, then we have $t'=0$. Since $t$ in $(\ref{triangle_i})$ factors through $t'$, this forces $t=0$, hence $X$ becomes a direct summand of $U_X$. Since $\CU$ is closed by taking direct summands, we obtain $X\in\CU$.  

{\rm (3)} From the distinguished triangles $(\ref{triangle_i})$ and $(\ref{triangle_iv})$, by the octahedron axiom we obtain a commutative diagram in $\CT$
\[
\begin{tikzpicture}[>=stealth]
\node (1) at (-2.4,0) {$U_X$};
\node (2) at (-0.42,0.88) {$P$};
\node (3) at (1.4,1.7) {$X_+$};
\node (4) at (0,0) {$X$};
\node (5) at (1.2,0) {$R_X$};
\node (6) at (0.98,-1.8) {$W_X[1]$};
\draw[->] (1) -- (2);
\draw[->] (1) -- node[below,font=\scriptsize] {$u$} (4);
\draw[->] (2) -- (3);
\draw[->] (2) -- (4);
\draw[->] (3) -- node[right,font=\scriptsize] {$t'$} (5);
\draw[->] (4) -- node[above,font=\scriptsize] {$t$} (5);
\draw[->] (4) -- node[left,font=\scriptsize] {$w\circ t$} (6);
\draw[->] (5) -- node[right,font=\scriptsize] {$w$} (6);
\end{tikzpicture}
\]
for some $P\in\CT$, in which $U_X\to P\to X_+\to U_X[1]$ and $W_X\to P\to X\xrightarrow{w\circ t}W_X[1]$ are distinguished triangles. Since $w\circ t=w\circ t'\circ l_X=0$, we have $P\cong X\oplus W_X$. Thus if $X_+\in\CV$, it follows $X\in\add(\CU\ast\CV)=\CN$.

For the converse, 
\color{blue}
first we remark that $\CV/[\CW]\subset\CT/[\CW]$ is closed under taking direct summands. Indeed, if $A,A'\in\CT$ satisfy $A\oplus A'\cong V_0$ in $\CT/[\CW]$ for some $V_0\in\CV$, then there exist an object $W_0\in\CW$ and a split monomorphism $s\in\CT(A\oplus A',V_0\oplus W_0)$. Then, since $\CT$ is a triangulated category, this means that $A$ and $A'$ are direct summands of $V_0\oplus W_0\in\CV$.
Since $\CV\subset\CT$ is closed under taking direct summands,
it follows that $A,A'\in\CV$.
This shows that $\CV/[\CW]\subset\CT/[\CW]$ is closed under taking direct summands. 

Since $(\ )_+\colon\CT/[\CW]\to \CT^+/[\CW]$ is an additive functor by Corollary~\ref{cor:functor_+}, it is now enough to prove that 
\normalcolor
$X\in\CU\ast\CV$ implies $X_+\in\CV$.
Suppose that $X\in\CU\ast\CV$ holds, and take a distinguished triangle
\[
U\to X\to V\to U[1]
\]
with $U\in\CU,V\in\CV$. Then, by applying the octahedron axiom to this triangle and $(\ref{triangle_iii})$, we obtain a commutative diagram in $\CT$
\[
\begin{tikzpicture}[>=stealth]
\node (1) at (-1.4,1.7) {$U$};
\node (2) at (-1.2,0) {$X$};
\node (3) at (-0.98,-1.8) {$V$};

\node (4) at (0.4,0.9) {$C$};
\node (5) at (0,0) {$X_+$};
\node (6) at (2.4,0) {$U'_X$};
\draw[->] (1) -- (2);
\draw[->] (1) -- (4);
\draw[->] (2) -- (3);
\draw[->] (2) -- node[above,font=\scriptsize] {$l_X$} (5);
\draw[->] (4) -- (5);
\draw[->] (4) -- (6);
\draw[->] (5) -- node[right,font=\scriptsize] {$h$} (3);
\draw[->] (5) -- node[below,font=\scriptsize] {$m_X$} (6);
\end{tikzpicture}
\]
for some $C\in\CT$, in which $C\to X_+\xrightarrow{h}V\to C[1]$ and $U\to C\to U'_X\to U[1]$ are distinguished triangles. Since $\CU\subset\CT$ is extension-closed, we have $C\in\CU$. Then $\CT(C,R_X)=0$, hence the morphism $t'$ in $(\ref{triangle_iv})$ factors through $h$. For any $P\in\CU_{-n}^{-1}$, we obtain an exact sequence
\[
\CT(P,W_X)\to\CT(P,X_+)\xrightarrow{\CT(P,t')}\CT(P,R_X)
\]
from $(\ref{triangle_iv})$. Since $\CT(P,W_X)=0$ and $\CT(P,t')=0$, it follows $\CT(P,X_+)=0$, which means $X_+\in\CV$.
\end{proof}

\begin{definition}\label{def:functor_-}
\color{blue}
The right adjoint functor $(\ )_-\colon\CT/[\CW]\to\CT^-/[\CW]$ to the inclusion $\CT^-/[\CW]\to\CT/[\CW]$ is defined dually to the construction in Definition~\ref{def:functor_+}. More precisely, for each $X\in\CT$, we associate an object $X_-\in\CT^-$ by the following diagram,
\normalcolor
\[
\begin{tikzpicture}[>=stealth]
\node (1) at (-2.4,0) {$L_X$};
\node (2) at (-0.42,0.88) {$X_-$};
\node (3) at (1.4,1.7) {$W'_X$};
\node (4) at (0,0) {$X$};
\node (5) at (1.2,0) {$V_X$};
\node (6) at (0.98,-1.8) {$V'_X[1]$};
\draw[->] (1) -- (2);
\draw[->] (1) -- (4);
\draw[->] (2) -- (3);
\draw[->] (2) -- node[right,font=\scriptsize] {$r_X$} (4);
\draw[->] (3) -- (5);
\draw[->] (4) -- node[above,font=\scriptsize] {$v$} (5);
\draw[->] (4) -- node[left,font=\scriptsize] {$v'\circ v$} (6);
\draw[->] (5) -- node[right,font=\scriptsize] {$v'$} (6);
\end{tikzpicture}
\]
in which
\begin{eqnarray*}
L_X\to X\xrightarrow{v}V_X\to L_X[1], && V'_X\to W'_X\to V_X\xrightarrow{v'}V'_X[1],\\
V'_X\to X_-\xrightarrow{r_X}X\xrightarrow{v'\circ v}V'_X[1], && L_X\to X_-\to W'_X\to L_X[1]
\end{eqnarray*}
are distinguished triangles, with $V_X,V'_X\in\CV$, $W'_X\in\CW$ and $L_X\in\CU_{-n}^{-1}$.

This gives the functor $(\ )_-\colon\CT/[\CW]\to\CT^-/[\CW]$. Dually to Proposition~\ref{prop:functor_+_property}, it also restricts to give a functor $(\ )_-\colon\CT^+/[\CW]\to\CH/[\CW]$.
\end{definition}

\begin{definition}\label{def:functor_H}
Define an additive functor $H\colon\CT\to\CH/[\CW]$ to be the composite of
\[
\CT\xrightarrow{\textcolor{blue}{\pi}}\CT/[\CW]\xrightarrow{(\ )_+}\CT^+/[\CW]\xrightarrow{(\ )_-}\CH/[\CW],
\]
in which $\textcolor{blue}{\pi}$ is the canonical ideal quotient functor, and $(\ )_-\colon\CT^+/[\CW]\to\CH/[\CW]$ is the functor obtained by the construction in Definition~\ref{def:functor_-}.
\end{definition}

The following lemma tells us that the construction of $H$ is self-dual up to isomorphism.
\begin{lemma}\label{lem:self-dual_H}
\darkblueedit{The functor} $H\colon\CT\to\CH/[\CW]$ in Definition~\ref{def:functor_H} is naturally isomorphic to the composite of
\[
\CT\xrightarrow{\textcolor{blue}{\pi}}\CT/[\CW]\xrightarrow{(\ )_-}\CT^-/[\CW]\xrightarrow{(\ )_+}\CH/[\CW].
\]
\end{lemma}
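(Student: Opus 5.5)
The plan is to compare the two composites through the universal properties of the reflection $(\ )_+$ (Corollary~\ref{cor:functor_+}) and the coreflection $(\ )_-$ (Definition~\ref{def:functor_-}), rather than through explicit octahedra alone. Fix $X\in\CT$ and the morphism $r_X\colon X_-\to X$ from Definition~\ref{def:functor_-}. Applying the functor $(\ )_+$ gives $\overline{(r_X)_+}\colon (X_-)_+\to X_+$ in $\CT^+/[\CW]$. By Proposition~\ref{prop:functor_+_property}(1), $(X_-)_+\in\CH\subset\CT^-$.

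Since $\overline{r_{X_+}}\colon (X_+)_-\to X_+$ is a coreflection of $X_+$ into $\CT^-/[\CW]$, the morphism $\overline{(r_X)_+}$ factors uniquely as
\[
\overline{(r_X)_+}=\overline{r_{X_+}}\circ\varphi_X,\qquad \varphi_X\colon (X_-)_+\to (X_+)_-.
\]
Naturality of $\varphi$ in $X$ is formal: $(\ )_+$ and $(\ )_-$ are functors, $r$ is natural, and the factorization is unique. It therefore remains to show that each $\varphi_X$ is an isomorphism in $\CH/[\CW]$.

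For this it suffices, by Yoneda, to show that $\overline{(r_X)_+}\colon (X_-)_+\to X_+$ is itself a coreflection of $X_+$ into $\CT^-/[\CW]$. That is, for every $Z\in\CT^-$, composition with $\overline{(r_X)_+}$ should induce a bijection
\[
(\CT/[\CW])(Z,(X_-)_+)\to(\CT/[\CW])(Z,X_+).
\]
To verify this, I would build a distinguished triangle containing $(r_X)_+$ and apply the dual of Lemma~\ref{lem:T-toT-}. The inputs are the triangles
\[
V'_X\to X_-\xrightarrow{r_X}X\to V'_X[1]
\qquad\text{and}\qquad
U'_X[-1]\to X\xrightarrow{l_X}X_+\xrightarrow{m_X} U'_X,
\]
together with the corresponding triangle for $X_-$. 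Here the construction of $(X_-)_+$ may be chosen compatibly, by taking the $\CU\ast\CV_1^n$-decomposition of $X_-$ induced from that of $X$ via $r_X$. Using the octahedron axiom, I want to produce a triangle
\[
V''\to (X_-)_+\xrightarrow{(r_X)_+} X_+\to V''[1]
\]
with $V''\in\CV$, whose connecting morphism lies in $[\CV]$.

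Granted this triangle, the dual of Lemma~\ref{lem:T-toT-}(3), with $(X_-)_+\in\CT^-$, gives surjectivity of $\CT(Z,(r_X)_+)$ for all $Z\in\CT^-$. The dual of Lemma~\ref{lem:T-toT-}(2) gives injectivity modulo $[\CW]$. Together these give the required bijection.

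The main obstacle is the octahedral step: showing that the cone of $(r_X)_+$ (suitably realized) lies in $\CV[1]$ and that the relevant connecting map factors through $\CV$. I would try first to get this by comparing $(r_X)_+$ with $r_X$ via $l_X$ and $l_{X_-}$. The object $V'_X$ is sent by $(\ )_+$ into $\CV$ by Proposition~\ref{prop:functor_+_property}(3), since $\CV\subset\CN$, and the cones of $l_X$ and $l_{X_-}$ lie in $\CU$, which is orthogonal to $\CV_1^n$. If the direct octahedral control fails, a fallback is to apply Lemma~\ref{lem:for_adjoint_pair_+-} to the triangles defining $l$ and $r$, transporting morphisms between the two constructions modulo $[\CW]$.
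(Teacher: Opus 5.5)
Your architecture is the same as the paper's. Your $\varphi_X$ is the paper's comparison map $\mathbf{p}_X$: both are characterized by $\overline{r_{X_+}}\circ\varphi_X\circ\overline{l_{X_-}}=\overline{l_X}\circ\overline{r_X}$, using naturality of $\overline{l}$. Naturality comes from uniqueness. Invertibility is reduced to exhibiting one object that is simultaneously a reflection of $X_-$ into $\CT^+/[\CW]$ and a coreflection of $X_+$ into $\CT^-/[\CW]$, via \cref{lem:T-toT-}(3) and its dual.

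However, the one non-formal step is exactly what you leave open: the triangle $V''\to(X_-)_+\to X_+\to V''[1]$ with $V''\in\CV$ and connecting morphism in $[\CV]$. The hints you give do not lead to it. There is no ``$\CU\ast\CV_1^n$-decomposition of $X_-$ induced via $r_X$.'' If $u_0\colon U_X\to X_-$ lifts $u$, the octahedron only gives a triangle $V'_X\to\operatorname{cone}(u_0)\to R_X\to V'_X[1]$. So $\operatorname{cone}(u_0)\in\CV\ast\CV_1^n=\CT^+$, and there is no reason for it to lie in $\CV_1^n$. Consequently the realization of $(X_-)_+$ you want cannot come from \cref{def:functor_+}; it has to be certified by \cref{lem:T-toT-}(3). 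Also, the observation $(V'_X)_+\in\CV$ is beside the point, since $(\ )_+$ does not carry triangles to triangles. The $\CV$-term you need is $V'_X$ itself.

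The missing construction goes as follows.
\begin{enumerate}
\item Since $\CT(U_X,V'_X[1])=0$, write $u=r_X\circ u_0$. Put $c=u_0\circ u'$, and let $U'_X[-1]\xrightarrow{c}X_-\xrightarrow{p_1}P\to U'_X$ be a distinguished triangle.
\item Apply the octahedron to the factorization $r_X\circ c=u\circ u'$. This yields a distinguished triangle $V'_X\to P\xrightarrow{p_2}X_+\xrightarrow{d}V'_X[1]$ with $p_2\circ p_1=l_X\circ r_X$ and $d\circ l_X=v'\circ v$.
\item From this triangle, $P\in\CT^+$ by the dual of \cref{lem:T-toT-}(1). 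Also $P\in\CT^-$ by \cref{lem:T-toT-}(1).
\item Since $c\in[\CU]$, \cref{lem:T-toT-}(3) makes $(P,\overline{p_1})$ a reflection of $X_-$. Under it, $\overline{(r_X)_+}$ corresponds to $\overline{p_2}$.
\item Finally, $d\in[\CV]$. Because $\CT(U'_X[-1],V_X)=0$, $v$ factors as $v_0\circ l_X$. Then $d-v'\circ v_0$ vanishes on $l_X$, so it factors through $m_X\colon X_+\to U'_X$. Since $\CT(U'_X,V'_X[1])=0$, this gives $d=v'\circ v_0$.
\end{enumerate}
Only after these verifications does the dual of \cref{lem:T-toT-}(3) apply as you intend.
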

\begin{proof}
Let $X\in\CT$ be any object. 
As in \cref{def:functor_+,def:functor_-}, we have distinguished triangles
\[ U'_X[-1]\xrightarrow{u\circ u'}X\xrightarrow{l_X} X_+\xrightarrow{m_X}U'_X
\quad\text{and}\quad V'_X\to X_-\xrightarrow{r_X}X\xrightarrow{v'\circ v}V'_X[1], \]
where $U'_X\in\CU$ and $V'_X\in\CV$, and $u\colon U_X\to X$, $v\colon X\to V_X$ are morphisms with $U_X\in\CU, V_X\in\CV$. 
Then, for $X_-$ and $X_+$, we have morphisms $l_{(X_-)}\colon X_-\to (X_-)_+$ and $r_{(X_+)}\colon (X_+)_-\to X_+$ again by \cref{def:functor_+,def:functor_-}. 
\color{blue}
By Proposition~\ref{prop:functor_+}{\rm (2)} and its dual, there exists a unique morphism
\normalcolor
$\mathbf{p}_X\in(\CT/[\CW])((X_-)_+,(X_+)_-)$ that makes
\[
\begin{tikzpicture}[>=stealth]
\node (1) at (-1,0.7) {$X_-$};
\node (2) at (1,0.7) {$X_+$};
\node (3) at (-1,-0.7) {$(X_-)_+$};
\node (4) at (1,-0.7) {$(X_+)_-$};
\draw[->] (1) -- node[above,font=\scriptsize] {$\overline{l_X}\circ\overline{r_X}$} (2);
\draw[->] (1) -- node[left,font=\scriptsize] {$\overline{l_{(X_-)}}$} (3);
\draw[->] (4) -- node[right,font=\scriptsize] {$\overline{r_{(X_+)}}$} (2);
\draw[->] (3) -- node[below,font=\scriptsize] {$\mathbf{p}_X$} (4);
\end{tikzpicture}
\]
commutative in $\CT/[\CW]$. Using its uniqueness, we may check that $\mathbf{p}_X$ is natural in $X\in\CT$, in a straightforward argument. Thus, it is enough to show that such $\mathbf{p}_X$ is an isomorphism in $\CT/[\CW]$.

Since $\CT(U_X,V'_X[1])=0$, there exists $u_0\in\CT(U_X,X_-)$ such that $u=r_X\circ u_0$. Put $c=u_0\circ u'$, and complete it into a distinguished triangle
\begin{equation}\label{triangle_UXP}
U'_X[-1]\xrightarrow{c}X_-\xrightarrow{p_1}P\to U'_X.
\end{equation}
By Lemma~\ref{lem:T-toT-}, we have $P\in\CT^-$.
Since $r_X\circ c=u\circ u'$, by the octahedron axiom we obtain a commutative diagram in $\CT$
\[
\begin{tikzpicture}[>=stealth]
\node (1) at (-2.4,0) {$U'_X[-1]$};
\node (2) at (-0.42,0.88) {$X_-$};
\node (3) at (1.4,1.7) {$P$};
\node (4) at (0,0) {$X$};
\node (5) at (1.2,0) {$X_+$};
\node (6) at (0.98,-1.8) {$V'_X[1]$};
\draw[->] (1) -- node[above, font=\scriptsize] {$c$} (2);
\draw[->] (1) -- node[below,font=\scriptsize] {$u\circ u'$} (4);
\draw[->] (2) -- node[above,font=\scriptsize] {$p_1$} (3);
\draw[->] (2) -- node[right,font=\scriptsize] {$r_X$} (4);
\draw[->] (3) -- node[right,font=\scriptsize] {$p_2$} (5);
\draw[->] (4) -- node[below,font=\scriptsize] {$l_X$} (5);
\draw[->] (4) -- node[left,font=\scriptsize] {$v'\circ v$} (6);
\draw[->] (5) -- node[right, font=\scriptsize] {$d$} (6);
\end{tikzpicture}
\]
in which
\begin{equation}\label{triangle_PXV}
V'_X\to P\xrightarrow{p_2}X_+\xrightarrow{d}V'_X[1]
\end{equation}
is a distinguished triangle.
By the dual of Lemma~\ref{lem:T-toT-}, we have $P\in\CT^+$. Thus it follows $P\in\CT^-\cap\CT^+=\CH$. Besides, we have $d\in [\CV]$. Indeed, since $\CT(U'_X[-1],V_X)=0$, there exists $v_0\in\CT(X_+,V_X)$ such that $v_0\circ l_X=v$. Then we have $(d-v'\circ v_0)\circ l_X=v'\circ v-v'\circ v=0$. Since $U'_X[-1]\xrightarrow{u\circ u'} X\xrightarrow{l_X} X_+\xrightarrow{m_X} U'_X
$ is a distinguished triangle, it follows that $d-v'\circ v_0$ factors through $m_X$. By $\CT(U'_X,V'_X[1])=0$, we obtain $d=v'\circ v_0$, which shows $d\in [\CV]$.

In the distinguished triangle $(\ref{triangle_UXP})$, by construction we have $c\in [\CU]$. Thus, by Lemma~\ref{lem:T-toT-} {\rm (3)}, the pair $(P,\overline{p_1})$ is a reflection of $X_-$ along the inclusion $\CT^+/[\CW]\hookrightarrow\CT/[\CW]$. Since so is $((X_-)_+,\overline{l_{(X_-)}})$, there exists an isomorphism $\mathbf{p}_1\in(\CT/[\CW])((X_-)_+,P)$ such that $\mathbf{p}_1\circ\overline{l_{(X_-)}}=\overline{p_1}$ in $\CT/[\CW]$. Dually, the existence of the distinguished triangle $(\ref{triangle_PXV})$ means that $(P,\overline{p_2})$ is a coreflection of $X_+$ along the inclusion functor $\CT^-/[\CW]\hookrightarrow\CT/[\CW]$, and thus there exists an isomorphism $\mathbf{p}_2\in(\CT/[\CW])(P,(X_+)_-)$ such that $\overline{r_{(X_+)}}\circ\mathbf{p}_2=\overline{p_2}$ in $\CT/[\CW]$.
Then, the diagram in $\CT/[\CW]$
\[
\begin{tikzpicture}[>=stealth]
\node (1) at (-1.6,1) {$X_-$};
\node (2) at (1.6,1) {$X_+$};
\node (3) at (-1.6,-1) {$(X_-)_+$};
\node (4) at (1.6,-1) {$(X_+)_-$};
\node (5) at (0,0) {$P$};
\draw[->] (1) -- node[above,font=\scriptsize] {$\overline{l_X}\circ\overline{r_X}$} (2);
\draw[->] (1) -- node[left,font=\scriptsize] {$\overline{l_{(X_-)}}$} (3);
\draw[->] (4) -- node[right,font=\scriptsize] {$\overline{r_{(X_+)}}$} (2);
\draw[->] (1) -- node[below,font=\scriptsize] {$\overline{p_1}$} (5);
\draw[->] (5) -- node[below,font=\scriptsize] {$\overline{p_2}$} (2);
\draw[->] (3) -- node[above,font=\scriptsize] {$\mathbf{p}_1$} node[below,font=\scriptsize] {$\cong$} (5);
\draw[->] (5) -- node[above,font=\scriptsize] {$\mathbf{p}_2$} node[below,font=\scriptsize] {$\cong$} (4);
\end{tikzpicture}
\]
is commutative. By the uniqueness of $\mathbf{p}_X$, this means $\mathbf{p}_X=\mathbf{p}_2\circ\mathbf{p}_1$. Since $\mathbf{p}_1,\mathbf{p}_2$ are isomorphisms, it follows that $\mathbf{p}_X$ is also an isomorphism in $\CH/[\CW]$. This is what we wanted to show.
\end{proof}

\begin{remark}\label{rem:essentially_surjective_H}
Since $(\ )_+$ and $(\ )_-$ are left/right adjoints to the inclusion functors, we see that the functor $H|_{\CH}\colon\CH\to\CH/[\CW]$ obtained by restricting $H$ to $\CH$ is naturally isomorphic to the canonical quotient functor by the ideal \textcolor{red}{$[\CW]$}. In particular, it follows that $H$ is essentially surjective.
\end{remark}

The following is a corollary of Proposition~\ref{prop:functor_+_property}.
\begin{corollary}\label{cor:Ker_H}
For the functor $H\colon\CT\to\CH/[\CW]$, the full subcategory $\Ker H\subset\CT$ agrees with $\CN$. Namely, the following are equivalent for any $X\in\CT$.
\begin{enumerate}
\item $H(X)\cong 0$ in $\CH/[\CW]$.
\item $X\in\CN$.
\end{enumerate}
\end{corollary}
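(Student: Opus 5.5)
We prove that $H(X)\cong 0$ if and only if $X\in\CN$. Write $H=(\ )_-\circ(\ )_+\circ\pi$, so $H(X)=(X_+)_-$.

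The key intermediate claim is the dual of Proposition~\ref{prop:functor_+_property}{\rm (2)}: for $Y\in\CT^+$, we have $Y_-\cong 0$ in $\CT/[\CW]$ if and only if $Y\in\CV$. We establish this first. Proposition~\ref{prop:functor_+_property}{\rm (2)} says $X_+\in\CW$ if and only if $X\in\CU$, so its dual gives $Y_-\in\CW$ if and only if $Y\in\CV$. Since being zero in $\CT/[\CW]$ means being a direct summand of an object of $\CW$, and $\CW$ is closed under summands, $Y_-\cong 0$ in $\CT/[\CW]$ is equivalent to $Y_-\in\CW$. Note that the dual of Proposition~\ref{prop:functor_+_property} applies to all of $\CT$, in particular to $Y\in\CT^+$.

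Next we check that $X_+\in\CV$ is a property of the isomorphism class of $X_+$ in $\CT^+/[\CW]$. This holds because $\CV/[\CW]$ is closed under isomorphisms and summands, as shown in the proof of Proposition~\ref{prop:functor_+_property}{\rm (3)}. In particular it does not depend on the choices made in Definition~\ref{def:functor_+}.

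Combining, we get $H(X)=(X_+)_-\cong 0$ if and only if $X_+\in\CV$. By Proposition~\ref{prop:functor_+_property}{\rm (3)}, $X_+\in\CV$ holds if and only if $X\in\CN$, which proves the equivalence of (1) and (2).

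The main obstacle is the dual statement $Y_-\in\CW\iff Y\in\CV$. It is tempting to assert it purely by duality, which is legitimate because the notion of an $n$-cotorsion pair is self-dual: passing to $\CT^{\op}$ swaps $\CU$ and $\CV$ and interchanges $\CT^-$ and $\CT^+$. Should a direct argument be preferred, we would mirror the proof of Proposition~\ref{prop:functor_+_property}{\rm (2)}. If $Y\in\CV$, the triangle $V'_Y\to Y_-\to Y\to V'_Y[1]$ and extension-closedness of $\CV$ give $Y_-\in\CV\cap\CT^-=\CW$ by Lemma~\ref{lem:cotors_1-n}. Conversely, if $Y_-\in\CW$, then the morphism $L_Y\to Y_-$ vanishes, since $\CT(\CU_{-n}^{-1},\CW)=0$. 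Hence $L_Y\to Y$ is zero, so $Y$ is a summand of $V_Y$, and therefore $Y\in\CV$.
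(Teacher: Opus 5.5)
Your proof is correct and follows the same route as the paper. Like the paper, you apply the dual of Proposition~\ref{prop:functor_+_property}{\rm (2)} to $X_+$ to get $(X_+)_-\cong 0 \iff X_+\in\CV$, and then invoke Proposition~\ref{prop:functor_+_property}{\rm (3)}. Your additional remarks — that $Y_-\cong 0$ in $\CT/[\CW]$ amounts to $Y_-\in\CW$ because $\CW$ is closed under summands, independence from choices, and the explicit mirrored proof of the dual statement — are valid details the paper leaves implicit.
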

\begin{proof}
Let $X\in\CT$ be any object. By the dual of Proposition~\ref{prop:functor_+_property} {\rm (2)}, we have $(X_+)_-=H(X)\cong 0$ in $\CH/[\CW]$ if and only if $X_+\in\CV$. Then, by Proposition~\ref{prop:functor_+_property} {\rm (3)}, this is also equivalent to $X\in\CN$.
\end{proof}

\subsection{Extriangulated structure}
\label{subsection:ET_heart}
\subsubsection{Relative theory induced by $\CN$}

Since $\CU\ast\CV\subset\CT$ is extension-closed, so is $\CN\subset\CT$. Thus, we may apply the results in \cite[Section~2]{Oga24} to $\CN\subset\CT$.
In particular, a closed subfunctor $\BE_{\CN}\colon\CT^\op\times\CT\to\Ab$ of $\BE=\CT(-,-[1])\colon\CT^\op\times\CT\to\Ab$ is given in \cite[Proposition~2.1]{Oga24}, which gives an extriangulated category $(\CT,\BE_{\CN},\fs_{\CN})$.

Its definition is as follows.
Recall that an element $h\in\BE(C,A)$ is nothing but a morphism $C\xto{h}A[1]$.

\begin{definition}\label{def:substructure_of_T}
Define $\BE^R_{\CN}$ and $\BE^L_{\CN}$ by setting
\begin{align*}
\BE^L_\CN(Z,X) 
    &= \Set{ h\in\BE(Z,X)  |\, \text{for any}\ x\colon N\to Z \text{ with } N\in\CN\text{, we have } hx\in[\,\CN[1]\,] }\ \text{and}\ \\
\BE^R_\CN(Z,X) 
    &= \Set{ h\in\BE(Z,X) |\, \text{for any}\ y\colon X\to N \text{ with } N\in\CN\text{, we have } y\circ h[-1]\in[\,\CN[-1]\,] }
\end{align*}
for all $X,Z\in\CT$, and put $\BE_\CN=\BE^L_\CN\cap\BE^R_\CN$.
By \cite[Proposition~2.1]{Oga24}, these are closed subfunctors of $\BE$.
We denote the extriangulated categories equipped with the resulting relative structures by
\[
\CT_{\CN}^R=(\CT,\BE_\CN^R,\fs_\CN^R),  \quad\CT_{\CN}^L=(\CT,\BE_\CN^L,\fs_\CN^L),\quad \text{and}\quad \CT_{\CN}=(\CT,\BE_\CN,\fs_\CN), 
\]
respectively.
\end{definition}

Note that the extriangulated structure on $\CN$ induced from $\CT_{\CN}$ coincides with that induced from $\CT$. In other words, we have
$(\CN, \BE|_\CN, \fs|_\CN) = (\CN, \BE_\CN|_\CN, \fs_\CN|_\CN)$.
Moreover, $\CN$ becomes a thick subcategory of $(\CT,\BE_{\CN},\fs_{\CN})$,
as shown in \cite[Corollary~2.8]{Oga24}.
\normalcolor
\darkblueedit{Then the classes of morphisms} $\CL,\CR,\SS_{\CN}$ are associated to $\CN$ as introduced in \cite[Definition~4.3]{NOS22}.
In our situation, \darkblueedit{as shown in \cite{Oga24},} these classes have the following alternative description, which we shall adopt as the definition of $\CL,\CR,\SS_{\CN}$ in this article.
\begin{definition}\label{def:LRS}(\cite[Lemmas~2.10 and~2.12]{Oga24})
The classes of morphisms $\CL,\CR,\SS_{\CN}$ in $\CT$ are given by the following.
\begin{enumerate}
\item A morphism $f\in\CT(X,Y)$ belongs to $\CL$ if and only if there is a distinguished triangle $X\xrightarrow{f}Y\xrightarrow{g}N\xrightarrow{h}X[1]$
such that $N\in\CN$ and $h[-1]\in [\CN]$.
\item Dually, a morphism $g\in\CT(Y,Z)$ belongs to $\CR$ if and only if there is a distinguished triangle $N\xrightarrow{f}Y\xrightarrow{g}Z\xrightarrow{h}N[1]$
such that $N\in\CN$ and $h\in [\CN]$.
\item $\SS_{\CN}=\CR\circ\CL$. In addition, by \cite[Proposition~2.11]{Oga24}, a morphism $g\in\CT(Y,Z)$ belongs to $\SS_{\CN}$ if and only if there is a distinguished triangle
$X\xrightarrow{f}Y\xrightarrow{g}Z\xrightarrow{h}X[1]$
such that $f,h\in [\CN]$.
\end{enumerate}
\end{definition}

The following two lemmas will be used to prove Proposition~\ref{prop:fully_faithful}.
\begin{lemma}\label{lem:fully_faithful1}
For any $X\in\CT$, the following holds.
\begin{enumerate}
\item $l_X\colon X\to X_+$ in Definition~\ref{def:functor_+} belongs to $\CL$.
\item $r_X\colon X_-\to X$ in Definition~\ref{def:functor_-} belongs to $\CR$.
\end{enumerate}
\end{lemma}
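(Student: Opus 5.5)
To show that $l_X$ belongs to $\CL$, I will exhibit a distinguished triangle $X\xrightarrow{l_X}X_+\xrightarrow{g}N\xrightarrow{h}X[1]$ with $N\in\CN$ and $h[-1]\in[\CN]$, as required by Definition~\ref{def:LRS}(1). The natural candidate is the triangle $(\ref{triangle_iii})$ from Definition~\ref{def:functor_+},
\[
U'_X[-1]\xrightarrow{u\circ u'} X\xrightarrow{l_X} X_+\xrightarrow{m_X} U'_X .
\]
In the abbreviated terminology of the paper, this means that $X\xrightarrow{l_X}X_+\xrightarrow{m_X}U'_X\xrightarrow{h}X[1]$ is a distinguished triangle in $\CT$, where $h=-((u\circ u')[1])\circ\eta^{\CT}_{U'_X}$ up to the fixed identifications.

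I then take $N=U'_X$ and $g=m_X$, and check the two conditions. First, $U'_X\in\CU\subset\CU\ast\CV\subset\CN$, since $0\in\CV$. Second, for $h[-1]\in[\CN]$: by naturality of $\eta^{\CT}$ and $\varepsilon^{\CT}$, the morphism $h[-1]$ equals $-u\circ u'$ composed on both sides with isomorphisms. In particular it factors through $u\colon U_X\to X$, and $U_X\in\CU\subset\CN$. Since $[\CN]$ is an ideal, this gives $h[-1]\in[\CN]$, and hence $l_X\in\CL$.

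For (2) the argument is dual. By Definition~\ref{def:functor_-} there is a distinguished triangle
\[
V'_X\to X_-\xrightarrow{r_X}X\xrightarrow{v'\circ v}V'_X[1],
\]
which already has the shape required in Definition~\ref{def:LRS}(2), with $N=V'_X\in\CV\subset\CN$. Its third morphism $v'\circ v$ factors through $V_X\in\CV\subset\CN$, so it lies in $[\CN]$, and therefore $r_X\in\CR$.

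The only step needing care is the sign and unit bookkeeping in (1), when converting the rotated triangle back into a standard one. Since the ideal $[\CN]$ is closed under composition with isomorphisms and under negation, this bookkeeping does not affect membership in $[\CN]$, so no real obstacle is expected.
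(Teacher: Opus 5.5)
Your proof is correct and is exactly the paper's argument: the triangle $(\ref{triangle_iii})$ has third term $U'_X\in\CU\subset\CN$, and its connecting morphism is, up to sign and the fixed isomorphisms, $u\circ u'$, which factors through $U_X\in\CU\subset\CN$. Part (2) is the dual, using $V'_X,V_X\in\CV\subset\CN$, and the sign/unit bookkeeping you flag is harmless because $[\CN]$ is an ideal.
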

\begin{proof}
{\rm (1)} is immediate from the definition of $\CL$, since $U_X,U'_X\in\CU\subset\CN$. Dually for {\rm (2)}. 
\end{proof}

\begin{lemma}\label{lem:fully_faithful2}
Let $f\in\CT(X,X')$ be any morphism. The following holds.
\begin{enumerate}
\item Assume $X\in\CT^-$ and $X'\in\CH$. If there is a distinguished triangle $U[-1]\xrightarrow{u}X\xrightarrow{f}X'\to U$ with $U\in\CU$ and $u\in[\CN]$, then $H(f)$ is an isomorphism in 
\color{blue}
$\CH/[\CW]$.
\normalcolor
\item Assume $X\in\CH$ and $X'\in\CT^+$. If there is a distinguished triangle $V\to X\xrightarrow{f}X'\xrightarrow{v}V[1]$ with $V\in\CV$ and $v\in[\CN]$, then $H(f)$ is an isomorphism in 
\color{blue}
$\CH/[\CW]$.
\normalcolor
\end{enumerate}
\end{lemma}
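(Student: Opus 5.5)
We treat part {\rm (1)}; part {\rm (2)} follows by the dual argument, using $(\ )_-$ in place of $(\ )_+$ and the self-duality of $H$ from Lemma~\ref{lem:self-dual_H}. By Lemma~\ref{lem:self-dual_H} we may compute $H$ as $(\ )_+\circ(\ )_-\circ\pi$. On objects of $\CT^-$ the functor $(\ )_-$ is isomorphic to the identity, since it is right adjoint to the inclusion $\CT^-/[\CW]\hookrightarrow\CT/[\CW]$. Because $X\in\CT^-$ and $X'\in\CH\subset\CT^-$, the morphism $H(f)$ is therefore identified, up to isomorphism, with $\overline{f}_+\colon X_+\to X'_+$. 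Since $X'\in\CT^+$, we also have $X'_+\cong X'$ via $\overline{l_{X'}}$. It thus suffices to prove the following: for every $Y\in\CT^+$, precomposition with $\overline{f}$ induces a bijection
\[
(\CT/[\CW])(X',Y)\to(\CT/[\CW])(X,Y).
\]
Once this holds, $(X',\overline{f})$ is a reflection of $X$ along $\CT^+/[\CW]\hookrightarrow\CT/[\CW]$. It then agrees with $(X_+,\overline{l_X})$ up to a unique isomorphism, and that isomorphism is exactly $\overline{f}_+$ composed with the identification $X'_+\cong X'$.

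Injectivity comes directly from Lemma~\ref{lem:T-toT-}{\rm (2)}, which only requires $U\in\CU$ and $Y\in\CT^+$. The issue is surjectivity, because Lemma~\ref{lem:T-toT-}{\rm (3)} assumes $u\in[\CU]$, whereas here we only know $u\in[\CN]$. I would adapt the proof of Lemma~\ref{lem:T-toT-}{\rm (3)} as follows. Take $x\in\CT(X,Y)$ and a triangle $W\xrightarrow{a}Y\to Y'\to W[1]$ with $W\in\CW$ and $Y'\in\CV_1^n$. Write $u=\beta\circ\alpha$ with $\alpha\colon U[-1]\to N$, $\beta\colon N\to X$ and $N\in\CN$.

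The central claim is that, up to the ideal $[\CW]$, $x\circ u$ is zero modulo something that can be lifted through $f$. Since $\CN=\add(\CU\ast\CV)$, we may assume $N\in\CU\ast\CV$ and pick a triangle $U_1\to N\to V_1\to U_1[1]$. The map $U[-1]\to N\to V_1$ vanishes because $\CT(\CU[-1],\CV)=0$. Hence $\alpha$ factors through $U_1\to N$, so $u$ factors through an object of $\CU$, that is, $u\in[\CU]$. This is because $\CT(U[-1],V)=\BE(U,V)=0$ and direct summands cause no trouble after passing to $\add$. The hypothesis then reduces to that of Lemma~\ref{lem:T-toT-}{\rm (3)}, which gives surjectivity of $\CT(f,Y)$ since $X'\in\CH\subset\CT^+$.

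The step I expect to need the most care is this reduction $[\CN]\ni u\Rightarrow u\in[\CU]$ for morphisms out of $\CU[-1]$, especially the handling of $\add$. If the vanishing argument fails at that point, I would instead run the computation of Lemma~\ref{lem:T-toT-}{\rm (3)} directly. There one shows that $x\circ\beta$ restricted along $\alpha$ factors through $W$, using $\CT(\CU[-1],\CV_1^n)=0$ and $\CT(\CU[-1],\CW)=0$, and concludes $x\circ u=0$. The remaining steps are the formal identifications of reflections described above.
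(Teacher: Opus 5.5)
Your argument is correct, but it handles the hypothesis $u\in[\CN]$ differently from the paper.

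\textbf{The paper's route.} The paper never upgrades $u$. It shows that $(X',\overline{f})$ is a reflection of $X$ along $\CH/[\CW]\hookrightarrow\CT^-/[\CW]$, and takes injectivity from \cref{lem:T-toT-}~(2). For surjectivity it proves $g\circ u=0$ for every $g\colon X\to Y$ with $Y\in\CH$:
\begin{itemize}
\item it uses $\Ker H=\CN$ (\cref{cor:Ker_H}) to get $H(u)=0$;
\item it lifts along the reflections of \cref{prop:functor_+}~(1) to see that $g\circ u$ factors through $\CW$;
\item it concludes with $\CT(U[-1],\CW)=0$.
\end{itemize}

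\textbf{Your route.} You note that a morphism $U[-1]\to N$ with $N\in\CU\ast\CV$ factors through the $\CU$-term, because $\CT(U[-1],\CV)\cong\BE^1(\CU,\CV)=0$; passing to $\add$ is harmless. So $u\in[\CU]$, and \cref{lem:T-toT-}~(3) applies verbatim, just as in the proof of \cref{lem:self-dual_H}. Dually, $v\in[\CN]$ forces $v\in[\CV]$ in part (2).

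\textbf{Comparison.} Your route is more elementary, since it does not need \cref{cor:Ker_H}. It also gives a stronger conclusion: $(X',\overline{f})$ is already a reflection of $X$ along $\CT^+/[\CW]\hookrightarrow\CT/[\CW]$, with test objects ranging over all of $\CT^+$. Finally, it shows that the hypothesis $u\in[\CN]$ is no weaker than $u\in[\CU]$ here. The paper's route is more conceptual: it uses only that $H$ annihilates $\CN$, without looking inside objects of $\CN$.

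\textbf{A caution about your fallback.} The vanishing $\CT(\CU[-1],\CV_1^n)=0$ that it relies on is false in general, since $\CT(U[-1],V[n])\cong\BE^{n+1}(U,V)$. For the pair $(t^{\le-1},t^{\ge n})$ this group is $\CT(t^{\le0},t^{\ge0})$, which is typically nonzero. Your main reduction to $[\CU]$ does not depend on it, so the proof stands.
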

\begin{proof}
Since {\rm (2)} can be shown in a dual manner, we only show {\rm (1)}.
Note that we have  $H(X)\cong X_+$.  Since $X'\in\CH$, to show that $H(f)$ is an isomorphism, it suffices to show that the pair 
\color{blue}
$(X',\overline{f})$ is a reflection of $X$ 
\normalcolor
along the inclusion $\CH/[\CW]\hookrightarrow\CT^-/[\CW]$. Namely, it is enough to show that
\[
(\CT^-/[\CW])(\overline{f},Y)\colon(\CH/[\CW])(X',Y)\to(\CT^-/[\CW])(X,Y)
\]
is bijective for all $Y\in\CH$. Injectivity is already shown in Lemma~\ref{lem:T-toT-} {\rm (2)}. Let us show the surjectivity.

In fact, we can show the surjectivity of 
$\CT(f,Y)\colon\CH(X',Y)\to\CT^-(X,Y)$
for all $Y\in\CH$. Let 
\color{blue}
$g\in\CT^-(X,Y)$ 
\normalcolor
be any morphism. It is enough to show $g\circ u=0$.
Note that we have $U[-1]\in\CT^-$. By Proposition~\ref{prop:functor_+} {\rm (1)}, there exist some $u'\in\CT((U[-1])_+,X_+)$ and $g'\in\CT(X_+,Y)$ which make
\[
\begin{tikzpicture}[>=stealth]
\node (1) at (-2,0.8) {$U[-1]$};
\node (2) at (0,0.8) {$X$};
\node (3) at (-2,-0.8) {$(U[-1])_+$};
\node (4) at (0,-0.8) {$X_+$};
\node (5) at (1.8,-0.8) {$Y$};
\draw[->] (1) -- node[above,font=\scriptsize] {$u$} (2);
\draw[->] (1) -- node[left,font=\scriptsize] {$l_{U[-1]}$} (3);
\draw[->] (2) -- node[left,font=\scriptsize] {$l_X$} (4);
\draw[->] (3) -- node[below,font=\scriptsize] {$u'$} (4);
\draw[->] (2) -- node[above,font=\scriptsize] {$g$} (5);
\draw[->] (4) -- node[below,font=\scriptsize] {$g'$} (5);
\end{tikzpicture}
\]
commutative in $\CT$. By \darkblueedit{the definition of the functor} $(\ )_+$, we have $\overline{g'}=\overline{g}_+$ and $\overline{u'}=\overline{u}_+$ in $\CH/[\CW]$. Since $u\in [\CN]$ by assumption, we have $H(u)=0$ by Corollary~\ref{cor:Ker_H}.
 Thus we have $H(g'\circ u'\circ l_{U[-1]})=H(g\circ u)=0$, which is equivalent to $\overline{g'\circ u'}=0$. 
Thus $g'\circ u'$ factors through some $W\in\CW$, hence so does $g\circ u$.
Since $\CT(U[-1],W)=0$, we obtain $g\circ u=0$, as desired.
\end{proof}

\begin{proposition}\label{prop:fully_faithful}
Let $X,Y\in\CT$ be any pair of objects. For any $f\in\CT(X,Y)$, the following are equivalent.
\begin{enumerate}
\item $f\in\SS_{\CN}$.
\item $H(f)$ is an isomorphism in $\CH/[\CW]$.
\end{enumerate}
\end{proposition}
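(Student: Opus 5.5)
For (1)$\Rightarrow$(2), I will use the description $\SS_{\CN}=\CR\circ\CL$ from Definition~\ref{def:LRS}. It then suffices to show that $H$ sends every morphism of $\CL$, and dually of $\CR$, to an isomorphism. Take $f\in\CL$, so there is a distinguished triangle $X\xrightarrow{f}Y\xrightarrow{g}N\xrightarrow{h}X[1]$ with $N\in\CN$ and $h[-1]\in[\CN]$. Apply $H$ to the associated long sequence. Since $H$ kills objects of $\CN$ by Corollary~\ref{cor:Ker_H}, and morphisms in $[\CN]$ factor through such objects, both $H(g)$ and $H(h[-1])$ vanish.

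To get invertibility of $H(f)$, I will not rely on exactness of $H$, which is not yet available. Instead I will reduce to Lemma~\ref{lem:fully_faithful2}. First, replace $X$ and $Y$ by their reflections. By Lemma~\ref{lem:fully_faithful1}, $l_X,r_X$ lie in $\CL$ and $\CR$, and $H(l_X)$, $H(r_X)$ are isomorphisms because $(\ )_\pm$ are reflections and coreflections. So I may assume $X,Y\in\CH$ up to composing with such maps. Concretely, I write $N$ via a triangle $U\to N\to V\to U[1]$ and use the octahedral axiom to factor $f$ as $X\to X''\to Y$. Here the first map has cone in $\CU$ and connecting morphism in $[\CN]$, and the second has cocone in $\CV$, as required in Lemma~\ref{lem:fully_faithful2}(1),(2). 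The condition $h[-1]\in[\CN]$ is what makes the relevant connecting maps lie in $[\CN]$. Checking that the middle object lies in $\CT^-$ and $\CT^+$ as the lemma requires will use Corollary~\ref{cor:TUT} and its dual.

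For (2)$\Rightarrow$(1), my plan is to build, for arbitrary $f\colon X\to Y$, a commutative square relating $f$ to $f':=(\ )$ applied in $\CH$. Using Lemma~\ref{lem:self-dual_H}, I pass from $f$ to $(f_+)_-$, realized in $\CT$ by a morphism $\tilde f\colon\tilde X\to\tilde Y$ with $\tilde X,\tilde Y\in\CH$. This comes with morphisms $X\leftarrow\cdot\to\tilde X$ whose legs lie in $\CL$ or $\CR$ by Lemma~\ref{lem:fully_faithful1} and the argument above, and hence in $\SS_{\CN}$. Since $\SS_{\CN}$ should satisfy 2-out-of-3 (it is the class of weak equivalences for the localization by the thick subcategory $\CN$, \cite{NOS22,Oga24}), it suffices to show that $\tilde f\in\SS_{\CN}$ whenever $\tilde f$ is a morphism in $\CH$ with $\overline{\tilde f}$ an isomorphism in $\CH/[\CW]$. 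In that case there is $g$ with $g\tilde f-1$ and $\tilde f g-1$ factoring through $\CW$. From this I will show that $\tilde f$ becomes a split-type isomorphism modulo $\CW$. Explicitly, $\tilde f\oplus$ (a map to $W$) is a split monomorphism, so $\tilde f$ is a composite of a split inclusion $\tilde X\to\tilde Y\oplus W$ with cone in $\CW\subset\CN$ and a projection with kernel in $\CW$. Both of these lie in $\CL$ and $\CR$ respectively, since split triangles have zero connecting maps.

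The main obstacle is the factorization step in (1)$\Rightarrow$(2): showing that a general $\CL$-morphism can be broken into pieces to which Lemma~\ref{lem:fully_faithful2} applies, while keeping the hypotheses $X\in\CT^-$ and $X'\in\CH$. If that bookkeeping fails, I will instead try to prove directly that $H(f)$ is an isomorphism for $f\in\CL$. The idea is to check that $\overline{f}$ induces a bijection on $(\CT/[\CW])(-,Z)$ for $Z\in\CH$ after applying $(\ )_+$ and $(\ )_-$. Surjectivity and injectivity would use $h[-1]\in[\CN]$ together with the vanishing $\CT(\CU[-1],\CW)=0$, exactly as in the proofs of Lemma~\ref{lem:T-toT-} and Lemma~\ref{lem:fully_faithful2}.
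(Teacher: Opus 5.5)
The direction (2)$\Rightarrow$(1) is fine and follows the paper: you reduce to $X,Y\in\CH$ by 2-out-of-3 and then split off a $\CW$-summand. In your variant, add one justification: the complement $C$ of $\tilde X$ in $\tilde Y\oplus W$ lies in $\CW$. This holds because the split monomorphism becomes invertible in $\CH/[\CW]$, so $\id_C\in[\CW]$, and $\CW$ is closed under summands.

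The gap is in (1)$\Rightarrow$(2), at exactly the step you flagged. Your factorization of $f\in\CL$ needs three things at once: $X\in\CT^-$, $Y\in\CT^+$, and the cone $N$ in $\CU\ast\CV[1]$. The last comes from $\CV\subset\CW\ast\CV[1]$; the triangle $U\to N\to V\to U[1]$ you wrote gives a second map with cocone $V[-1]$, not an object of $\CV$. ``Composing with reflections'' does not produce these three conditions:
\begin{itemize}
\item Post-composing with $l_Y$ keeps you in $\CL$. Precomposing with $r_X$ does not: by the octahedral axiom, the cone of $f\circ r_X$ lies only in $\CV[1]\ast\CN$.
\item Passing to $(f_+)_-$ gives, via 2-out-of-3 (which you did not invoke here), only a morphism of $\SS_{\CN}=\CR\circ\CL$. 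Its factorization runs through an uncontrolled middle object $M$, so you are back to an $\CL$- or $\CR$-map with one end arbitrary.
\item $N$ is only in $\add(\CU\ast\CV)$. Adding a complement $K\in\CN$ changes the target to $Y\oplus K\notin\CT^+$ (or the source to $X\oplus K[-1]$). So this enlargement must happen before the reduction, and the reduction must then be shown to keep the cone in $\CU\ast\CV$.
\end{itemize}

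This is the missing work, and it is where the paper's proof spends its effort. After reducing $f$ itself to $X,Y\in\CH$, it writes $f=r\circ l$ and treats $r\colon M\to Y$ in $\CR$ with only $Y\in\CH$ known (and $l$ dually).
\begin{itemize}
\item It replaces $N,M$ by $N\oplus K,M\oplus K$ so that $N\in\CU\ast\CV$; this is harmless since $H(K)=0$.
\item It then replaces $r$ by $r\circ r_M\in\CR$, so that $M\in\CT^-$.
\item It checks with two octahedra, using $\CT(\CU,\CV[1])=0$ and extension-closedness of $\CV$, that the new cocone is again in $\CU\ast\CV$.
\end{itemize}
Only then does a triangle $U[-1]\to N\to V\to U$ factor $r$ through some $Y'\in\CH$ so that \cref{lem:fully_faithful2} applies to both pieces.

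Your fallback does not replace this. For $f\in\CL$, the morphism $f_+$ is generally not invertible (e.g.\ $0\to V$ with $V\in\CV$, $V\notin\CU$). So the one-sided Hom arguments of \cref{lem:T-toT-} and \cref{lem:fully_faithful2}, which only handle cones in $\CU$ or cocones in $\CV$, do not transfer as stated.
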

\begin{proof}
By Proposition~\ref{prop:functor_+} {\rm (1)} and its dual, we have a commutative diagram
\begin{equation}\label{dia_lr}
\begin{tikzcd}[row sep=0.8cm]
X \arrow{d}[swap]{f}\arrow{r}{l_X}
&X_+ \arrow{d}{f'}
&(X_+)_- \arrow{l}[swap]{r_{X_+}}\arrow{d}{f^{\prime\prime}}
\\
Y \arrow{r}[swap]{l_Y}
&Y_+ 
&(Y_+)_- \arrow{l}{r_{Y_+}}
\end{tikzcd}
\end{equation}
in $\CT$. \darkblueedit{By the definitions of the functors} $(\ )_+$ and $(\ )_-$, we have $\overline{f'}=\overline{f}_+$ and $\overline{f^{\prime\prime}}=\overline{f'}_-=H(f)$ in $\CT/[\CW]$.
By \cref{lem:fully_faithful1}, the horizontal morphisms in \eqref{dia_lr} are in $\SS_{\CN}$.
Since $\SS_{\CN}$ satisfies the $2$-out-of-$3$ condition with respect to the composite of morphisms by \cite[Corollary~2.18]{Oga24}, we have $f\in\SS_{\CN}$ if and only if $f^{\prime\prime}\in\SS_{\CN}$.
On the other hand, by the definition of the functor $H$ and by \cref{lem:self-dual_H}, the horizontal arrows in \eqref{dia_lr} are sent to isomorphisms in $\CH/[\CW]$ by the functor $H$. Thus $H(f)$ is an isomorphism in $\CH/[\CW]$ if and only if $H(f^{\prime\prime})$ is so.
Therefore, by replacing $X$ and $Y$ with $(X_+)_-$ and $(Y_+)_-$ if necessary, we may assume $X,Y\in\CH$ from the beginning.

$(1)\Rightarrow(2)$ 
Assume $X,Y\in\CH$.
\normalcolor
Since $\SS_{\CN}=\CR\circ\CL$, \darkblueedit{the morphism} $f$ can be written as a composite of $X\xrightarrow{l}M\xrightarrow{r}Y$ for some $M$, with $l\in\CL$ and $r\in\CR$. It suffices to show that both $H(l)$ and $H(r)$ are isomorphisms in $\CH/[\CW]$. As a dual argument works for $H(l)$, we shall only show that $H(r)$ is an isomorphism in $\CH/[\CW]$ for any morphism $r\colon M\to Y$ in $\CR$ with $Y\in\CH$.

Let $r\colon M\to Y$ in $\CR$ 
\color{blue}
be
\normalcolor
any morphism with $Y\in\CH$. 
\color{blue}
As in Definition~\ref{def:LRS}, there is a distinguished triangle
\begin{equation}\label{triangle_NMY}
N\xrightarrow{e}M\xrightarrow{r}Y\xrightarrow{g}N[1]
\end{equation}
with $N\in\CN$ and $g\in [\CN]$. 
Since $\CN=\add(\CU\ast\CV)$, there exists $K\in\CN$ such that $N\oplus K\in\CU\ast\CV$.
Then, from~$(\ref{triangle_NMY})$, we obtain a distinguished triangle
\[
N\oplus K\xrightarrow{e\oplus\id_K}M\oplus K\xrightarrow{[r\ 0]}Y\xrightarrow{g'} (N\oplus K)[1],
\]
in which $g'\in [\CN]$. Since $H(K)\cong 0$ in $\CH/[\CW]$ by Corollary~\ref{cor:Ker_H}, if $H([r\ 0])$ is an isomorphism in $\CH/[\CW]$, then so is $H(r)$. Thus, replacing $N,M$ with $N\oplus K,M\oplus K$ respectively, we may assume that $N\in\CU\ast\CV$ holds in $(\ref{triangle_NMY})$ from the beginning.

Then, take a distinguished triangle $U_0\xrightarrow{u_0}N\xrightarrow{v_0}V_0\to U_0[1]$ with $U_0\in\CU$ and $V_0\in\CV$.
If we take $r_M\colon M_-\to M$ in Definition~\ref{def:functor_-}, then $r_M\in\CR$ by Lemma~\ref{lem:fully_faithful1} {\rm (2)}.
By Lemma~\ref{lem:self-dual_H}, we know that $H(r_M)$ is an isomorphism in $\CH/[\CW]$. Moreover, since $\CR$ is closed by compositions, we have $r\circ r_M\in\CR$. 
Complete $r\circ r_M$ into a distinguished triangle
\[
Z\to M_-\xrightarrow{r\circ r_M}Y\xrightarrow{g'}Z[1].
\]
By $r\circ r_M\in\CR$, we have $g'\in[\CN]$.
By the octahedron axiom, we obtain a commutative diagram
\[
\begin{tikzpicture}[>=stealth]
\node (1) at (-0.98,1.8) {$M_-$};
\node (2) at (-1.19,0) {$M$};
\node (3) at (-1.4,-1.7) {\darkblueedit{$V'_M[1]$}};
\node (4) at (0,0) {$Y$};
\node (5) at (0.45,-0.9) {$Z[1]$};
\node (6) at (2.4,0) {$N[1]$};
\draw[->] (1) -- node[left,font=\scriptsize] {$r_M$} (2);
\draw[->] (1) -- node[right,font=\scriptsize] {$r\circ r_M$} (4);
\draw[->] (2) -- (3);
\draw[->] (2) -- node[below,font=\scriptsize] {$r$} (4);
\draw[->] (3) -- (5);
\draw[->] (4) -- node[left,font=\scriptsize] {$g'$} (5);
\draw[->] (4) -- node[above,font=\scriptsize] {$g$} (6);
\draw[->] (5) -- node[below,font=\scriptsize] {$z[1]$} (6);
\end{tikzpicture}
\]
in $\CT$, in which $V'_M\to Z\xrightarrow{z}N\xrightarrow{z'}V'_M[1]$ is a distinguished triangle. Since $z'\circ u_0=0$ by $\CT(U_0,V'_M[1])=0$, by the octahedron axiom we obtain a commutative diagram
\[
\begin{tikzpicture}[>=stealth]
\node (1) at (0.98,1.8) {$U_0$};
\node (2) at (1.19,0) {$N$};
\node (3) at (1.4,-1.7) {$V_0$};
\node (4) at (0,0) {$Z$};
\node (5) at (-0.45,-0.9) {$V_0'$};
\node (6) at (-2.4,0) {$V'_M$};
\draw[->] (1) -- node[right,font=\scriptsize] {$u_0$} (2);
\draw[->] (1) -- node[right,font=\scriptsize] {} (4);
\draw[->] (2) -- node[right,font=\scriptsize] {$v_0$} (3);
\draw[->] (4) -- node[below,font=\scriptsize] {} (2);
\draw[->] (5) -- node[below,font=\scriptsize] {} (3);
\draw[->] (4) -- node[left,font=\scriptsize] {} (5);
\draw[->] (6) -- node[above,font=\scriptsize] {} (4);
\draw[->] (6) -- node[below,font=\scriptsize] {} (5);
\end{tikzpicture}
\]
in $\CT$ for some $V_0'\in \CT$, in which 
$V'_M\to V'_0\to V_0\to V'_M[1]$ and $U_0\to Z\to V'_0\to U_0[1]$ are distinguished triangles. Since $\CV\subset\CT$ is extension-closed, we have $V'_0\in\CV$.

Thus, by replacing $r$ with $r\circ r_M$ in \eqref{triangle_NMY} if necessary, we may assume $M\in\CT^-$ and $N\in\CU\ast\CV$ from the beginning.
\normalcolor
In this case, since $\CU\ast\CV\subset(\CU[-1]\ast\CW)\ast\CV=\CU[-1]\ast\CV$, there exists a distinguished triangle
$
U[-1]\xrightarrow{u}N\xrightarrow{v}V\to U
$
with $U\in\CU$ and $V\in\CV$. By the octahedron axiom, we obtain a commutative diagram in $\CT$
\[
\begin{tikzcd}[row sep=0.8cm]
U[-1]\arrow[equal]{d}{}\arrow{r}{u}
&N \arrow{r}{v}\arrow{d}[swap]{e}
&V \arrow{r}{}\arrow{d}{}
&U \arrow[equal]{d}{}
\\
U[-1] \arrow{r}[swap]{e\circ u}
&M \arrow{r}{y}
\arrow{d}[swap]{r}
&Y' \arrow{r}{}\arrow{d}{y'}
&U \arrow{d}{}\\
{}
&Y \arrow[equal]{r}{}\arrow{d}[swap]{g}
&Y \arrow{r}[swap]{g}\arrow{d}{g'}
&N[1]
\\
{}
&N[1] \arrow{r}
&V[1]
\end{tikzcd}
\]
for some $Y'\in\CT$, in which $U[-1]\xrightarrow{e\circ u}M\xrightarrow{y}Y'\to U$ and $V\to Y'\xrightarrow{y'}Y\xrightarrow{g'}V[1]$ are distinguished triangles. Since $M\in\CT^-$, we have $Y'\in\CT^-$ by Lemma~\ref{lem:T-toT-} {\rm (1)}. Dually, we have $Y'\in\CT^+$ since $Y\in\CH$. Thus $Y'\in\CT^-\cap\CT^+=\CH$ follows.
By Lemma~\ref{lem:fully_faithful2} {\rm (1)}, we see that $H(y)$ is an isomorphism in $\CH/[\CW]$. Since $g\in [\CN]$, we have $g'\in [\CN]$. Therefore, by Lemma~\ref{lem:fully_faithful2} {\rm (2)}, we see that $H(y')$ is also an isomorphism in $\CH/[\CW]$. Thus, $H(r)=H(y')\circ H(y)$ becomes an isomorphism in $\CH/[\CW]$.

$(2)\Rightarrow(1)$ 
Assume $X,Y\in\CH$. By assumption $H(f)$ is an isomorphism in $\CH/[\CW]$, which is equivalent to that $\overline{f}$ is an isomorphism in $\CH/[\CW]$. Let $g\in\CT(Y,X)$ \darkblueedit{be a morphism such that} $\overline{g}$ gives the inverse of $\overline{f}$ in $\CH/[\CW]$. By $\overline{f}\circ\overline{g}=\id_Y$, there exist some $W\in\CW$ and \darkblueedit{$w_1\in\CT(Y,W),\;w_2\in\CT(W,Y)$} such that $\id_Y-f\circ g=w_2\circ w_1$ in $\CT$.
Complete $[f\ w_2]\colon X\oplus W\to Y$ into a split distinguished triangle
\[
K\xrightarrow{\begin{bsmallmatrix}
k_1\\
k_2
\end{bsmallmatrix}}
X\oplus W\xrightarrow{[f\ w_2]}Y\xrightarrow{0}K[1]
\]
in $\CT$. By $f\circ k_1+w_2\circ k_2=0$ in $\CT$, we obtain $\overline{k_1}=\overline{g}\circ\overline{f}\circ\overline{k_1}=-\overline{g}\circ\overline{w_2}\circ\overline{k_2}=0$ in $\CT/[\CW]$. Thus $\begin{bsmallmatrix}
k_1\\
k_2
\end{bsmallmatrix}\in [\CW]\subset [\CN]$, hence $[f \ w_2]\in\SS_{\CN}$ follows.
Since $\begin{bsmallmatrix}
1\\
0
\end{bsmallmatrix}\colon X\to X\oplus W$ belongs to $\CL$, we obtain $f=[f \ w_2]\circ\begin{bsmallmatrix}
1\\
0
\end{bsmallmatrix}\in\SS_{\CN}$ as desired.
\end{proof}

\subsubsection{Construction of the induced extriangulated structure via subfactors}

In \cref{cor:proj-inj_in_H}, we equip the heart $\CH/[\CW]$ with the natural extriangulated structure induced from $\CT$ and denote it by $(\BF,\ft)$\darkblueedit{.}

\normalcolor

\begin{lemma}\label{lem:merged}
The following holds.
\begin{enumerate}
\item For any $U\in\CU$ and $X\in\CT^+$, we have $\BE^L_{\CN}(U,X)=0$.
\item Let $X\xrightarrow{f}Y\xrightarrow{g}Z\overset{h}{\dashrightarrow}$ be any $\fs^L_{\CN}$-triangle with $Z\in\CT^+$. Then, $X\in\CT^+$ if and only if $Y\in\CT^+$.
\end{enumerate}
\end{lemma}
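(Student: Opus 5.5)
For (1), I will take $h\in\BE^L_{\CN}(U,X)$, i.e. $h\colon U\to X[1]$, and use $U\in\CU\subset\CN$. Applying the defining condition of $\BE^L_\CN$ with $x=\id_U$ gives $h\in[\CN[1]]$, so $h$ factors as $U\xrightarrow{a}N[1]\xrightarrow{b}X[1]$ with $N\in\CN$. After adding a complement we may take $N\in\CU\ast\CV$. Then $b[-1]\colon N\to X$ with $X\in\CT^+$. By Lemma~\ref{lem:equation_T-T+}, $X\in\CT^+=\CW_0^{n-1}\ast\CV[n]$.

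My first attempt will be to show $\CT(U,X[1])=0$ directly. We have $X[1]\in\CV_1^{n+1}$, and $\CT(U,\CV[i])=0$ for $1\le i\le n$, so only the $\CV[n+1]$ layer obstructs this. That obstruction is exactly what the factorization through $N[1]$ should kill. Write $X[1]\in\CW_1^{n}\ast\CV[n+1]$ via a triangle $A\to X[1]\xrightarrow{p}V[n+1]\to A[1]$ with $A\in\CW_1^n\subset\CV_1^n$. Since $\CT(U,A)=0$, it suffices to show $p\circ h=0$. Now $p\circ h=(p\circ b)\circ a$, and $p\circ b\colon N[1]\to V[n+1]$. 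Using $N\in\CU\ast\CV$, the composite splits into a part on $\CU[1]\to\CV[n+1]$, i.e. $\BE^n(\CU,\CV)$, which vanishes, and a part on $\CV[1]\to\CV[n+1]$. The latter is not obviously zero, so I will instead precompose with $a\colon U\to N[1]$. The piece of $a$ landing in $\CV[1]$ vanishes because $\CT(U,\CV[1])=0$, so $a$ factors through $\CU[1]$, and then $\CU[1]\to V[n+1]$ is zero since $n\ge1$. This gives $h=0$. Controlling this two-step factorization through the triangle for $N$ is the main technical point, and I expect it to go through with the usual $\CT(U,-)$ exact sequence.

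For (2), I will use the distinguished triangle $X\xrightarrow{f}Y\xrightarrow{g}Z\xrightarrow{h}X[1]$ with $h\in\BE^L_\CN(Z,X)$. If $X\in\CT^+$, then $Y\in\CT^+\ast\CT^+$, and I need extension-closedness of $\CT^+$. By the dual of Proposition~\ref{prop:cotors_1-n} and Lemma~\ref{lem:equation_T-T+}, $\CT^+=\CV_0^n$ is the right half of the cotorsion pair $(\CU_{-n}^{-1},\CV)$ shifted, hence extension-closed. Concretely, $\CT^+[-n]=\CV_{-n}^0$ is extension-closed by the dual statement, so this direction needs no use of $\BE^L_\CN$.

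For the converse, suppose $Y\in\CT^+$. I will show $\CT(P,X)=0$ for all $P\in\CU_{-n}^{-1}$, or equivalently $X\in\CT^+$. By Lemma~\ref{lem:T-_*_<1>}, $\CT^+={}^{\perp}$-type: $\CT^+=(\CU_{-n}^{-1})^{\perp}$ follows from the cotorsion pair $(\CU_{-n}^{-1},\CV_0^n)$ obtained by shifting Proposition~\ref{prop:cotors_1-n} with $k=n-1$. Given $p\colon P\to X$, I reduce to $P=U[-i]$ with $1\le i\le n$ by filtering. From the exact sequence $\CT(P,Z[-1])\to\CT(P,X)\to\CT(P,Y)=0$, any $p$ lifts as $p=h[-1]\circ z$ with $z\colon U[-i]\to Z[-1]$. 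For $i\ge2$, we have $U[-i+1]\in\CU_{-n+1}^{-1}$ and $Z\in\CT^+$, so $z=0$. For $i=1$, $z[1]\colon U\to Z$ and $h\circ z[1]\in\BE^L_\CN(U,X)$, which is zero by (1) once $X\in\CT^+$ is known; but that is circular. I will therefore instead apply the $\BE^L_\CN$ condition to get $h\circ z[1]\in[\CN[1]]$, and rerun the argument of (1) against $X$, now only knowing $Y,Z\in\CT^+$. I expect this case $i=1$ to be the main obstacle, and would handle it by factoring through $N\in\CU\ast\CV$ as before.
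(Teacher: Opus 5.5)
Your part~(1) is correct and is essentially the paper's argument. You factor $h$ through $\CU[1]$ and split off the top layer $\CV[n+1]$ of $X[1]$, whereas the paper splits off the bottom layer $\CW[1]$ using $\CT^+=\CW\ast\CV_1^n$; either works.

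Part~(2), however, has genuine gaps. For $X\in\CT^+\Rightarrow Y\in\CT^+$ you claim that $\CT^+$ is extension-closed in $\CT$. This is false in general, and \cref{prop:cotors_1-n} and its dual do not give it. They only make $\CV_0^k$ and $\CV_k^{n-1}$ extension-closed for $k\le n-1$, and the shifted pair $(\CU_{-n}^{-1},\CV[-1])$ concerns $\CV$, not $\CT^+=\CV_0^n$. For a counterexample, take $\CT$ to be the cluster category of type $A_2$, with indecomposables $X_0,\dots,X_4$. Here indices are taken mod~$5$, $X_i[1]=X_{i+3}$, $\CT(X_i,X_j)\neq0$ iff $j\in\{i,i+1\}$, and there are triangles $X_i\to X_{i+1}\to X_{i+2}\to X_{i+3}$. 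The pair $(\add(X_0\oplus X_1\oplus X_2),\add X_1)$ is a cotorsion pair ($n=1$) with $\CT^+=\add(X_1\oplus X_4)$. The triangle $X_4\to X_0\to X_1\to X_2$ has both outer terms in $\CT^+$, but $X_0\notin\CT^+$. So the hypothesis $h\in\BE^L_\CN$ cannot be discarded.

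The missing idea is the paper's. Choose $W\xrightarrow{w}Z\to R[1]\to W[1]$ with $W\in\CW$ and $R\in\CV_0^{n-1}$. Then $h\circ w\in\BE^L_\CN(W,X)=0$ by part~(1), and this is exactly where $X\in\CT^+$ is used. Hence $w$ lifts along $g$, and the octahedron yields triangles $W\to Y\to Y'\to W[1]$ and $X\to Y'\to R[1]\to X[1]$, to which the dual of \cref{cor:TUT} applies.

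For the converse, your identification $\CT^+=(\CU_{-n}^{-1})^{\perp}$ is wrong. In fact $(\CU_{-n}^{-1})^{\perp}=\CV$; for a $t$-structure this is $t^{\ge n}$ rather than $t^{\ge 0}$. The correct description is $\CT^+=\CW\ast\CV_1^n$, i.e.\ $Y\in\CT^+$ iff every morphism from an object of $\CU$ to $Y$ factors through $\CW$.

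More seriously, the circularity you hit at $i=1$ cannot be removed, because this implication is false. Take $\Db(\mathrm{mod}\,k)$ with $n=1$, $\CU=\add\{k[-i]\mid i\ge1\}$ and $\CV=\add\{k[-i]\mid i\le1\}$. Then $\CN=\CT$, hence $\BE^L_\CN=\BE$, and $\CT^+=\add\{k[-i]\mid i\le1\}$. The $\fs^L_\CN$-triangle $k[-2]\to0\to k[-1]\xrightarrow{\cong}k[-1]$ has $Y,Z\in\CT^+$ but $X=k[-2]\notin\CT^+$.

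The paper's own proof has the same defect: it applies part~(1) to $h\circ w\in\BE^L_\CN(W,X)$ without knowing $X\in\CT^+$. So it really establishes only the forward implication, which is the one used in \cref{prop:H_is_extension-closed}.
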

\begin{proof}
{\rm (1)} Let $h\in\BE^L_{\CN}(U,X)$ be any element. Since $U\in\CU\subset\CN$, we have $h\in[\CN[1]]$ by the definition of $\BE^L_{\CN}$. This implies that $h$ factors through some object  in $\CU[1]\ast\CV[1]$. By $\CT(U,\CV[1])=0$, it follows that $h$ factors through some $U'[1]\in\CU[1]$, hence there exist morphisms $h_1\in \CT(U,U'[1]),h_2\in\CT(U'[1],X[1])$ such that $h=h_2\circ h_1$. By $X\in\CT^+=\CW\ast\CV_1^{n}$, there exists a distinguished triangle $W[1]\xrightarrow{w} X[1]\to R[1]\to W[2]$ with $W\in\CW$ and $R\in\CV_1^n$. Since $\CT(U'[1],R[1])=0$, there exists $h_3\in\CT(U'[1],W[1])$ such that $h_2=w\circ h_3$. Then, since $\CT(U,W[1])=0$, it  follows $h=h_2\circ h_1=w\circ h_3\circ h_1=w\circ 0=0$. 

{\rm (2)} By $Z\in\CT^+=\CW\ast\CV_1^{n}$, there exists a distinguished triangle $W\xrightarrow{w}Z\to R[1]\to W[1]$ in $\CT$ with $W\in\CW$  and $R\in\CV_0^{n-1}$. Since $h\circ w\in\BE^L_{\CN}(W,X)$, we have $h\circ w=0$ by {\rm (1)}. Thus $w$ factors through $g$, hence by the octahedron axiom we obtain a commutative diagram 
\[
\begin{tikzpicture}[>=stealth]
\node (1) at (0.98,1.8) {$W$};
\node (2) at (1.19,0) {$Z$};
\node (3) at (1.4,-1.7) {$R[1]$};
\node (4) at (0,0) {$Y$};
\node (5) at (-0.45,-0.9) {$Y'$};
\node (6) at (-2.4,0) {$X$};
\draw[->] (1) -- node[right,font=\scriptsize] {$w$} (2);
\draw[->] (1) -- node[right,font=\scriptsize] {} (4);
\draw[->] (2) -- node[left,font=\scriptsize] {} (3);
\draw[->] (4) -- node[below,font=\scriptsize] {$g$} (2);
\draw[->] (5) -- node[below,font=\scriptsize] {$f'$} (3);
\draw[->] (4) -- node[left,font=\scriptsize] {} (5);
\draw[->] (6) -- node[above,font=\scriptsize] {$f$} (4);
\draw[->] (6) -- node[below,font=\scriptsize] {} (5);
\end{tikzpicture}
\]
in $\CT$ for some $Y'\in \CT$, in which $W\to Y\to Y'\to W[1]$ and $X\to Y'\to R[1]\to X[1]$ are distinguished triangles. By the dual of Corollary~\ref{cor:TUT}, we have $X\in\CT^+$ if and only if $Y'\in\CT^+$ if and only if $Y\in\CT^+$.
\end{proof}

\normalcolor
\begin{proposition}\label{prop:H_is_extension-closed}
We have the following assertions.
\begin{enumerate}[label=\textup{(\arabic*)}]
\item 
$\CT^+$ is extension-closed in $\CT_\CN^L=(\CT,\BE_\CN^L,\fs_\CN^L)$.
\item 
$\CT^-$ is extension-closed in $\CT_\CN^R=(\CT,\BE_\CN^R,\fs_\CN^R)$.
\item 
$\CH$ is extension-closed in $\CT_\CN=(\CT,\BE_\CN,\fs_\CN)$.
\end{enumerate}
In particular, $\CH$ admits an extriangulated structure induced from $\CT_\CN$.
\end{proposition}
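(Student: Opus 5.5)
(1) follows directly from Lemma~\ref{lem:merged}{\rm (2)}. Let $X\xrightarrow{f}Y\xrightarrow{g}Z\overset{h}{\dashrightarrow}$ be an $\fs^L_{\CN}$-triangle with $X,Z\in\CT^+$. Since $Z\in\CT^+$, Lemma~\ref{lem:merged}{\rm (2)} says that $X\in\CT^+$ if and only if $Y\in\CT^+$. As $X\in\CT^+$ by assumption, we get $Y\in\CT^+$. No further argument is needed; the substantive work was done in Lemma~\ref{lem:merged}, which used $\BE^L_{\CN}(\CU,\CT^+)=0$ to lift $W\to Z$ through $g$ and then the dual of Corollary~\ref{cor:TUT}.

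(2) is the dual statement, and I would prove it by dualizing Lemma~\ref{lem:merged}. The dual of Lemma~\ref{lem:merged}{\rm (1)} should read $\BE^R_{\CN}(X,V)=0$ for $V\in\CV$ and $X\in\CT^-$. The argument runs as follows. Take $h\colon X\to V[1]$ in $\BE^R_{\CN}(X,V)$. Then $h[-1]\colon X[-1]\to V$, and composing with $\id_V$ (note $V\in\CN$) shows $h[-1]\in[\CN[-1]]$. So $h[-1]$ factors through $\CU[-1]\ast\CV[-1]$, and by $\CT(\CU[-1],\CV)=0$ it in fact factors through $\CV[-1]$. Next use $X[-1]\in\CT^-[-1]=\CU_{-n}^{-1}[-1]\ast\CW[-1]$ (Lemma~\ref{lem:equation_T-T+}) together with the vanishings $\CT(\CU_{-n-1}^{-2},\CV[-1])=0$ and $\CT(\CW[-1],V)=0$ to conclude $h=0$. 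The dual of Lemma~\ref{lem:merged}{\rm (2)} then follows by taking the triangle $L\to X\to W\to L[1]$ with $L\in\CU_{-n}^{-1}$ and $W\in\CW$, applying the octahedron, and invoking Corollary~\ref{cor:TUT}. Alternatively, I would simply note that everything is formally dual: passing to $\CT^{\op}$ swaps $\CU$ and $\CV$, $\CT^-$ and $\CT^+$, and $\BE^L_{\CN}$ and $\BE^R_{\CN}$.

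(3) Since $\BE_{\CN}=\BE^L_{\CN}\cap\BE^R_{\CN}$, every $\fs_{\CN}$-triangle is simultaneously an $\fs^L_{\CN}$-triangle and an $\fs^R_{\CN}$-triangle (same underlying distinguished triangle). Hence if $X,Z\in\CH=\CT^-\cap\CT^+$, parts (1) and (2) give $Y\in\CT^+$ and $Y\in\CT^-$, so $Y\in\CH$. The final claim then follows from the standard fact that an extension-closed subcategory of an extriangulated category inherits an extriangulated structure by restriction (\cite{NP19}). The only step requiring care is the dual vanishing in (2), specifically tracking the shifts in $\CT^-=\CU[-n]\ast\CW_{-n+1}^0$.
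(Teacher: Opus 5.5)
Your proof is correct and follows the paper's route exactly: (1) is read off from Lemma~\ref{lem:merged}(2), (2) is the formal dual, and (3) combines the two using $\BE_{\CN}=\BE^L_{\CN}\cap\BE^R_{\CN}$. You only need the implication $X\in\CT^+\Rightarrow Y\in\CT^+$ of Lemma~\ref{lem:merged}(2), which is precisely the case where its proof can apply Lemma~\ref{lem:merged}(1) to $X$; your explicit dualization of Lemma~\ref{lem:merged}(1), via $\CT^-=\CU_{-n}^{-1}\ast\CW$, $\CT(\CU_{-n}^{-1},\CV)=0$ and $\CT(\CW,\CV[1])=0$, is also correct.
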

\begin{proof}

{\rm (1)} follows from \cref{lem:merged} {\rm (2)}.
\normalcolor
The second item (2) follows dually, and (3) is a combination of (1) and (2).
\end{proof}

\begin{corollary}\label{cor:proj-inj_in_H}
The subcategory $\CW$ consists of projective-injective objects in $(\CH,\BE_\CN|_\CH,\fs_\CN|_\CH)$.
This gives a natural extriangulated structure on the heart $\CH/[\CW]$, which we will denote by $(\BF,\ft)$ in the sequel.
\normalcolor
\end{corollary}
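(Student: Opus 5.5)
The plan is to show that every $W\in\CW$ is projective in $(\CH,\BE_\CN|_\CH,\fs_\CN|_\CH)$, to obtain injectivity by the dual argument, and then to invoke the general result that an extriangulated category modulo an ideal generated by projective-injective objects inherits an extriangulated structure (\cite[Proposition~3.30]{NP19}). First, $\CW\subset\CH$ holds by \cref{lem:inclusions_for_n-CT}(2): indeed $\CW\subset\CU\subset\CT^-$ and $\CW\subset\CV\subset\CT^+$.

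For projectivity it suffices to show that $\BE_\CN(W,X)=0$ for every $W\in\CW$ and every $X\in\CH$. In an extriangulated category, an object $P$ with $\BE(P,-)=0$ on the relevant objects is projective, because every deflation then splits after pulling back along morphisms from $P$. The vanishing itself is immediate from \cref{lem:merged}(1). Since $\BE_\CN\subset\BE^L_\CN$ and $W\in\CU$ while $X\in\CT^+$, we get $\BE_\CN(W,X)\subset\BE^L_\CN(W,X)=0$.

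Dually, injectivity of $W$ requires $\BE_\CN(X,W)=0$ for $X\in\CH$. This follows from the dual of \cref{lem:merged}(1), which states that $\BE^R_\CN(X,V)=0$ for $V\in\CV$ and $X\in\CT^-$. I would check this dual directly by mirroring the argument. Take $h\in\BE^R_\CN(X,V)$. Since $V\in\CN$, the morphism $h[-1]$ lies in $[\CN[-1]]$, and so it factors through $\CU[-1]\ast\CV[-1]$. Because $\CT(\CU[-1],\CV)=0$, it in fact factors through $\CV[-1]$. Next use $X[-1]\in\CT^-[-1]=\CU^{-1}_{-n-1}\ast\CW[-1]$, which comes from \cref{lem:equation_T-T+}. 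Orthogonality pushes the factorization through $\CW[-1]$, and finally $\CT(\CW[-1],V)=0$ kills $h$. This is the only step that needs any care, and it is routine.

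Since every object of $\CW$ is projective-injective and $\CW$ lies in $\CH$, the ideal quotient $\CH/[\CW]$ acquires an extriangulated structure $(\BF,\ft)$ by \cite[Proposition~3.30]{NP19}. Here $\BF$ is induced by $\BE_\CN|_\CH$ (it is well defined on the quotient because $\BE_\CN$ vanishes on $\CW$ in either argument), and $\ft$-triangles are the images of $\fs_\CN$-triangles with all terms in $\CH$.
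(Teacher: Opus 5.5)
Your proposal follows the paper's proof: projectivity from \cref{lem:merged}~(1) via $\BE_\CN\subset\BE^L_\CN$, injectivity from its dual, and the quotient structure from \cite[Proposition~3.30]{NP19}. There is one index slip in your explicit check of the dual, where the decomposition you need is $X\in\CT^-=\CU_{-n}^{-1}\ast\CW$, i.e.\ $X[-1]\in\CU_{-n-1}^{-2}\ast\CW[-1]$, because the vanishing $\CT(\CU_{-n-1}^{-2},\CV[-1])=0$ is what lets the first factor $X[-1]\to V'[-1]$ (with $V'\in\CV$) of $h[-1]$ pass through $\CW[-1]$; the equality $\CT^-[-1]=\CU_{-n-1}^{-1}\ast\CW[-1]$ you wrote is true but vacuous (both sides equal $\CU_{-n-1}^{-1}$), and it gives no orthogonality since $\CT(\CU[-1],\CV[-1])=\CT(\CU,\CV)\neq0$ in general.
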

\begin{proof}
The projectivity of $W\in\CW$ immediately follows from \cref{lem:merged} {\rm (1)}. The injectivity of $W$ is shown dually.
\normalcolor
Thanks to \cite[Prop.~3.30]{NP19}, the ideal quotient of $(\CH,\BE_\CN|_\CH,\fs_\CN|_\CH)$ by $\CW$ admits a natural extriangulated structure.
\end{proof}

\color{blue}
The following result, which will be used in the proof of \cref{thm:extri_heart}, also follows from \cref{lem:merged}.
\begin{lemma}\label{lem:sN-surj}
Let $X\xrightarrow{f}Y\xrightarrow{g}Z\xrightarrow{\delta}X[1]$ be a distinguished triangle in $\CT$ with $\delta\in\BE_{\CN}(Z,X)$. Suppose that there exist $U\in\CU$, $u\in\CT(U,X)$ and $\delta'\in\BE_{\CN}(Z,U)$ such that $\delta=u_{\ast}\delta'$ in $\BE_{\CN}(Z,X)$, namely, $\delta=u[1]\circ \delta'$ in $\CT(Z,X[1])$. Then, for any $A\in\CT^+$, the map
$f^{\ast}\colon\BE_{\CN}(Y,A)\to\BE_{\CN}(X,A)$ is surjective.
\end{lemma}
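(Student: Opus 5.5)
The plan is to use the long exact sequence of the relative extriangulated category $\CT_{\CN}=(\CT,\BE_{\CN},\fs_{\CN})$. Since $\delta\in\BE_{\CN}(Z,X)$, the sequence $X\xrightarrow{f}Y\xrightarrow{g}Z\overset{\delta}{\dashrightarrow}$ is an $\fs_{\CN}$-triangle. For any $A\in\CT$ this gives an exact sequence
\[
\BE_{\CN}(Y,A)\xrightarrow{f^{\ast}}\BE_{\CN}(X,A)\xrightarrow{\delta^{\sharp}}\BE^2_{\CN}(Z,A)
\]
if higher extensions are available. Rather than relying on those, I will argue directly with the triangle. Let $\theta\in\BE_{\CN}(X,A)$, i.e. $\theta\colon X\to A[1]$. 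The obstruction to lifting $\theta$ along $f$ is the composite $\theta[1]\circ\delta\colon Z\to A[2]$ (up to sign and the fixed isomorphisms). If this composite vanishes, the triangle gives some $\theta'\colon Y\to A[1]$ with $\theta'\circ f=\theta$.

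The first key step is to show that $\theta\circ u=0$. We have $\theta\circ u=u^{\ast}\theta\in\BE_{\CN}(U,A)\subset\BE^L_{\CN}(U,A)$, and this group vanishes by \cref{lem:merged}(1), since $U\in\CU$ and $A\in\CT^+$. Using $\delta=u[1]\circ\delta'$, we then get $\theta[1]\circ\delta=(\theta\circ u)[1]\circ\delta'=0$. Rotating the triangle, the sequence $\CT(Y,A[1])\xrightarrow{-\circ f}\CT(X,A[1])\xrightarrow{-\circ\delta[-1]}\CT(Z[-1],A[1])$ is exact. Since $\theta\circ\delta[-1]$ is, up to the shift isomorphisms, $(\theta[1]\circ\delta)[-1]=0$, there exists $\theta'\in\CT(Y,A[1])$ with $\theta'\circ f=\theta$.

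The remaining and main obstacle is to show that $\theta'$ actually lies in $\BE_{\CN}(Y,A)$, and not merely in $\BE(Y,A)$. The lift is only determined up to adding $\gamma\circ g$ with $\gamma\in\CT(Z,A[1])$, so membership has to come from structure rather than from a clever choice.

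My first approach is to invoke closedness of $\BE_{\CN}$. By \cite{Oga24}, $\BE_{\CN}$ is a closed subfunctor, so $\fs_{\CN}$-triangles give long exact sequences
\[
\CT(Y,A)\to\CT(X,A)\to\BE_{\CN}(Z,A)\xrightarrow{g^{\ast}}\BE_{\CN}(Y,A)\xrightarrow{f^{\ast}}\BE_{\CN}(X,A).
\]
Exactness at the right end alone does not give surjectivity, so the fallback is to verify membership directly from the definitions of $\BE^L_{\CN}$ and $\BE^R_{\CN}$.

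For $\BE^R_{\CN}$: given $y\colon A\to N$ with $N\in\CN$, I need $y\circ\theta'[-1]\in[\CN[-1]]$. This reduces to the corresponding property for $\theta$ together with a term factoring through $Z[-1]$, which is controlled by $\delta\in\BE^R_{\CN}$.

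For $\BE^L_{\CN}$: given $x\colon N\to Y$, I compose with $g$ to get $g\circ x\colon N\to Z$. Since $\delta\circ g\circ x=0$, the morphism $g\circ x$ lifts appropriately, and I reduce to the condition on $\theta$ via the factorisation through $X$.

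I expect the bookkeeping in these last two checks, especially the $\BE^L$-part, to be the delicate point. If it becomes unwieldy, I will instead realise $\theta'$ as an $\fs_{\CN}$-extension by a pushout/pullback construction along $u$. There, \cref{lem:merged}(1) kills the relevant component, and closedness of $\BE_{\CN}$ under the extriangulated operations then guarantees $\theta'\in\BE_{\CN}(Y,A)$.
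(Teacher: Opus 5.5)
Your first step is correct: $\theta\circ u=u^{\ast}\theta\in\BE_{\CN}(U,A)=0$ by \cref{lem:merged} {\rm (1)}, so $\theta$ lifts to some $\theta'\in\CT(Y,A[1])$. The gap is the next step, showing that a lift lies in $\BE_{\CN}(Y,A)$. Your proposal does not carry this out, and your assessment of it is mistaken. Two lifts differ by $\gamma\circ g$ with $\gamma\in\BE(Z,A)$ arbitrary, and such terms need not lie in $\BE_{\CN}(Y,A)$.

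Here is a concrete instance. Take $X=U=0$ and $\delta=\delta'=0$, so $g$ is an isomorphism and every $\theta'\in\BE(Y,A)$ lifts $\theta=0$. For the pair $(t^{\le-1},t^{\ge n})$, take $Y=A[1]$ with $0\neq A\in t^{\ge0}\cap t^{\le0}$. Then $Y\in\CU$, so $\BE_{\CN}(Y,A)=0$ by \cref{lem:merged} {\rm (1)}, while $\id_{A[1]}\in\BE(Y,A)$ is nonzero. So membership in $\BE_{\CN}$ depends on \emph{which} lift you take. Your sketched checks of the $\BE^L_{\CN}$- and $\BE^R_{\CN}$-conditions never use how $\theta'$ was chosen, so they cannot succeed. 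Your closing remark about a pushout/pullback along $u$ points the right way, but it names neither the construction nor why it lands in $\BE_{\CN}$.

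The missing idea is to use the half-exactness of $\BE_{\CN}$, valid in the extriangulated category $\CT_{\CN}$, on a triangle whose first term is $U$ rather than on the original triangle. Realize $\delta'$ by an $\fs_{\CN}$-triangle $U\xrightarrow{f'}Y'\xrightarrow{g'}Z\overset{\delta'}{\dashrightarrow}$. Since $\delta=u_{\ast}\delta'$, \cite[Proposition~1.20]{LN19} gives $y\in\CT(Y',Y)$ such that $U\xrightarrow{\begin{bsmallmatrix}u\\ f'\end{bsmallmatrix}}X\oplus Y'\xrightarrow{[-f\ y]}Y\overset{g^{\ast}\delta'}{\dashrightarrow}$ is an $\fs_{\CN}$-triangle. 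Its exact sequence $\BE_{\CN}(Y,A)\to\BE_{\CN}(X,A)\oplus\BE_{\CN}(Y',A)\to\BE_{\CN}(U,A)=0$ shows that $\begin{bsmallmatrix}-f^{\ast}\\ y^{\ast}\end{bsmallmatrix}$ is surjective, hence so is $f^{\ast}$.

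This is the paper's proof. The lift is produced inside $\BE_{\CN}$ by the extriangulated structure itself: it is $\theta'=-\rho$ for a preimage $\rho$ of $(\theta,0)$, so it also satisfies $\theta'\circ y=0$. Your vanishing $\theta\circ u=0$ is absorbed into $\BE_{\CN}(U,A)=0$.
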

\begin{proof}
Let $A$ be any object in $\CT^+$.
Take an $\fs_{\CN}$-triangle $U\xrightarrow{f'}Y'\xrightarrow{g'}Z\overset{\delta'}{\dashrightarrow}$.
Since $X\xrightarrow{f}Y\xrightarrow{g}Z\overset{\delta}{\dashrightarrow}$ is also an $\fs_{\CN}$-triangle and $\delta=u_{\ast}\delta'$,
there exists $y\in\CT(Y',Y)$ such that
\[
U\xrightarrow{{\scriptsize\begin{bmatrix}u\\ f'\end{bmatrix}}}X\oplus Y'\xrightarrow{[-f\ y]}Y\overset{g^{\ast}\delta'}{\dashrightarrow}
\]
is an $\fs_{\CN}$-triangle, by \cite[Proposition~1.20]{LN19}. Then,
\[
\BE_{\CN}(Y,A)\xrightarrow{{\scriptsize\begin{bmatrix}-f^{\ast}\\ y^{\ast}\end{bmatrix}}}
\BE_{\CN}(X,A)\oplus\BE_{\CN}(Y',A)\to0
\]
becomes exact, since $\BE_{\CN}(U,A)=0$ by \cref{lem:merged} {\rm (1)}.
In particular, the map $f^{\ast}\colon\BE_{\CN}(Y,A)\to\BE_{\CN}(X,A)$ is surjective.
\end{proof}

\color{blue}We summarize properties of $l_X\colon X\to X_+$ in the following proposition. 
\begin{proposition}\label{prop:coreflection_tri1}
Let $X\in\CT$ be any object. The distinguished triangle $U'_X[-1]\xrightarrow{u\circ u'}X\xrightarrow{l_X}X_+\xrightarrow{m_X}U'_X$ in \cref{def:functor_+} satisfies the following properties.
\begin{enumerate}
\item $X\xrightarrow{l_X}X_+\xrightarrow{m_X}U'_X\dashrightarrow$ is an $\fs_\CN$-triangle.
\item $H(l_X)$ is an isomorphism in $\CH/[\CW]$.
\item $(l_X)^{\ast}\colon\BE_{\CN}(X_+,A)\to\BE_{\CN}(X,A)$ is an isomorphism for any $A\in\CT^+$.
\end{enumerate}
Dually, the distinguished triangle $V'_X\to X_-\xrightarrow{r_X}X\xrightarrow{v'\circ v}V'_X[1]$ in \cref{def:functor_-} gives an $\fs_{\CN}$-triangle $V'_X\to X_-\xrightarrow{r_X}X\dashrightarrow$, \darkblueedit{for which} $H(r_X)$ is an isomorphism in $\CH/[\CW]$ and $(r_X)_{\ast}\colon\BE_{\CN}(C,X_-)\to\BE_{\CN}(C,X)$ is an isomorphism for any $C\in\CT^-$.
\end{proposition}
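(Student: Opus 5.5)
We prove the three items for $l_X$ in order; the assertions for $r_X$ then follow by the dual argument.

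For (1), we must show that the morphism $m'' := (u\circ u')[1]\circ(\text{sign/unit})\colon X_+\to X[1]$ lies in $\BE_\CN = \BE^L_\CN\cap\BE^R_\CN$. Here $m''$ is the connecting morphism of the triangle \eqref{triangle_iii}; we regard it as $\delta\in\BE(X_+,X)$, which up to sign and the fixed isomorphisms is $(u\circ u')[1]\circ m_X$.
\begin{itemize}
\item Since $\delta$ factors through $U'_X\in\CU\subset\CN$, any composite $\delta\circ x$ with $x\colon N\to X_+$ factors through $U'_X[-1][1]$. Hence $\delta$ lies in $[\CN[1]]$, and $\delta\in\BE^L_\CN$.
\item Likewise, $\delta[-1]$ factors through $U_X\in\CN$ via $u$ (indeed through $u\circ u'$). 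So for any $y\colon X\to N$, the composite $y\circ\delta[-1]$ factors through $U'_X[-1]\in\CN[-1]$. Hence $\delta\in\BE^R_\CN$.
\end{itemize}
Equivalently, $l_X\in\CL$ by \cref{lem:fully_faithful1}, and the connecting map factors through $\CU$ on both sides. Thus $X\xrightarrow{l_X}X_+\xrightarrow{m_X}U'_X\dashrightarrow$ is an $\fs_\CN$-triangle.

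For (2), we use \cref{lem:fully_faithful1}: $l_X\in\CL\subset\SS_\CN$, so \cref{prop:fully_faithful} shows that $H(l_X)$ is an isomorphism. Alternatively, this is immediate from the definition of $H$ together with \cref{prop:functor_+}.

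For (3), which is the main point, we apply the long exact sequence for the $\fs_\CN$-triangle from (1) in the relative extriangulated category $\CT_\CN$:
\[
\CT(X,A)\to\BE_\CN(U'_X,A)\xrightarrow{m_X^*}\BE_\CN(X_+,A)\xrightarrow{l_X^*}\BE_\CN(X,A)\to\BE^2_\CN(U'_X,A)\to\cdots .
\]
\begin{itemize}
\item Injectivity. By \cref{lem:merged}(1), $\BE^L_\CN(U'_X,A)=0$ for $A\in\CT^+$, so $\BE_\CN(U'_X,A)=0$ and $l_X^*$ is injective.
\item Surjectivity. To avoid needing higher extensions, we invoke \cref{lem:sN-surj} with the triangle $X\to X_+\to U'_X\xrightarrow{\delta}$ rotated appropriately. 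More directly, the triangle $U'_X[-1]\xrightarrow{u u'}X\xrightarrow{l_X}X_+\xrightarrow{\delta}$ has connecting morphism $\delta$ which factors as $\delta=(uu')[1]\circ\delta_0$ through $U'_X[-1][1]=U'_X$, with $\delta_0=\pm m_X$. So $\delta=w_*\delta'$, where $w=u\circ u'\colon U'_X[-1]\to X$. The lemma, however, needs the intermediate object to lie in $\CU$, whereas $U'_X[-1]$ is not in $\CU$. So instead we factor $\delta$ through $u\colon U_X\to X$, with $U_X\in\CU$: take $\delta'=u'[1]\circ(\pm m_X)\in\BE(X_+,U_X)$.
\end{itemize}

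The main obstacle is then to check that $\delta'\in\BE_\CN(X_+,U_X)$.
\begin{itemize}
\item The $\BE^L$ part holds because $\delta'$ factors through $U'_X\in\CN[1]$-type objects (namely through $U'_X=U'_X[-1][1]$).
\item The $\BE^R$ part holds because $\delta'[-1]$ factors through $U'_X[-1]$, and $U'_X\in\CN$.
\end{itemize}
With $\delta=u_*\delta'$, \cref{lem:sN-surj} applied with $f=l_X$ gives surjectivity of $l_X^*\colon\BE_\CN(X_+,A)\to\BE_\CN(X,A)$ for all $A\in\CT^+$.

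Combining injectivity and surjectivity, $(l_X)^*$ is an isomorphism, which proves (3). The dual statements for $r_X$ follow by the same argument with $\CU$ and $\CV$ interchanged, using the dual of \cref{lem:merged} and of \cref{lem:sN-surj}.
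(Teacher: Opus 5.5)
Your strategy is the paper's own: (1) from the definition of $\BE_\CN$ (equivalently $l_X\in\CL$), (2) from \cref{lem:fully_faithful1} and \cref{prop:fully_faithful}, and (3) with injectivity from the exact sequence of the $\fs_\CN$-triangle together with \cref{lem:merged}(1), and surjectivity from \cref{lem:sN-surj}. Item (2) and the injectivity half of (3) are fine as written.

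The verification of (1), and of the hypothesis of \cref{lem:sN-surj}, is carried out for the wrong element. The extension of the $\fs$-triangle $X\xrightarrow{l_X}X_+\xrightarrow{m_X}U'_X\dashrightarrow$ lives in $\BE(U'_X,X)=\CT(U'_X,X[1])$. By the paper's convention it is the third morphism of the triangle, $\delta=-(u\circ u')[1]\circ\eta^{\CT}_{U'_X}$. What you call $\delta\in\BE(X_+,X)$, namely $(u\circ u')[1]\circ m_X$ up to sign, is a composite of two consecutive morphisms of a distinguished triangle, hence $0$. Likewise your $\delta'=u'[1]\circ(\pm m_X)\in\BE(X_+,U_X)$ satisfies $u_*\delta'=0$. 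So it does not factor the connecting morphism as \cref{lem:sN-surj} requires, since there $\delta'$ must lie in $\BE_\CN(Z,U)$ with $Z=U'_X$ the third term. Your long exact sequence in (3) already uses the correct $\delta\in\BE_\CN(U'_X,X)$, so the proposal is internally inconsistent on this point.

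There is a second, independent error. You argue twice that a morphism factoring through $U'_X=U'_X[-1][1]$ lies in $[\CN[1]]$. That needs $U'_X[-1]\in\CN$, which fails in general. For example, for $(\CU,\CV)=(t^{\le-1},t^{\ge n})$ one has $\CU[-1]=t^{\le0}$. This contains nonzero objects of $t^{\ge0}\cap t^{\le0}\subset\CH$, and these are not in $\CN=\Ker H$.

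The repair is short. Put $\delta'=-u'[1]\circ\eta^{\CT}_{U'_X}\in\BE(U'_X,U_X)$. Since $U'_X,U_X\in\CU\subset\CN$ and $\BE_\CN|_\CN=\BE|_\CN$, we get $\delta'\in\BE_\CN(U'_X,U_X)$. As $\BE_\CN$ is a subfunctor of $\BE$, it follows that $\delta=u_*\delta'\in\BE_\CN(U'_X,X)$. This proves (1), and it supplies exactly the hypothesis of \cref{lem:sN-surj} with $f=l_X$, $Z=U'_X$, $U=U_X$, giving surjectivity in (3).
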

\begin{proof}
{\rm (1)} is by the definition of $(\BE_{\CN},\fs_{\CN})$. 
{\rm (2)} \darkblueedit{is immediate from} \cref{lem:fully_faithful1} and \cref{prop:fully_faithful}.
Let us show {\rm (3)}. For any $A\in\CT^+$,
\[ \BE_{\CN}(U'_X,A)\xrightarrow{(m_X)^{\ast}}\BE_{\CN}(X_+,A)\xrightarrow{(l_X)^{\ast}}\BE_{\CN}(X,A) \]
is exact by {\rm (1)}. Since $\BE_{\CN}(U'_X,A)=0$ by \cref{lem:merged} {\rm (1)}, this shows that $(l_X)^{\ast}\colon\BE_{\CN}(X_+,A)\to\BE_{\CN}(X,A)$ is injective. Its surjectivity follows from \cref{lem:sN-surj}.
\end{proof}
\normalcolor

\subsection{Pretriangulated structure}
\label{subsection:PT_heart}
\subsubsection{Shift functors on the heart}

For each $X\in\CT^-$, we associate $X\langle1\rangle\in\CT^-$ as follows.
\begin{definition}\label{def:functor_<1>}
For each object $X\in\CT^-$, let $0\le j\le n$ be the minimal integer such that $X\in\CU_{-j}^0$, and choose a distinguished triangle
\begin{equation}\label{triangle_<1>}
X\xrightarrow{a_X}W^X\xrightarrow{b_X}X\langle 1\rangle\xrightarrow{c_X}X[1]
\end{equation}
that satisfies $W^X\in\CW$ and 
\[
X\langle 1\rangle\in
\begin{cases}
\, \CU & \text{if}\ j=0\\
\, \CU_{-j+1}^0 & \text{if}\ j>0
\end{cases},
\]
by using $\CU\subset\CU[-1]\ast\CW$ and Lemma~\ref{lem:equation_T-T+}.
Since $\CU\subset\CU_{-j+1}^0\subset\CT^-$, we have $X\langle1\rangle\in\CT^-$.
For any non-negative integer $k$, we denote by $X\langle k\rangle\in\CT^-$ the object obtained by applying $\langle1\rangle$ $k$-times to $X$.
\end{definition}

\begin{proposition}\label{prop:functor_<1>}
The following holds.
\begin{enumerate}
\item For any $X\in\CT^-$, we have $X\langle k\rangle\in\CU_{-n+k}^0$ for all $0\le k\le n$, and  $X\langle k\rangle\in\CU$ for all $k\ge n$.
\item The assignment $X\mapsto X\langle1\rangle$ in Definition~\ref{def:functor_<1>} gives an additive endofunctor 
$\langle1\rangle\colon\CT^-/[\CW]\to \CT^-/[\CW]$. Moreover, this functor is determined uniquely up to a natural isomorphism, regardless of the choice made in Definition~\ref{def:functor_<1>}.
\end{enumerate}

\end{proposition}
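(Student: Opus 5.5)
Part (1) is an induction on $k$, and part (2) follows the pattern of \cref{lem:for_adjoint_pair_+-}.

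\emph{Part (1).} If $X\in\CU_{-j}^0$ with $j$ minimal, \cref{def:functor_<1>} gives $X\langle1\rangle\in\CU_{-j+1}^0$ when $j>0$, and $X\langle1\rangle\in\CU$ when $j=0$. So each application of $\langle1\rangle$ lowers the minimal index by one until it reaches $0$, and it stays at $0$ afterwards. Since $X\in\CT^-=\CU_{-n}^0$, the minimal index of $X$ is at most $n$. Induction on $k$ then gives that the minimal index of $X\langle k\rangle$ is at most $\max(n-k,0)$. Hence $X\langle k\rangle\in\CU_{-n+k}^0$ for $0\le k\le n$, and $X\langle k\rangle\in\CU$ for $k\ge n$. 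Here we use the inclusions $\CU_{-i}^0\subset\CU_{-i'}^0$ for $i\le i'$, which hold because $0\in\CU$.

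\emph{Part (2): definition on morphisms.} The key point is that the triangle \eqref{triangle_<1>} has the shape required in \cref{lem:for_adjoint_pair_+-}, with middle term $W^X\in\CW$. Take $X,X'\in\CT^-$ with chosen triangles $X\to W^X\to X\langle1\rangle\to X[1]$ and $X'\to W^{X'}\to X'\langle1\rangle\to X'[1]$. To apply \cref{lem:for_adjoint_pair_+-}{\rm (1)} we must check $\CT(X\langle1\rangle[-1],W^{X'})=0$. Since $X\langle1\rangle\in\CT^-=\CU_{-n}^0$, we have $X\langle1\rangle[-1]\in\CU_{-n-1}^{-1}$. Also $\CT(\CU[-i],\CW)\cong\CT(\CU,\CW[i])=0$ for $1\le i\le n+1$: for $i\le n$ this is the cotorsion condition $\BE^i(\CU,\CV)=0$. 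The case $i=n+1$ falls outside that range, and it is the step I expect to be the main obstacle. To handle it I would refine the triangle, using $X\in\CU[-n]\ast\CW_{-n+1}^0$ from \cref{lem:equation_T-T+}, so that $X\langle1\rangle[-1]$ actually lies in $\CU_{-n}^{-1}$. Indeed, part (1) shows $X\langle1\rangle\in\CU_{-n+1}^0$, hence $X\langle1\rangle[-1]\in\CU_{-n}^{-1}$, and $\CT(\CU[-i],\CW)=0$ for $1\le i\le n$. So the Hom vanishing holds with no need for the $i=n+1$ case.

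With this, \cref{lem:for_adjoint_pair_+-}{\rm (1)} gives, for each $\overline{x}\in(\CT/[\CW])(X,X')$, a well-defined class $\overline{y_x}\in(\CT/[\CW])(X\langle1\rangle,X'\langle1\rangle)$, depending additively on $\overline{x}$. This class is characterised by $x[1]\circ c_X=c_{X'}\circ y_x$. Set $\overline{x}\langle1\rangle:=\overline{y_x}$.

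\emph{Part (2): functoriality, additivity and independence of choices.} The characterising property is compatible with composition and identities: if $y_x$ works for $x$ and $y_{x'}$ works for $x'$, then $y_{x'}\circ y_x$ works for $x'\circ x$, and $\id$ works for $\id$. Uniqueness modulo $[\CW]$ then gives $\langle1\rangle(\overline{x'}\circ\overline{x})=\overline{x'}\langle1\rangle\circ\overline{x}\langle1\rangle$ and $\langle1\rangle(\id)=\id$. Additivity is already part of \cref{lem:for_adjoint_pair_+-}{\rm (1)}. For independence of choices, take two triangles for the same $X$, say with third terms $X\langle1\rangle$ and $X\langle1\rangle'$. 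Applying the same construction to $\id_X$ in both directions gives morphisms $X\langle1\rangle\to X\langle1\rangle'$ and back. By uniqueness their composites are the identities in $\CT/[\CW]$, so they are mutually inverse isomorphisms. The same uniqueness shows these isomorphisms are natural in $X$. The two choices therefore give naturally isomorphic functors.
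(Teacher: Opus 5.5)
Your proof is correct and follows the same approach as the paper: part (1) is read off from Definition~\ref{def:functor_<1>}, and part (2) defines $\overline{x}\langle1\rangle=\overline{y_x}$ via Lemma~\ref{lem:for_adjoint_pair_+-}{\rm (1)}, then deduces functoriality, additivity and independence of choices from the uniqueness of $\overline{y_x}$ modulo $[\CW]$. Your check of the hypothesis $\CT(X\langle1\rangle[-1],W^{X'})=0$ via $X\langle1\rangle\in\CU_{-n+1}^0$ is exactly what is needed; no refinement of the triangle is required, since the definition already places $X\langle1\rangle$ in $\CU_{-n+1}^0$, so the $i=n+1$ case never arises.
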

\begin{proof}
{\rm (1)} This is immediate from Definition~\ref{def:functor_<1>}.

{\rm (2)} This is shown by the usual argument using Lemma~\ref{lem:for_adjoint_pair_+-} {\rm (1)}.
In fact, for any $x\in\CT^-(X,X')$, we define $\overline{x}\langle1\rangle$ by $\overline{x}\langle1\rangle=\overline{y_x}$, where $y_x\in\CT^-(X\langle1\rangle,X'\langle1\rangle)$ is a morphism that makes
\[
\begin{tikzcd}[row sep=0.8cm, column sep=1.1cm]
X\langle1\rangle \arrow{r}{c_X}\arrow{d}[swap]{y_x}
&X{[}1{]} \arrow[equal]{d}{x[1]}
\\
X'\langle1\rangle \arrow{r}[swap]{c_{X'}}
&X'{[}1{]}
\end{tikzcd}
\]
commutative in $\CT$. By Lemma~\ref{lem:for_adjoint_pair_+-} {\rm (1)}, such $\overline{x}\langle1\rangle$ is unique independently of the choices of representative $x$ of $\overline{x}$ and $y_x$. 
Using the uniqueness, we can check that the correspondence $\langle1\rangle$ indeed preserves composition, identities, and the addition of morphisms.
\normalcolor
\end{proof}

\color{blue}
\begin{remark}
\label{rem:c_is_natural}
As the proof of \cref{prop:functor_<1>} suggests, for any morphism $x\in\CT^-(X,X')$, we have $\ovl{x[1]}\circ \ovl{c_X}=\ovl{c_{X'}}\circ \ovl{x}\langle 1\rangle$ in $\CT/[\CW]$.
\end{remark}
\normalcolor

\begin{definition}
For any non-negative integer $k$, we denote by $\langle k\rangle\colon\CT^-/[\CW]\to \CT^-/[\CW]$ the $k$-times iteration of the endofunctor $\langle1\rangle$.
\end{definition}

We can define a functor $\langle-1\rangle\colon\CT^+/[\CW]\to\CT^+/[\CW]$ in a dual manner, as follows. 
\begin{definition}\label{def:functor_<-1>}
For each object $X\in\CT^+$, let $0\le j\le n$ be the minimal integer such that $X\in\CV_0^j$, and choose a distinguished triangle
\begin{equation}\label{dtriXXWX}
X[-1]\xrightarrow{{}_Xc}X\langle -1\rangle\xrightarrow{{}_Xb}{}^XW\xrightarrow{{}_Xa}X
\end{equation}
that satisfies ${}^XW\in\CW$ and 
\[
X\langle -1\rangle\in
\begin{cases}
\, \CV & \text{if}\ j=0\\
\, \CV_0^{j-1} & \text{if}\ j>0
\end{cases}.
\]
This gives an additive \darkblueedit{endofunctor} $\langle-1\rangle\colon\CT^+/[\CW]\to\CT^+/[\CW]$.
\end{definition}

\normalcolor

Moreover, the following holds.
\begin{lemma}\label{lem:adjoint_+-}
For any pair of objects $X\in\CT^-$ and $Y\in\CT^+$, there is a bijection
\[ \theta_{X,Y}\colon (\CT/[\CW])(X\langle1\rangle,Y)\xrightarrow{\cong}(\CT/[\CW])(X,Y\langle-1\rangle) \]
\normalcolor
which is natural in $X$ and $Y$.
\end{lemma}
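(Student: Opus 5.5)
The plan is to compare both sides with $(\CT/[\CW])(X[1],Y)$. We show that each side is naturally isomorphic to the group of morphisms $X\to Y[-1]$, taken modulo morphisms factoring through $\CW[1]$ (respectively $\CW[-1]$). The natural tool is Lemma~\ref{lem:for_adjoint_pair_+-}, applied to the two defining triangles
\[
X\xrightarrow{a_X}W^X\xrightarrow{b_X}X\langle1\rangle\xrightarrow{c_X}X[1]
\quad\text{and}\quad
Y[-1]\xrightarrow{{}_Yc}Y\langle-1\rangle\xrightarrow{{}_Yb}{}^YW\xrightarrow{{}_Ya}Y.
\]

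\textbf{Construction of $\theta_{X,Y}$.} Take $y\colon X\langle1\rangle\to Y$.
\begin{itemize}
\item Since $Y\in\CT^+=\CW\ast\CV_1^n$ and $W^X\in\CW$, we have $\CT(W^X,{}^YW)$-type vanishing: $\CT(\CW,\CV_1^n)=0$. Moreover $\CT(\CW,Y)$ consists of morphisms landing through $\CW$.
\item Consider $y\circ b_X\colon W^X\to Y$. It factors through ${}^YW\xrightarrow{{}_Ya}Y$. Indeed, $Y\to {}^Y W$ fits in the rotated triangle $Y\langle-1\rangle\to{}^YW\to Y\to Y\langle-1\rangle[1]$, and $\CT(W^X,Y\langle-1\rangle[1])=0$ because $Y\langle-1\rangle\in\CV_0^{n-1}$ and $\BE^k(\CU,\CV)=0$ for $1\le k\le n$.
\item The triangle axiom TR3 then gives a morphism of triangles. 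Its component $x\colon X\to Y\langle-1\rangle$ satisfies ${}_Yc\circ y[-1]\circ \ldots$ compatibility, i.e.\ ${}_Yc\circ(\text{shift of }y)\circ(\text{appropriate shift of }c_X)=x$ up to the unit isomorphisms.
\end{itemize}
Concretely, this is exactly the situation of Lemma~\ref{lem:for_adjoint_pair_+-}, after rotating both triangles into the form $X\to W\to Y\to X[1]$:
\begin{itemize}
\item The first triangle is already $X\to W^X\to X\langle1\rangle\to X[1]$.
\item For the second, rotate to $Y\langle-1\rangle\to{}^YW\to Y\to Y\langle-1\rangle[1]$.
\end{itemize}
Then Lemma~\ref{lem:for_adjoint_pair_+-}(3), with $(X,W,Y)=(X,W^X,X\langle1\rangle)$ and $(X',W',Y')=(Y\langle-1\rangle,{}^YW,Y)$, gives a bijection
\[
(\CT/[\CW])(X,Y\langle-1\rangle)\cong(\CT/[\CW])(X\langle1\rangle,Y).
\]
We define $\theta_{X,Y}$ as its inverse. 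It sends $\overline y$ to $\overline{x}$, where $x[1]\circ c_X$ corresponds to the connecting morphism $Y\to Y\langle-1\rangle[1]$ composed with $y$.

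\textbf{Hypotheses of Lemma~\ref{lem:for_adjoint_pair_+-}(3).} Two vanishing conditions are needed.
\begin{itemize}
\item $\CT(W^X,Y\langle-1\rangle[1])=0$. This holds since $Y\langle-1\rangle\in\CV_0^{n-1}$, so $Y\langle-1\rangle[1]\in\CV_1^n$, and $\CT(\CW,\CV[k])=0$ for $1\le k\le n$.
\item $\CT(X\langle1\rangle[-1],{}^YW)=0$. This holds since $X\langle1\rangle\in\CU_{-n+1}^0$, so $X\langle1\rangle[-1]\in\CU_{-n}^{-1}$, and $\CT(\CU[-k],\CW)=0$ for $1\le k\le n$.
\end{itemize}
Both are immediate from Proposition~\ref{prop:functor_<1>}(1), its dual, and axiom (2) of an $n$-cotorsion pair.

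\textbf{Naturality.} Take $\overline{p}\colon X'\to X$ in $\CT^-/[\CW]$ and $\overline{q}\colon Y\to Y'$ in $\CT^+/[\CW]$. We must check that
\[
\theta_{X',Y'}(\overline q\circ\overline y\circ\overline p\langle1\rangle)=\overline q\langle-1\rangle\circ\theta_{X,Y}(\overline y)\circ\overline p.
\]
The morphisms $\overline p\langle1\rangle$ and $\overline q\langle-1\rangle$ are themselves defined through Lemma~\ref{lem:for_adjoint_pair_+-}(1),(2), via the compatibility with $c$ recorded in Remark~\ref{rem:c_is_natural} and its dual. So both sides are characterized by commutativity of a composite square with the connecting morphisms $c_{X'}$ and ${}_{Y'}c$. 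By the uniqueness part of Lemma~\ref{lem:for_adjoint_pair_+-} (the value is determined modulo $[\CW]$ by the square), the two sides coincide.

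The main obstacle I expect is this bookkeeping. One must make sure that the uniqueness modulo $[\CW]$ survives composing squares whose middle objects differ: pasting squares yields a square for the composite, and uniqueness then applies. One must also keep the sign and unit conventions ($\eta^{\CT},\varepsilon^{\CT}$) consistent when rotating the second triangle. Signs only affect $\theta$ by a global sign, which does not harm bijectivity or naturality.
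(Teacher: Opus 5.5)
Your proof is correct and is essentially the paper's. The paper also applies Lemma~\ref{lem:for_adjoint_pair_+-} to $X\to W^X\to X\langle1\rangle\xrightarrow{c_X}X[1]$ and to the rotated triangle $Y\langle-1\rangle\to{}^YW\to Y\xrightarrow{-({}_Yc)[1]}Y\langle-1\rangle[1]$ of Definition~\ref{def:functor_<-1>}, setting $\theta_{X,Y}(\overline{f})=\overline{g}$ with $g[1]\circ c_X=-({}_Yc)[1]\circ f$, using the same two vanishing conditions and getting naturality from the same uniqueness-modulo-$[\CW]$ argument. Your opening sentence about comparing both sides with $(\CT/[\CW])(X[1],Y)$ plays no role in what follows and should be deleted.
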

\begin{proof}
This is shown by the usual argument using Lemma~\ref{lem:for_adjoint_pair_+-}.
In fact, for any $\overline{f}\in(\CT/[\CW])(X\langle1\rangle,Y)$, we define $\theta_{X,Y}(\overline{f})\in(\CT/[\CW])(X,Y\langle-1\rangle)$ by $\theta_{X,Y}(\overline{f})=\overline{g}$, where $g\in\CT(X,Y\langle-1\rangle)$ is a morphism that makes
\[
\begin{tikzcd}[row sep=0.8cm, column sep=1.2cm]
X\langle1\rangle \arrow{r}{c_X}\arrow{d}[swap]{f}
&X{[}1{]} \arrow[equal]{d}{g[1]}
\\
Y\arrow{r}[swap]{-({}_Yc){[}1{]}}
&Y\langle-1\rangle{[}1{]}
\end{tikzcd}
\]
commutative in $\CT$. Lemma~\ref{lem:for_adjoint_pair_+-} 
\color{blue}
shows that 
\normalcolor 
$\theta_{X,Y}$ is indeed a well-defined bijection. Naturality of $\theta_{X,Y}$ in $X,Y$ can be also checked in a straightforward manner.
\normalcolor
\end{proof}

\begin{definition}\label{def:functor_Sigma}
Define an additive endofunctor $\Sigma\colon\CH/[\CW]\to\CH/[\CW]$ to be the composite of
\[
\CH/[\CW]\overset{\iota}{\hookrightarrow}\CT^-/[\CW]\xrightarrow{\langle 1\rangle}\CT^-/[\CW]\xrightarrow{(\ )_+}\CH/[\CW],
\]
in which $\iota$ is the inclusion. For any non-negative integer $k$, we denote by $\Sigma^k$ the $k$-times iteration of $\Sigma$.
Dually, an additive endofunctor $\Omega\colon\CH/[\CW]\to\CH/[\CW]$ is defined to be the composite of
\[
\CH/[\CW]\hookrightarrow\CT^+/[\CW]\xrightarrow{\langle -1\rangle}\CT^+/[\CW]\xrightarrow{(\ )_-}\CH/[\CW],
\]
and its $k$-times iteration will be denoted by $\Omega^k$.
\end{definition}

\begin{proposition}\label{prop:adjoint_Sigma_Omega}
$\Sigma$ is left adjoint to $\Omega$.
In fact, we have a natural isomorphism
\[
\Theta\colon(\CH/[\CW])(\Sigma(-),-)\overset{\cong}{\Longrightarrow}(\CH/[\CW])(-,\Omega(-))
\]
where $\Theta_{X,Y}$ is defined to be the composite of
\begin{eqnarray*}
(\CH/[\CW])(\Sigma X,Y)&=&(\CH/[\CW])((X\langle1\rangle)_+,Y)\\
&\underset{\cong}{\xrightarrow{-\circ\overline{l_{X\langle1\rangle}}}}&
(\CT^-/[\CW])(X\langle1\rangle,Y)\\
&\underset{\cong}{\xrightarrow{(\theta_{X,Y})}}&(\CT^+/[\CW])(X,Y\langle-1\rangle)\\
&\underset{\cong}{\xrightarrow{(\overline{r_{Y\langle-1\rangle}}\circ-)^{-1}}}&(\CH/[\CW])(X,(Y\langle-1\rangle)_-)\, =\, (\CH/[\CW])(X,\Omega Y)
\end{eqnarray*}
for each $X,Y\in\CH$. In particular, the unit $\eta\colon\id_{\CH/[\CW]}\Rightarrow\Omega\circ \Sigma$ and the counit $\varepsilon\colon\Sigma\circ\Omega\Rightarrow\id_{\CH/[\CW]}$ for the adjunction $\Sigma\dashv\Omega$ are given by
$\eta_X=\Theta_{X,\Sigma X}(\id_{\Sigma X})$ and 
$\varepsilon_Y=\Theta_{\Omega Y,Y}^{-1}(\id_{\Omega Y})$ for any $X,Y\in\CH$, respectively.
\normalcolor
\end{proposition}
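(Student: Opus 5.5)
The plan is to verify that each map in the displayed composite defining $\Theta_{X,Y}$ is a bijection natural in $X,Y\in\CH$. The standard characterization of adjunctions by natural hom-bijections then gives $\Sigma\dashv\Omega$. Once $\Theta$ is known to be a natural isomorphism, the formulas $\eta_X=\Theta_{X,\Sigma X}(\id_{\Sigma X})$ and $\varepsilon_Y=\Theta^{-1}_{\Omega Y,Y}(\id_{\Omega Y})$ are the usual Yoneda descriptions of the unit and counit, so nothing further is needed there.

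\textbf{Step 1: the three bijections.} Fix $X,Y\in\CH$. First, $X\langle1\rangle\in\CT^-$ and $Y\in\CH\subset\CT^+$. By \cref{prop:functor_+}(2), precomposition with $\overline{l_{X\langle1\rangle}}$ gives a bijection
\[
(\CT/[\CW])((X\langle1\rangle)_+,Y)\to(\CT/[\CW])(X\langle1\rangle,Y).
\]
Since $\CH/[\CW]$, $\CT^\pm/[\CW]$ are full subcategories of $\CT/[\CW]$, this is the first arrow. Second, $\theta_{X,Y}$ is a bijection by \cref{lem:adjoint_+-}, since $X\in\CT^-$ and $Y\in\CT^+$. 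Third, $Y\langle-1\rangle\in\CT^+$ and $X\in\CH\subset\CT^-$, so the dual of \cref{prop:functor_+}(2) (the coreflection property of $r$ used in \cref{def:functor_-}) shows that postcomposition with $\overline{r_{Y\langle-1\rangle}}$ is a bijection
\[
(\CT/[\CW])(X,(Y\langle-1\rangle)_-)\to(\CT/[\CW])(X,Y\langle-1\rangle).
\]
Its inverse is therefore well defined. Hence $\Theta_{X,Y}$ is a bijection, and it is additive because each constituent map is.

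\textbf{Step 2: naturality.} Naturality of the middle map is part of \cref{lem:adjoint_+-}. For the outer maps, take $\overline{x}\colon X'\to X$ in $\CH/[\CW]$. By definition of the functor $(\ )_+$ in \cref{cor:functor_+}, the morphism $\Sigma\overline{x}=(\overline{x}\langle1\rangle)_+$ is the unique map with
\[
(\overline{x}\langle1\rangle)_+\circ\overline{l_{X'\langle1\rangle}}=\overline{l_{X\langle1\rangle}}\circ\overline{x}\langle1\rangle .
\]
Precomposition therefore intertwines $-\circ\Sigma\overline{x}$ with $-\circ\overline{x}\langle1\rangle$; in other words, $\overline{l}$ is a natural transformation $\langle1\rangle\Rightarrow(\ )_+\circ\langle1\rangle$. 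Dually, $\overline{r}$ is natural $(\ )_-\circ\langle-1\rangle\Rightarrow\langle-1\rangle$, which gives naturality in $Y$ of the last map. Composing natural isomorphisms yields a natural isomorphism.

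\textbf{Expected obstacle.} The only point needing care is that the functors $(\ )_\pm$ and $\langle\pm1\rangle$ are defined only up to the choices made in the definitions. One must check that naturality squares for the units $l$ and $r$ hold strictly in $\CT/[\CW]$ for the chosen representatives. This follows from the uniqueness parts of \cref{prop:functor_+}(2) and \cref{lem:for_adjoint_pair_+-}, so this step is routine bookkeeping rather than a genuine difficulty.
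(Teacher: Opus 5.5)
Your proof is correct and takes the same route as the paper. The paper's proof just cites Lemma~\ref{lem:adjoint_+-} together with Proposition~\ref{prop:functor_+_property} and its dual: each of the three maps in the composite is a natural bijection, the outer ones coming from the (co)reflections $(\ )_\pm\colon\CT^\mp/[\CW]\to\CH/[\CW]$ and the middle one from $\theta$, so the unit and counit formulas are formal. Your appeal to Proposition~\ref{prop:functor_+}(2) and its dual, and your check of the naturality squares for $\overline{l}$ and $\overline{r}$, spell out exactly what the paper leaves implicit.
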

\begin{proof}
This follows from Lemma~\ref{lem:adjoint_+-}, Proposition~\ref{prop:functor_+_property} and its dual.
\normalcolor
\end{proof}

\begin{lemma}\label{lem:functor_Sigma}
For any $X\in\CT^-$, the morphism $l_X\in\CT(X,X_+)$ induces an isomorphism
\[
\tau_X=(\overline{l_X}\langle1\rangle)_+\colon X\langle1\rangle_+\xrightarrow{\cong}(X_+\langle1\rangle)_+=\Sigma(X_+)
\]
in $\CH/[\CW]$.
\end{lemma}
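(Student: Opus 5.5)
The plan is a Yoneda argument inside $\CH/[\CW]$. Both the source and the target of $\tau_X$ lie in $\CH/[\CW]$, and both corepresent the functor $(\CT/[\CW])(X,(-)\langle-1\rangle)$ on $\CH/[\CW]$. The map $\tau_X$ is the comparison between these two corepresentations.

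First the setup. Let $X\in\CT^-$. By Proposition~\ref{prop:functor_+_property}~(1), $X_+\in\CH\subset\CT^-$, so $X_+\langle1\rangle$ is defined, and $\overline{l_X}\langle1\rangle\colon X\langle1\rangle\to X_+\langle1\rangle$ is defined by Proposition~\ref{prop:functor_<1>}. Applying $(\ )_+$ gives $\tau_X$; both its source and target lie in $\CH/[\CW]$, again by Proposition~\ref{prop:functor_+_property}~(1). It therefore suffices to show that for every $Y\in\CH$ the map
\[
(\CH/[\CW])(\tau_X,Y)\colon(\CH/[\CW])(\Sigma(X_+),Y)\to(\CH/[\CW])(X\langle1\rangle_+,Y)
\]
is bijective.

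Next, compute both sides through a chain of natural bijections, using that $Y\in\CH\subset\CT^+$ and $Y\langle-1\rangle\in\CT^+$.

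For the target side:
\begin{align*}
(\CH/[\CW])(X\langle1\rangle_+,Y)
&\cong(\CT/[\CW])(X\langle1\rangle,Y) &&\text{by Proposition~\ref{prop:functor_+}~(2)}\\
&\cong(\CT/[\CW])(X,Y\langle-1\rangle) &&\text{by Lemma~\ref{lem:adjoint_+-}.}
\end{align*}

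For the source side:
\begin{align*}
(\CH/[\CW])(\Sigma(X_+),Y)
&\cong(\CT/[\CW])(X_+\langle1\rangle,Y) &&\text{by Proposition~\ref{prop:functor_+}~(2)}\\
&\cong(\CT/[\CW])(X_+,Y\langle-1\rangle) &&\text{by Lemma~\ref{lem:adjoint_+-}}\\
&\cong(\CT/[\CW])(X,Y\langle-1\rangle) &&\text{by Proposition~\ref{prop:functor_+}~(2) for } \overline{l_X}.
\end{align*}
The last step uses that $Y\langle-1\rangle\in\CT^+$.

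The key step is to check that these identifications intertwine $-\circ\tau_X$ with the identity on $(\CT/[\CW])(X,Y\langle-1\rangle)$. This breaks into three compatibilities.

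\begin{enumerate}
\item By the definition of $(\ )_+$ on morphisms, $\tau_X\circ\overline{l_{X\langle1\rangle}}=\overline{l_{X_+\langle1\rangle}}\circ\overline{l_X}\langle1\rangle$. Hence precomposing with $\tau_X$ corresponds to precomposing with $\overline{l_X}\langle1\rangle$ on $(\CT/[\CW])(X_+\langle1\rangle,Y)$.
\item By the naturality of $\theta_{-,Y}$ in the first variable (Lemma~\ref{lem:adjoint_+-}), we have $\theta_{X,Y}(\overline{f}\circ\overline{l_X}\langle1\rangle)=\theta_{X_+,Y}(\overline{f})\circ\overline{l_X}$.
\item Precomposition with $\overline{l_X}$ is the final bijection in the source chain.
\end{enumerate}
Combining these, $(\CH/[\CW])(\tau_X,Y)$ is identified with the identity map, hence it is bijective. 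By the Yoneda lemma in $\CH/[\CW]$, $\tau_X$ is an isomorphism.

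I expect the main obstacle to be the naturality of $\theta$ in the first variable with respect to $\overline{l_X}\langle1\rangle$, which Lemma~\ref{lem:adjoint_+-} asserts but only sketches. If it needs justification, I would argue directly with representatives. Take $f\colon X_+\langle1\rangle\to Y$ and set $g=\theta_{X_+,Y}(f)$, so that $g[1]\circ c_{X_+}=-({}_Yc)[1]\circ f$. Then Remark~\ref{rem:c_is_natural} gives $\overline{l_X[1]}\circ\overline{c_X}=\overline{c_{X_+}}\circ\overline{l_X}\langle1\rangle$, which yields the defining square for $\theta_{X,Y}(\overline{f\circ y})$ with $y$ a representative of $\overline{l_X}\langle1\rangle$. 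There is one subtlety: that remark holds only modulo $[\CW]$. To handle it, I would replace $y$ by $y$ plus a correction factoring through $\CW$, using $\CT(X\langle1\rangle[-1],\CW)=0$ as in Lemma~\ref{lem:for_adjoint_pair_+-}~(1), which makes the square commute on the nose. The uniqueness clause of Lemma~\ref{lem:for_adjoint_pair_+-} then finishes the argument.
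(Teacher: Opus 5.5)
Your proof is correct, and it takes a genuinely different route from the paper's.

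\textbf{The paper's route.} The paper argues at the level of triangles. Using the octahedral axiom, it builds a triangle $X_+\to U\to X\langle1\rangle\to X_+[1]$ with $U\cong W^X\oplus U'_X\in\CU$. It then splits $U$ via $U'[-1]\to U\to W'\to U'$. This yields a morphism $g\colon X\langle1\rangle\to Y$ lying in $\CL$, together with a triangle $X_+\to W'\to Y\to X_+[1]$ with $W'\in\CW$. That triangle identifies $Y$ with $X_+\langle1\rangle$ in $\CT^-/[\CW]$ via some $\overline{y}$. The paper concludes from $\overline{l_X}\langle1\rangle=\overline{y}\circ\overline{g}$, using Proposition~\ref{prop:fully_faithful} to see that $\overline{g}_+$ is invertible.

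\textbf{Your route.} You observe that the statement is purely formal. For $Y\in\CH$, the functor $(\CH/[\CW])\bigl((-\langle1\rangle)_+,Y\bigr)$ on $\CT^-/[\CW]$ is naturally isomorphic, via the reflection and $\theta$, to $(\CT/[\CW])(-,Y\langle-1\rangle)$. This functor inverts $\overline{l_X}$ because $Y\langle-1\rangle\in\CT^+$, and Yoneda finishes.

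\textbf{Comparison.} Your argument is shorter. It avoids the octahedral bookkeeping and the comparatively heavy Proposition~\ref{prop:fully_faithful}, and hence the $2$-out-of-$3$ input from \cite{Oga24}. It needs only Proposition~\ref{prop:functor_+}(2) and the naturality of $\theta$. The paper's route gives in exchange an explicit triangle-level model of $X_+\langle1\rangle$ and of $\tau_X$ as $\overline{y}_+\circ\overline{g}_+$, although this extra information is not used later.

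\textbf{The obstacle you anticipated does not arise.} By the construction in the proof of Proposition~\ref{prop:functor_<1>}, the class $\overline{l_X}\langle1\rangle$ already has a representative $y$ with $c_{X_+}\circ y=l_X[1]\circ c_X$ on the nose. It exists because $\CT(X\langle1\rangle[-1],W^{X_+})=0$, which is the hypothesis of Lemma~\ref{lem:for_adjoint_pair_+-}(1). Then $\theta_{X_+,Y}(\overline{f})=\overline{g}$ gives directly
\[
(g\circ l_X)[1]\circ c_X=g[1]\circ c_{X_+}\circ y=-({}_Yc)[1]\circ f\circ y ,
\]
that is, $\theta_{X,Y}(\overline{f\circ y})=\overline{g\circ l_X}$. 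This is exactly the naturality your second compatibility requires, with no correction term needed.
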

\begin{proof}
As in Definition~\ref{def:functor_<1>}, we have a distinguished triangle
\[
X\xrightarrow{a_X}W^X\xrightarrow{b_X}X\langle1\rangle\xrightarrow{c_X}X[1]
\]
with $W^X\in\CW$ and $X\langle1\rangle\in\CT^-$.
As in Definition~\ref{def:functor_+}, we have a distinguished triangle
\[
U'_X[-1]\xrightarrow{u\circ u'}X\xrightarrow{l_X}X_+\to U'_X
\]
with $U'_X\in\CU$.
By the octahedron axiom, we obtain a commutative diagram in $\CT$
\begin{equation}\label{octahedron_UXX}
\begin{tikzcd}[row sep=0.8cm, column sep=0.6cm]
U'_X[-1]\arrow{d}[swap]{u\circ u'}\arrow[equal]{r}{}
&U'_X[-1] \arrow{d}{}
\\
X \arrow{r}{a_X}\arrow{d}[swap]{l_X}
&W^X \arrow{d}{e} \arrow{r}{b_X}
&X\langle1\rangle \arrow{r}{c_X} \arrow[equal]{d}{}
&X[1] \arrow{d}{l_X[1]}
\\
X_+\arrow{r}[swap]{a'}\arrow{d}{}
&U \arrow{d}{} \arrow{r}[swap]{b'}
&X \langle1\rangle \arrow{r}[swap]{c'}
&X_+[1]
\\
U'_X \arrow[equal]{r}
&U'_X
\end{tikzcd}
\end{equation}
for some $U\in\CT$, in which $U'_X[-1]\to W^X\xrightarrow{e} U\to U'_X$ and $X_+\xrightarrow{a'}U\xrightarrow{b'}X\langle1\rangle\xrightarrow{c'}X_+[1]$ are distinguished triangles. Since $\CT(U'_X[-1],W^X)=0$, we have $U\cong W^X\oplus U'_X\in\CU$.
Since $\CU\subset\CU[-1]\ast\CW$, we may take a distinguished triangle $U'[-1]\to U\xrightarrow{f} W'\to U'$ with $U'\in\CU$ and $W'\in\CW$. By the octahedron axiom, we obtain a commutative diagram in $\CT$
\begin{equation}\label{octahedron_UUW}
\begin{tikzcd}[row sep=0.8cm, column sep=0.6cm]
{}
&U'[-1]\arrow{d}{}\arrow[equal]{r}{}
&U'[-1] \arrow{d}{}
\\
X_+ \arrow{r}{a'}\arrow[equal]{d}{}
&U \arrow{d}{f} \arrow{r}{b'}
&X\langle1\rangle \arrow{r}{c'} \arrow{d}{g}
&X_+[1] \arrow[equal]{d}{}
\\
X_+\arrow{r}[swap]{a^{\prime\prime}}
&W' \arrow{d}{} \arrow{r}[swap]{b^{\prime\prime}}
&Y\arrow{r}[swap]{c^{\prime\prime}}\arrow{d}{}
&X_+[1]
\\
{}
&U' \arrow[equal]{r}
&U'
\end{tikzcd}
\end{equation}
in which $U'[-1]\to X\langle1\rangle\xrightarrow{g}Y\to U'$ and $X_+\xrightarrow{a^{\prime\prime}}W'\xrightarrow{b^{\prime\prime}}Y\xrightarrow{c^{\prime\prime}}X_+[1]$ are distinguished triangles.
By $U,U'\in\CU$, we have $g\in\CL$, and hence $H(g)$ is an isomorphism by Proposition~\ref{prop:fully_faithful}. 
Moreover, since $X\langle1\rangle\in\CT^-$, we have $Y\in\CT^-$ by Lemma~\ref{lem:T-toT-} {\rm (1)}. Thus it follows that $\overline{g}_+\colon X\langle1\rangle_+\to Y_+$ is an isomorphism in $\CH/[\CW]$. By $(\ref{octahedron_UXX})$ and $(\ref{octahedron_UUW})$, we have a morphism
\[
\begin{tikzcd}[row sep=0.8cm, column sep=0.6cm]
X \arrow{r}{a_X}\arrow{d}[swap]{l_X}
&W^X \arrow{d}{f\circ e} \arrow{r}{b_X}
&X\langle1\rangle \arrow{r}{c_X} \arrow{d}{g}
&X[1] \arrow{d}{l_X[1]}
\\
X_+ \arrow{r}[swap]{a^{\prime\prime}}
&W'  \arrow{r}[swap]{b^{\prime\prime}}
&Y \arrow{r}[swap]{c^{\prime\prime}}
&X_+[1]
\end{tikzcd}
\]
of distinguished triangles. Since $W'\in\CW$, there exists $y\in\CT(Y,X_+\langle1\rangle)$ such that 
\[
\begin{tikzcd}[row sep=0.8cm, column sep=0.8cm]
X_+ \arrow{r}{a^{\prime\prime}}\arrow[d,equals]
&W' \arrow{d}{} \arrow{r}{b^{\prime\prime}}
&Y \arrow{r}{c^{\prime\prime}} \arrow{d}{y}
&X_+[1] \arrow[d,equals]
\\
X_+ \arrow{r}[swap]{a_{(X_+)}}
&W^{(X_+)}  \arrow{r}[swap]{b_{(X_+)}}
&X_+\langle1\rangle \arrow{r}[swap]{c_{(X_+)}}
&X_+[1]
\end{tikzcd}
\]
becomes a morphism of distinguished triangles.
This gives an isomorphism $\overline{y}\colon Y\xrightarrow{\cong} X_+\langle1\rangle$ in $\CT^-/[\CW]$. Proposition~\ref{prop:functor_<1>} tells us that $\overline{l_X}\langle1\rangle=\overline{y}\circ\overline{g}$ holds in $\CT^-/[\CW]$. Therefore, it follows that $(\overline{l_X}\langle1\rangle)_+=\overline{y}_+\circ\overline{g}_+\colon (X\langle1\rangle)_+\xrightarrow{\cong}(X_+\langle1\rangle)_+$ is an isomorphism in $\CH/[\CW]$. 
\normalcolor
\end{proof}

\begin{proposition}\label{prop:functor_Sigma}
The following holds.
\begin{enumerate}
\item For any non-negative integer $k$, the functor $\Sigma^k$ is isomorphic to the composite of 
\[
\CH/[\CW]\overset{\iota}{\hookrightarrow}\CT^-/[\CW]\xrightarrow{\langle  k\rangle}\CT^-/[\CW]\xrightarrow{(\ )_+}\CH/[\CW].
\]
\item $\Sigma^k=0$ holds for all $k\ge n$.
\end{enumerate}
\end{proposition}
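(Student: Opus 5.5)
Part (1) is proved by induction on $k$. Part (2) then follows from (1) and Proposition~\ref{prop:functor_<1>}(1), together with Proposition~\ref{prop:functor_+_property}(2).

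\textbf{Part (1).} The cases $k=0$ and $k=1$ are trivial. For $k=0$, $(\ )_+$ restricted to $\CH/[\CW]$ is naturally isomorphic to the identity, since $(\ )_+$ is a left adjoint to an inclusion and $\CH\subset\CT^+$. For $k=1$, the claim is the definition of $\Sigma$.

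For the inductive step, assume a natural isomorphism $\Sigma^k\cong(\ )_+\circ\langle k\rangle\circ\iota$. Then
\[
\Sigma^{k+1}=\Sigma\circ\Sigma^k\cong\Sigma\circ(\ )_+\circ\langle k\rangle\circ\iota .
\]
For $X\in\CH$ put $Z=X\langle k\rangle\in\CT^-$. Lemma~\ref{lem:functor_Sigma} gives an isomorphism
\[
\tau_Z\colon (Z\langle1\rangle)_+\xrightarrow{\cong}\Sigma(Z_+),
\]
and $(Z\langle1\rangle)_+=(X\langle k+1\rangle)_+$. So it remains to check that $\tau_Z$ is natural in $Z\in\CT^-/[\CW]$. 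This holds because $\tau_Z=(\overline{l_Z}\langle1\rangle)_+$, and $\overline{l_Z}$ is the unit of the adjunction $(\ )_+\dashv$ inclusion, hence natural. Applying the functors $\langle1\rangle$ and $(\ )_+$ preserves naturality. Composing $\tau$ with the inductive isomorphism gives the required natural isomorphism, which completes the induction.

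\textbf{Part (2).} Let $k\ge n$ and $X\in\CH\subset\CT^-$. Proposition~\ref{prop:functor_<1>}(1) gives $X\langle k\rangle\in\CU$. Proposition~\ref{prop:functor_+_property}(2) then gives $(X\langle k\rangle)_+\in\CW$, so this object is zero in $\CH/[\CW]$. By (1), $\Sigma^kX\cong 0$ for every $X$. An additive functor that sends every object to a zero object is the zero functor, so $\Sigma^k=0$.

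The only point needing any care is the naturality of $\tau$ in the inductive step. This is formal, since it is the image of a natural transformation under functors. If it caused trouble, an alternative is to check it directly from the morphism of triangles built in the proof of Lemma~\ref{lem:functor_Sigma}, using the uniqueness in Lemma~\ref{lem:for_adjoint_pair_+-}.
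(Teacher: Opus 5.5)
Your proof is correct and follows essentially the same route as the paper. The paper likewise uses Lemma~\ref{lem:functor_Sigma} to show that $(\overline{l_X}\langle1\rangle)_+$ is a natural isomorphism $(\ )_+\circ\langle1\rangle\cong\Sigma\circ(\ )_+$ on $\CT^-/[\CW]$, with naturality inherited from the unit $\overline{l_X}$, leaving the induction on $k$ implicit, and it deduces part~(2) exactly as you do from Propositions~\ref{prop:functor_<1>}(1) and~\ref{prop:functor_+_property}(2).
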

\begin{proof}
{\rm (1)} It suffices to show that
\begin{equation}\label{diagram_to_be_commutative}
\begin{tikzpicture}[>=stealth]
\node (1) at (-2.5,0.8) {$\CT^-/[\CW]$};
\node (2) at (0,0.8) {$\CT^-/[\CW]$};
\node (3) at (-2.5,-0.8) {$\CH/[\CW]$};
\node (4) at (0,-0.8) {$\CT^-/[\CW]$};
\node (5) at (2.5,-0.8) {$\CT^-/[\CW]$};
\node (6) at (2.5,0.8) {$\CH/[\CW]$};
\draw[->] (1) -- node[above,font=\scriptsize] {$\langle1\rangle$} (2);
\draw[->] (2) -- node[above,font=\scriptsize] {$(\ )_+$} (6);
\draw[->] (1) -- node[left,font=\scriptsize] {$(\ )_+$} (3);
\draw[->] (3) -- node[below,font=\scriptsize] {$\iota$} (4);
\draw[->] (4) -- node[below,font=\scriptsize] {$\langle1\rangle$} (5);
\draw[->] (5) -- node[right,font=\scriptsize] {$(\ )_+$} (6);
\end{tikzpicture}
\end{equation}
is commutative up to a natural isomorphism.
Let $X\in\CT^-$ be any object. 
As in Definition~\ref{def:functor_+}, we have a distinguished triangle
\[
U'_X[-1]\xrightarrow{u\circ u'}X\xrightarrow{l_X}X_+\to U'_X
\]
with $U'_X\in\CU$.
We note that $\{\overline{l_X}\}_{X\in\CT/[\CW]}$ forms a natural transformation, which is the unit for the left adjoint functor $(\ )_+$ of the inclusion $\CT^+/[\CW]\hookrightarrow\CT/[\CW]$.
By Lemma~\ref{lem:functor_Sigma}, each
\darkblueedit{$(\overline{l_X}\langle1\rangle)_+$}
is an isomorphism.
Since $\overline{l_X}$ is natural in $X\in\CT$, so is $(\overline{l_X}\langle1\rangle)_+$.  
Thus $(\ref{diagram_to_be_commutative})$ is commutative up to this natural isomorphism.

{\rm (2)} Let $X\in\CH$ be any object, and let $k$ be any integer with $k\ge n$. Then $X\langle k\rangle\in\CU$ by Proposition~\ref{prop:functor_<1>} {\rm (1)}, hence $(X\langle k\rangle)_+\cong 0$ in $\CH/[\CW]$ by Proposition~\ref{prop:functor_+_property} {\rm (2)}. By {\rm (1)}, this means that $\Sigma^kX\cong 0$ holds in $\CH/[\CW]$.
\end{proof}

\subsubsection{Right/left triangles}

We define the class of distinguished right triangles in $\CH/[\CW]$ by the following. 
\begin{definition}\label{def:standard_right_triangle}
Let $f\in\CH(X,Y)$ be any morphism.
Let $X\xrightarrow{a_X}W^X\xrightarrow{b_X}X\langle 1\rangle\xrightarrow{c_X}X[1]$ be the distinguished triangle $(\ref{triangle_<1>})$ in Definition~\ref{def:functor_<1>}.
Complete $\begin{bmatrix}f\\ a_X\end{bmatrix}\in\CT(X,Y\oplus W^X)$ into a distinguished triangle
\[
X\xrightarrow{{\scriptsize\begin{bmatrix}f\\ a_X\end{bmatrix}}}Y\oplus W^X\xrightarrow{{\scriptsize\begin{bmatrix}g_f&w_f\end{bmatrix}}}M_f\xrightarrow{k_f}X[1]
\]
in $\CT$.
Then, by the octahedron axiom, we obtain a commutative diagram in $\CT$
\begin{equation}\label{octahedron_h_f}
\begin{tikzcd}[row sep=0.8cm, column sep=1.2cm]
{}
&Y\arrow{d}[swap]{\begin{bmatrix}\id\\0\end{bmatrix}}\arrow[equal]{r}{}
&Y \arrow{d}{g_f}
\\
X \arrow{r}{\begin{bmatrix}f\\ a_X\end{bmatrix}}\arrow[equal]{d}{}
&Y\oplus W^X \arrow{d}{[0\ \id]} \arrow{r}{[g_f\ w_f]}
&M_f \arrow{r}{k_f} \arrow{d}{h_f}
&X[1] \arrow[equal]{d}{}
\\
X\arrow{r}[swap]{a_X}
&W^X \arrow{d}{0} \arrow{r}[swap]{b_X}
&X\langle1\rangle\arrow{r}[swap]{c_X}\arrow{d}{s_f}
&X[1]\arrow{d}{\begin{bmatrix}f\\ a_X\end{bmatrix}{[}1{]}}
\\
{}
&Y[1] \arrow[equal]{r}
&Y[1] \arrow{r}[swap]{-\begin{bmatrix}\id\\0\end{bmatrix}{[}1{]}}
&(Y\oplus W^X)[1]
\end{tikzcd}
\end{equation}
in which $Y\xrightarrow{g_f}M_f\xrightarrow{h_f}X\langle1\rangle\xrightarrow{s_f}Y[1]$ is a distinguished triangle in $\CT$.
By Lemma~\ref{lem:T-_*_<1>} {\rm (1)}, we have $M_f\in\CT^-$.
We define \darkblueedit{the} \emph{standard right triangle associated to $f$} to be the sequence
\begin{equation}\label{standard_right_triangle}
X\xrightarrow{\overline{f}}Y\xrightarrow{\overline{l_{M_f}}\circ \overline{g_f}}(M_f)_+\xrightarrow{\overline{(h_f)}_+}\Sigma X
\end{equation}
in $\CH/[\CW]$, obtained in this way.
A \emph{distinguished right triangle} is a right triangle %
in $\CH/[\CW]$ isomorphic to the standard right triangle associated to some morphism $f\in\CH(X,Y)$. We denote by $\vartriangleright$ the class of distinguished right triangles. 
\end{definition}

\begin{remark}\label{rem:standard_h_f}
In fact, it is not necessary to choose $h_f$ via the octahedron axiom:
any morphism $h_f$ for which the diagram
\[
\begin{tikzcd}[row sep=0.8cm, column sep=1.2cm]
X \arrow{r}{\begin{bmatrix}f\\ a_X\end{bmatrix}}\arrow[d, equal]
&Y\oplus W^X \arrow{d}{{[}0\ \id{]}} \arrow{r}{{[}g_f\ w_f{]}}
&M_f\arrow{r}{k_f} \arrow{d}{h_f}
&X{[}1{]} \arrow[d, equal]
\\
X\arrow{r}[swap]{a_X}
&W^X\arrow{r}[swap]{b_X}
&X \langle1\rangle \arrow{r}[swap]{c_X}
&X{[}1{]}
\end{tikzcd}\]
is a morphism of distinguished triangles in $\CT$,
already gives the standard right triangle~(\ref{standard_right_triangle}).
Indeed, if any other $h'\in\CT(M_f,X\langle1\rangle)$ satisfies $c_X\circ h'=k_f$, then $h_f-h'$ should factor through $b_X$, which means that $\overline{h_f}=\overline{h'}$ holds in 
\color{blue}
$\CT^-/[\CW]$.
\normalcolor
\end{remark}

\begin{lemma}\label{lem:H(f)_monom}
Let $X,Y\in\CT$ be any pair of objects, and let $f\in\CT(X,Y)$ be any morphism. Suppose that there exists a distinguished triangle in $\CT$
\[
V\xrightarrow{e}X\xrightarrow{f}Y\to V[1]
\]
with $V\in\CV$. Then $H(f)$ is a monomorphism in $\CH/[\CW]$.
\end{lemma}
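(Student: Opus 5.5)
The plan is to reduce to the case $X\in\CH$ and $Y\in\CT^+$. In that case the coreflection $(\ )_-$ turns the claim into a statement about morphisms in $\CT/[\CW]$, which the orthogonality $\CT(\CU_{-n}^{-1},\CV)=0$ settles.

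\emph{Reduction.} First replace $X$ by $X_+$ using the triangle $U'_X[-1]\to X\xrightarrow{l_X}X_+\to U'_X$ of Definition~\ref{def:functor_+}.
\begin{itemize}
\item Apply the octahedron axiom to $e$ and $l_X$: set $e_1:=l_X\circ e\colon V\to X_+$ and complete it to a triangle $V\xrightarrow{e_1}X_+\xrightarrow{f_1}Y_1\to V[1]$.
\item The octahedron then gives a morphism $y\colon Y\to Y_1$ with $f_1\circ l_X=y\circ f$, whose cone is $U'_X$. This should exhibit $y$ as an element of $\CL$, so $H(y)$ is an isomorphism by Proposition~\ref{prop:fully_faithful}.
\item Since $H(l_X)$ is also an isomorphism, $H(f)$ is a monomorphism if and only if $H(f_1)$ is. Hence we may assume $X\in\CT^+$.
\end{itemize}
Next replace $X$ by $X_-$ via $r_X\colon X_-\to X$, whose fibre is $V'_X\in\CV$.
\begin{itemize}
\item The octahedron for $f\circ r_X$ gives a triangle $V''\to X_-\xrightarrow{f\circ r_X}Y\to V''[1]$ with $V''$ an extension of $V'_X$ and $V$. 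Since $\CV$ is extension-closed, $V''\in\CV$.
\item $H(r_X)$ is an isomorphism, and for $X\in\CT^+$ we have $X_-\in\CH$ (dual of Proposition~\ref{prop:functor_+_property}).
\end{itemize}
So we may assume $X\in\CH$. Then $Y\in\CT^+\ast\CV[1]$, and this should lie in $\CT^+$ by the description $\CT^+=\CW_0^{n-1}\ast\CV[n]$ from Lemma~\ref{lem:equation_T-T+}, using that $\CV_0^{n}$ absorbs a further $\CV[1]$-extension on the right (cf.\ Lemma~\ref{lem:T-_*_<1>}(2) and its proof). I expect this reduction to be the main obstacle: the octahedra and the membership claims ($y\in\CL$, $Y\in\CT^+$) need to be checked carefully.

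\emph{Key step.} Now $H(X)\cong X$, and $H(Y)\cong Y_-$ with $\overline{r_Y}$ a coreflection. By Lemma~\ref{lem:self-dual_H}, for $Z\in\CH$ and under the adjunction bijections, composing with $H(f)$ corresponds to composing with $\overline{f}$ in $(\CT/[\CW])(Z,-)$. So it suffices to show: if $g\in\CT(Z,X)$ with $Z\in\CH$ satisfies $f\circ g\in[\CW]$, then $g\in[\CW]$.
\begin{enumerate}
\item Write $f\circ g=a\circ w$ with $w\colon Z\to W$, $a\colon W\to Y$ and $W\in\CW$.
\item Since $\CT(W,V[1])=0$, the map $a$ lifts as $a=f\circ a'$ for some $a'\colon W\to X$.
\item Then $f\circ(g-a'\circ w)=0$, so $g-a'\circ w=e\circ z$ for some $z\colon Z\to V$.
\item Since $Z\in\CT^-=\CU_{-n}^{-1}\ast\CU$ and $\CU\subset\CU[-1]\ast\CW$ (Lemma~\ref{lem:inclusions_for_n-CT}), we get $Z\in\CU_{-n}^{-1}\ast\CW$, using that $\CU_{-n}^{-1}$ is extension-closed (Proposition~\ref{prop:cotors_1-n}). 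Take a triangle $P\to Z\to W''\to P[1]$ with $P\in\CU_{-n}^{-1}$ and $W''\in\CW$.
\item Since $\CT(\CU[-k],\CV)=0$ for $1\le k\le n$, we have $\CT(P,V)=0$. Hence $z$ factors through $Z\to W''$, so $z\in[\CW]$.
\end{enumerate}
It follows that $g=a'\circ w+e\circ z\in[\CW]$, which proves that $H(f)$ is a monomorphism.
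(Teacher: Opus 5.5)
Your proof is correct, and its final step is exactly the paper's Step~4: write $f\circ g=a\circ w$, lift $a$ through $f$ using $\CT(W,V[1])=0$, and kill the resulting $z\colon Z\to V$ using $Z\in\CU_{-n}^{-1}\ast\CW$. You even supply the justification of $z\in[\CW]$ that the paper leaves implicit.

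The difference lies in the reduction. The paper moves \emph{both} ends into $\CH$ using three octahedra:
\begin{itemize}
\item it first replaces $Y$ by $Y_-$;
\item it then replaces $Y$ by $Y_+$, which requires lifting $u\colon U_Y\to Y$ through $f$ via $\CT(U_Y,V[1])=0$ and produces a new source in $\CV\ast\CT^+=\CT^+$;
\item finally it replaces $X$ by $X_-$.
\end{itemize}
At the end $H(f)$ is literally $\overline{f}$.

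You modify only the source. The octahedron on $V\xrightarrow{e}X\xrightarrow{l_X}X_+$ gives $y\colon Y\to Y_1$ with cone $U'_X$ and third morphism $f[1]\circ\mu$, where $\mu\colon U'_X\to X[1]$ is the connecting morphism of $l_X$. Since $\mu$ factors through $U_X[1]$, you do get $y\in\CL$, as you expected. The second move $X\mapsto X_-$ is the paper's Step~3. You then handle the target through the closure $\CT^+\ast\CV[1]\subseteq\CT^+$ and the coreflection $\overline{r_Y}$. This saves one octahedron and the lifting step, at the cost of a little adjunction bookkeeping.

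On the two points you flagged:
\begin{itemize}
\item Lemma~\ref{lem:T-_*_<1>}(2) is an absorption on the \emph{left}, so it does not directly give what you need. The clean justification is
$\CT^+\ast\CV[1]=\CV\ast(\CV_0^{n-1}\ast\CV)[1]\subseteq\CV\ast\CV_0^{n-1}[1]=\CT^+$,
using that $\CV_0^{n-1}$ is extension-closed as the right half of the cotorsion pair $(\CU,\CV_0^{n-1})$ from Proposition~\ref{prop:cotors_1-n}.
\item Your key step does not really need Lemma~\ref{lem:self-dual_H}. What it uses is:
\begin{itemize}
\item $H|_{\CT^+}$ is isomorphic to $(\ )_-$, which is immediate from the definition of $H$ since $\overline{l_A}$ is invertible for $A\in\CT^+$;
\item the naturality $\overline{r_Y}\circ\overline{f}_-=\overline{f}\circ\overline{r_X}$;
\item the bijectivity of $\overline{r_Y}\circ-$ on $(\CT/[\CW])(Z,-)$ for $Z\in\CT^-$.
\end{itemize}
Only $Z\in\CT^-$ is used there.
\end{itemize}
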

\begin{proof}
\textbf{Step 1: Reduction to the case $Y\in\CT^-$.} By the definition of $Y_-$, there exists a distinguished triangle
\[
V'_Y\to Y_-\xrightarrow{r_Y}Y\xrightarrow{v^{\prime\prime}}V'_Y[1]
\]
in $\CT$ with $V'_Y\in\CV$ and $v^{\prime\prime}\in[\CV]$. Then, by the octahedron axiom, we obtain a commutative diagram in $\CT$
\[
\begin{tikzpicture}[>=stealth]
\node (1) at (-2.4,0) {$V$};
\node (2) at (-0.42,0.88) {$X'$};
\node (3) at (1.4,1.7) {$Y_-$};
\node (4) at (0,0) {$X$};
\node (5) at (1.2,0) {$Y$};
\node (6) at (0.98,-1.8) {$V'_Y[1]$};
\draw[->] (1) -- node[above, font=\scriptsize] {} (2);
\draw[->] (1) -- node[below,font=\scriptsize] {$e$} (4);
\draw[->] (2) -- node[above,font=\scriptsize] {$f'$} (3);
\draw[->] (2) -- node[right,font=\scriptsize] {$x$} (4);
\draw[->] (3) -- node[right,font=\scriptsize] {$r_Y$} (5);
\draw[->] (4) -- node[below,font=\scriptsize] {$f$} (5);
\draw[->] (4) -- node[left,font=\scriptsize] {$v^{\prime\prime}\circ f$} (6);
\draw[->] (5) -- node[right, font=\scriptsize] {$v^{\prime\prime}$} (6);
\end{tikzpicture}
\]
in which $V\to X'\xrightarrow{f'}Y_-\to V[1]$ and $V'_Y\to X'\xrightarrow{x}X\xrightarrow{v^{\prime\prime}\circ f}V'_Y[1]$ are distinguished triangles. By Lemma~\ref{lem:fully_faithful1} {\rm (2)}, we have $r_Y\in\CR$.
Similarly, since $v^{\prime\prime}\circ f\in[\CV]$, we have $x\in\CR$.
Thus, by Proposition~\ref{prop:fully_faithful}, it follows that $H(r_Y)$ and $H(x)$ are isomorphisms in $\CH/[\CW]$.
By the commutativity of 
\[
\begin{tikzpicture}[>=stealth]
\node (1) at (-1.2,0.7) {$H(X')$};
\node (2) at (1.2,0.7) {$H(Y_-)$};
\node (3) at (-1.2,-0.7) {$H(X)$};
\node (4) at (1.2,-0.7) {$H(Y)$};
\draw[->] (1) -- node[above,font=\scriptsize] {$H(f')$} (2);
\draw[->] (1) -- node[left,font=\scriptsize] {$H(x)$}
node[right,font=\scriptsize] {$\cong$}(3);
\draw[->] (2) -- node[left,font=\scriptsize] {$\cong$}
node[right,font=\scriptsize] {$H(r_Y)$}(4);
\draw[->] (3) -- node[below,font=\scriptsize] {$H(f)$} (4);
\end{tikzpicture}
\]
it suffices to show that $H(f')$ is a monomorphism. Replacing $Y$ with $Y_-$ if necessary, we may assume $Y\in\CT^-$ from the beginning.

\textbf{Step 2: Reduction to the case $X\in\CT^+$ and $Y\in\CH$.}
We assume $Y\in\CT^-$. By the definition of $Y_+$, there exist morphisms $u\in\CT(U_Y,Y)$, $u'\in\CT(U'_Y[-1],U_Y)$ with $U_Y,U'_Y\in\CU$ and a distinguished triangle
\[
U'_Y[-1]\xrightarrow{u\circ u'}Y\xrightarrow{l_Y}Y_+\to U'_Y
\]
in $\CT$.
Since $\CT(U_Y,V[1])=0$, there exists $g\in\CT(U_Y,X)$ such that $u=f\circ g$. By the octahedron axiom, we obtain a commutative diagram in $\CT$
\[
\begin{tikzpicture}[>=stealth]
\node (1) at (-0.98,1.8) {$U'_Y[-1]$};
\node (2) at (-1.19,0) {$X$};
\node (3) at (-1.4,-1.7) {$X'$};
\node (4) at (0,0) {$Y$};
\node (5) at (0.45,-0.9) {$Y_+$};
\node (6) at (2.4,0) {$V[1]$};
\draw[->] (1) -- node[left,font=\scriptsize] {$g\circ u'$} (2);
\draw[->] (1) -- node[right,font=\scriptsize] {$u\circ u'$} (4);
\draw[->] (2) -- node[left,font=\scriptsize] {$x$} (3);
\draw[->] (2) -- node[below,font=\scriptsize] {$f$} (4);
\draw[->] (3) -- node[below,font=\scriptsize] {$f'$} (5);
\draw[->] (4) -- node[left,font=\scriptsize] {$l_Y$} (5);
\draw[->] (4) -- node[above,font=\scriptsize] {} (6);
\draw[->] (5) -- node[below,font=\scriptsize] {} (6);
\end{tikzpicture}
\]
for some $X'\in\CT$, in which 
$U'_Y[-1]\xrightarrow{g\circ u'}X\xrightarrow{x} X'\to U'_Y$
and
$V\to X'\xrightarrow{f'}Y_+\to V[1]$
are distinguished triangles. In particular, we have $X'\in\CV\ast\CT^+=\CT^+$.
By Lemma~\ref{lem:fully_faithful1} {\rm (1)}, we have $l_Y\in\CL$. Similarly, since $g\circ u'\in[\CU]$, we have $x\in\CL$.
Thus, by Proposition~\ref{prop:fully_faithful}, $H(l_Y)$ and $H(x)$ are isomorphisms in $\CH/[\CW]$.
By the commutativity of 
\[
\begin{tikzpicture}[>=stealth]
\node (1) at (-1.2,0.7) {$H(X)$};
\node (2) at (1.2,0.7) {$H(Y)$};
\node (3) at (-1.2,-0.7) {$H(X')$};
\node (4) at (1.2,-0.7) {$H(Y_+)$};
\draw[->] (1) -- node[above,font=\scriptsize] {$H(f)$} (2);
\draw[->] (1) -- node[left,font=\scriptsize] {$H(x)$}
node[right,font=\scriptsize] {$\cong$}(3);
\draw[->] (2) -- node[left,font=\scriptsize] {$\cong$}
node[right,font=\scriptsize] {$H(l_Y)$}(4);
\draw[->] (3) -- node[below,font=\scriptsize] {$H(f')$} (4);
\end{tikzpicture}
\]
it suffices to show that $H(f')$ is a monomorphism. Replacing $X$,$Y$ with $X'$,$Y_+$ if necessary, we may assume $X\in\CT^+$ and $Y\in\CH$ from the beginning.

\textbf{Step 3: Reduction to the case $X,Y\in\CH$.}
We assume $X\in\CT^+$ and $Y\in\CH$. By the definition of $X_-$, there exists a distinguished triangle
\[
V'_X\to X_-\xrightarrow{r_X}X\xrightarrow{v^{\prime\prime}}V'_X[1]
\]
in $\CT$ with $V'_X\in\CV$ and $v^{\prime\prime}\in[\CV]$. By the octahedron axiom, we obtain a commutative diagram in $\CT$
\[
\begin{tikzpicture}[>=stealth]
\node (1) at (-0.98,1.8) {$X_-$};
\node (2) at (-1.19,0) {$X$};
\node (3) at (-1.4,-1.7) {$V'_X[1]$};
\node (4) at (0,0) {$Y$};
\node (5) at (0.45,-0.9) {$Z[1]$};
\node (6) at (2.4,0) {$V[1]$};
\draw[->] (1) -- node[left,font=\scriptsize] {$r_X$} (2);
\draw[->] (1) -- node[right,font=\scriptsize] {$f\circ r_X$} (4);
\draw[->] (2) -- node[left,font=\scriptsize] {$v^{\prime\prime}$} (3);
\draw[->] (2) -- node[below,font=\scriptsize] {$f$} (4);
\draw[->] (3) -- (5);
\draw[->] (4) -- node[left,font=\scriptsize] {} (5);
\draw[->] (4) -- node[above,font=\scriptsize] {} (6);
\draw[->] (5) -- node[below,font=\scriptsize] {} (6);
\end{tikzpicture}
\]
for some $Z\in\CT$, in which 
$V\to V'_X[1]\to Z[1]\to V[1]$
and $Z\to X_-\xrightarrow{f\circ r_X} Y\to Z[1]$
are distinguished triangles. In particular we have $Z\in\CV$. Since $H(r_X)$ is an isomorphism in $\CH/[\CW]$, by a similar argument as before, \darkblueedit{we may replace} $X$ with $X_-$ and assume that $X\in\CH$.

\textbf{Step 4: Proof in the case $X,Y\in\CH$.}
We assume $X,Y\in\CH$. It suffices to show that $\overline{f}$ is a monomorphism in $\CH/[\CW]$.
Suppose that $A\in\CH$ and $a\in\CH(A,X)$ satisfies $\overline{f}\circ\overline{a}=0$ in $\CH/[\CW]$. It means that there are $W\in\CW$, $w_1\in\CT(A,W)$ and $w_2\in\CT(W,Y)$ such that $f\circ a=w_2\circ w_1$ in $\CT$. By $\CT(W,V[1])=0$, there exists $w_3\in\CT(W,X)$ such that $w_2=f\circ w_3$. Since 
\[
f\circ(a-w_3\circ w_1)=f\circ a-w_2\circ w_1=0,
\]
there exists $a'\in\CT(A,V)$ such that $a-w_3\circ w_1=e\circ a'$. By $A\in\CH$ and $V\in\CV$, we see that $a'\in[\CW]$, and hence $a=e\circ a'+w_3\circ w_1\in[\CW]$. This shows that $\overline{f}$ is a monomorphism.
\end{proof}

\begin{lemma}\label{lem:for_right_triangle_replace}
Let $X\xrightarrow{f}Y\xrightarrow{g}Z\xrightarrow{h}X[1]$ be any distinguished triangle in $\CT$ with $X\in\CT^-$.
Let
$X\xrightarrow{a_X}W^X\xrightarrow{b_X}X\langle 1\rangle\xrightarrow{c_X}X[1]$
be the distinguished triangle as in Definition~\ref{def:functor_<1>}, and assume that there exists \darkblueedit{a morphism} $a\in\CT(Y,W^X)$ such that $a\circ f=a_X$ in $\CT$.
Then, the following holds.
\begin{enumerate}
\item There exists $v\in\CT(Z,X\langle1\rangle)$ such that $c_X\circ v=h$ in $\CT$. 
In particular we have $H(c_X)\circ H(v)=H(h)$ in $\CH/[\CW]$. 
\item Morphism $\overline{v}_+\in(\CH/[\CW])(Z_+,X\langle1\rangle_+)$ is uniquely determined by the equality $H(c_X)\circ H(v)=H(h)$.
\end{enumerate}
\end{lemma}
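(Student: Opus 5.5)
The plan is to get (1) from the morphism axiom of triangulated categories, and (2) from the fact that $H(c_X)$ is a monomorphism, which \cref{lem:H(f)_monom} supplies.

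\textbf{Step 1 (existence of $v$).} The hypothesis $a\circ f=a_X$ says that the square
\[
\begin{tikzcd}
X \arrow{r}{f}\arrow[equal]{d} & Y \arrow{d}{a}\\
X \arrow{r}[swap]{a_X} & W^X
\end{tikzcd}
\]
commutes. Both rows extend to distinguished triangles: $X\xrightarrow{f}Y\xrightarrow{g}Z\xrightarrow{h}X[1]$ and the triangle $(\ref{triangle_<1>})$. By the axiom completing a commutative square to a morphism of distinguished triangles, there is $v\in\CT(Z,X\langle1\rangle)$ with $v\circ g=b_X\circ a$ and $c_X\circ v=\id_X[1]\circ h=h$. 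Applying the additive functor $H$ gives $H(c_X)\circ H(v)=H(h)$ in $\CH/[\CW]$.

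\textbf{Step 2 ($H(c_X)$ is a monomorphism).} Rotating $(\ref{triangle_<1>})$ gives a distinguished triangle
\[
W^X\xrightarrow{b_X}X\langle1\rangle\xrightarrow{c_X}X[1]\xrightarrow{-a_X[1]}W^X[1].
\]
Here $W^X\in\CW\subset\CV$. So \cref{lem:H(f)_monom}, applied with $V=W^X$, $e=b_X$ and $f=c_X$, shows that $H(c_X)$ is a monomorphism in $\CH/[\CW]$. Consequently, any endomorphism-level solution $\varphi$ of $H(c_X)\circ\varphi=H(h)$ is unique. In particular, if $v'$ is another morphism with $H(c_X)\circ H(v')=H(h)$, then $H(v)=H(v')$.

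\textbf{Step 3 (translate to $\overline{v}_+$).} It remains to identify $H(v)$ with $\overline{v}_+$. Since $X\langle1\rangle\in\CT^-$, \cref{lem:self-dual_H} together with \cref{rem:essentially_surjective_H} gives a natural isomorphism
\[
H(X\langle1\rangle)\cong (X\langle1\rangle_-)_+\cong X\langle1\rangle_+ .
\]
On the source side, $H(Z)=(Z_+)_-$, with the counit $\overline{r_{Z_+}}$. The composite isomorphisms are natural, so they carry $H(v)$ to the morphism induced by $\overline{v}_+$; equality of the $H(v)$ then forces equality of the $\overline{v}_+$. When $Z\in\CT^-$, we have $Z_+\in\CH$ by \cref{prop:functor_+_property}(1), so the source identification is simply the identity.

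The main obstacle is this bookkeeping, namely checking that the natural isomorphisms $\mathbf{p}$ of \cref{lem:self-dual_H} are compatible with $v$. Naturality of $\mathbf{p}_X$ in $X$ handles it. I expect Steps 1 and 2 to be routine.
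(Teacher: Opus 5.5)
Your proposal is correct and follows the paper's proof: (1) comes from (TR3), and (2) holds because $H(c_X)$ is a monomorphism by \cref{lem:H(f)_monom}, while $\overline{v}_+$ corresponds to $H(v)=(\overline{v}_+)_-$ under the isomorphisms $H(Z)\cong Z_+$ and $H(X\langle1\rangle)\cong X\langle1\rangle_+$. In Step~3 you do not need the isomorphism $\mathbf{p}$ of \cref{lem:self-dual_H} at all, and citing \cref{rem:essentially_surjective_H} there is beside the point: naturality of the counit of $(\ )_-$ alone gives $\overline{r_{X\langle1\rangle_+}}\circ H(v)=\overline{v}_+\circ\overline{r_{Z_+}}$, and both counit maps are invertible because $Z_+,X\langle1\rangle_+\in\CH$ (the statement's typing of $\overline{v}_+$ presupposes $Z_+\in\CH$). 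Note that $\overline{r_{Z_+}}$ is an isomorphism, not literally the identity as you wrote.
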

\begin{proof}
{\rm (1)} This immediately follows from \darkblueedit{{\rm (TR3)}} applied to the commutative diagram
\[
\begin{tikzcd}[row sep=0.8cm, column sep=0.8cm]
X \arrow{r}{f}\arrow[d, equal]
&Y\arrow{d}{a} \arrow{r}{g}
&Z\arrow{r}{h} %
&X{[}1{]} \arrow[d, equal]
\\
X\arrow{r}[swap]{a_X}
&W^X\arrow{r}[swap]{b_X}
&X \langle1\rangle \arrow{r}[swap]{c_X}
&X{[}1{]}
\end{tikzcd}
\]
in $\CT$.

{\rm (2)} Note that we have a commutative diagram
\[
\begin{tikzpicture}[>=stealth]
\node (1) at (-2.2,0.6) {$H(Z)$};
\node (2) at (0,0.6) {$H(X\langle1\rangle)$};
\node (3) at (-2.2,-0.6) {$Z_+$};
\node (4) at (0,-0.6) {$X\langle1\rangle_+$};
\node (5) at (2.5,0.6) {$H(X[1])$};
\draw[->] (1) -- node[below,font=\scriptsize] {$H(v)$} (2);
\draw[->,bend left=20] (1) to node[above,font=\scriptsize] {$H(h)$} (5);
\draw[->] (1) -- node[left,font=\scriptsize] {$\cong$} (3);
\draw[->] (2) -- node[right,font=\scriptsize] {$\cong$} (4);
\draw[->] (3) -- node[below,font=\scriptsize] {$\overline{v}_+$} (4);
\draw[->] (2) -- node[below,font=\scriptsize] {$H(c_X)$} (5);
\end{tikzpicture}
\]
in $\CH/[\CW]$. Since $H(c_X)$ is monomorphic by Lemma~\ref{lem:H(f)_monom},
\darkblueedit{the morphism} $\overline{v}_+$
is uniquely determined by the equality $H(c_X)\circ H(v)=H(h)$.
\end{proof}

\begin{lemma}\label{lem:right_triangle_replace}
Let $X\xrightarrow{f}Y\xrightarrow{g}Z\xrightarrow{h}X[1]$ be any distinguished triangle in $\CT$ with $X,Y,Z\in\CT^-$.
Let
$X\xrightarrow{a_X}W^X\xrightarrow{b_X}X\langle 1\rangle\xrightarrow{c_X}X[1]$
be the distinguished triangle as in Definition~\ref{def:functor_<1>}, and assume that there exists \darkblueedit{a morphism} $a\in\CT(Y,W^X)$ such that $a\circ f=a_X$ in $\CT$.
Let $\overline{v}_+\in(\CH/[\CW])(Z_+,X\langle1\rangle_+)$ be the unique morphism satisfying $H(c_X)\circ H(v)=H(h)$ obtained in Lemma~\ref{lem:for_right_triangle_replace}.
Then, 
\[ X_+\xrightarrow{\overline{f}_+}Y_+\xrightarrow{\overline{g}_+}Z_+\xrightarrow{\tau_X\circ \overline{v}_+}\Sigma (X_+)
\]
belongs to $\vartriangleright$, where $\tau_X=(\overline{l_X}\langle1\rangle)_+\colon X\langle1\rangle_+\xrightarrow{\cong}\Sigma(X_+)$ is the isomorphism obtained in Lemma~\ref{lem:functor_Sigma}.
\end{lemma}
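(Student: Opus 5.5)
The plan is to compare the given triangle with the standard right triangle attached to a representative of $\overline{f}_+$, by building an explicit isomorphism of right triangles in $\CH/[\CW]$.

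\textbf{Step 1: a representative of $\overline{f}_+$.} Let $f'\in\CH(X_+,Y_+)$ satisfy $f'\circ l_X=l_Y\circ f$; it exists by Proposition~\ref{prop:functor_+}. Then $\overline{f'}=\overline{f}_+$.

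\textbf{Step 2: the standard right triangle for $f'$.} Form the standard right triangle of Definition~\ref{def:standard_right_triangle}:
\[
X_+\xrightarrow{\overline{f'}}Y_+\to (M_{f'})_+\to\Sigma X_+ .
\]
It is built from the distinguished triangle
\[
X_+\to Y_+\oplus W^{X_+}\to M_{f'}\to X_+[1],
\]
together with $h_{f'}\colon M_{f'}\to X_+\langle1\rangle$ satisfying $c_{X_+}\circ h_{f'}=k_{f'}$.

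\textbf{Step 3: a map of triangles in $\CT$.} Set $a'\colon Y\to W^{X_+}$ to be a lift obtained from $a_{X_+}\circ l_X$ through $f$. Concretely, the morphism of triangles from the proof of Lemma~\ref{lem:functor_Sigma} lets us compare $a_X$ with $a_{X_+}$. Failing that, I would simply apply (TR3) to the square
\[
\begin{bmatrix}f'\\ a_{X_+}\end{bmatrix}\circ l_X=\begin{bmatrix}l_Y\\ a''\end{bmatrix}\circ f .
\]
Here $a''$ is chosen using $W^{X_+}\in\CW$ and $\CT(U'_X,W^{X_+}[1])=0$, so that $a_{X_+}\circ l_X$ factors through $f$; this uses the hypothesis $a\circ f=a_X$. (TR3) then gives a map of distinguished triangles
\[
(X,Y,Z)\to (X_+,\,Y_+\oplus W^{X_+},\,M_{f'}),
\]
with third component $z\colon Z\to M_{f'}$ satisfying
\[
k_{f'}\circ z=l_X[1]\circ h ,\qquad z\circ g=g_{f'}\circ l_Y .
\]

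\textbf{Step 4: $H(z)$ is an isomorphism.} The first two components $l_X$ and $\begin{bmatrix}l_Y\\ a''\end{bmatrix}$ become isomorphisms under $H$, since $H(l_X)$ and $H(l_Y)$ are isomorphisms by Proposition~\ref{prop:coreflection_tri1} and $W^{X_+}\in\Ker H$. I would then deduce that $H(z)$ is an isomorphism. I expect this to be the main obstacle, because $H$ is not a homological functor, so no five lemma is available. The plan is an octahedral argument showing that the cone of $z$ is an extension of the cones of $l_X[1]$ and $l_Y\oplus(\text{map into }W^{X_+})$. These cones lie in $\CU$ with connecting maps in $[\CU]$, so $z\in\CL$ (or at least $z\in\SS_\CN$, since $\SS_\CN$ is stable enough by \cite{Oga24}). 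Proposition~\ref{prop:fully_faithful} then shows that $H(z)$ is an isomorphism. Since $Z\in\CT^-$, we have $H(Z)\cong Z_+$, and $\overline{z}_+\colon Z_+\to (M_{f'})_+$ is an isomorphism.

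\textbf{Step 5: compatibility of the third maps.} The first two squares commute by construction, with the identities on $X_+$ and $Y_+$. For the third square, compare $\overline{(h_{f'})}_+\circ\overline{z}_+$ with $\tau_X\circ\overline{v}_+$. Both are maps $Z_+\to\Sigma X_+=(X_+\langle1\rangle)_+$, and after composing with $H(c_{X_+})$ both equal $H(l_X[1]\circ h)$:
\begin{itemize}
\item for the first, use $c_{X_+}\circ h_{f'}\circ z=k_{f'}\circ z=l_X[1]\circ h$;
\item for the second, use Remark~\ref{rem:c_is_natural} with $x=l_X$, together with $c_X\circ v=h$.
\end{itemize}
Since $H(c_{X_+})$ is a monomorphism by Lemma~\ref{lem:H(f)_monom}, the two maps agree. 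Hence the given sequence is isomorphic to a standard right triangle, and so lies in $\vartriangleright$.
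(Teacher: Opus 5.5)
\textbf{Verdict: there is a genuine gap at Step 4.}

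Steps 1--3 and 5 are essentially fine. The right-hand square in Step 5 is checked exactly as in the paper, using that $H(c_{X_+})$ is a monomorphism. Two small corrections:
\begin{itemize}
\item (TR3) gives $z\circ g=g_{f'}\circ l_Y+w_{f'}\circ a''$, not $g_{f'}\circ l_Y$. This is harmless modulo $[\CW]$.
\item $a''$ can simply be taken to be $w\circ a$, where $w\colon W^X\to W^{X_+}$ satisfies $w\circ a_X=a_{X_+}\circ l_X$. Such $w$ exists because $\CT(X\langle1\rangle[-1],W^{X_+})=0$.
\end{itemize}

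Step 4 carries the whole content of the lemma, and the argument you sketch for it does not work, for two reasons.
\begin{itemize}
\item Your $z$ is an arbitrary (TR3) fill-in, and nothing controls the cone of an arbitrary fill-in. An octahedral ($3\times3$) argument only describes the cone of a specially constructed fill-in.
\item Even for such a fill-in, the bottom row of the $3\times3$ diagram is a triangle $C_y\to C_z\to U'_X[1]\to C_y[1]$ with $C_y\cong U'_Y\oplus W^{X_+}$. Hence $C_z\in\CU\ast\CU[1]$. The cone of $l_X[1]$ is $U'_X[1]\in\CU[1]$, not an object of $\CU$. Moreover, $\CU\ast\CU[1]$ is in general not contained in $\CN$; this already fails for $(\CC,\CC)$ with $\CC$ cluster tilting.
\end{itemize}
So $z\in\CL$ does not follow. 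The fallback that $\SS_\CN$ is ``stable enough'' is not available either. What is known is $2$-out-of-$3$ under composition, not a five lemma for morphisms of triangles, and, as you note, $H$ is not homological.

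The missing idea is never to move two vertices at once.
\begin{itemize}
\item The paper first applies the octahedral axiom to $l_X$ and $f$. This gives a triangle $X_+\to Y'\to Z\to X_+[1]$ with the same $Z$, together with a morphism $Y\to Y'$ in $\CL$.
\item It then applies the octahedral axiom to $l_{Y'}$ and $Y'\to Z$. This gives $X_+\to Y'_+\to Z'\to X_+[1]$ with the same $X_+$. Now $Z\to Z'$ has cone $U'_{Y'}$ and connecting morphism in $[\CU]$, so it lies in $\CL$, and $H(Z\to Z')$ is invertible by Proposition~\ref{prop:fully_faithful}.
\end{itemize}
Note that the paper compares with the standard triangle of $X_+\to Y'_+$, not of your $f'\colon X_+\to Y_+$. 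The objects $Y'_+$ and $Y_+$ agree only in $\CH/[\CW]$, and it is precisely this choice that puts the cone of the third comparison map in $\CU$.

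Only once both of the first two vertices lie in $\CH$ does the paper compare with a standard right triangle (Claim~\ref{claim:if_XYinH}). There the hypothesis on $a$ yields an honest isomorphism of triangles in $\CT$, via $\begin{bsmallmatrix}\id&0\\ a&\id\end{bsmallmatrix}$. This isomorphism identifies $Z'\oplus W^{(X_+)}$ with the object $M$ of Definition~\ref{def:standard_right_triangle} attached to $X_+\to Y'_+$. That is why the paper has to transport $a$ to a morphism $Y'_+\to W^{(X_+)}$ compatible with $a_{(X_+)}$.

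Your first objection (arbitrariness of the fill-in) is repairable. Suppose one fill-in $z_0$ has $H(z_0)$ invertible, and put $y=\begin{bsmallmatrix}l_Y\\ a''\end{bsmallmatrix}$. Any other fill-in $z$ satisfies $z-z_0=\alpha\circ h=[g_{f'}\ w_{f'}]\circ\beta$ for suitable $\alpha,\beta$. Moreover $H(h)\,H(z_0)^{-1}H([g_{f'}\ w_{f'}])=H(h\circ g)\,H(y)^{-1}=0$. Hence $H(z)H(z_0)^{-1}$ is the identity plus a square-zero map, so $H(z)$ is invertible. But producing one such $z_0$ is exactly what the two-step construction above achieves, and your proposal does not supply it.
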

\begin{proof}
First we show the following claim, which corresponds to the special case of $X,Y\in\CH$.
\begin{claim}\label{claim:if_XYinH}
If moreover $X,Y\in\CH$, then 
$X\xrightarrow{\overline{f}}Y\xrightarrow{\overline{l_Z\circ g}}Z_+\xrightarrow{\overline{v}_+}\Sigma X$
belongs to $\vartriangleright$.
\end{claim}
\begin{proof}[Proof of Claim~\ref{claim:if_XYinH}]
Take the standard right triangle associated to $f$. We use the notation in Definition~\ref{def:standard_right_triangle}.
Since $X\xrightarrow{f}Y\xrightarrow{g}Z\xrightarrow{h}X[1]$ is a distinguished triangle in $\CT$, so is
\[
X\xrightarrow{{\scriptsize\begin{bmatrix}f\\ 0\end{bmatrix}}}Y\oplus W^X\xrightarrow{g\oplus \id}Z\oplus W^X\xrightarrow{{\scriptsize\begin{bmatrix}h&0\end{bmatrix}}}X[1].
\]
Put $y=\begin{bmatrix}\id&0\\ a&\id\end{bmatrix}\colon Y\oplus W^X\xrightarrow{\cong}Y\oplus W^X$. It is an isomorphism in $\CT$, and hence there is an isomorphism $\begin{bmatrix}z&m\end{bmatrix}\in\CT(Z\oplus W^X,M_f)$ that makes
\[
\begin{tikzcd}[row sep=0.8cm, column sep=1.0cm]
X \arrow{r}{\begin{bmatrix}f\\ 0\end{bmatrix}}\arrow[d, equal]
&Y\oplus W^X \arrow{d}{\cong}[swap]{y} \arrow{r}{g\oplus\id}
&Z\oplus W^X\arrow{r}{[h\ 0]} \arrow{d}{\cong}[swap]{{[}z\ m{]}}
&X{[}1{]} \arrow[d, equal]
\\
X\arrow{r}[swap]{\begin{bmatrix}f\\ a_X\end{bmatrix}}
&Y\oplus W^X\arrow{r}[swap]{[g_f\ w_f]}
&M_f \arrow{r}[swap]{k_f}
&X{[}1{]}
\end{tikzcd}
\]
an isomorphism of distinguished triangles in $\CT$.
In particular, $\overline{z}\colon Z\to M_f$ is an isomorphism in $\CT/[\CW]$. If we put $v=h_f\circ z$, then it satisfies $c_X\circ v=k_f\circ z=h$. Thus we may use this $v$ to give the morphism $\overline{v}_+$ in the statement, by Lemma~\ref{lem:for_right_triangle_replace} {\rm (2)}. Since
\[
\begin{tikzpicture}[>=stealth]
\node (1) at (-3.2,0.8) {$X$};
\node (2) at (-1.2,0.8) {$Y$};
\node (3) at (1.2,0.8) {$(M_f)_+$};
\node (4) at (3.6,0.8) {$X\langle1\rangle_+$};
\node (5) at (1.2,-0.8) {$Z_+$};
\node (6) at (4.64,0.82) {$=\Sigma X$};
\draw[->] (1) -- node[above,font=\scriptsize] {$\overline{f}$} (2);
\draw[->] (2) -- node[above,font=\scriptsize] {$\overline{l_{M_f}\circ g_f}$} (3);
\draw[->] (3) -- node[above,font=\scriptsize] {$\overline{(h_f)}_+$} (4);
\draw[->] (2) -- node[left,font=\scriptsize] {$\overline{l_Z\circ g}\ \ $} (5);
\draw[->] (5) -- node[left,font=\scriptsize] {$\cong$} node[right,font=\scriptsize] {$\overline{z}_+$} (3);
\draw[->] (5) -- node[right,font=\scriptsize] {$\ \ \overline{v}_+$} (4);
\end{tikzpicture}
\]
is commutative in $\CH/[\CW]$, this shows that $X\xrightarrow{\overline{f}}Y\xrightarrow{\overline{l_Z\circ g}}Z_+\xrightarrow{\overline{v}_+}\Sigma X$ is isomorphic to the standard right triangle associated to $f$, hence belongs to $\vartriangleright$. Thus Claim~\ref{claim:if_XYinH} is shown.
\end{proof}
We proceed to prove Lemma~\ref{lem:right_triangle_replace}. Take $X_+$. By its definition, there exists a distinguished triangle
\[ U'_X[-1]\xrightarrow{u_X^{\prime\prime}}X\xrightarrow{l_X}X_+\xrightarrow{m_X}U'_X \]
 with $U'_X\in\CU$ and $u_X^{\prime\prime}\in[\CU]$.
By the octahedron axiom, we obtain a commutative diagram in $\CT$
\begin{equation}\label{octa:X_+}
\begin{tikzcd}[row sep=0.8cm, column sep=0.6cm]
U'_X[-1]\arrow{d}[swap]{u_X^{\prime\prime}}\arrow[equal]{r}{}
&U'_X[-1] \arrow{d}{f\circ u_X^{\prime\prime}}
\\
X \arrow{r}{f}\arrow{d}[swap]{l_X}
&Y \arrow{d}{y} \arrow{r}{g}
&Z \arrow{r}{h} \arrow[equal]{d}{}
&X[1] \arrow{d}{l_X[1]}
\\
X_+\arrow{r}[swap]{f'}\arrow{d}[swap]{m_X}
&Y' \arrow{d}{m'} \arrow{r}[swap]{g'}
&Z \arrow{r}[swap]{h'}
&X_+[1]
\\
U'_X \arrow[equal]{r}
&U'_X
\end{tikzcd}
\end{equation}
for some $Y'$, in which $U'_X[-1]\xrightarrow{f\circ u_X^{\prime\prime}}Y\xrightarrow{y}Y'\xrightarrow{m'} U'_X$ and $X_+\xrightarrow{f'}Y'\xrightarrow{g'}Z\xrightarrow{h'} X_+[1]$ are distinguished triangles in $\CT$. Since $Y\in\CT^-$, we have $Y'\in \CT^-\ast\CU=\CT^-$. Moreover, since $f\circ u_X^{\prime\prime}\in[\CU]$ and $U'_X\in\CU$, we have $y\in\CL$. Proposition~\ref{prop:fully_faithful} shows that \color{blue}
$\overline{l_X}\colon X\to X_+$ 
\normalcolor
and $\overline{y}_+\colon Y_+\to Y'_+$ are isomorphisms in $\CH/[\CW]$. 

By the definition of $Y'_+$, there exists a distinguished triangle
\[ U'_{Y'}[-1]\xrightarrow{u_{Y'}^{\prime\prime}}Y'\xrightarrow{l_{Y'}}Y'_+\xrightarrow{m_{Y'}}U'_{Y'} \]
in $\CT$ with  $U'_{Y'}\in\CU$ and $u_{Y'}^{\prime\prime}\in [\CU]$.
By the octahedron axiom, we obtain
\begin{equation}\label{octa:Y'+}
\begin{tikzcd}[row sep=0.8cm, column sep=0.6cm]
{}
&U'_{Y'}[-1]\arrow{d}[swap]{u_{Y'}^{\prime\prime}}\arrow[equal]{r}{}
&U'_{Y'}[-1] \arrow{d}{g'\circ u_{Y'}^{\prime\prime}}
\\
X_+ \arrow{r}{f'}\arrow[equal]{d}{}
&Y' \arrow{d}{l_{Y'}} \arrow{r}{g'}
&Z \arrow{r}{h'} \arrow{d}{z}
&X_+[1] \arrow[equal]{d}{}
\\
X_+\arrow{r}[swap]{f^{\prime\prime}}
&Y'_+ \arrow{d}{m_{Y'}} \arrow{r}[swap]{g^{\prime\prime}}
&Z'\arrow{r}[swap]{h^{\prime\prime}}\arrow{d}{}
&X_+[1]
\\
{}
&U'_{Y'} \arrow[equal]{r}
&U'_{Y'}
\end{tikzcd}
\end{equation}
for some $Z'\in\CT$, in which $U'_{Y'}[-1]\xrightarrow{g'\circ u_{Y'}^{\prime\prime}}Z\xrightarrow{z}Z'\to U'_{Y'}$ and
$X_+\xrightarrow{f^{\prime\prime}}Y'_+\xrightarrow{g^{\prime\prime}}Z'\xrightarrow{h^{\prime\prime}}X_+[1]$ are distinguished triangles in $\CT$.
Since $Z\in\CT^-$, we have $Z'\in\CT^-$. Since $U'_{Y'}\in\CU$ and $g'\circ u^{\prime\prime}_{Y'}\in[\CU]$, we have $z\in \CL$. By Proposition~\ref{prop:fully_faithful}, 
\color{blue}
$\overline{l_{Y'}}\colon Y'\to Y'_+$ 
\normalcolor 
and $\overline{z}_+\colon Z_+\to Z'_+$ are isomorphisms in $\CH/[\CW]$. 

To apply Claim~\ref{claim:if_XYinH} to $X_+\xrightarrow{f^{\prime\prime}}Y'_+\xrightarrow{g^{\prime\prime}}Z'\xrightarrow{h^{\prime\prime}}X_+[1]$, we show that there exists a morphism $Y'_+\to W^{(X_+)}$ that makes the diagram
\begin{equation}\label{diagram_to_commute_XYW}
\begin{tikzpicture}[>=stealth]
\node (1) at (-0.8,0.5) {$X_+$};
\node (2) at (0.8,0.5) {$Y'_+$};
\node (3) at (0,-0.7) {$W^{(X_+)}$};
\draw[->] (1) -- node[above,font=\scriptsize] {$f^{\prime\prime}$} (2);
\draw[->] (1) -- node[left,font=\scriptsize] {$a_{(X_+)}$} (3);
\draw[->] (2) -- node[above,font=\scriptsize] {} (3);
\end{tikzpicture}
\end{equation}
commutative in $\CT$.
By assumption, there exists $a\in\CT(Y,W^X)$ such that $a\circ f=a_X$ in $\CT$. 
\color{blue}
Since $a\circ(f\circ u^{\prime\prime}_X)=0$ by $\CT(U'_X[-1],W^X)=0$, 
\normalcolor
there exists $a'\in\CT(Y',W^X)$ such that $a'\circ y=a$ in $\CT$. 
Note that there is $w\in\CT(W^X,W^{(X_+)})$ that makes
\[
\begin{tikzpicture}[>=stealth]
\node (1) at (-0.9,0.7) {$X$};
\node (2) at (0.9,0.7) {$X_+$};
\node (3) at (-0.9,-0.7) {$W^X$};
\node (4) at (0.9,-0.7) {$W^{(X_+)}$};
\draw[->] (1) -- node[above,font=\scriptsize] {$l_X$} (2);
\draw[->] (1) -- node[left,font=\scriptsize] {$a_X$} (3);
\draw[->] (2) -- node[right,font=\scriptsize] {$a_{(X_+)}$} (4);
\draw[->] (3) -- node[below,font=\scriptsize] {$w$} (4);
\end{tikzpicture}
\]
commutative in $\CT$.
Then, since 
\[
(a_{(X_+)}-w\circ a'\circ f')\circ l_X=w\circ a_X-w\circ a'\circ y\circ f=w\circ(a_X-a\circ f)=0,
\]
\darkblueedit{there exists} %
$q\in\CT(U'_X,W^{(X_+)})$ such that $a_{(X_+)}-w\circ a'\circ f'=q\circ m_X$.
If we put $a^{\prime\prime}=w\circ a'+q\circ m'\in\CT(Y',W^{(X_+)})$, then it satisfies $a_{(X_+)}=a^{\prime\prime}\circ f'$ in $\CT$.
By Lemma~\ref{lem:T-toT-} {\rm (3)}, we obtain $a^{\prime\prime\prime}\in\CT(Y'_+,W^{(X_+)})$ that satisfies $a^{\prime\prime\prime}\circ l_{Y'}=a^{\prime\prime}$, hence makes $(\ref{diagram_to_commute_XYW})$ commutative.

Thus, we are able to apply Claim~\ref{claim:if_XYinH} to $X_+\xrightarrow{f^{\prime\prime}}Y'_+\xrightarrow{g^{\prime\prime}}Z'\xrightarrow{h^{\prime\prime}}X_+[1]$.
It shows that 
\[
X_+\xrightarrow{\overline{f^{\prime\prime}}}Y'_+\xrightarrow{\overline{l_{Z'}\circ g^{\prime\prime}}}Z'_+\xrightarrow{\overline{v'}_+}\Sigma (X_+)
\]
belongs to $\vartriangleright$, where $\overline{v'}_+$ is the morphism uniquely determined by the equality $H(c_{(X_+)})\circ H(v')=H(h^{\prime\prime})$ in $\CH/[\CW]$.
Then, in the diagram below
\begin{equation}\label{to_be_a_morph_of_right_tri}
\begin{tikzcd}[row sep=0.8cm, column sep=1.2cm]
X_+ \arrow{r}{\overline{f}_+}\arrow[d, equal]
&Y_+ \arrow{d}{\overline{y}_+}[swap]{\cong} \arrow{r}{\overline{g}_+}
&Z_+\arrow{r}{\tau_X\circ\overline{v}_+} \arrow{d}[swap]{\cong}{\overline{z}_+}
&\Sigma(X_+) \arrow[d, equal]
\\
X_+\arrow{r}[swap]{\overline{f^{\prime\prime}}}
&Y'_+\arrow{r}[swap]{\overline{l_{Z'}\circ g^{\prime\prime}}}
&Z'_+ \arrow{r}[swap]{\overline{v'}_+}
&\Sigma(X_+)
\end{tikzcd}
\end{equation}
in $\CH/[\CW]$, the left and middle squares commute by the commutativity of $(\ref{octa:X_+})$ and $(\ref{octa:Y'+})$.
We claim that the right square is also commutative.
By the definition of $\tau_X$, if we take a morphism $l'\in\CT(X\langle1\rangle,X_+\langle1\rangle)$ such that $c_{(X_+)}\circ l'=l_X[1]\circ c_X$, then it satisfies 
\color{blue}
$\tau_X=\overline{l'}_+$. 
\normalcolor
By the commutativity of
\[
\begin{tikzpicture}[>=stealth]
\node (1) at (-4.8,1.2) {$Z_+$};
\node (2) at (-2.4,1.2) {$X\langle1\rangle_+$};
\node (3) at (0,1.2) {$\Sigma(X_+)$};
\node (4) at (2.4,1.2) {$Z'_+$};
\node (5) at (4.8,1.2) {$Z_+$};
\node (11) at (-4.8,0) {$H(Z)$};
\node (12) at (-2.4,0) {$H(X\langle1\rangle)$};
\node (13) at (0,0) {$H(X_+\langle1\rangle)$};
\node (14) at (2.4,0) {$H(Z')$};
\node (15) at (4.8,0) {$H(Z)$};
\node (22) at (-2.4,-1.4) {$H(X{[}1{]})$};
\node (23) at (0,-1.4) {$H(X_+{[}1{]})$};
\draw[->] (1) -- node[above,font=\scriptsize] {$\overline{v}_+$} (2);
\draw[->] (2) -- node[above,font=\scriptsize] {$\tau_X$} (3);
\draw[->] (4) -- node[above,font=\scriptsize] {$\overline{v'}_+$} (3);
\draw[->] (5) -- node[above,font=\scriptsize] {$\overline{z}_+$} (4);
\draw[->] (1) -- node[left,font=\scriptsize] {$\cong$} (11);
\draw[->] (2) -- node[left,font=\scriptsize] {$\cong$} (12);
\draw[->] (3) -- node[left,font=\scriptsize] {$\cong$} (13);
\draw[->] (4) -- node[left,font=\scriptsize] {$\cong$} (14);
\draw[->] (5) -- node[left,font=\scriptsize] {$\cong$} (15);
\draw[->] (11) -- node[above,font=\scriptsize] {$H(v)$} (12);
\draw[->] (12) -- node[above,font=\scriptsize] {$H(l')$} (13);
\draw[->] (14) -- node[above,font=\scriptsize] {$H(v')$} (13);
\draw[->] (15) -- node[above,font=\scriptsize] {$H(z)$} (14);
\draw[->,bend right=5] (11) to node[left,font=\scriptsize] {$H(h)\ $} (22);
\draw[->] (12) -- node[left,font=\scriptsize] {$H(c_X)$} (22);
\draw[->] (13) -- node[left,font=\scriptsize] {$H(c_{(X_+)})$} (23);
\draw[->,bend left=5] (14) to node[right,font=\scriptsize] {$\ H(h^{\prime\prime})$} (23);
\draw[->,bend left=8] (15) to node[right,font=\scriptsize] {$\ H(h)$} (23);
\draw[->] (22) -- node[below,font=\scriptsize] {$H(l_X{[}1{]})$} (23);
\end{tikzpicture}
\]
in $\CH/[\CW]$, it follows $\overline{v'}_+\circ \overline{z}_+=\tau_X\circ\overline{v}_+$ in $\CH/[\CW]$ since $H(c_{(X_+)})$ is monomorphic.
This shows that the right square of 
$(\ref{to_be_a_morph_of_right_tri})$ is commutative, and hence $(\ref{to_be_a_morph_of_right_tri})$ is an isomorphism of right triangles. Thus, $X_+\xrightarrow{\overline{f}_+}Y_+\xrightarrow{\overline{g}_+}Z_+\xrightarrow{\tau_X\circ \overline{v}_+}\Sigma (X_+)$ belongs to $\vartriangleright$.
\end{proof}

We now prove that $(\CH/[\CW],\Sigma,\vartriangleright)$ is a right triangulated category, as stated in Proposition~\ref{prop:right_triangulated}.
By definition, $\vartriangleright$ is closed under isomorphisms of right triangles.
Thus it remains to verify axioms {\rm (RT1)}--{\rm (RT4)}, which are dual to
axioms {\rm (LT1)}--{\rm (LT4)} in \cite[Definition~2.2]{BM94}.
The verification is carried out in the following lemmas.

\begin{lemma}\label{lem:RTR1}
The following holds.
\begin{enumerate}
\item For any $X\in\CH/[\CW]$, the sequence $0\to X\xrightarrow{\id}X\to 0$ belongs to $\vartriangleright$.
\item For any $\mathbf{f}\in(\CH/[\CW])(X,Y)$, there exists an object in $\vartriangleright$ of the form $X\xrightarrow{\mathbf{f}}Y\xrightarrow{\mathbf{g}}Z\xrightarrow{\mathbf{h}}\Sigma X$.
\end{enumerate}
\end{lemma}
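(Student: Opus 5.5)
For part (1) the plan is to apply Claim~\ref{claim:if_XYinH} from the proof of Lemma~\ref{lem:right_triangle_replace} to the trivial distinguished triangle in $\CT$ that begins at the zero object. This avoids any dependence on how the defining triangle $(\ref{triangle_<1>})$ for $0\langle1\rangle$ was chosen. Fix $X\in\CH$ and consider the distinguished triangle
\[
0\xrightarrow{0}X\xrightarrow{\id_X}X\xrightarrow{0}0[1]
\]
in $\CT$. Its first two terms $0$ and $X$ lie in $\CH$, and its third term $X$ lies in $\CH\subset\CT^-$. Let $0\xrightarrow{a_0}W^0\xrightarrow{b_0}0\langle1\rangle\xrightarrow{c_0}0[1]$ be the chosen triangle $(\ref{triangle_<1>})$. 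The hypothesis of Lemma~\ref{lem:for_right_triangle_replace} and Claim~\ref{claim:if_XYinH} is the existence of $a\in\CT(X,W^0)$ with $a\circ 0=a_0$. It holds trivially with $a=0$, because $a_0\colon 0\to W^0$ is the zero morphism.

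By Claim~\ref{claim:if_XYinH}, the sequence
\[
0\longrightarrow X\xrightarrow{\overline{l_X}}X_+\xrightarrow{\overline v_+}\Sigma 0
\]
therefore belongs to $\vartriangleright$. Since $\Sigma$ is additive, $\Sigma0\cong0$, so $\overline v_+=0$. Since $X\in\CH\subset\CT^+$, Proposition~\ref{prop:functor_+}(2) shows that $\overline{l_X}\colon X\to X_+$ is an isomorphism in $\CH/[\CW]$; this is the reflection of an object already lying in $\CT^+$, cf.\ Remark~\ref{rem:essentially_surjective_H}. Now take the vertical maps $\id_0$, $\id_X$, $\overline{l_X}^{-1}$ and $\id_{\Sigma0}$. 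All four squares commute: the middle one because $\overline{l_X}^{-1}\circ\overline{l_X}=\id_X$, and the outer ones because they involve zero objects. Hence $0\to X\xrightarrow{\id}X\to0$ is isomorphic, as a right triangle, to a member of $\vartriangleright$. Since $\vartriangleright$ is closed under isomorphisms by definition, the sequence lies in $\vartriangleright$.

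For part (2), let $\mathbf f\in(\CH/[\CW])(X,Y)$ and choose a representative $f\in\CH(X,Y)$ with $\overline f=\mathbf f$. Definition~\ref{def:standard_right_triangle} then produces the standard right triangle
\[
X\xrightarrow{\overline f}Y\xrightarrow{\overline{l_{M_f}}\circ\overline{g_f}}(M_f)_+\xrightarrow{\overline{(h_f)}_+}\Sigma X .
\]
Here $M_f\in\CT^-$ by Lemma~\ref{lem:T-_*_<1>}(1), so $(M_f)_+\in\CH$ by Proposition~\ref{prop:functor_+_property}(1), and the triangle is a genuine sequence in $\CH/[\CW]$. Its first morphism is $\mathbf f$, and it belongs to $\vartriangleright$ by definition.

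No step here is a real obstacle. The only point needing care is in (1): the standard right triangle of the zero morphism $0\to X$ is built from whatever triangle was chosen for $0\langle1\rangle$. Routing through Claim~\ref{claim:if_XYinH} absorbs this choice.
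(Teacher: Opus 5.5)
Your proof is correct and is essentially the paper's: part (2) is exactly the paper's argument, and for part (1) the paper simply asserts that $0\to X\xrightarrow{\id}X\to 0$ is isomorphic to the standard right triangle of $0\to X$, which is precisely the comparison that the proof of Claim~\ref{claim:if_XYinH} carries out in general, so routing through the Claim only repackages it. The dependence on the choice of $0\langle1\rangle$ that you wanted to avoid is harmless anyway: $W^0\in\CW$ gives $M_f\cong X\oplus W^0$, so $\overline{l_{M_f}}\circ\overline{g_f}\colon X\to(M_f)_+$ is already an isomorphism in $\CH/[\CW]$.
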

\begin{proof}
{\rm (1)} Indeed, $0\to X\xrightarrow{\id}X\to 0$ is isomorphic to the standard right triangle associated to $0\to X$.

{\rm (2)} For $f\in\CH(X,Y)$ such that $\mathbf{f}=\overline{f}$, the standard right triangle associated to $f$ gives such a right triangle.
\end{proof}

\begin{lemma}\label{lem:RTR2}
If a sequence of morphisms
$
X\xrightarrow{\mathbf{f}}Y\xrightarrow{\mathbf{g}}Z\xrightarrow{\mathbf{h}}\Sigma X
$
in $\CH/[\CW]$ belongs to $\vartriangleright$, then so does
$
Y\xrightarrow{\mathbf{g}}Z\xrightarrow{\mathbf{h}}\Sigma X\xrightarrow{-\Sigma\mathbf{f}}\Sigma Y
$.
\end{lemma}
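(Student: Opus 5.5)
By isomorphism-invariance of $\vartriangleright$ and naturality of $\Sigma$, it suffices to treat the standard right triangle associated to some $f\in\CH(X,Y)$:
\[
X\xrightarrow{\overline{f}}Y\xrightarrow{\overline{l_{M_f}}\circ\overline{g_f}}(M_f)_+\xrightarrow{\overline{(h_f)}_+}\Sigma X .
\]
The plan is to exhibit its rotation as the image under $(\ )_+$ of a distinguished triangle in $\CT$ to which \cref{lem:right_triangle_replace} applies. The natural candidate comes from diagram~(\ref{octahedron_h_f}): there we have the distinguished triangle
\[
Y\xrightarrow{g_f}M_f\xrightarrow{h_f}X\langle1\rangle\xrightarrow{s_f}Y[1],
\]
with $Y\in\CH\subset\CT^-$, $M_f\in\CT^-$ and $X\langle1\rangle\in\CT^-$. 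So all three terms lie in $\CT^-$, as \cref{lem:right_triangle_replace} requires.

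The first step is to verify the factorization hypothesis of \cref{lem:right_triangle_replace}, namely to find $a\in\CT(M_f,W^Y)$ with $a\circ g_f=a_Y$. From the triangle $X\to Y\oplus W^X\xrightarrow{[g_f\ w_f]}M_f\to X[1]$, it is enough to produce a map $Y\oplus W^X\to W^Y$ that restricts to $a_Y$ on $Y$ and kills the image of $X$. The image of $X$ is $\begin{bsmallmatrix}f\\ a_X\end{bsmallmatrix}$, so I need $a_Y\circ f$ to factor through $a_X$, which amounts to extending $a_Y\circ f$ along $a_X$. This holds because the obstruction lies in $\CT(X\langle1\rangle[-1],W^Y)$, and that group vanishes since $X\langle1\rangle\in\CT^-$ gives $X\langle 1\rangle[-1]\in\CU_{-n-1}^{-1}$ up to the relevant range, together with $\BE^k(\CU,\CW)=0$ for $1\le k\le n$. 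Concretely, $c_X$ composed into $W^Y[1]$ vanishes by \cref{lem:T-toT-}-type Hom-vanishing, so $a_X$ is a left $\CW$-approximation-like map. I expect the bookkeeping of shift ranges here to be the delicate point; if it fails, I would instead replace $W^X$ by a $\CW$-approximation built from $W^Y$.

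Granting this, \cref{lem:right_triangle_replace} shows that
\[
Y_+\xrightarrow{\overline{g_f}_+}(M_f)_+\xrightarrow{\overline{h_f}_+}X\langle1\rangle_+\xrightarrow{\tau_Y\circ\overline{v}_+}\Sigma(Y_+)
\]
lies in $\vartriangleright$, where $v\in\CT(X\langle1\rangle,Y\langle1\rangle)$ satisfies $c_Y\circ v=s_f$. Identify $Y_+\cong Y$ via $\overline{l_Y}$ (since $Y\in\CH$), and identify $X\langle1\rangle_+=\Sigma X$. The first two maps then match those of the rotated triangle, by naturality of $\overline{l}$.

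The remaining step, which I expect to be the main obstacle, is identifying the third map with $-\Sigma\overline{f}$. By \cref{rem:c_is_natural}, $\Sigma\overline f=(\overline f\langle1\rangle)_+$, where $\overline{f}\langle1\rangle$ is characterized by $c_Y\circ y_f=f[1]\circ c_X$. From the bottom-right square of (\ref{octahedron_h_f}), $\begin{bsmallmatrix}f\\a_X\end{bsmallmatrix}[1]\circ c_X=-\begin{bsmallmatrix}1\\0\end{bsmallmatrix}[1]\circ s_f$. Projecting to $Y[1]$ gives $s_f=-f[1]\circ c_X$. Hence $v=-y_f$ works, and by the uniqueness in \cref{lem:for_right_triangle_replace}(2) we get $\overline v_+=-\Sigma\overline f$. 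Finally, $\tau_Y$ becomes the identity under $Y_+\cong Y$, by naturality of the isomorphisms in \cref{lem:functor_Sigma}. The resulting isomorphism of right triangles finishes the proof.
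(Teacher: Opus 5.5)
Your proposal is correct and follows the paper's proof essentially step for step. You reduce to a standard right triangle, apply \cref{lem:right_triangle_replace} to $Y\xrightarrow{g_f}M_f\xrightarrow{h_f}X\langle1\rangle\xrightarrow{s_f}Y[1]$, read off $s_f=-f[1]\circ c_X$ from (\ref{octahedron_h_f}) to get $\overline{v}_+=-\Sigma\overline{f}$, and use $\tau_Y=(\overline{l_Y}\langle1\rangle)_+=\Sigma(\overline{l_Y})$ for $Y\in\CH$.

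The one step to tighten is the vanishing $\CT(X\langle1\rangle[-1],W^Y)=0$. It needs $X\langle1\rangle\in\CU_{-n+1}^0$ (Proposition~\ref{prop:functor_<1>}{\rm (1)}), not just $X\langle1\rangle\in\CT^-$, since the latter would allow a $\CU[-n-1]$ piece. With that vanishing you can extend $a_Y$ along $g_f$ directly from this triangle, as the paper does, without the detour through $Y\oplus W^X$.
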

\begin{proof}
We may assume that $X\xrightarrow{\mathbf{f}}Y\xrightarrow{\mathbf{g}}Z\xrightarrow{\mathbf{h}}\Sigma X$ is a standard right triangle $(\ref{standard_right_triangle})$ associated to a morphism $f\in\CH(X,Y)$. We use the notation in Definition~\ref{def:standard_right_triangle}. In particular we have $Z=(M_f)_+$. By its definition, there exist morphisms $u\in\CT(U_{M_f},M_f)$ and $u'\in\CT(U'_{M_f}[-1],U_{M_f})$ with $U_{M_f},U'_{M_f}\in\CU$ and a distinguished triangle
\[
U'_{M_f}[-1]\xrightarrow{u\circ u'}M_f\xrightarrow{l_{M_f}}Z\to U'_{M_f}
\]
in $\CT$.
As in Definition~\ref{def:standard_right_triangle}, we take $h_f$ so that $(\ref{octahedron_h_f})$ is commutative and that $Y\xrightarrow{g_f}M_f\xrightarrow{h_f}X\langle1\rangle\xrightarrow{s_f}Y[1]$ is a distinguished triangle in $\CT$. In particular we have $s_f=-f[1]\circ c_X$ in $\CT$, and hence $\overline{s_f}=-\overline{c_Y}\circ\overline{f}\langle1\rangle$ in $\CT/[\CW]$.

Let 
$Y\xrightarrow{a_Y}W^Y\xrightarrow{b_Y}Y\langle 1\rangle\xrightarrow{c_Y}Y[1]$
be the distinguished triangle in $\CT$ with $W^Y\in\CW$ and $Y\langle1\rangle\in\CU_{-n+1}^0$, as in Definition~\ref{def:functor_<1>}.
Since $Y\xrightarrow{g_f}M_f\xrightarrow{h_f}X\langle1\rangle\xrightarrow{s_f}Y[1]$ is a distinguished triangle and 
\color{blue}
$\CT((X\langle1\rangle)[-1],W^Y)=0$, 
\normalcolor
there exists $a\in\CT(M_f,W^Y)$ such that $a\circ g_f=a_Y$. 
By Lemma~\ref{lem:right_triangle_replace} applied to $Y\xrightarrow{g_f}M_f\xrightarrow{h_f}X\langle1\rangle\xrightarrow{s_f}Y[1]$, we see that
\[
Y_+\xrightarrow{(\overline{g_f})_+}(M_f)_+\xrightarrow{(\overline{h_f})_+}\Sigma X\xrightarrow{\tau_Y\circ\overline{v}_+}\Sigma(Y_+)
\] belongs to $\vartriangleright$.
By the definition of $\tau_Y$, we have $\tau_Y=(\overline{l_Y}\langle1\rangle)_+=\Sigma(\overline{l_Y})$ since $Y\in\CH$. Here, $\overline{v}_+$ is the morphism determined by the equation $H(c_Y)\circ H(v)=H(s_f)$. Since $\overline{s_f}=-\overline{c_Y}\circ\overline{f}\langle1\rangle$ holds in $\CT/[\CW]$, we have $\overline{v}_+=-(\overline{f}\langle1\rangle)_+=-\Sigma\overline{f}$.
Thus, 
\[
\begin{tikzcd}[row sep=0.8cm, column sep=1.2cm]
Y\arrow[r, "\overline{l_{M_f}\circ g_f}=\mathbf{g}"]
  \arrow[d, "\overline{l_Y}" swap, "\cong"]
&Z\arrow[d, double, no head]
  \arrow[r, "\mathbf{h}"]
&\Sigma X\arrow[d, double, no head]
  \arrow[r, "-\Sigma\overline{f}"]
&\Sigma Y\arrow[d, "\cong" swap, "\Sigma(\overline{l_Y})"]
\\
Y_+\arrow[r, swap, "(\overline{g_f})_+"]
&(M_f)_+ \arrow[r, swap, "(\overline{h_f})_+"]
&\Sigma X \arrow[r, swap, "\tau_Y\circ\overline{v}_+"]
&\Sigma(Y_+)
\end{tikzcd}
\]
gives an isomorphism of right triangles in $\CH/[\CW]$, hence $Y\xrightarrow{\mathbf{g}}Z\xrightarrow{\mathbf{h}}\Sigma X\xrightarrow{-\Sigma\mathbf{f}}\Sigma Y$ belongs to $\vartriangleright$.
\normalcolor
\end{proof}

\begin{lemma}\label{lem:RTR3}
Assume that $
X\xrightarrow{\mathbf{f}}Y\xrightarrow{\mathbf{g}}Z\xrightarrow{\mathbf{h}}\Sigma X
$
and 
$
X'\xrightarrow{\mathbf{f}'}Y'\xrightarrow{\mathbf{g}'}Z'\xrightarrow{\mathbf{h}'}\Sigma X'
$
belong to $\vartriangleright$.
If $\mathbf{x}\in(\CH/[\CW])(X,X')$ and $\mathbf{y}\in(\CH/[\CW])(Y,Y')$ satisfy
$\mathbf{f}'\circ \mathbf{x}=\mathbf{y}\circ \mathbf{f}$, then there exists $\mathbf{z}\in(\CH/[\CW])(Z,Z')$
that makes 
\[
\begin{tikzcd}[row sep=0.8cm, column sep=1.2cm]
X\arrow[r, "\mathbf{f}"]
\arrow[d, swap, "\mathbf{x}" ]
&Y\arrow[r, "\mathbf{g}"]
\arrow[d, "\mathbf{y}"]
&Z \arrow[r, "\mathbf{h}"]
\arrow[d, "\mathbf{z}"]
&\Sigma X
\arrow[d, "\Sigma\mathbf{x}"]
\\
X'\arrow[r, swap, "\mathbf{f}'"]
&Y'\arrow[r, swap, "\mathbf{g}'"]
&Z' \arrow[r, swap, "\mathbf{h}'"]
&\Sigma X'
\end{tikzcd}
\]
a morphism of right triangles.
\end{lemma}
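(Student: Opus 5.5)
We first reduce to the case where both right triangles are standard. Every member of $\vartriangleright$ is isomorphic to a standard right triangle $(\ref{standard_right_triangle})$, and the required $\mathbf{z}$ can be transported along such isomorphisms. So we may assume the two rows are the standard right triangles associated to $f\in\CH(X,Y)$ and $f'\in\CH(X',Y')$, with $Z=(M_f)_+$ and $Z'=(M_{f'})_+$. Choose representatives $x\in\CH(X,X')$ and $y\in\CH(Y,Y')$ of $\mathbf{x}$ and $\mathbf{y}$.

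Next we lift the commutative square to an honest one in $\CT$. The relation $\overline{f'\circ x}=\overline{y\circ f}$ means that $f'\circ x-y\circ f$ factors as $w_2\circ w_1$ with $W\in\CW$. We absorb this error into the $W^X$-components.
\begin{itemize}
\item Since $X\in\CT^-$, we have $\CT(X\langle1\rangle[-1],W)=0$, so $w_1$ factors through $a_X$: $w_1=w_1'\circ a_X$ for some $w_1'\in\CT(W^X,W)$.
\item Similarly, $a_{X'}\circ x$ factors through $a_X$, giving $w'\colon W^X\to W^{X'}$ with $w'\circ a_X=a_{X'}\circ x$.
\end{itemize}
Then the morphism $\begin{bmatrix} y& w_2 w_1'\\ 0& w'\end{bmatrix}\colon Y\oplus W^X\to Y'\oplus W^{X'}$ satisfies
\[
\begin{bmatrix} y& w_2 w_1'\\ 0& w'\end{bmatrix}\circ\begin{bmatrix}f\\ a_X\end{bmatrix}=\begin{bmatrix}f'\\ a_{X'}\end{bmatrix}\circ x
\]
on the nose in $\CT$. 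By (TR3) we obtain $m\in\CT(M_f,M_{f'})$ forming a morphism of distinguished triangles, with $k_{f'}\circ m=x[1]\circ k_f$ and $m\circ g_f=g_{f'}\circ y$. Set $\mathbf{z}=\overline{m}_+$.

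The first two squares are then immediate. The first commutes by hypothesis. The second commutes because $\overline{l_{M_{f'}}}\circ\overline{m}\circ\overline{g_f}=\overline{m}_+\circ\overline{l_{M_f}}\circ\overline{g_f}$, by naturality of $\overline{l}$ (the unit of $(\ )_+$, Corollary~\ref{cor:functor_+}).

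The third square is where the real work lies: we must show $\Sigma\mathbf{x}\circ\overline{(h_f)}_+=\overline{(h_{f'})}_+\circ\overline{m}_+$, where $\Sigma\mathbf{x}=(\overline{x}\langle1\rangle)_+$. Let $y_x$ be the morphism of Proposition~\ref{prop:functor_<1>}, so $c_{X'}\circ y_x=x[1]\circ c_X$. Then
\[
c_{X'}\circ y_x\circ h_f=x[1]\circ k_f=k_{f'}\circ m=c_{X'}\circ h_{f'}\circ m.
\]
It therefore suffices to cancel $c_{X'}$ after applying $H$. This is exactly what Lemma~\ref{lem:H(f)_monom} provides, since $H(c_{X'})$ is a monomorphism: the triangle $X'\to W^{X'}\to X'\langle1\rangle\to X'[1]$, rotated, has the form $W^{X'}\to X'\langle1\rangle\xrightarrow{c_{X'}}X'[1]$ with $W^{X'}\in\CW\subset\CV$, as in Lemma~\ref{lem:for_right_triangle_replace}(2). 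Hence $H(y_x\circ h_f)=H(h_{f'}\circ m)$. Since $H$ restricted to $\CT^-$ is isomorphic to $(\ )_+$ (Lemma~\ref{lem:self-dual_H}, and $M_f,X\langle1\rangle\in\CT^-$), this gives $(\overline{y_x})_+\circ(\overline{h_f})_+=(\overline{h_{f'}})_+\circ\overline{m}_+$, which is the third square.

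The main obstacle is this cancellation of $H(c_{X'})$ together with the bookkeeping that $H$ agrees with $(\ )_+$ on $\CT^-$ compatibly with morphisms. The lifting step also needs care: the $[\CW]$-error term must be absorbed so that (TR3) applies strictly in $\CT$.
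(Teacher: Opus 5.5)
Your proposal is correct and follows the paper's proof essentially step by step. You reduce to standard right triangles, absorb the $[\CW]$-error $f'\circ x-y\circ f$ through $a_X$ to obtain an honestly commuting upper-triangular map $Y\oplus W^X\to Y'\oplus W^{X'}$, apply (TR3) to get $m\colon M_f\to M_{f'}$, set $\mathbf{z}=\overline{m}_+$, and deduce the right-hand square from $c_{X'}\circ y_x\circ h_f=c_{X'}\circ h_{f'}\circ m$ by cancelling the monomorphism $H(c_{X'})$ of Lemma~\ref{lem:H(f)_monom}; your extra bookkeeping (the vanishing $\CT(X\langle1\rangle[-1],W)=0$ and the identification of $H$ with $(\ )_+$ on $\CT^-$) only makes explicit what the paper leaves implicit.
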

\begin{proof}
By replacing each right triangle with an isomorphic standard one, it suffices to check the condition for standard right triangles. 
Let $(\ref{standard_right_triangle})$ and
\[
X'\xrightarrow{\overline{f'}}Y'\xrightarrow{\overline{l_{M_{f'}}}\circ \overline{g_{f'}}}(M_{f'})_+\xrightarrow{\overline{(h_{f'})}_+}\Sigma X'
\]
be the standard right triangles associated to $f\in\CH(X,Y)$ and $f'\in\CH(X',Y')$, respectively. We use the same notation introduced in Definition~\ref{def:standard_right_triangle} for $f$, and denote the corresponding objects and morphisms associated to $f'$ by replacing $f$ with $f'$ in the notation. Suppose that $x\in\CH(X,X')$ and $y\in\CH(Y,Y')$ \darkblueedit{make} the left square of the following diagram
\[
\begin{tikzcd}[row sep=0.8cm, column sep=1.2cm]
X\arrow[r, "\overline{f}"]
\arrow[d, swap, "\overline{x}" ]
&Y\arrow[r, "\overline{l_{M_f}}\circ \overline{g_f}"]
\arrow[d, "\overline{y}"]
&(M_f)_+ \arrow[r, "(\overline{h_f})_+"]
&\Sigma X
\arrow[d, "\Sigma\overline{x}"]
\\
X'\arrow[r, swap, "\overline{f'}"]
&Y'\arrow[r, swap, "\overline{l_{M_{f'}}}\circ\overline{g_{f'}}"]
&(M_{f'})_+ \arrow[r, swap, "(\overline{h_{f'}})_+"]
&\Sigma X'
\end{tikzcd}
\]
commutative in $\CH/[\CW]$. This means $f'\circ x-y\circ f\in[\CW]$. Thus it factors through $a_X$, namely, there exists $y_1\in\CT(W^X,Y')$ such that $f'\circ x-y\circ f=y_1\circ a_X$ in $\CT$. Also, there is $y_2\in\CT(W^X,W^{X'})$ such that $a_{X'}\circ x=y_2\circ a_X$ in $\CT$.

Put $q=\begin{bmatrix}y&y_1\\0&y_2\end{bmatrix}\in\CT(Y\oplus W^X,Y'\oplus W^{X'})$.
Then, in the following diagram whose rows are distinguished triangles, the leftmost square is commutative. Thus there exists $z\in\CT(M_f,M_{f'})$ that makes
\begin{equation}\label{morph_of_tri_XYW}
\begin{tikzcd}[row sep=0.8cm, column sep=1.2cm]
X\arrow[r, "\begin{bmatrix}f\\ a_X\end{bmatrix}"]
\arrow[d, swap, "x" ]
&Y\oplus W^X\arrow[r, "{[}g_f\ w_f{]}"]
\arrow[d, "q"]
&M_f \arrow[r, "k_{f}"]
\arrow[d, "z"]
&X[1]
\arrow[d, "x{[}1{]}"]
\\
X'\arrow[r, swap, "\begin{bmatrix}f'\\ a_{X'}\end{bmatrix}"]
&Y'\oplus W^{X'}\arrow[r, swap, "{[}g_{f'}\ w_{f'}{]}"]
&M_{f'} \arrow[r, swap, "k_{f'}"]
&X'[1]
\end{tikzcd}
\end{equation}
a morphism of distinguished triangles in $\CT$.
In particular we have $z\circ g_f=g_{f'}\circ y$ in $\CT$. 

It suffices to show that
\begin{equation}\label{resulting_morph_of_right_triangles}
\begin{tikzcd}[row sep=0.8cm, column sep=1.2cm]
X\arrow[r, "\overline{f}"]
\arrow[d, swap, "\overline{x}" ]
&Y\arrow[r, "\overline{l_{M_f}}\circ \overline{g_f}"]
\arrow[d, "\overline{y}"]
&(M_f)_+ \arrow[r, "(\overline{h_f})_+"]
\arrow[d, "\overline{z}_+"]
&\Sigma X
\arrow[d, "\Sigma\overline{x}"]
\\
X'\arrow[r, swap, "\overline{f'}"]
&Y'\arrow[r, swap, "\overline{l_{M_{f'}}}\circ\overline{g_{f'}}"]
&(M_{f'})_+ \arrow[r, swap, "(\overline{h_{f'}})_+"]
&\Sigma X'
\end{tikzcd}
\end{equation}
is a morphism of right triangles in $\CH/[\CW]$.
Commutativity of the middle square is immediate from $z\circ g_f=g_{f'}\circ y$ in $\CT$. %
To show the commutativity of the rightmost square of $(\ref{resulting_morph_of_right_triangles})$, note that the commutativity of the rightmost square of $(\ref{morph_of_tri_XYW})$ is equivalent to the commutativity of the following diagram.
\[
\begin{tikzcd}[row sep=0.8cm, column sep=1.0cm]
&M_f \arrow[r, "h_{f}"]
\arrow[d, swap, "z"]
&X\langle1\rangle \arrow[r, "c_X"]
&X[1]
\arrow[d, "x{[}1{]}"]
\\
&M_{f'} \arrow[r, swap, "h_{f'}"]
&X'\langle1\rangle \arrow[r, swap, "c_{X'}"]
&X'[1]
\end{tikzcd}
\]
We remark that $\overline{x}\langle1\rangle\in(\CT/[\CW])(X\langle1\rangle,X'\langle1\rangle)$ is given by $\overline{x}\langle1\rangle=\overline{y_x}$, where $y_x\in\CT(X\langle1\rangle,X'\langle1\rangle)$ is a morphism such that $x[1]\circ c_X=c_{X'}\circ y_x$ in $\CT$. Since $H(c_{X'})$ is monomorphic by Lemma~\ref{lem:H(f)_monom}, the commutativity of the above diagram implies that the rightmost square of the diagram $(\ref{resulting_morph_of_right_triangles})$ is also commutative. 
\end{proof}

\begin{lemma}\label{lem:RTR4}
Assume that $X\xrightarrow{\mathbf{f}}Y\xrightarrow{\mathbf{g}}Z\xrightarrow{\mathbf{h}}\Sigma X$ and 
$X'\xrightarrow{\mathbf{f}'}Y'\xrightarrow{\mathbf{g}'}Z'\xrightarrow{\mathbf{h}'}\Sigma Y$
belong to $\vartriangleright$, and satisfy $Y=X'$.
Then \darkblueedit{there exists a right triangle}
\color{blue}
\[
X\xrightarrow{\mathbf{f'}\circ\mathbf{f}}Y'\xrightarrow{\mathbf{g^{\prime\prime}}}Z^{\prime\prime}\xrightarrow{\mathbf{h}^{\prime\prime}}\Sigma X
\]
\normalcolor
\darkblueedit{in $\vartriangleright$. Moreover, there exist morphisms} $\mathbf{d}\in(\CH/[\CW])(Z,Z^{\prime\prime})$, $\mathbf{z}\in(\CH/[\CW])(Z^{\prime\prime}, Z')$
such that
\[
\begin{tikzcd}[row sep=0.9cm, column sep=1.2cm]
{}
X
\arrow{r}{\mathbf{f}}
\arrow[equal]{d}{}
&Y
\arrow{r}{\mathbf{g}}
\arrow{d}{\mathbf{f}'} 
&
Z
\arrow{r}{\mathbf{h}}
\arrow{d}{\mathbf{d}}
&
\Sigma X \arrow[equal]{d}{}
\\
X
\arrow{r}[swap]{\mathbf{f}'\circ\mathbf{f}}
&
Y' 
\arrow{r}[swap]{\mathbf{g}^{\prime\prime}}
\arrow{d}{\mathbf{g}'} 
&Z^{\prime\prime}
\arrow{r}[swap]{\mathbf{h}^{\prime\prime}}
\arrow{d}{\mathbf{z}}
&\Sigma X
\arrow{d}{\Sigma(\mathbf{f}'\circ\mathbf{f})}
\\
&Z'
\arrow[equal]{r}
\arrow{d}{\mathbf{h}'}
&Z'
\arrow{r}[swap]{(\Sigma\mathbf{f}')\circ\mathbf{h}'}
\arrow{d}{(\Sigma\mathbf{g})\circ\mathbf{h}'}
&\Sigma Y'
\\
&\Sigma Y
\arrow[swap]{r}{\Sigma\mathbf{g}}
&\Sigma Z
\end{tikzcd}
\]
is commutative in $\CH/[\CW]$ and 
$Z\xrightarrow{\mathbf{d}}Z^{\prime\prime}\xrightarrow{\mathbf{z}}Z'\xrightarrow{(\Sigma\mathbf{g})\circ\mathbf{h}'}\Sigma Z$
belongs to $\vartriangleright$.
\end{lemma}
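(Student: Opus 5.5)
We plan to verify the octahedral axiom (RT4) by lifting to $\CT$ and applying the octahedron axiom there. Then we push the result back to $\CH/[\CW]$ through $(\ )_+$ using \cref{lem:right_triangle_replace}. By \cref{lem:RTR3} and closure of $\vartriangleright$ under isomorphisms, we may assume both given right triangles are standard. So take $f\in\CH(X,Y)$ and $f'\in\CH(Y,Y')$ with triangles $X\xrightarrow{\tiny\begin{bmatrix}f\\a_X\end{bmatrix}}Y\oplus W^X\to M_f\xrightarrow{k_f}X[1]$ and $Y\xrightarrow{\tiny\begin{bmatrix}f'\\a_Y\end{bmatrix}}Y'\oplus W^Y\to M_{f'}\xrightarrow{k_{f'}}Y[1]$ as in \cref{def:standard_right_triangle}. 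The objects $Z,Z'$ are then $(M_f)_+$ and $(M_{f'})_+$.

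The key step is to realize the composite $\mathbf{f}'\circ\mathbf{f}$ by a single morphism in $\CT$ that factors honestly through the two lifts. Consider
\[
\phi=\begin{bmatrix}f\\a_X\end{bmatrix}\colon X\to Y\oplus W^X,\qquad
\psi=\begin{bmatrix}f'&0\\a_Y&0\\0&\id\end{bmatrix}\colon Y\oplus W^X\to Y'\oplus W^Y\oplus W^X .
\]
The composite $\psi\circ\phi$ has components $f'f$, $a_Yf$ and $a_X$. Its cone $M''$ is isomorphic, via an automorphism of $Y'\oplus W^Y\oplus W^X$ of the type used in Claim~\ref{claim:if_XYinH}, to $M_{f'f}\oplus W^Y$: we use $a_X$ to kill the $W^Y$-component $a_Yf$, since $a_Y f$ factors through $a_X$ by $\CT(X\langle1\rangle[-1],W^Y)=0$. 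Modulo $[\CW]$, the cone of $\psi$ is $M_{f'}\oplus W^X[1]$-like. More precisely, $\psi$ is $\begin{bmatrix}f'\\a_Y\end{bmatrix}\oplus\id_{W^X}$, so its cone is $M_{f'}$. The octahedron axiom in $\CT$ applied to $\psi\circ\phi$ yields a distinguished triangle $M_f\xrightarrow{d}M''\xrightarrow{z}M_{f'}\xrightarrow{e}M_f[1]$ compatible with all the maps. We set $\mathbf{d}=\overline{d}_+$, $\mathbf{z}=\overline{z}_+$ and $\mathbf{g}''$, $\mathbf{h}''$ from the standard triangle of $f'f$, transported along $M''\cong M_{f'f}\oplus W^Y$. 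Commutativity of the squares in the big diagram then follows from commutativity in $\CT$. For squares involving the third morphisms $\mathbf{h},\mathbf{h}''$, $(\Sigma\mathbf{f}')\circ\mathbf{h}'$ and $(\Sigma\mathbf{g})\circ\mathbf{h}'$, we argue as in \cref{lem:RTR3}: the relevant identities hold after composing with $H(c_X)$ or $H(c_Y)$, which are monomorphic by \cref{lem:H(f)_monom}. \cref{rem:c_is_natural} and \cref{lem:functor_Sigma} let us convert $\langle1\rangle$ and $(\ )_+$ into $\Sigma$.

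For the final assertion, that $Z\xrightarrow{\mathbf{d}}Z''\xrightarrow{\mathbf{z}}Z'\xrightarrow{(\Sigma\mathbf{g})\circ\mathbf{h}'}\Sigma Z$ lies in $\vartriangleright$, we apply \cref{lem:right_triangle_replace} to $M_f\xrightarrow{d}M''\xrightarrow{z}M_{f'}\xrightarrow{e}M_f[1]$. All three objects lie in $\CT^-$ by \cref{lem:T-_*_<1>}. This requires a morphism $a\colon M''\to W^{M_f}$ with $a\circ d=a_{M_f}$, and we get it from $\CT(M_{f'}[-1],W^{M_f})=0$, which holds because $M_{f'}\in\CT^-$. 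The lemma then gives that $(M_f)_+\to M''_+\to(M_{f'})_+\xrightarrow{\tau\circ\overline{v}_+}\Sigma((M_f)_+)$ belongs to $\vartriangleright$. What remains is to identify $\tau_{M_f}\circ\overline{v}_+$ with $(\Sigma\mathbf{g})\circ\mathbf{h}'$. Here $\mathbf{g}=\overline{l_{M_f}g_f}$ and $\mathbf{h}'=(\overline{h_{f'}})_+$. By the uniqueness in \cref{lem:for_right_triangle_replace}(2), it suffices to check the corresponding equality after applying $H(c_{M_f})$, which is monomorphic. That reduces to the octahedral identity $e=g_f[1]\circ s$-type relation relating $e$ to $h_{f'}$, $c_Y$ and $g_f$.

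We expect this last identification to be the main obstacle. It involves the signs coming from the octahedron and from the rotation convention in (\ref{octahedron_h_f}), where $s_f=-f[1]\circ c_X$. It also involves tracking the correction by $W$-summands when identifying $M''$ with $M_{f'f}$. We would handle the signs first on the triangulated level, writing $e$ explicitly in terms of $k_{f'}$ and $g_f\oplus\id$, before passing to $H$.
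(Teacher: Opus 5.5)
Your route is the paper's: the same factorization $\psi\circ\phi$, the same identification of its cone with $M_{f'f}\oplus W^Y$ via $w\colon W^X\to W^Y$, the octahedron in $\CT$, then \cref{lem:right_triangle_replace} and the monomorphicity of $H(c_X)$, $H(c_Y)$, $H(c_{M_f})$. However, the step that verifies the hypothesis of \cref{lem:right_triangle_replace} rests on a false claim. The vanishing $\CT(M_{f'}[-1],W^{M_f})=0$ does not follow from $M_{f'}\in\CT^-$. The axioms give $\CT(\CU[-k],\CW)=0$ only for $1\le k\le n$, i.e.\ $\CT(\CU_{-n+1}^0[-1],\CW)=0$, whereas $\CT^-[-1]=\CU_{-n-1}^{-1}$ also involves $\CU[-n-1]$. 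For the pair $(\CC,\CC)$ coming from an $(n+1)$-cluster tilting subcategory one has $\CT^-=\CT$ and $\CW=\CC$ (\cref{ex:t_structure_n_cotorsion_pair}), so $\CT(\CT^-[-1],\CW)$ is as large as possible. The object $M_{f'}$ is no better than a general object of $\CT^-$. Take $f=f'=0$, $Y=C\in\CC$ and $Y'=C[1]$ in that case. Then $M_{f'}$ has $C[1]$ as a summand. Also $M_f$ has $C$ as a summand, and $a_{M_f}$ is a left $\CW$-approximation, so $W^{M_f}$ contains $C$. Hence $\CT(M_{f'}[-1],W^{M_f})\supseteq\CT(C,C)\neq0$. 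The vanishing you invoke would need $M_{f'}\in\CU_{-n+1}^0$, and by \cref{prop:equivalence_Sigma_epi} this says exactly that $\mathbf{f}'$ is an $n$-epimorphism.

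What actually makes the extension exist is the explicit connecting morphism. Take the triangle on $\psi=\begin{bsmallmatrix}f'\\ a_Y\end{bsmallmatrix}\oplus\id_{W^X}$ to be the direct sum of the triangle defining $M_{f'}$ with the trivial one; then the octahedron gives $e=g_f[1]\circ k_{f'}$. Since $Y\langle1\rangle\in\CU_{-n+1}^0$, the map $a_Y$ is a left $\CW$-approximation, so $a_{M_f}\circ g_f=w'\circ a_Y$ for some $w'\colon W^Y\to W^{M_f}$. Moreover $a_Y[1]\circ k_{f'}=0$, because $\begin{bsmallmatrix}f'\\ a_Y\end{bsmallmatrix}[1]\circ k_{f'}=0$. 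Hence $a_{M_f}[1]\circ e=w'[1]\circ a_Y[1]\circ k_{f'}=0$, which is precisely the condition for $a_{M_f}$ to extend along $d$. (The paper writes this extension down explicitly from $w'$ and the relation $d'\circ g_f=a_Y$.) The same formula for $e$ also disposes of what you call the main obstacle, and no signs arise. Choose $g'$ with $c_{M_f}\circ g'=g_f[1]\circ c_Y$, so that $\overline{g'}=\overline{g_f}\langle1\rangle$. Then $c_{M_f}\circ g'\circ h_{f'}=g_f[1]\circ k_{f'}=e$. Therefore \cref{lem:for_right_triangle_replace}(2) gives $\overline{v}_+=(\overline{g_f}\langle1\rangle)_+\circ(\overline{h_{f'}})_+$. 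Composing with $\tau_{M_f}=(\overline{l_{M_f}}\langle1\rangle)_+$ yields $(\Sigma\mathbf{g})\circ\mathbf{h}'$.
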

\begin{proof}
We may assume that 
$X\xrightarrow{\mathbf{f}}Y\xrightarrow{\mathbf{g}}Z\xrightarrow{\mathbf{h}}\Sigma X$ and 
$Y\xrightarrow{\mathbf{f}'}Y'\xrightarrow{\mathbf{g}'}Z'\xrightarrow{\mathbf{h}'}\Sigma Y$
are standard right triangles
associated to $f\in\CH(X,Y)$ and $f'\in\CH(X',Y')$, respectively. We use the same notation introduced in Definition~\ref{def:standard_right_triangle} for $f$, and denote the corresponding objects and morphisms associated to $f'$ by replacing $f$ with $f'$ in the notation, while also taking into account the fact that $Y=X'$.
In particular, we have morphisms of distinguished triangles
\[
\begin{tikzcd}[row sep=0.8cm, column sep=1.2cm]
X \arrow{r}{\begin{bmatrix}f\\ a_X\end{bmatrix}}\arrow[d, equal]
&Y\oplus W^X \arrow{d}{{[}0\ \id{]}} \arrow{r}{{[}g_f\ w_f{]}}
&M_f\arrow{r}{k_f} \arrow{d}{h_f}
&X{[}1{]} \arrow[d, equal]
\\
X\arrow{r}[swap]{a_X}
&W^X\arrow{r}[swap]{b_X}
&X \langle1\rangle \arrow{r}[swap]{c_X}
&X{[}1{]}
\end{tikzcd}\]
and
\[
\begin{tikzcd}[row sep=0.8cm, column sep=1.2cm]
Y \arrow{r}{\begin{bmatrix}f'\\ a_Y\end{bmatrix}}\arrow[d, equal]
&Y'\oplus W^Y \arrow{d}{{[}0\ \id{]}} \arrow{r}{{[}g_{f'}\ w_{f'}{]}}
&M_{f'}\arrow{r}{k_{f'}} \arrow{d}{h_{f'}}
&Y{[}1{]} \arrow[d, equal]
\\
Y\arrow{r}[swap]{a_Y}
&W^Y\arrow{r}[swap]{b_Y}
&Y \langle1\rangle \arrow{r}[swap]{c_Y}
&Y{[}1{]}
\end{tikzcd}
\]
in $\CT$.
Then,
\[
Y\oplus W^X\xrightarrow{{\scriptsize\begin{bmatrix}f'&0\\ a_Y&0\\0&\id\end{bmatrix}}}Y'\oplus W^Y\oplus W^X\xrightarrow{[g_{f'}\ w_{f'}\ 0]}M_{f'}\xrightarrow{k'} (Y\oplus W^X)[1]
\]
is a distinguished triangle in $\CT$, where $k'$ denotes the composite of $\begin{bmatrix}k_{f'}\\0\end{bmatrix}$ with the canonical isomorphism $Y[1]\oplus W^X[1]\xrightarrow{\cong}(Y\oplus W^X)[1]$.

On the other hand, we have a standard right triangle
\color{blue}
\[
X\xrightarrow{\overline{f^{\prime\prime}}}Y'\xrightarrow{\overline{l_{M_{f^{\prime\prime}}}}\darkblueedit{\circ}\overline{g_{f^{\prime\prime}}}}(M_{f^{\prime\prime}})_+\xrightarrow{\overline{(h_{f^{\prime\prime}}})_+}\Sigma X
\]
\normalcolor
associated to $f^{\prime\prime}=f'\circ f\in\CH(X,Y')$, where
\[
\begin{tikzcd}[row sep=0.8cm, column sep=1.2cm]
X \arrow{r}{\begin{bmatrix}f^{\prime\prime}\\ a_X\end{bmatrix}}\arrow[d, equal]
&Y'\oplus W^X \arrow{d}{{[}0\ \id{]}} \arrow{r}{{[}g_{f^{\prime\prime}}\ w_{f^{\prime\prime}}{]}}
&M_{f^{\prime\prime}}\arrow{r}{k_{f^{\prime\prime}}} \arrow{d}{h_{f^{\prime\prime}}}
&X{[}1{]} \arrow[d, equal]
\\
X\arrow{r}[swap]{a_X}
&W^X\arrow{r}[swap]{b_X}
&X \langle1\rangle \arrow{r}[swap]{c_X}
&X{[}1{]}
\end{tikzcd}
\]
is a morphism of distinguished triangles in $\CT$.
Then, 
\[
X
\xrightarrow{{\scriptsize\begin{bmatrix}f^{\prime\prime}\\a_X\\0\end{bmatrix}}}
Y'\oplus W^X\oplus W^Y
\xrightarrow{{\scriptsize\begin{bmatrix}g_{f^{\prime\prime}}&w_{f^{\prime\prime}}&0\\ 0&0&\id\end{bmatrix}}}
M_{f^{\prime\prime}}\oplus W^Y
\xrightarrow{[k_{f^{\prime\prime}}\ 0]}
X[1]
\]
is also a distinguished triangle.
We note that there exists a morphism $w\in\CT(W^X,W^Y)$ that satisfies $w\circ a_X=a_Y\circ f$ in $\CT$.
By composing an isomorphism
\[
\begin{bmatrix}
\id&0&0\\
0&w&\id\\
0&\id&0
\end{bmatrix}\in\CT(Y'\oplus W^X\oplus W^Y,Y'\oplus W^Y\oplus W^X)
\]
to the above distinguished triangle, we obtain a distinguished triangle
\[
X
\xrightarrow{{\scriptsize\begin{bmatrix}f^{\prime\prime}\\a_Y\circ f\\a_X\end{bmatrix}}}
Y'\oplus W^Y\oplus W^X
\xrightarrow{{\scriptsize\begin{bmatrix}g_{f^{\prime\prime}}&0&w_{f^{\prime\prime}}\\ 0&\id&-w\end{bmatrix}}}
M_{f^{\prime\prime}}\oplus W^Y
\xrightarrow{[k_{f^{\prime\prime}}\ 0]}
X[1]
\]
in $\CT$. By the octahedron axiom in $\CT$, there exist
morphisms $\begin{bmatrix}d\\d'\end{bmatrix}\in\CT(M_f,M_{f^{\prime\prime}}\oplus W^Y)$ and $[z\ z']\in\CT(M_{f^{\prime\prime}}\oplus W^Y,M_{f'})$ such that
\begin{equation}\label{octa_XYWMX}
\begin{tikzcd}[row sep=1.2cm, column sep=1.4cm]
{}
X
\arrow{r}{{\scriptsize\begin{bmatrix}f\\a_X\end{bmatrix}}}
\arrow[equal]{d}{}
&Y\oplus W^X
\arrow{r}{[g_f\ w_f]}
\arrow{d}{{\scriptsize\begin{bmatrix}f'\ \,0\\a_Y\ 0\\0\ \ \ \id\end{bmatrix}}} 
&
M_f
\arrow{r}{k_f}
\arrow{d}{{\scriptsize\begin{bmatrix}d\\d'\end{bmatrix}}}
&
X{[}1{]} \arrow[equal]{d}
\\
X
\arrow{r}[swap]{{\scriptsize\begin{bmatrix}f'\circ f\\a_Y\circ f\\ a_X\end{bmatrix}}}
&
Y'\oplus W^Y\oplus W^X 
\arrow{r}[swap]{{\scriptsize\begin{bmatrix}g_{f^{\prime\prime}}\ 0\ \ w_{f^{\prime\prime}}\\ 0\ \ \ \ \id\ -w\end{bmatrix}}}
\arrow{d}[swap]{{[}g_{f'}\ w_{f'}\ 0{]}} 
&M_{f^{\prime\prime}}\oplus W^Y
\arrow{r}[swap]{{[}k_{f^{\prime\prime}}\ 0{]}}
\arrow{d}{[z\ z']}
&X{[}1{]}
\arrow{d}{{\scriptsize\begin{bmatrix}f\\a_X\end{bmatrix}}{[}1{]}}
\\
&M_{f'}
\arrow[equal]{r}
\arrow{d}[swap]{k'}
&M_{f'}
\arrow{r}[swap]{k'}
\arrow{d}{({[}g_f\ w_f{]}{[}1{]})\circ k'=g_f{[}1{]}\circ k_{f'}}
&(Y\oplus W^X){[}1{]}
\\
&(Y\oplus W^X){[}1{]}
\arrow[swap]{r}{{[}g_f\ w_f{]}{[}1{]}}
&M_f{[}1{]}
\end{tikzcd}
\end{equation}
is commutative in $\CT$,
and \begin{equation}\label{dt_MWMM}
M_f\xrightarrow{{\scriptsize\begin{bmatrix}d\\d'\end{bmatrix}}}M_{f^{\prime\prime}}\oplus W^Y\xrightarrow{[z\ z']}M_{f'}\xrightarrow{g_f[1]\circ k_{f'}}M_f[1]
\end{equation}
is a distinguished triangle.
In particular we have $k_{f^{\prime\prime}}\circ d=k_f$ and $d'\circ g_f=a_Y$ in $\CT$.

We check that the triangle $(\ref{dt_MWMM})$
satisfies the assumptions of Lemma~\ref{lem:right_triangle_replace}.
Note that there exists a morphism 
\[
\begin{tikzcd}[row sep=0.8cm, column sep=1.2cm]
Y \arrow{r}{a_Y}
\arrow{d}[swap]{g_f}
&
W^Y 
\arrow{r}{b_Y}
\arrow{d}{w'} 
&Y\langle1\rangle
\arrow{r}{c_Y}
\arrow{d}{g'}
&Y{[}1{]}
\arrow{d}{g_f{[}1{]}}
\\
M_f\arrow{r}[swap]{a_{M_f}}
&W^{M_f}\arrow{r}[swap]{b_{M_f}}
&M_f\langle1\rangle \arrow{r}[swap]{c_{M_f}}
&M_f{[}1{]}
\end{tikzcd}
\]
of distinguished triangles in $\CT$ that gives $\overline{g'}=\overline{g_f}\langle1\rangle$.
Since $Y\xrightarrow{g_f}M_f\xrightarrow{h_f}X\langle1\rangle\xrightarrow{s_f}Y[1]$ is a distinguished triangle and  
$(a_{M_f}-w'\circ d')\circ g_f=w'\circ a_Y-w'\circ a_Y=0$, 
there exists $a'\in\CT(X\langle1\rangle,W^{M_f})$ such that $a_{M_f}-w'\circ d'=a'\circ h_f$. If we put 
$a=[a'\circ h_{f^{\prime\prime}}\ \, w']\in\CT(M_{f^{\prime\prime}}\oplus W^Y,W^{M_f})$,
then it satisfies
\begin{eqnarray*}
a\circ \begin{bmatrix}d\\ d'\end{bmatrix}
&=&a'\circ h_{f^{\prime\prime}}\circ d+w'\circ d'
\ =\ a'\circ c_X\circ k_{f^{\prime\prime}}\circ d+w'\circ d'\\
&=&a'\circ c_X\circ k_f+w'\circ d'
\ =\ a'\circ h_f+w'\circ d'
\ =\ a_{M_f}.
\end{eqnarray*}
Thus, by Lemma~\ref{lem:right_triangle_replace}, it follows that
\[
(M_f)_+\xrightarrow{\overline{d}_+}(M_{f^{\prime\prime}})_+\xrightarrow{\overline{z}_+}
(M_{f'})_+\xrightarrow{\tau_{M_f}\circ \overline{v}_+}
\Sigma ((M_f)_+)
\]
belongs to $\vartriangleright$, where $\overline{v}_+\in(\CH/[\CW])((M_{f'})_+,(M_f\langle1\rangle)_+)$ is the unique morphism satisfying $H(c_{M_f})\circ H(v)=H(g_f[1]\circ k_{f'})$.
However, since $g'\circ h_{f'}$ satisfies
\[
H(c_{M_f})\circ H(g'\circ h_{f'})=H(g_f[1])\circ H(c_Y)\circ H(h_{f'})=H(g_f[1]\circ k_{f'}),
\]
we have
\begin{equation}\label{eq_ghv}
\overline{v}_+=\overline{(g'\circ h_{f'})}_+
=(\overline{g'})_+\circ \overline{(h_{f'})}_+
=(\overline{g_f}\langle1\rangle)_+\circ \overline{(h_{f'})}_+
\end{equation}
by the uniqueness.

It remains to show that the diagram
\[
\begin{tikzcd}[row sep=1.2cm, column sep=1.4cm]
{}
X
\arrow{r}{\overline{f}}
\arrow[equal]{d}
&Y
\arrow{r}{\overline{l_{M_f}}\circ\overline{g_f}}
\arrow{d}{\overline{f'}} 
&
(M_f)_+
\arrow{r}{\overline{(h_f)}_+}
\arrow{d}{\overline{d}_+}
&
\Sigma X \arrow[equal]{d}
\\
X
\arrow{r}[swap]{\overline{f'}\circ\overline{f}}
&
Y' 
\arrow{r}[swap]{\overline{l_{M_{f^{\prime\prime}}}}\circ\overline{g_{{f^{\prime\prime}}}}}
\arrow{d}[swap]{\overline{l_{M_{f'}}}\circ\overline{g_{f'}}} 
&(M_{f^{\prime\prime}})_+
\arrow{r}{\overline{(h_{f^{\prime\prime}})_+}}
\arrow{d}{\overline{z}_+}
&\Sigma X
\arrow{d}{\Sigma\overline{f}}
\\
&(M_{f'})_+
\arrow[equal]{r}
\arrow{d}[swap]{\overline{(h_{f'})}_+}
&(M_{f'})_+
\arrow{r}{\overline{(h_{f'})}_+}
\arrow{d}{\tau_{M_f}\circ \overline{v}_+}
&\Sigma Y
\\
&\Sigma Y
\arrow[swap]{r}{\Sigma (\overline{l_{M_f}}\circ\overline{g_f})}
&\Sigma ((M_f)_+)
\arrow[from=1-1,to=2-2, phantom, "\mathrm{(i)}"{pos=.5}]
\arrow[from=1-2,to=2-3, phantom, "\mathrm{(ii)}"{pos=.5}]
\arrow[from=1-3,to=2-4, phantom, "\mathrm{(iii)}"{pos=.5}]
\arrow[from=2-2,to=3-3, phantom, "\mathrm{(iv)}"{pos=.5}]
\arrow[from=2-3,to=3-4, phantom, "\mathrm{(v)}"{pos=.5}]
\arrow[from=3-2,to=4-3, phantom, "\mathrm{(vi)}"{pos=.5}]
\end{tikzcd}
\]
is commutative in $\CH/[\CW]$. 
Commutativity of the part marked as {\rm (i)} is obvious. 
Commutativity of the parts marked as \darkblueedit{{\rm (ii), (iv)}} follows immediately from the commutativity of the corresponding parts of $(\ref{octa_XYWMX})$ and the definition of $(-)_+$.
Commutativity of the parts marked as \darkblueedit{{\rm (iii), (v)}} also follows from the commutativity of the corresponding parts of $(\ref{octa_XYWMX})$, since $H(c_X)$ and $H(c_Y)$ are monomorphisms in $\CH/[\CW]$ by Lemma~\ref{lem:H(f)_monom}.
By $(\ref{eq_ghv})$, commutativity of the part marked as {\rm (vi)} follows since
\[
\begin{tikzcd}[row sep=1.2cm, column sep=1.8cm]
(M_{f'})_+ \arrow[r, "\overline{v}_+"]
\arrow{d}[swap]{\overline{(h_{f'})}_+}
&(M_f\langle1\rangle)_+
\arrow{d}{\tau_{M_f}=(\overline{(l_{M_f})}\langle1\rangle)_+}
\\
\Sigma Y \arrow[r, swap, "\Sigma(\overline{l_{M_f}}\circ\overline{g_f})"]
&\Sigma ((M_f)_+)
\arrow[from=2-1,to=1-2, "(\overline{g_f}\langle1\rangle)_+"{pos=.5}]
\end{tikzcd}
\]
is commutative in $\CH/[\CW]$.
\normalcolor
\end{proof}

\begin{proposition}\label{prop:right_triangulated}
$(\CH/[\CW],\Sigma,\vartriangleright)$ is a right triangulated category.
\end{proposition}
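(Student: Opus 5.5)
The plan is to assemble the axioms of a right triangulated category (in the sense dual to \cite[Definition~2.2]{BM94}) from the lemmas just established; almost no new work is needed.

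We need an additive category $\CH/[\CW]$ with an additive endofunctor $\Sigma$ and a class $\vartriangleright$ of right triangles $X\to Y\to Z\to\Sigma X$ satisfying four conditions:
\begin{itemize}
\item closure under isomorphism together with (RT1);
\item the rotation axiom (RT2);
\item the morphism axiom (RT3);
\item the octahedral axiom (RT4).
\end{itemize}

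First, $\CH/[\CW]$ is additive, being an ideal quotient of the additive category $\CH$. By \cref{def:functor_Sigma}, $\Sigma$ is an additive endofunctor, as a composite of additive functors (\cref{prop:functor_<1>}, \cref{cor:functor_+} and \cref{prop:functor_+_property}).

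Second, $\vartriangleright$ is closed under isomorphisms of right triangles by its very definition in \cref{def:standard_right_triangle}. We should also observe that every standard right triangle is a genuine right triangle, i.e.\ consecutive composites vanish. This follows since the morphism of triangles in \eqref{octahedron_h_f} gives $h_f\circ g_f=0$ (a consequence of $[0\ \id]\circ\begin{bmatrix}\id\\0\end{bmatrix}=0$). Moreover, $g_f\circ f$ factors through $W^X$, since $[g_f\ w_f]\circ\begin{bmatrix}f\\a_X\end{bmatrix}=0$. Finally, the composite $\Sigma\overline f\circ\overline{(h_f)}_+$ vanishes by the rotation lemma below. In any case, this is implied by the axioms.

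Third, we match each remaining axiom with a lemma:
\begin{itemize}
\item (RT1): identity triangles and existence of a triangle on any morphism are \cref{lem:RTR1}.
\item (RT2): rotation is \cref{lem:RTR2}.
\item (RT3): completing a commutative square to a morphism of triangles is \cref{lem:RTR3}.
\item (RT4): the octahedral axiom, in the form of \cite{BM94} dualized, is exactly the content of \cref{lem:RTR4}. This lemma provides a triangle on $\mathbf f'\circ\mathbf f$, the comparison maps $\mathbf d,\mathbf z$, the required commutative diagram, and the fact that $Z\to Z''\to Z'\to\Sigma Z$ lies in $\vartriangleright$.
\end{itemize}

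The only step requiring care is confirming that the formulation of the octahedral axiom in \cref{lem:RTR4} agrees with the dual of (LT4) in \cite{BM94}, including the signs and the third morphism $(\Sigma\mathbf g)\circ\mathbf h'$. I would check this against the cited definition. If the conventions differ by a sign, I would adjust by the isomorphism of triangles given by $-\id$ on one term, which preserves membership in $\vartriangleright$.
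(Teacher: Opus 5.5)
Your proposal is correct and follows the paper's proof: the paper notes that $\vartriangleright$ is closed under isomorphisms by definition, and then reads off (RT1)--(RT4) directly from Lemmas~\ref{lem:RTR1}, \ref{lem:RTR2}, \ref{lem:RTR3} and \ref{lem:RTR4}. Your extra remarks on additivity and on vanishing composites are harmless. The sign-adjustment hedge is unnecessary, since Lemma~\ref{lem:RTR4} is already stated as the dual of (LT4), with third morphism $(\Sigma\mathbf{g})\circ\mathbf{h}'$ and no sign; in any case, negating a single term would not in general give an isomorphism of right triangles.
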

\begin{proof}
This follows from Lemmas~\ref{lem:RTR1}, \ref{lem:RTR2}, \ref{lem:RTR3} and \ref{lem:RTR4}, where
conditions {\rm (RT1)}, {\rm (RT2)}, {\rm (RT3)} and {\rm (RT4)}
are verified, respectively.
\end{proof}

To conclude this section, we prove that $(\Sigma,\Omega,\vartriangleright,\vartriangleleft)$ is a pretriangulated structure on $\CH/[\CW]$ (Proposition~\ref{prop:pretriangulated}). The following lemma, together with its dual, completes the proof.
\begin{lemma}\label{lem:PT}
Assume that $X\xrightarrow{\mathbf{f}}Y\xrightarrow{\mathbf{g}}Z\xrightarrow{\mathbf{h}}\Sigma X$
belongs to $\vartriangleright$, and 
\color{blue}
that 
\normalcolor
$\Omega Z'\xrightarrow{\mathbf{e}'}X'\xrightarrow{\mathbf{f}'}Y'\xrightarrow{\mathbf{g}'}Z'
$ 
\color{blue}
belongs 
\normalcolor
to $\vartriangleleft$.
If $\mathbf{y}\in(\CH/[\CW])(Z,Y')$ and $\mathbf{z}\in(\CH/[\CW])(\Sigma X,Z')$ satisfy
$\mathbf{g}'\circ \mathbf{y}=\mathbf{z}\circ \mathbf{h}$, then there exists $\mathbf{x}\in(\CH/[\CW])(Y,X')$
that makes 
\[
\begin{tikzcd}[row sep=0.8cm, column sep=1.0cm]
X\arrow[r, "\mathbf{f}"]
\arrow[d, swap, "\mathbf{z}^{\dagger}" ]
&Y\arrow[r, "\mathbf{g}"]
\arrow[d, "\mathbf{x}"]
&Z \arrow[r, "\mathbf{h}"]
\arrow[d, "\mathbf{y}"]
&\Sigma X
\arrow[d, "\mathbf{z}"]
\\
\Omega Z'\arrow[r, swap, "\mathbf{e}'"]
&X'\arrow[r, swap, "\mathbf{f}'"]
&Y' \arrow[r, swap, "\mathbf{g}'"]
&Z'
\end{tikzcd}
\]
commutative in $\CH/[\CW]$. Here, 
$\mathbf{z}^{\dagger}=(\Omega\mathbf{z})\circ\eta_X=\Theta_{X,Z'}(\mathbf{z})$, where $\Theta$ and $\eta$ are those %
obtained in Proposition~\ref{prop:adjoint_Sigma_Omega}.
\normalcolor

\end{lemma}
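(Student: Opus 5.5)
The plan is to reduce to standard triangles and lift everything to a single application of (TR3) in $\CT$. I assume that $\vartriangleleft$ is defined dually to $\vartriangleright$, through standard left triangles built from the triangles $(\ref{dtriXXWX})$ and the functor $(\ )_-$. Since both classes are closed under isomorphism, we may take $X\xrightarrow{\mathbf{f}}Y\xrightarrow{\mathbf{g}}Z\xrightarrow{\mathbf{h}}\Sigma X$ to be the standard right triangle $(\ref{standard_right_triangle})$ of some $f\in\CH(X,Y)$. We may likewise take the given left triangle to be the standard left triangle of some $g'\in\CH(Y',Z')$. Dually to Definition~\ref{def:standard_right_triangle}, that triangle is built from a distinguished triangle
\[
Z'[-1]\xrightarrow{k'}K\xrightarrow{[f'_0\ w']^{T}}Y'\oplus{}^{Z'}W\xrightarrow{[g'\ {}_{Z'}a]}Z'
\]
together with a morphism $h'\colon Z'\langle-1\rangle\to K$ compatible with ${}_{Z'}c$. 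In this triangle $K\in\CT^+$, $X'=K_-$, $\mathbf{f}'=\overline{f'_0\circ r_K}$ and $\mathbf{e}'=\overline{(h')}_-$.

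The first step is to represent $\mathbf{y}$ and $\mathbf{z}$ by honest morphisms in $\CT$. Proposition~\ref{prop:functor_+}(1) gives $y_0\colon M_f\to Y'$ with $\overline{y_0}=\mathbf{y}\circ\overline{l_{M_f}}$, since $Y'\in\CT^+$. Applied to $X\langle1\rangle$, it gives $z_0\colon X\langle1\rangle\to Z'$ with $\overline{z_0}=\mathbf{z}\circ\overline{l_{X\langle1\rangle}}$. Precomposing the hypothesis $\mathbf{g}'\circ\mathbf{y}=\mathbf{z}\circ\mathbf{h}$ with $\overline{l_{M_f}}$ and using naturality of $\overline{l}$ gives $g'\circ y_0-z_0\circ h_f\in[\CW]$. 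Because $[g'\ {}_{Z'}a]$ has cone in $\CV[1]$-type position relative to $\CW$ (i.e.\ $\CT(W,K[1])=0$ for $W\in\CW$), any morphism $M_f\to Z'$ factoring through $\CW$ lifts along ${}_{Z'}a$. We can therefore correct $y_0$ to $[y_0\ w_0]^{T}\colon M_f\to Y'\oplus{}^{Z'}W$ so that the square
\[
[g'\ {}_{Z'}a]\circ[y_0\ w_0]^{T}=z_0\circ h_f
\]
commutes on the nose in $\CT$.

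Next, rotate the triangle $Y\xrightarrow{g_f}M_f\xrightarrow{h_f}X\langle1\rangle\xrightarrow{s_f}Y[1]$ and the $K$-triangle so that the square just obtained becomes the right-hand square of a pair of triangles. Then (TR3) in $\CT$ produces $x_0\colon Y\to K$ with two properties. First, $[f'_0\ w']^{T}\circ x_0=[y_0\ w_0]^{T}\circ g_f$. Second, $k'\circ(z_0[-1])=x_0\circ(\text{the rotated }s_f)$, up to the sign conventions for left triangles in $\CT$. Since $Y\in\CT^-$, the dual of Proposition~\ref{prop:functor_+} factors $x_0=r_K\circ x_1$ with $x_1\colon Y\to K_-$, and we set $\mathbf{x}=\overline{x_1}$. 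The middle square $\mathbf{f}'\circ\mathbf{x}=\mathbf{y}\circ\mathbf{g}$ then follows from the first property after composing with $\overline{l_{M_f}}$ and $\overline{r_K}$.

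The main obstacle is the left square $\mathbf{e}'\circ\mathbf{z}^\dagger=\mathbf{x}\circ\mathbf{f}$. Here I would unwind $\Theta_{X,Z'}(\mathbf{z})$ through the definition in Proposition~\ref{prop:adjoint_Sigma_Omega}, which is $\theta_{X,Z'}$ sandwiched between $\overline{l}$ and $\overline{r}$. By Lemma~\ref{lem:adjoint_+-}, $\theta_{X,Z'}(\overline{z_0})$ is represented by the unique-mod-$[\CW]$ morphism $t\colon X\to Z'\langle-1\rangle$ with $t[1]\circ c_X=-({}_{Z'}c)[1]\circ z_0$. The second (TR3) identity, rewritten via $k_f=c_X\circ h_f$ and $k'=h'\circ{}_{Z'}c$-type relations (Remark~\ref{rem:standard_h_f} and its dual), should show that $x_0\circ f$ and $h'\circ t$ agree after composing with the connecting morphism to $Z'[-1]$. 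I would then conclude by injectivity, namely the monicity of $H(c_X)$ from Lemma~\ref{lem:H(f)_monom} and its dual epicity statement. The delicate point is tracking the minus sign: the one in the definition of $\theta$ must match the one coming from rotating triangles (the convention for left triangles in $\CT$).
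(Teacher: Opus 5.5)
Your route is the same as the paper's: reduce to standard triangles, lift $y_0,z_0$ through $\overline{l_{M_f}}$ and $\overline{l_{X\langle1\rangle}}$, correct through ${}^{Z'}W$, apply (TR3) to the rotated rows, and factor through $r_K$. The middle square is fine. One small repair: the lift through ${}_{Z'}a$ holds because $\CT(\CW,Z'\langle-1\rangle[1])=0$, as $Z'\langle-1\rangle\in\CV_0^{n-1}$; the vanishing $\CT(\CW,K[1])=0$ you cite can fail.

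The real gap is the left square, which you leave at ``should show''. With the conventions you have fixed, it fails by a sign.
\begin{itemize}
\item[(1)] The sequence $Z'[-1]\xrightarrow{k'}K\to Y'\oplus{}^{Z'}W\to Z'$ stands for an honest triangle ending in $-k'[1]\colon Z'\to K[1]$. So (TR3) gives $x_0[1]\circ s_f=-k'[1]\circ z_0$.
\item[(2)] The octahedron of Definition~\ref{def:standard_right_triangle} forces $s_f=-f[1]\circ c_X$, as recorded in the proof of Lemma~\ref{lem:RTR2}. Hence $(x_0\circ f)[1]\circ c_X=k'[1]\circ z_0$.
\item[(3)] On the other side, $h'\circ{}_{Z'}c=k'$ and $t[1]\circ c_X=-({}_{Z'}c)[1]\circ z_0$ give $(h'\circ t)[1]\circ c_X=-k'[1]\circ z_0$.
\item[(4)] Thus $(x_0\circ f+h'\circ t)[1]\circ c_X=0$, so $x_0\circ f+h'\circ t$ factors through $a_X$. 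Cancelling $\overline{r_K}$ then gives $\mathbf{x}\circ\mathbf{f}=-\mathbf{e}'\circ\mathbf{z}^{\dagger}$.
\end{itemize}

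No other choice of $\mathbf{x}$ fixes this; here is a counterexample under these conventions.
\begin{itemize}
\item[(a)] Take $(t^{\le-1},t^{\ge n})$ on $\Db(\mathbb{Z})$ with $n\ge2$, so $\CW=0$. Use a strict shift with $\eta^{\CT},\varepsilon^{\CT}$ identities. Let $X\xrightarrow{f}Y\xrightarrow{g}Z\xrightarrow{h}X[1]$ be a shift of the triangle of $0\to\mathbb{Z}\to\mathbb{Q}\to\mathbb{Q}/\mathbb{Z}\to0$ with $X,Y,Z,X[1]\in\CH$.
\item[(b)] Make the admissible choices $X\langle1\rangle=X[1]$, $c_X=\id$, $(X[1])\langle-1\rangle=X$ and ${}_{X[1]}c=\id$. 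Then this sequence is the standard right triangle of $f$. It is also your standard left triangle of $h$, with $K=Y$ and $k'=h'=f$.
\item[(c)] Meanwhile $\Theta_{X,X[1]}(\id)=-\id_X$.
\item[(d)] For $\mathbf{y}=\id_Z$ and $\mathbf{z}=\id_{X[1]}$, the lemma would need $\mathbf{x}\in\mathrm{End}(Y)\cong\mathbb{Q}$ with $\mathbf{x}\circ f=-f$ and $g\circ\mathbf{x}=g$. That forces $\mathbf{x}=-1$ and hence $2g=0$. This is false, since $\mathrm{Hom}(\mathbb{Q},\mathbb{Q}/\mathbb{Z})$ is torsion-free.
\end{itemize}

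So the missing step is not a computation to finish but a sign to change in the setup. One option is to take standard left triangles with $h'\circ{}_{Z'}c=-k'$; another is to define $\theta$ in Lemma~\ref{lem:adjoint_+-} without the minus sign. With either, your (TR3) argument closes as written.

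The paper's own proof passes over the same point. Its last diagram treats $s_f=f[1]\circ c_X$ as commutative, which contradicts $s_f=-f[1]\circ c_X$.
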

\begin{proof}
By replacing with isomorphic standard ones, it suffices to check the condition for the standard right triangle
\[
X\xrightarrow{\overline{f}}Y\xrightarrow{\overline{l_{M_f}}\circ \overline{g_f}}(M_f)_+\xrightarrow{\overline{(h_f)}_+}\Sigma X
\]
associated to $f\in\CH(X,Y)$, and the standard left triangle

\begin{equation}\label{st_left_tri_Z'}
\Omega Z'\xrightarrow{\overline{p}_-}K_{g'}\xrightarrow{\overline{q}\circ \overline{r_{K_{g'}}}}Y'\xrightarrow{\overline{g'}}Z'
\end{equation}
associated to $g'\in\CH(Y',Z')$.
By the construction dual to Definition~\ref{def:standard_right_triangle}, the sequence $(\ref{st_left_tri_Z'})$ is induced from 
the distinguished triangle \eqref{dtriXXWX} for $Z'$, which we denote by
\[
Z'[-1]\xrightarrow{c'}Z'\langle-1\rangle\xrightarrow{b'}W'\xrightarrow{a'}Z'
\]
briefly here, and a morphism
\[
\begin{tikzcd}[row sep=0.9cm, column sep=1.1cm]
Z'\langle-1\rangle
\arrow{r}{b'}
\arrow[d, swap, "p"]
&W'
\arrow{r}{a'}
\arrow{d}{{\begin{bmatrix}0\\ \id\end{bmatrix}}} 
&Z'\arrow{r}{t'} \arrow[d, equal]
&(Z'\langle-1\rangle){[}1{]} 
\arrow[d, "p{[}1{]}"]
\\
K_{g'}
\arrow{r}[swap]{{\begin{bmatrix}q\\ k\end{bmatrix}}}
&Y'\oplus W'
\arrow{r}[swap]{{{[}g'\ a'{]}}}
&Z' \arrow{r}[swap]{d}
&K_{g'}{[}1{]}
\end{tikzcd}
\]
of distinguished triangles in $\CT$, where we put $t'=-c'[1]$.

Suppose that we are given morphisms $y\in \CH(Z,Y')$ and $z\in \CH(\Sigma X,Z')$ that make the rightmost square of
\[
\begin{tikzcd}[row sep=0.8cm, column sep=1.1cm]
X
\arrow{r}{\overline{f}}
\arrow[d, swap, "\overline{z}^{\dagger}"]
&Y
\arrow{r}{\overline{l_{M_f}}\circ \overline{g_f}}
&(M_f)_+
\arrow{r}{\overline{(h_f)}_+}
\arrow[d, swap, "\overline{y}"]
&\Sigma X 
\arrow[d, "\overline{z}"]
\\
\Omega Z'
\arrow{r}[swap]{\overline{p}_-}
&(K_{g'})_-
\arrow{r}[swap]{\overline{q}\circ \overline{r_{K_{g'}}}}
&Y' 
\arrow{r}[swap]{\overline{g'}}
&Z'
\end{tikzcd}
\]
commutative in $\CH/[\CW]$. 
If we put $y'=y\circ l_{M_f}\in\CT(M_f,Y')$ and $z'=z\circ l_{X\langle1\rangle}$ for simplicity, then we have 
\color{blue}
$\overline{z'}\circ \overline{h_f}=\overline{g'}\circ\overline{y'}$
\normalcolor
in $\CT/[\CW]$. Thus $z'\circ h_f-g'\circ y'\in[\CW]$, and hence there exists $j\in\CT(M_f,W')$ such that
$z'\circ h_f-g'\circ y'=a'\circ j$. 
This means that the middle square of the diagram $(\ref{morphism_tri_YMf})$ below is commutative in $\CT$. 
Since the rows are distinguished triangles, we obtain $x'\in\CT(Y,K_{g'})$ that makes 
\begin{equation}\label{morphism_tri_YMf}
\begin{tikzcd}[row sep=1.0cm, column sep=1.0cm]
Y
\arrow{r}{g_f}
\arrow[d, swap, "x'"]
&M_f
\arrow{r}{h_f}
\arrow{d}{{\begin{bmatrix}y'\\ j\end{bmatrix}}}
&X\langle1\rangle
\arrow{r}{s_f}
\arrow[d, swap, "z'"]
&Y[1]
\arrow[d, "x'{[}1{]}"]
\\
K_{g'}
\arrow{r}[swap]{{\begin{bmatrix}q\\ k\end{bmatrix}}}
&Y'\oplus W'
\arrow{r}[swap]{{[}g'\ a'{]}}
&Z' 
\arrow{r}[swap]{d}
&K_{g'}{[}1{]}
\end{tikzcd}
\end{equation}
a morphism of distinguished triangles.
\normalcolor
Since $Y\in\CH$, there exists $x\in\CH(Y,(K_{g'})_-)$ that makes 
\[
\begin{tikzpicture}[>=stealth]
\node (1) at (0,0.8) {$Y$};
\node (2) at (1.1,-0.5) {$K_{g'}$};
\node (3) at (-1.1,-0.5) {$(K_{g'})_-$};
\draw[->] (1) -- node[right,font=\scriptsize] {$x'$} (2);
\draw[->] (1) -- node[left,font=\scriptsize] {$x$} (3);
\draw[->] (3) -- node[below,font=\scriptsize] {$r_{K_{g'}}$} (2);
\end{tikzpicture}
\]
commutative in $\CT$, by the dual of Proposition~\ref{prop:functor_+} {\rm (1)}.

It remains to show that $\overline{x}\in(\CH/[\CW])(Y,(K_{g'})_-)$ makes
\begin{equation}\label{diag_for_PTR}
\begin{tikzcd}[row sep=0.8cm, column sep=1.1cm]
X
\arrow{r}{\overline{f}}
\arrow[d, swap, "\overline{z}^{\dagger}"]
&Y
\arrow{r}{\overline{l_{M_f}}\circ \overline{g_f}}
\arrow[d, swap, "\overline{x}"]
&(M_f)_+
\arrow{r}{\overline{(h_f)}_+}
\arrow[d, swap, "\overline{y}"]
&\Sigma X 
\arrow[d, "\overline{z}"]
\\
\Omega Z'
\arrow{r}[swap]{\overline{p}_-}
&(K_{g'})_-
\arrow{r}[swap]{\overline{q}\circ \overline{r_{K_{g'}}}}
&Y' 
\arrow{r}[swap]{\overline{g'}}
&Z'
\end{tikzcd}
\end{equation}
commutative in $\CH/[\CW]$. 
The commutativity of the middle square is obvious by the construction.
Let us show the commutativity of the leftmost square. We recall that the morphism $\theta_{X,Z'}(\overline{z'})\in (\CT/[\CW])(X,Z'\langle-1\rangle)$, where $\theta$ is the isomorphism obtained in Lemma~\ref{lem:adjoint_+-}, is given by
$\theta_{X,Z'}(\overline{z'})=\overline{m}$
using a morphism $m\in\CT(X,Z'\langle-1\rangle)$ that makes
\[
\begin{tikzcd}[row sep=0.8cm, column sep=0.9cm]
X\langle1\rangle 
\arrow{r}{c_X}
\arrow{d}[swap]{z'}
&X[1] 
\arrow{d}{m{[}1{]}}
\\
Z' 
\arrow{r}[swap]{t'}
&(Z'\langle-1\rangle){[}1{]}
\end{tikzcd}
\]
commutative in $\CT$. Note that the following diagram is commutative except for the right-hand square.
\[
\begin{tikzpicture}[>=stealth]
\node (1) at (-2.3,0.8) {$X\langle1\rangle$};
\node (2) at (0,0.8) {$X{[}1{]}$};
\node (3) at (2.4,0.8) {$Y{[}1{]}$};
\node (4) at (-2.3,-0.8) {$Z'$};
\node (5) at (0,-0.8) {$(Z'\langle-1\rangle){[}1{]}$};
\node (6) at (2.4,-0.8) {$K_{g'}{[}1{]}$};
\draw[->] (1) -- node[below,font=\scriptsize] {$c_X$} (2);
\draw[->,bend left=25] (1) to node[above,font=\scriptsize] {$s_f$} (3);
\draw[->] (2) -- node[below,font=\scriptsize] {$f{[}1{]}$} (3);
\draw[->] (1) -- node[left,font=\scriptsize] {$z'$} (4);
\draw[->] (2) -- node[right,font=\scriptsize] {$m{[}1{]}$} (5);
\draw[->] (4) -- node[above,font=\scriptsize] {$t'$} (5);
\draw[->] (5) -- node[above,font=\scriptsize] {$p{[}1{]}$} (6);
\draw[->] (3) -- node[right,font=\scriptsize] {$x'{[}1{]}$} (6);
\draw[->,bend right=25] (4) to node[below,font=\scriptsize] {$d$} (6);
\end{tikzpicture}
\]
In particular we have
\[
(x'\circ f-p\circ m)[1]\circ c_X=0
\]
in $\CT$.
Since $X\xrightarrow{a_X}W^X\xrightarrow{b_X}X\langle1\rangle\xrightarrow{c_X}X[1]$ is a distinguished triangle, this equality implies that $x'\circ f-p\circ m$ factors through $W^X$, hence we obtain
$\overline{x'}\circ \overline{f}=\overline{p}\circ \overline{m}$
in $\CT/[\CW]$. This means that
\[
\overline{r_{K_{g'}}}\circ \overline{x}\circ \overline{f}
=\overline{x'}\circ \overline{f}
=\overline{p}\circ \overline{m}
=\overline{r_{K_{g'}}}\circ \overline{p}_-\circ \overline{z}^{\dagger}
\]
holds. By the dual of Proposition~\ref{prop:functor_+} {\rm (2)} we obtain $\overline{x}\circ \overline{f}=\overline{p}_-\circ \overline{z}^{\dagger}$, which shows the commutativity of the leftmost square of $(\ref{diag_for_PTR})$.
\end{proof}

\begin{proposition}\label{prop:pretriangulated}
$(\CH/[\CW],\Sigma,\Omega,\vartriangleright,\vartriangleleft)$ is 
an $n$-truncated 
\normalcolor
pretriangulated category.
\end{proposition}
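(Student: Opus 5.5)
We assemble the statement from the pieces already established; almost no new computation should be needed. By \cite[Definition~1.1]{BR07}, a pretriangulated category consists of an adjoint pair $\Sigma\dashv\Omega$ together with a right triangulated structure $(\Sigma,\vartriangleright)$, a left triangulated structure $(\Omega,\vartriangleleft)$, and two compatibility conditions. The first condition says that a morphism from a right triangle to a left triangle can be completed given its right-hand square. The second is the dual statement given the left-hand square, with the outer vertical maps related through the unit and counit of the adjunction.

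The data are already in place. The adjunction $\Sigma\dashv\Omega$ on $\CH/[\CW]$, with unit $\eta$ and counit $\varepsilon$, is \cref{prop:adjoint_Sigma_Omega}. The right triangulated structure $(\CH/[\CW],\Sigma,\vartriangleright)$ is \cref{prop:right_triangulated}.

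Next we define $\vartriangleleft$ dually. A standard left triangle associated to $g\in\CH(Y,Z)$ is built from the triangle \eqref{dtriXXWX} for $Z$ and a cocone of $[g\ {}_Za]\colon Y\oplus {}^ZW\to Z$, followed by the coreflection $(\ )_-$. Every argument of Lemmas~\ref{lem:H(f)_monom}--\ref{lem:RTR4} has a formal dual. In the dual arguments we replace $\CU,(\ )_+,\langle1\rangle,l_X$ by $\CV,(\ )_-,\langle-1\rangle,r_X$ and use the dual of \cref{lem:T-toT-}, \cref{prop:functor_+}, and \cref{lem:functor_Sigma}. The monomorphism property of $H(c_X)$ is replaced by the epimorphism property of $H({}_Xc)$, the dual of \cref{lem:H(f)_monom}. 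This gives that $(\CH/[\CW],\Omega,\vartriangleleft)$ is left triangulated.

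For the compatibility axioms, \cref{lem:PT} gives the condition in which the right-hand square is given. Its dual, with the left-hand square given and the fourth map obtained as $\varepsilon_Z\circ\Sigma(-)$, is proved by the mirror argument. That argument uses the standard left triangle, \eqref{morphism_tri_YMf} read backwards, and the bijection $\theta$ of \cref{lem:adjoint_+-} in the other direction. The main point needing care is sign and convention consistency: the sign convention $t'=-c'[1]$ and the choice of $\eta^{\CT},\varepsilon^{\CT}$ must match the formula $\mathbf{z}^{\dagger}=\Theta(\mathbf{z})$, so that the dual statement uses exactly $\Theta^{-1}$ and $\varepsilon$ as in \cref{prop:adjoint_Sigma_Omega}. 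Together these give that $(\CH/[\CW],\Sigma,\Omega,\vartriangleright,\vartriangleleft)$ is pretriangulated.

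Finally, $n$-truncatedness in the sense of \cref{def:n-trun_pretr} is \cref{prop:functor_Sigma}~(2), which gives $\Sigma^n=0$. The dual statement $\Omega^n=0$ follows either from the dual computation, using $X\langle -k\rangle\in\CV$ for $k\ge n$ and the dual of \cref{prop:functor_+_property}~(2), or directly from the adjunction $\Sigma^n\dashv\Omega^n$: it gives $(\CH/[\CW])(-,\Omega^nY)\cong(\CH/[\CW])(\Sigma^n-,Y)=0$, so $\Omega^n=0$ by Yoneda.
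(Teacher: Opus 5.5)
Your proposal is correct and takes the same route as the paper's proof. The paper gets the right triangulated structure from \cref{prop:right_triangulated} and the left triangulated structure "in a similar way". It derives conditions (iv) and (v) of \cite[Definition~1.1]{BR07} from \cref{lem:PT} and its dual, and $n$-truncatedness from \cref{prop:functor_Sigma}. Your extra Yoneda argument for $\Omega^n=0$ is valid but not needed, since \cref{def:n-trun_pretr} treats $\Sigma^n=0$ and $\Omega^n=0$ as equivalent.
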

\begin{proof}
By Proposition~\ref{prop:right_triangulated},
$(\CH/[\CW],\Sigma,\vartriangleright)$ is a right triangulated category.
In a similar way, we can show that $(\CH/[\CW],\Omega,\vartriangleleft)$ is a left triangulated category.
The remaining conditions {\rm (iv)} and {\rm (v)} of \cite[Definition~1.1]{BR07} follow from Lemma~\ref{lem:PT} and its dual. 
The fact that it is $n$-truncated follows from \cref{prop:functor_Sigma}. \normalcolor
\end{proof}

\subsection{The heart is abelian \texorpdfstring{$n$}{n}-truncated}
\label{subsection:nA_heart}
In this subsection, 
\color{blue}
we prove our main theorem, \cref{thm:quasitri}, which states that the heart equipped with the pretriangulated structure and the extriangulated structure obtained in the preceding results is an abelian $n$-truncated category.  
\normalcolor

\begin{lemma}\label{lemma:construction_of_dell}
Let $X,Z\in\CH$ be any pair of objects, and let $\delta\in\BE_{\CN}(Z,X)$ be any element.
Let %
$X\xrightarrow{a_X}W^X\xrightarrow{b_X}X\langle 1\rangle\xrightarrow{c_X}X[1]$
be the distinguished triangle in $\CT$ with $W^X\in\CW$, as in Definition~\ref{def:functor_<1>}.
Then, there exists a unique morphism $\wp_{\delta}\in(\CH/[\CW])(H(Z),H(X\langle1\rangle))$ that makes the diagram
\[
\begin{tikzpicture}[>=stealth]
\node (2) at (2,0.8) {$H(Z)$};
\node (4) at (-0.3,-0.8) {$H(X\langle1\rangle)$};
\node (5) at (2,-0.8) {$H(X[1])$};
\draw[->] (2) -- node[above,font=\scriptsize] {$\wp_{\delta}$} (4);
\draw[->] (4) -- node[below,font=\scriptsize] {$H(c_X)$} (5);
\draw[->] (2) -- node[right,font=\scriptsize] {$H(\delta)$} (5);
\end{tikzpicture}
\]
commutative in $\CH/[\CW]$.
\end{lemma}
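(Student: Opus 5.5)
Uniqueness is immediate from \cref{lem:H(f)_monom}. The triangle $X\xrightarrow{a_X}W^X\xrightarrow{b_X}X\langle1\rangle\xrightarrow{c_X}X[1]$ rotates to a distinguished triangle $W^X\xrightarrow{b_X}X\langle1\rangle\xrightarrow{c_X}X[1]\xrightarrow{-a_X[1]}W^X[1]$. Since $W^X\in\CW\subset\CV$, \cref{lem:H(f)_monom} applies with $V=W^X$ and shows that $H(c_X)$ is a monomorphism in $\CH/[\CW]$. Hence at most one $\wp_\delta$ satisfies $H(c_X)\circ\wp_\delta=H(\delta)$, and the whole task is existence.

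For existence I will lift $\delta$ through $c_X$ in $\CT$, following the model of \cref{lem:for_right_triangle_replace}(1). Complete $\delta$ to a distinguished triangle $X\xrightarrow{f}Y\xrightarrow{g}Z\xrightarrow{\delta}X[1]$. If $a_X$ extends along $f$, i.e.\ $a_X=a\circ f$ for some $a\in\CT(Y,W^X)$, then (TR3) produces $v\in\CT(Z,X\langle1\rangle)$ with $c_X\circ v=\delta$, and $\wp_\delta:=H(v)$ works. The extension exists if and only if $a_X\circ\delta[-1]=0$, where $\delta[-1]\colon Z[-1]\to X$ (up to the fixed unit/counit isomorphisms). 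This vanishing is where $\delta\in\BE_\CN$ is used, which I expect to be the main step.

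To prove the vanishing, use $\delta\in\BE^R_\CN(Z,X)$ with $y=a_X\colon X\to W^X$ and $W^X\in\CW\subset\CN$. This gives $a_X\circ\delta[-1]\in[\CN[-1]]$, so $a_X\circ\delta[-1]$ factors as $Z[-1]\to N[-1]\to W^X$ with $N\in\CN$. Replacing $N$ by $N\oplus K\in\CU\ast\CV$ as in the proof of \cref{prop:fully_faithful}, we may assume $N\in\CU\ast\CV$. Since $\CT(\CU[-1],W^X)=0$, the map $N[-1]\to W^X$ factors through $N[-1]\to V[-1]$ with $V\in\CV$, so $a_X\circ\delta[-1]$ factors through $Z[-1]\to V[-1]$.

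It remains to kill maps $Z[-1]\to V[-1]$, equivalently $Z\to V$, that arise this way. This uses $Z\in\CH\subset\CT^-=\CU[-n]\ast\CW_{-n+1}^0$ together with $\CT(\CU[-k],\CV)=0$ for $1\le k\le n$. Consequently any map $Z\to V$ factors through $\CW$, via the $\CW$-part of the filtration. Alternatively, use the symmetric condition $\delta\in\BE^L_\CN$ with the filtration of $Z$ to see that the relevant composite vanishes.

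If this direct vanishing is too delicate, there is a fallback. First reduce, via \cref{prop:coreflection_tri1} and the isomorphisms $(l_X)^\ast$ and $(r_X)_\ast$ on $\BE_\CN$, to the case where $\delta$ factors through $\CU$-objects as in \cref{lem:sN-surj}. Then perform the lift there and transport it back using the isomorphisms $H(l)$ and $H(r)$. Either way, once $a$ exists, setting $\wp_\delta=H(v)$ finishes the proof, with uniqueness supplied by the monomorphism $H(c_X)$.
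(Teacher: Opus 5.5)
Your proof is correct, but for existence it takes a different route from the paper; uniqueness is handled the same way, via \cref{lem:H(f)_monom} applied to the rotated triangle with $W^X\in\CV$.

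\textbf{The paper's route.} The paper never shows that $\delta$ itself lifts through $c_X$. It only uses $a_X[1]\circ\delta\in[\CN]$, which is the $\BE^R_\CN$-condition tested on $a_X$. It then takes the cocone $W^X\to D_Z\xrightarrow{r}Z$ of $-a_X[1]\circ\delta$ and observes that $r\in\CR$, so $H(r)$ is invertible by \cref{prop:fully_faithful}. By (TR3) it lifts $\delta\circ r$ to some $d\colon D_Z\to X\langle1\rangle$ and sets $\wp_\delta=H(d)\circ H(r)^{-1}$. This construction does not use $Z\in\CT^-$, but it depends on the full strength of \cref{prop:fully_faithful}.

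\textbf{Your route.} You use $Z\in\CT^-=\CU[-n]\ast\CW_{-n+1}^0$ together with $\CT(\CU[-k],\CV)=0$ to upgrade $a_X[1]\circ\delta\in[\CN]$ to $a_X[1]\circ\delta=0$. Then $\delta=c_X\circ v$ for an honest morphism $v$ in $\CT$, and $\wp_\delta=H(v)$. This is more elementary. It also gives the stronger statement that the paper only proves later, in \cref{lem:partial_description} via \cref{lem:char_for_Sn-deflation}, and needs for \cref{lem:the_equation}.

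\textbf{Points to tidy.} You write that the maps $Z[-1]\to V[-1]$ are ``killed''; they need not vanish. What your argument actually shows is that $Z\to V$ factors through some $W_0\in\CW$. The composite $W_0[-1]\to V[-1]\to W^X$ then vanishes because $\CT(W_0,W^X[1])=0$, and you should state this step explicitly. Once $a_X[1]\circ\delta=0$ is known, you can lift $\delta$ through $c_X$ directly from the rotated triangle $X\langle1\rangle\xrightarrow{c_X}X[1]\xrightarrow{-a_X[1]}W^X[1]$, without introducing $Y$ and (TR3). Your fallback paragraph is not needed.
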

\begin{proof}
Complete the morphism $-a_X[1]\circ\delta\in\CT(Z,W^X[1])$ into a distinguished triangle
\begin{equation}\label{triangle_WDZ}
W^X\to D_Z\xrightarrow{r}Z\xrightarrow{-a_X[1]\circ\delta}W^X[1]
\end{equation}
in $\CT$. Since $\delta\in\BE_{\CN}(Z,X)$, we have $-a_X[1]\circ\delta\in [\CN]$, and hence $r\in\CR$. By Proposition~\ref{prop:fully_faithful}, it follows that $H(r)$ is an isomorphism in $\CH/[\CW]$.

Since $(\ref{triangle_<1>})$ and $(\ref{triangle_WDZ})$ are distinguished triangles, \darkblueedit{there exists} $d\in\CT(D_Z,X\langle1\rangle)$ such that
\[
\begin{tikzcd}[row sep=0.7cm, column sep=0.7cm]
D_Z\arrow[r, "r"]
  \arrow[d, swap, "d"] %
&Z\arrow[d, "\delta"]
\\
X\langle1\rangle \arrow[r, swap, "c_X"]
&X[1]
\end{tikzcd}
\]
is commutative in $\CT$. If we put $\wp_{\delta}=H(d)\circ H(r)^{-1}$, then it satisfies $H(c_X)\circ \wp_{\delta}=H(\delta)$ in $\CH/[\CW]$.
Such $\wp_{\delta}$ is unique by Lemma~\ref{lem:H(f)_monom}. 
\end{proof}

\begin{definition}\label{def:construction_of_dell}
For any $X,Z\in\CH$ and any $\delta\in\BF(Z,X)$, define $\partial^+_{Z,X}(\delta)\in(\CH/[\CW])(Z,\Sigma X)$ 
to be the unique morphism that makes
\color{blue}
\[
\begin{tikzpicture}[>=stealth]
\node (0) at (-2.2,0.8) {$Z$};
\node (1) at (-0.1,0.8) {$Z_+$};
\node (2) at (2.2,0.8) {$H(Z)$};
\node (3) at (-2.2,-0.8) {$\Sigma X$};
\node (4) at (-0.1,-0.8) {$H(X\langle1\rangle)$};
\node (5) at (2.2,-0.8) {$H(X[1])$};
\draw[->] (0) -- node[above,font=\scriptsize] {$\overline{l_Z}$}node[below,font=\scriptsize] {$\cong$}  (1);
\draw[->] (2) -- node[above,font=\scriptsize] {$\overline{r_{Z_+}}$} node[below,font=\scriptsize] {$\cong$} (1);
\draw[->] (0) -- node[left,font=\scriptsize] {$\partial^+_{Z,X}(\delta)$} (3);
\draw[->] (2) -- node[right,font=\scriptsize] {$\wp_{\delta}$} (4);
\draw[->] (4) -- node[below,font=\scriptsize] {$\overline{r_{\Sigma X}}$} node[above,font=\scriptsize] {$\cong$} (3);
\draw[->] (4) -- node[below,font=\scriptsize] {$H(c_X)$} (5);
\draw[->] (2) -- node[right,font=\scriptsize] {$H(\delta)$} (5);
\end{tikzpicture}
\]
\normalcolor
commutative in $\CH/[\CW]$, where $\wp_{\delta}$ is the morphism obtained in Lemma~\ref{lemma:construction_of_dell}.
This assignment defines a map
$
\partial^+_{Z,X}\colon\BF(Z,X)\to(\CH/[\CW])(Z,\Sigma X).
$
\end{definition}

\begin{lemma}\label{lem:char_for_Sn-deflation}
Let $X\xto{f}Y\xto{g}Z\xto{h}X[1]$ be any distinguished triangle in $\CT$.
\begin{enumerate}[label=\textup{(\arabic*)}]
\item 
Assume that $Z\in\CT^-$. Then $g$ is an $\fs_\CN^R$-deflation in $\CT_\CN$ if and only if $h\in[\CU_{-n+1}^0]$.
\item 
Assume that $X\in\CT^+$. Then $f$ is an $\fs_\CN^L$-inflation in $\CT_\CN$ if and only if $h[-1]\in[\CV_0^{n-1}]$.
\end{enumerate}
\darkblueedit{In particular, for any $X\in\CT^+$ and $Z\in\CT^-$, we have an equality}
\[
\BE_\CN(Z,X) 
    =\Set{ h\in\BE(Z,X) | h\in[\CU_{-n+1}^0]\cap [\CV_1^{n}] }.
\]
\end{lemma}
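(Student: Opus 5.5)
We prove {\rm (1)}; {\rm (2)} is dual. By the definition of the relative structure $\fs_\CN^R$ (\cite[Proposition~2.1]{Oga24}), $g$ is an $\fs_\CN^R$-deflation if and only if $h\in\BE_\CN^R(Z,X)$. So the task is to compare membership in $\BE^R_\CN(Z,X)$ with $h\in[\CU_{-n+1}^0]$. Our main tool is the cotorsion pair of Proposition~\ref{prop:cotors_1-n} with $k=n-1$, shifted by $[-n+1]$. It says that $(\CU_{-n+1}^0,\CV)$ is a cotorsion pair, so
\[
\CT=\CU_{-n+1}^0\ast\CV[1]\qquad\text{and}\qquad\CU_{-n+1}^0={}^{\perp}(\CV[1]).
\]

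($\Leftarrow$) Suppose $h$ factors through some $L\in\CU_{-n+1}^0$. Let $y\colon X\to N$ be any morphism with $N\in\CN$. Then $y\circ h[-1]$ factors through $L[-1]\in\CU_{-n}^{-1}$. So it suffices to show that every morphism $L[-1]\to N$ lies in $[\CN[-1]]$.

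Enlarging $N$ by a summand, we may assume $N\in\CU\ast\CV$, with a triangle $U\to N\to V\to U[1]$. Since $\CT(\CU_{-n}^{-1},\CV)=0$ (this uses $\BE^k(\CU,\CV)=0$ for $1\le k\le n$), the morphism $L[-1]\to N$ factors through $U$. Next, using $\CU\subset\CU[-1]\ast\CW$, take a triangle $U'[-1]\to U\to W\to U'$. We have $\CT(\CU_{-n}^{-1},\CW)=0$, so the map $L[-1]\to U$ factors through $U'[-1]\in\CN[-1]$. This finishes this direction.

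($\Rightarrow$) Suppose $h\in\BE^R_\CN(Z,X)$. From $X[1]\in\CU_{-n+1}^0\ast\CV[1]$, shifting back, we get a triangle $L[-1]\to X\xrightarrow{v}V\to L$ with $L\in\CU_{-n+1}^0$ and $V\in\CV$. Then $h$ factors through $L\to X[1]$ as soon as $v[1]\circ h=0$, so this is what we must show.

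Since $V\in\CN$, the hypothesis gives that $v\circ h[-1]$ factors through some $N'[-1]$ with $N'\in\CN$. Hence $v[1]\circ h$ factors as $Z\to N'\to V[1]$. Again assume $N'\in\CU\ast\CV$, with a triangle $U''\to N'\to V''\to U''[1]$. Because $\CT(\CU,\CV[1])=0$, the map $N'\to V[1]$ factors through $V''$, so $v[1]\circ h$ factors as $Z\xrightarrow{\phi}V''\xrightarrow{\psi}V[1]$.

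Now we use $Z\in\CT^-=\CU_{-n}^{-1}\ast\CU$, with a triangle $A\to Z\to U_0\to A[1]$ where $A\in\CU_{-n}^{-1}$ and $U_0\in\CU$. Since $\CT(A,V'')=0$, the map $\phi$ factors through $Z\to U_0$. Therefore $v[1]\circ h$ factors through a composite $U_0\to V''\to V[1]$. This composite lies in $\CT(U_0,V[1])=\BE^1(U_0,V)=0$, so $v[1]\circ h=0$, as required.

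For the final equality, recall that $\BE_\CN=\BE^R_\CN\cap\BE^L_\CN$. Apply {\rm (1)}, which uses $Z\in\CT^-$, and the dual statement {\rm (2)}, which uses $X\in\CT^+$. Note that $h[-1]\in[\CV_0^{n-1}]$ is equivalent to $h\in[\CV_1^n]$. This gives
\[
\BE_\CN(Z,X)=\Set{ h\in\BE(Z,X) | h\in[\CU_{-n+1}^0]\cap[\CV_1^{n}] }.
\]

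The step I expect to be the main obstacle is the ($\Rightarrow$) direction. The morphism $V''\to V[1]$ need not vanish on its own, so the vanishing of $v[1]\circ h$ only comes out after also using the hypothesis $Z\in\CT^-$.
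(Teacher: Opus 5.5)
Your proof is correct and follows the paper's argument. The ($\Leftarrow$) direction is the same: you factor through $\CU$, and then through $\CU[-1]$ using $\CU\subset\CU[-1]\ast\CW$. In ($\Rightarrow$) you use the same approximation triangle $L\to X[1]\to V[1]\to L[1]$ and show $v[1]\circ h=0$ by reproving inline the vanishing $\BE^R_\CN(Z,V)=0$ for $Z\in\CT^-$ and $V\in\CV$, which the paper instead obtains by citing the dual of Lemma~\ref{lem:merged}~(1) together with the subfunctor property of $\BE^R_\CN$.
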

\begin{proof}
Since (2) can be checked in a dual manner, we only show (1).

Suppose that $g$ is an $\fs_\CN^R$-deflation in $\CT_\CN$.
By the definition of an $n$-cotorsion pair, there exists a distinguished triangle $\wtil{U}\xto{}X[1]\xto{a}V[1]\xto{}\wtil{U}[1]$ with $\wtil{U}\in\CU_{-n+1}^0$ and $V\in\CV$.
By the dual of \cref{lem:merged} {\rm (1)}, we have $\BE^R_{\CN}(Z,V)=0$. Thus 
$a\circ h=0$, and hence 
\normalcolor
$h$ factors through $\wtil{U}\in\CU_{-n+1}^0$.

Conversely, suppose that $h\in[\CU_{-n+1}^0]$, namely, we have a factorization $h[-1]\colon Z[-1]\xto{a}\wtil{U}[-1]\xto{b}X$ with $\wtil{U}\in\CU_{-n+1}^0$.
We consider a morphism $X\xto{x}N$ to $N\in\add(\CU*\CV)$.
Then, by $\CT(\wtil{U}[-1],\CV)=0$, the composite $x\circ b$ factors through an object $U\in\CU$.
Since we see $U\in\CU[-1]*\CW$ by \cref{lem:inclusions_for_n-CT}, the condition $\CT(\wtil{U}[-1],\CW)=0$ shows that $x\circ b$ also factors through $\CU[-1]$.
This shows that $g$ is an $\fs_\CN^R$-deflation.
\end{proof}

\begin{proposition}\label{prop:construction_of_dell}
The following holds.
\begin{enumerate}
\item $\partial^+=\{\partial^+_{Z,X}\}_{Z,X\in\CH}$
forms a natural transformation $\partial^+\colon \BF\Rightarrow (\CH/[\CW])(-,\Sigma(-))$ between functors $(\CH/[\CW])^\op\times(\CH/[\CW])\to\Ab$.
\item For any $\ft$-triangle $X\xrightarrow{\mathbf{f}}Y\xrightarrow{\mathbf{g}}Z\overset{\delta}{\dashrightarrow}$ in $(\CH/[\CW],\BF,\ft)$, the sequence
\begin{equation}\label{sequence_to_belong_rtri}
X\xrightarrow{\mathbf{f}}Y\xrightarrow{\mathbf{g}}Z\xrightarrow{\partial^+_{Z,X}(\delta)}\Sigma X
\end{equation}
belongs to $\vartriangleright$.
\end{enumerate}

In a dual manner, we can define a natural transformation $\partial^-\colon \BF\Rightarrow (\CH/[\CW])(\Omega(-),-)$ such that the sequence
$\Omega Z\xrightarrow{\partial^-_{Z,X}(\delta)} X\xrightarrow{\mathbf{f}}Y\xrightarrow{\mathbf{g}}Z$
belongs to $\vartriangleleft$ for any $\ft$-triangle $X\xrightarrow{\mathbf{f}}Y\xrightarrow{\mathbf{g}}Z\overset{\delta}{\dashrightarrow}$ in $(\CH/[\CW],\BF,\ft)$.
Thus, the extriangulated structure $(\BF,\ft)$ 
\color{blue}
satisfies condition {\rm (a)} in \cref{def:compatible_ET_PT}.
\normalcolor
\end{proposition}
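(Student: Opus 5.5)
Since $\CW$ consists of projective-injective objects of $(\CH,\BE_\CN|_\CH,\fs_\CN|_\CH)$ by \cref{cor:proj-inj_in_H}, we have $\BF(Z,X)=\BE_\CN(Z,X)$ for $X,Z\in\CH$. The first step is to check that $\partial^+_{Z,X}$ is well defined and additive, and that it is compatible with the passage to $\CH/[\CW]$.

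\emph{Additivity.} The assignment $\delta\mapsto\wp_\delta$ of \cref{lemma:construction_of_dell} is additive. Indeed, $\wp_\delta+\wp_{\delta'}$ satisfies the defining equation for $\delta+\delta'$, and uniqueness follows because $H(c_X)$ is a monomorphism (\cref{lem:H(f)_monom}). Composing with the fixed isomorphisms $\overline{l_Z}$, $\overline{r_{Z_+}}$ and $\overline{r_{\Sigma X}}$ then shows that $\partial^+_{Z,X}$ is a group homomorphism.

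\emph{Compatibility with $[\CW]$.} If $x\in\CH(X,X')$ factors through some $W\in\CW$, then $x_\ast\delta$ factors through $\BE_\CN(Z,W)=0$ by injectivity of $W$. So the action of $\overline{x}$ on $\BF$ is well defined, and the same holds on the contravariant side by projectivity.

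\emph{Naturality in $X$.} Take $x\in\CH(X,X')$ and choose $y_x$ with $c_{X'}\circ y_x=x[1]\circ c_X$, as in \cref{rem:c_is_natural}. Then
\[
H(c_{X'})\circ H(y_x)\circ\wp_\delta=H(x[1])\circ H(\delta)=H(c_{X'})\circ\wp_{x_\ast\delta}.
\]
Since $H(c_{X'})$ is monic, this gives $\wp_{x_\ast\delta}=H(y_x)\circ\wp_\delta$. Translated through the isomorphisms $\overline{r_{\Sigma X}}$, whose naturality comes from the adjunction property of $(\ )_-$, this is the equality $\partial^+(x_\ast\delta)=\Sigma\overline{x}\circ\partial^+(\delta)$.

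\emph{Naturality in $Z$.} For $z\in\CH(Z',Z)$ we have $H(\delta\circ z)=H(\delta)\circ H(z)$, so uniqueness gives $\wp_{z^\ast\delta}=\wp_\delta\circ H(z)$. Naturality of $\overline{l}$ and $\overline{r}$ then yields the corresponding equality for $\partial^+$.

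For (2), I will lift the $\ft$-triangle to an $\fs_\CN$-triangle in $\CH$. By the construction in \cite[Prop.~3.30]{NP19}, after replacing by an isomorphic $\ft$-triangle, it comes from a distinguished triangle $X\xrightarrow{f}Y\xrightarrow{g}Z\xrightarrow{\delta}X[1]$ with $X,Y,Z\in\CH$ and $\delta\in\BE_\CN(Z,X)$, where $\mathbf{f}=\overline f$ and $\mathbf{g}=\overline g$.

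\emph{Checking the hypotheses of \cref{lem:right_triangle_replace}.} Since $a_X[1]\circ\delta\in\BE_\CN(Z,W^X)=0$, the morphism $a_X$ extends along $f$ to some $a\in\CT(Y,W^X)$ with $a\circ f=a_X$. So \cref{lem:right_triangle_replace} applies and shows that
\[
X_+\xrightarrow{\overline f_+}Y_+\xrightarrow{\overline g_+}Z_+\xrightarrow{\tau_X\circ\overline v_+}\Sigma(X_+)
\]
belongs to $\vartriangleright$, where $c_X\circ v=\delta$. By the uniqueness in \cref{lemma:construction_of_dell}, $H(v)$ corresponds to $\wp_\delta$.

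\emph{Comparison with (\ref{sequence_to_belong_rtri}).} Since $X,Y,Z\in\CH$, the morphisms $\overline{l_X},\overline{l_Y},\overline{l_Z}$ are isomorphisms. Also $\tau_X=\Sigma(\overline{l_X})$, as in the proof of \cref{lem:RTR2}. These maps assemble into a morphism of right triangles from (\ref{sequence_to_belong_rtri}) to the triangle above, provided its last square commutes, namely
\[
\Sigma(\overline{l_X})\circ\partial^+_{Z,X}(\delta)=\tau_X\circ\overline v_+\circ\overline{l_Z}.
\]
Since $\vartriangleright$ is closed under isomorphism, this gives (2).

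\emph{Main obstacle.} The step I expect to be hardest is verifying this last square. It requires unwinding the identifications $\Sigma X=(X\langle1\rangle)_+$, $H(X\langle1\rangle)\cong((X\langle1\rangle)_+)_-$ and $H(Z)\cong Z_+$ used in \cref{def:construction_of_dell}. My plan is to reduce it to the statement that $\overline{v}_+$ and $\wp_\delta$ agree under the natural isomorphism $H|_{\CT^-}\cong(\ )_+$ of \cref{lem:self-dual_H} and \cref{rem:essentially_surjective_H}. That agreement should follow once more from the monicity of $H(c_X)$.

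The dual statement, with $\partial^-$ and $\vartriangleleft$, follows by the dual argument.
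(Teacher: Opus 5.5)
Your proposal is correct and follows essentially the paper's proof. Naturality comes from monicity of $H(c_{X'})$, which gives $\wp_{x_\ast z^\ast\delta}=H(y_x)\circ\wp_\delta\circ H(z)$. Part (2) goes by lifting to an $\fs_\CN$-triangle, extending $a_X$ along $f$, and reducing to the identity $\partial^+_{Z,X}(\delta)=\overline{v}_+\circ\overline{l_Z}$; the paper obtains this exactly as you plan, from $\wp_\delta=H(v)=(\overline{v}_+)_-$ and naturality of $\overline{r}$.

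The differences are minor. The paper applies \cref{claim:if_XYinH} directly, since $X,Y\in\CH$, which avoids your extra comparison through $\overline{l_X},\overline{l_Y}$ and $\tau_X=\Sigma(\overline{l_X})$. It also justifies $a_X\circ\delta[-1]=0$ via \cref{lem:char_for_Sn-deflation} rather than via injectivity of $W^X$. Both routes are valid.
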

\begin{proof}
{\rm (1)} Let $\delta\in\BF(Z,X)$ be any element, with $X,Z\in\CH$. Let $x\in\CH(X,X')$ and $z\in\CH(Z',Z)$ be any pair of morphisms.
We remark that $\overline{x}\langle1\rangle\in\darkblueedit{(\CT/[\CW])}(X\langle1\rangle,X'\langle1\rangle)$ is given by $\overline{x}\langle1\rangle=\overline{x'}$, where  $x'\in\CT(X\langle1\rangle,X'\langle1\rangle)$ is a morphism such that $c_{X'}\circ x'=x[1]\circ c_X$ in $\CT$.
Then, we have a commutative diagram
\[
\begin{tikzpicture}[>=stealth]
\node (1) at (3.6,0.8) {$H(Z')$};
\node (2) at (1.4,0.8) {$H(Z)$};
\node (3) at (-1.4,-0.8) {$H(X\langle1\rangle)$};
\node (4) at (1.4,-0.8) {$H(X[1])$};
\node (5) at (-1.4,-2.4) {$H(X'\langle1\rangle)$};
\node (6) at (1.4,-2.4) {$H(X'[1])$};
\draw[->] (1) -- node[above,font=\scriptsize] {$H(z)$} (2);
\draw[->] (2) -- node[above,font=\scriptsize] {$\wp_{\delta}$} (3);
\draw[->] (2) -- node[left,font=\scriptsize] {$H(\delta)$} (4);
\draw[->] (3) -- node[below,font=\scriptsize] {$H(c_X)$} (4);
\draw[->] (3) -- node[left,font=\scriptsize] {$H(x')$} (5);
\draw[->] (4) -- node[left,font=\scriptsize] {$H(x[1])$} (6);
\draw[->] (5) -- node[below,font=\scriptsize] {$H(c_{X'})$} (6);
\draw[->] (1) to[bend left=10]
  node[right,font=\scriptsize] {$H(x[1]\circ\delta\circ z)=H(x_{\ast}z^{\ast}\delta)$}
  (6);
\end{tikzpicture}
\]
in $\CH/[\CW]$. Since $H(c_{X'})$ is monomorphic by Lemma~\ref{lem:H(f)_monom}, this induces
$\wp_{x_{\ast}z^{\ast}\delta}=H(x')\circ\wp_{\delta}\circ H(z)$.
This implies
\[
\partial^+_{Z',X'}(x_{\ast}z^{\ast}\delta)=(\Sigma\overline{x})\circ\partial^+_{Z,X}(\delta)\circ\overline{z},
\]
which shows the naturality of $\partial^+$.

{\rm (2)} By the definition of $(\BF,\ft)$, we may assume that the $\ft$-triangle appearing in the assumption is of the form
\[
X\xrightarrow{\overline{f}}Y\xrightarrow{\overline{g}}Z\overset{\delta}{\dashrightarrow},
\]
where $X\xrightarrow{f}Y\xrightarrow{g}Z\xrightarrow{\delta}X[1]$ is a distinguished triangle in $\CT$ with $\delta\in\BE_{\CN}(Z,X)$.

\color{blue}
Let $X\xrightarrow{a_X}W^X\xrightarrow{b_X}X\langle 1\rangle\xrightarrow{c_X}X[1]$ be the distinguished
triangle given in \cref{def:functor_<1>}, which satisfies $W^X\in\CW$.
By \cref{lem:char_for_Sn-deflation}, we have $\delta\in[\CU_{-n+1}^0]\cap[\CV_1^n]$. This implies $a_X\circ (\delta[-1])=0$, hence there exists $a\in\CT(Y,W^X)$ such that $a\circ f=a_X$.
\normalcolor
Applying Claim~\ref{claim:if_XYinH} to this $\fs$-triangle, we see that
\begin{equation}\label{rtri_from_claim}
X\xrightarrow{\overline{f}}Y\xrightarrow{\overline{l_Z\circ g}}Z_+\xrightarrow{\overline{v}_+}\Sigma X
\end{equation}
belongs to $\vartriangleright$, where
$\overline{v}_+\in(\CH/[\CW])(Z_+,X\langle1\rangle_+)$ is the unique morphism satisfying $H(c_X)\circ H(v)=H(\delta)$ in $\CH/[\CW]$. 
Note that we have $\wp_{\delta}=H(v)$.
Since $Z\in\CH$, it follows that $\overline{l_Z}\in(\CH/[\CW])(Z,Z_+)$ is an isomorphism. \darkblueedit{Moreover, since the right square and the outside of}
\color{blue}
\[
\begin{tikzpicture}[>=stealth]
\node (0) at (-2.2,0.8) {$Z$};
\node (1) at (-0.1,0.8) {$Z_+$};
\node (2) at (2.2,0.8) {$H(Z)$};
\node (3) at (-2.2,-0.8) {$\Sigma X$};
\node (4) at (-0.1,-0.8) {$H(X\langle1\rangle)$};
\draw[->] (0) -- node[above,font=\scriptsize] {$\overline{l_Z}$}node[below,font=\scriptsize] {$\cong$}  (1);
\draw[->] (2) -- node[above,font=\scriptsize] {$\overline{r_{Z_+}}$} node[below,font=\scriptsize] {$\cong$} (1);
\draw[->] (0) -- node[left,font=\scriptsize] {$\partial^+_{Z,X}(\delta)$} (3);
\draw[->] (1) -- node[left,font=\scriptsize] {$\overline{v}_+$} (3);
\draw[->] (2) -- node[right,font=\scriptsize] {$(\overline{v}_+)_-=H(v)=\wp_{\delta}$} (4);
\draw[->] (4) -- node[below,font=\scriptsize] {$\overline{r_{\Sigma X}}$} node[above,font=\scriptsize] {$\cong$} (3);
\end{tikzpicture}
\]
\normalcolor
are commutative in $\CH/[\CW]$, we also have $\partial^+_{Z,X}(\delta)=\overline{v}_+\circ\overline{l_Z}$. %
Thus $(\ref{sequence_to_belong_rtri})$ is isomorphic to $(\ref{rtri_from_claim})$, and hence also 
\color{blue}
belongs to 
\normalcolor
$\vartriangleright$.   
\end{proof}

The following gives a characterization of $j$-epimorphisms in $\CH/[\CW]$ for $1\le j\le n$.
\begin{proposition}\label{prop:equivalence_Sigma_epi}
Let $j$ be any integer with $1\le j\le n$. For any $f\in\CH(X,Y)$, the following are equivalent. Here, $M_f\in\CT^-$ is the object given in Definition~\ref{def:standard_right_triangle}. 
\begin{enumerate}
\item $\ovl{f}$ is a $j$-epimorphism.
\item $\Sigma^{j-1}((M_f)_+)\cong 0$ in $\CH/[\CW]$.
\item $M_f\langle j-1\rangle\in\CU$.
\item $M_f\in\CU_{-j+1}^0$.
\end{enumerate}

Moreover, when $j=n$, these conditions are also equivalent to the following.
\begin{enumerate}\setcounter{enumi}{4}
\item $g\in [\CU_{-n+1}^0]$ holds for a distinguished triangle $X\xrightarrow{f}Y\xrightarrow{g}Z\xrightarrow{h} X[1]$.
\end{enumerate}
\end{proposition}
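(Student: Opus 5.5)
We prove the cycle $(1)\Leftrightarrow(2)$, $(2)\Leftrightarrow(3)\Leftrightarrow(4)$, and then handle $(5)$ separately for $j=n$.

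For $(1)\Leftrightarrow(2)$, we use the standard right triangle $X\xrightarrow{\overline f}Y\to (M_f)_+\xrightarrow{\overline{(h_f)}_+}\Sigma X$ of Definition~\ref{def:standard_right_triangle}. Rotating it by Lemma~\ref{lem:RTR2} gives a right triangle $Y\to (M_f)_+\to\Sigma X\xrightarrow{-\Sigma\overline f}\Sigma Y$ in which $\overline f$ plays the role of the "kernel-type" map. The dual of Lemma~\ref{lem:chara_n-mono_ker}, applied to the rotated triangle $X\xrightarrow{\overline f}Y\to(M_f)_+\to\Sigma X$, says that $\overline f$ is a $j$-epimorphism if and only if $\Sigma^{j-1}((M_f)_+)=0$. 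This is exactly the dual of the argument used in Lemma~\ref{lem:defl_are_n-epi}, applied to the third term, and is routine.

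For $(2)\Leftrightarrow(3)$, Proposition~\ref{prop:functor_Sigma}(1) together with Lemma~\ref{lem:functor_Sigma} (applied iteratively) gives
\[\Sigma^{j-1}((M_f)_+)\cong (M_f\langle j-1\rangle)_+,\]
since $M_f\in\CT^-$. By Proposition~\ref{prop:functor_+_property}(2), we have $(M_f\langle j-1\rangle)_+\cong0$ in $\CH/[\CW]$ if and only if $M_f\langle j-1\rangle\in\CU$. One point needs care: $(\ )_+$ is only defined up to $[\CW]$, and "$\cong0$" means $\in\CW$. Proposition~\ref{prop:functor_+_property}(2) already handles this, using that $\CW$ is closed under summands in the quotient.

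For $(3)\Leftrightarrow(4)$, we use the triangles $Z\to W^Z\to Z\langle1\rangle\to Z[1]$. If $M_f\in\CU_{-j+1}^0$, then by the minimality rule in Definition~\ref{def:functor_<1>} each application of $\langle1\rangle$ lowers the range by one, which gives $(3)$; this is Proposition~\ref{prop:functor_<1>}(1) localized at level $j$. Conversely, suppose $Z\langle1\rangle\in\CU_{-k}^0$. Then $Z\in\CU^{-1}_{-k-1}\ast\CW\subset\CU^0_{-k-1}$ via the rotated triangle $Z\langle1\rangle[-1]\to Z\to W^Z$ and inclusion~$(\ref{incl_to_be_iterated})$. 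Inducting $j-1$ times yields $M_f\in\CU_{-j+1}^0$.

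For $(5)$ with $j=n$, we compare $M_f$ with $Z$. Up to the summand $W^X$, the triangle defining $M_f$ is the cone of $\begin{bmatrix}f\\a_X\end{bmatrix}$. An octahedron relates it to $Z$: there is a triangle $Z\to M_f\to W^X[1]$-type, or more precisely $W^X\to M_f\to$ something related to $Z$ modulo the map $a_X$. Assume $g\in[\CU^0_{-n+1}]$. Then $h[-1]$ kills the map $Z[-1]\to$ factor, and the argument of Lemma~\ref{lem:char_for_Sn-deflation}(1) shows $a_X\circ h[-1]=0$. Hence $a_X$ extends along $f$, and Claim~\ref{claim:if_XYinH} identifies $M_f\cong Z\oplus W^X$. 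It then remains to show $Z\in\CU^0_{-n+1}$ from $g\in[\CU^0_{-n+1}]$ together with $Z\in\CT^-$, for instance by splitting off via the cotorsion pair $(\CU_0^{n-1},\CV[n-1])$ of Proposition~\ref{prop:cotors_1-n} shifted. Conversely, if $M_f\in\CU^0_{-n+1}$, then $g$ factors through $g_f$ into $M_f$, which gives $(5)$.

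I expect the main obstacle to be $(5)\Rightarrow(4)$: the triangle in $(5)$ is not the one defining $M_f$, and $Z$ need not lie in $\CT^-$ a priori. Controlling the discrepancy between $Z$ and $M_f$ through $W^X$ and the $\CU$-approximation therefore needs the most care.
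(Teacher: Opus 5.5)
Your arguments for $(1)\Leftrightarrow(2)\Leftrightarrow(3)\Leftrightarrow(4)$ and for $(4)\Rightarrow(5)$ are fine and agree with the paper. In $(1)\Leftrightarrow(2)$ no rotation is needed: the dual of Lemma~\ref{lem:chara_n-mono_ker} applies directly to the standard right triangle, whose first map is $\overline{f}$. In $(4)\Rightarrow(5)$, $g=z\circ g_f$ comes from the morphism of triangles induced by $[1\ 0]\colon Y\oplus W^X\to Y$.

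The gap is in $(5)\Rightarrow(4)$. The step ``$g\in[\CU_{-n+1}^0]$ implies $a_X\circ h[-1]=0$'' is false. The argument you borrow from Lemma~\ref{lem:char_for_Sn-deflation} needs a factorization of $h$ through $\CU_{-n+1}^0$, whereas (5) only gives one for $g$.

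For a concrete failure, take $0\neq X\in\CW$, $Y=0$, $f=0$, with the admissible choice $W^X=X$, $a_X=\id_X$, $X\langle1\rangle=0$. Then $g=0$, so (5) holds, and $\overline f$ is trivially an $n$-epimorphism. Here $M_f=0$, while $Z\cong X[1]$ and $h$ is an isomorphism. Hence $a_X\circ h[-1]\neq 0$, $a_X$ does not extend along $f$, and $M_f\not\cong Z\oplus W^X$.

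Your reduced target ``$Z\in\CU_{-n+1}^0$'' also fails here. Indeed $\CT(X[1],X[1])\neq 0$ with $X\in\CV$, while $\CT(\CU_{-n+1}^0,\CV[1])=0$. The assumption $Z\in\CT^-$ that you invoke is, as you note yourself, not available either.

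In general the octahedron gives a triangle $W^X\to M_f\xrightarrow{z}Z\xrightarrow{\pm a_X[1]\circ h}W^X[1]$ with $z\circ g_f=g$ and $h\circ z=k_f$. This triangle need not split, so any strategy that splits off $W^X$ and transfers the question to $Z$ cannot work.

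What is missing is to test $M_f$ against $\CV[1]$ instead of comparing objects. By Proposition~\ref{prop:cotors_1-n} with $k=n-1$, shifted by $[-n+1]$, one has $\CU_{-n+1}^0=\{M\in\CT\mid \CT(M,\CV[1])=0\}$. Now let $v\colon M_f\to V[1]$ with $V\in\CV$:
\begin{enumerate}
\item Since $\CT(W^X,V[1])=0$, we get $v=v'\circ z$.
\item Since $g$ factors through $\CU_{-n+1}^0$, we get $v'\circ g=0$, so $v'=v''\circ h$.
\item Hence $v=v''\circ h\circ z=v''\circ k_f=v''\circ c_X\circ h_f$.
\item Finally $v''\circ c_X\in\CT(X\langle1\rangle,V[1])=0$, because $X\langle1\rangle\in\CU_{-n+1}^0$.
\end{enumerate}
Thus $v=0$ and $M_f\in\CU_{-n+1}^0$. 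The class $a_X[1]\circ h$ never has to vanish: the leftover factor through $h$ is killed by $c_X$ only after it is pulled back to $M_f$.
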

\begin{proof}
$(1)\Leftrightarrow(2)$ This is nothing but the dual of \cref{lem:chara_n-mono_ker}.

$(2)\Leftrightarrow(3)$ 
By Proposition~\ref{prop:functor_Sigma}, we have $\Sigma^{j-1}((M_f)_+)\cong (M_f\langle j-1\rangle)_+$ in $\CH/[\CW]$.
By Proposition~\ref{prop:functor_+_property}, we have 
$(M_f\langle j-1\rangle)_+\cong 0$ in $\CH/[\CW]$ if and only if $M_f\langle j-1\rangle\in\CU$.

$(3)\Leftrightarrow(4)$  This is by Definition~\ref{def:functor_<1>}.

$(4)\Leftrightarrow(5)$ Assume $j=n$. Let $X\xrightarrow{f}Y\xrightarrow{g}Z\xrightarrow{h} X[1]$ be a distinguished triangle.
We continue to use the notation in Definition~\ref{def:standard_right_triangle}. By the octahedron axiom, we have a commutative diagram
\begin{equation}\label{octa_WM}
\begin{tikzcd}[row sep=0.9cm, column sep=0.7cm]
{}
&W^X\arrow{d}[swap]{\begin{bmatrix}0\\1\end{bmatrix}}\arrow[equal]{r}{}
&W^X \arrow{d}{}
\\
X \arrow{r}{\begin{bmatrix}f\\a_X\end{bmatrix}}\arrow[equal]{d}{}
&Y\oplus W^X \arrow{d}[swap]{{[}1\ 0{]}} \arrow{r}{{[}g_f\ w_f{]}}
&M_f \arrow{r}{k_f} \arrow{d}{z}
&X[1] \arrow[equal]{d}{}
\\
X\arrow{r}[swap]{f}
&Y \arrow{d}{0} \arrow{r}[swap]{g}
&Z\arrow{r}[swap]{h}\arrow{d}{}
&X[1]
\\
{}
&W^X[1] \arrow[equal]{r}
&W^X[1]
\end{tikzcd}
\end{equation}
in $\CT$, in which $W^X\to M_f\xrightarrow{z}Z\to W^X[1]$ is a distinguished triangle. Since $g=z\circ g_f$, we have $g\in[\CU_{-n+1}^0]$ whenever $M_f\in \CU_{-n+1}^0$.

Conversely, suppose that $g\in[\CU_{-n+1}^0]$ holds. It suffices to show $\CT(M_f,V[1])=0$ for any $V\in\CV$. 
Let $v\in\CT(M_f,V[1])$ be any morphism. 
In the above commutative diagram $(\ref{octa_WM})$, %
since $\CT(W^X,V[1])=0$, there exists $v'\in\CT(Z,V[1])$ such that $v=v'\circ z$.
By the assumption $g\in [\CU_{-n+1}^0]$, we have $v'\circ g=0$. Thus there exists $v^{\prime\prime}\in\CT(X[1],V[1])$ such that $v'=v^{\prime\prime}\circ h$.
Then we have
\[
v=v'\circ z=v^{\prime\prime}\circ h\circ z=v^{\prime\prime}\circ k_f=v^{\prime\prime}\circ c_X\circ h_f. 
\]
Since $X\langle1\rangle\in\CU_{-n+1}^0$, we have $\CT(X\langle1\rangle,V[1])=0$, hence $v^{\prime\prime}\circ c_X=0$. This shows $v=0$, as desired.
\end{proof}

\begin{corollary}\label{cor:implication_Sigma_epi}
Let $j$ be any integer with $1\le j\le n$. 
Let $X\xrightarrow{f}Y\xrightarrow{g}Z\xrightarrow{h} X[1]$ be a distinguished triangle in $\CT$ with $X,Y\in\CH$.
If $Z\in\CU_{-j+1}^0$, then $\overline{f}$ is a $j$-epimorphism.
\end{corollary}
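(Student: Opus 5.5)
The natural route is to reduce to Proposition~\ref{prop:equivalence_Sigma_epi}: show that $M_f\in\CU_{-j+1}^0$, where $M_f$ is the object of Definition~\ref{def:standard_right_triangle}. The implication (4)$\Rightarrow$(1) then gives that $\overline{f}$ is a $j$-epimorphism. The key tool is the octahedral diagram $(\ref{octa_WM})$ from the proof of that proposition. It yields a distinguished triangle
\[
W^X\to M_f\xrightarrow{z}Z\to W^X[1],
\]
so $M_f\in\CW\ast Z$.

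The steps are as follows.
\begin{itemize}
\item[(i)] Build the diagram $(\ref{octa_WM})$ for the given distinguished triangle $X\xrightarrow{f}Y\xrightarrow{g}Z\xrightarrow{h}X[1]$. This needs only $X\in\CT^-$, so that $W^X$ and $a_X$ exist; the triangle with first term $W^X$ and third term $Z$ then follows.
\item[(ii)] Note that $Z\in\CU_{-j+1}^0$ by hypothesis, and $\CW\subset\CU\subset\CU_{-j+1}^0$. Hence $M_f\in\CU\ast\CU_{-j+1}^0$.
\item[(iii)] Show that $\CU_{-j+1}^0$ is closed under extensions. Since $\CU_{-j+1}^0=\CU_0^{j-1}[-j+1]$ and $0\le j-1\le n-1$, this follows from Proposition~\ref{prop:cotors_1-n}: $\CU_0^{j-1}$ is the left half of a cotorsion pair, hence a left perp and so extension-closed. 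Alternatively, one can use the inclusion $(\ref{incl_to_be_iterated})$, $\CW[i]\ast\CU[k]\subset\CU[k]\ast\CW[i]$. This gives $\CW\ast\CU[-j+1]\ast\CW_{-j+2}^0\subset\CU[-j+1]\ast\CW\ast\CW_{-j+2}^0$ after combining with Lemma~\ref{lem:equation_T-T+}; one then absorbs $\CW$ into $\CU$.
\item[(iv)] Conclude that $M_f\in\CU_{-j+1}^0$ and apply Proposition~\ref{prop:equivalence_Sigma_epi}.
\end{itemize}

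The main point requiring care is the closure step (iii): one must check that the extension $\CW\ast\CU_{-j+1}^0$ lands back in $\CU_{-j+1}^0$. The cleanest way is the left-perp description from Proposition~\ref{prop:cotors_1-n}, shifted by $[-j+1]$. If $j=1$, it is just the extension-closedness of $\CU$. Everything else is a direct citation.
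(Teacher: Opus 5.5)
Your proposal is correct and matches the paper's proof. The paper likewise extracts the triangle $W^X\to M_f\to Z\to W^X[1]$ from the octahedral diagram in the proof of Proposition~\ref{prop:equivalence_Sigma_epi}, deduces $M_f\in\CW\ast\CU_{-j+1}^0=\CU_{-j+1}^0$, and applies the implication (4)$\Rightarrow$(1) there. The paper states that last equality without comment, and your justification via extension-closedness of $\CU_{-j+1}^0=\CU_0^{j-1}[-j+1]$ from Proposition~\ref{prop:cotors_1-n} fills it in correctly.
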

\begin{proof}
We use the notation in Definition~\ref{def:standard_right_triangle}.
In the diagram \eqref{octa_WM}, since
\[
W^X\to M_f\xrightarrow{z}Z\to W^X[1]
\]
is a distinguished triangle, $Z\in\CU_{-j+1}^0$ implies $M_f\in\CW\ast\CU_{-j+1}^0=\CU_{-j+1}^0$. %
Thus \cref{prop:equivalence_Sigma_epi} shows that $\overline{f}$ is a $j$-epimorphism. 
\end{proof}

As a consequence of \cref{prop:equivalence_Sigma_epi}, we show that $\CH/[\CW]$ admits a suitable factorization in \cref{prop:factorization}.

\normalcolor

\begin{proposition}\label{prop:factorization}
Let $j$ be any integer with $1\le j\le n$.
For any morphism $\mathbf{f}\in(\CH/[\CW])(X,Y)$, there exist an \darkblueedit{$(n-j+1)$-monomorphism} $\mathbf{f}_2$ and a $j$-epimorphism $\mathbf{f}_1$ such that $\mathbf{f}=\mathbf{f}_2\circ \mathbf{f}_1$.
\end{proposition}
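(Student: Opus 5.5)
The plan is to build the intermediate object directly in $\CT$ by splitting the cone of a representative of $\mathbf{f}$ with a suitable shifted cotorsion pair. Then \cref{cor:implication_Sigma_epi} and its dual should give the two required properties.

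First I choose $f\in\CH(X,Y)$ with $\mathbf{f}=\overline{f}$ and complete it to a distinguished triangle $X\xrightarrow{f}Y\xrightarrow{g}C\xrightarrow{h}X[1]$. Next I apply \cref{prop:cotors_1-n} with $k=j-1$. It says $(\CU_0^{j-1},\CV_{j-1}^{n-1})$ is a cotorsion pair, so $\CT=\CU_0^{j-1}\ast\CV_j^{n}$. Shifting by $[-(j-1)]$ gives $\CT=\CU_{-j+1}^0\ast\CV_1^{n-j+1}$. Hence there is a distinguished triangle $L\to C\xrightarrow{p}R\to L[1]$ with $L\in\CU_{-j+1}^0$ and $R\in\CV_1^{n-j+1}$.

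I then apply the octahedron axiom to the composite $p\circ g\colon Y\to R$. This produces an object $D$, a factorization $f=f_2\circ f_1$ with $f_1\colon X\to D$ and $f_2\colon D\to Y$, and two distinguished triangles
\[
X\xrightarrow{f_1}D\to L\to X[1],\qquad R[-1]\to D\xrightarrow{f_2}Y\xrightarrow{p\circ g}R.
\]
I set $\mathbf{f}_1=\overline{f_1}$ and $\mathbf{f}_2=\overline{f_2}$.

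The key check, and the main point of care, is that $D\in\CH$, so that both corollaries apply.
\begin{itemize}
\item From the first triangle, $D\in\CT^-\ast\CU_{-j+1}^0\subset\CT^-\ast\CU_{-n+1}^0=\CT^-$ by \cref{lem:T-_*_<1>}~(1), since $j\le n$.
\item From the second triangle, $D\in\CV_0^{n-j}\ast\CT^+\subset\CV_0^{n-1}\ast\CT^+=\CT^+$ by \cref{lem:T-_*_<1>}~(2), since $R[-1]\in\CV_0^{n-j}$.
\end{itemize}
Thus $D\in\CH$.

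Now $X,D\in\CH$ and the cone $L$ of $f_1$ lies in $\CU_{-j+1}^0$, so \cref{cor:implication_Sigma_epi} says $\mathbf{f}_1$ is a $j$-epimorphism.

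For $\mathbf{f}_2$ I need the dual of \cref{cor:implication_Sigma_epi}. It should read as follows: for a distinguished triangle $K\to A\xrightarrow{e}B\to K[1]$ with $A,B\in\CH$ and $K\in\CV_0^{i-1}$, the morphism $\overline{e}$ is an $i$-monomorphism. Its proof dualizes \cref{prop:equivalence_Sigma_epi}, using $\langle-1\rangle$ and $\Omega$ in place of $\langle1\rangle$ and $\Sigma$. I take $i=n-j+1$ and $K=R[-1]\in\CV_0^{n-j}$, and conclude that $\mathbf{f}_2$ is an $(n-j+1)$-monomorphism.

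The main obstacle is getting the indices right at two places. One is the shift of \cref{prop:cotors_1-n}, which must put $L$ in $\CU_{-j+1}^0$ and $R[-1]$ in $\CV_0^{n-j}$ at the same time. The other is the exact form of the dual corollary, which must give $n-j+1$ as the monomorphism index. I would verify the dual statement explicitly rather than just cite "dually".
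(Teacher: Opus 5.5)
Your proposal is correct and matches the paper's proof: the paper likewise splits the cone of $f$ using the cotorsion pair $(\CU_{-j+1}^0,\CV_0^{n-j})$ from \cref{prop:cotors_1-n}, applies the octahedron axiom to get the middle object, and concludes with \cref{cor:implication_Sigma_epi} and its dual (your indices $L\in\CU_{-j+1}^0$, $R[-1]\in\CV_0^{n-j}$, $i=n-j+1$ are exactly right). The only minor difference is that you show the middle object lies in $\CH$ directly via \cref{lem:T-_*_<1>}, while the paper goes through \cref{lem:char_for_Sn-deflation} and the dual of \cref{lem:merged}~(2); your route is slightly more direct and equally valid.
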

\begin{proof}
Take $f\in\CH(X,Y)$ that gives $\mathbf{f}=\overline{f}$.
Complete $f$ into a distinguished triangle $X\xrightarrow{f}Y\xrightarrow{g}Z\to X[1]$ in $\CT$.
Since $(\CU_{-j+1}^0,\CV_0^{n-j})$ is a cotorsion pair by \cref{prop:cotors_1-n}, there exists a distinguished triangle
$R\xrightarrow{b}L\xrightarrow{a}Z\to R[1]$
in $\CT$ with $L\in\CU_{-j+1}^0$ and $R\in\CV_0^{n-j}$.
By the octahedron axiom, we obtain a commutative diagram
\[
\begin{tikzcd}
{} & R\arrow{d}\arrow[equal]{r} & R\arrow{d}{b} 
\\
X\arrow{r}{f_1}\arrow[equal]{d} & Y'\arrow{d}[swap]{f_2}\arrow{r} & L\arrow{d}{a}
\\
X\arrow{r}[swap]{f} & Y\arrow{r}[swap]{g} & Z %
\end{tikzcd}
\]
in $\CT$, in which $X\xrightarrow{f_1}Y'\to L\to X[1]$ and $R\to Y'\xrightarrow{f_2}Y\to R[1]$ are distinguished triangles.

Since $L\in\CU_{-j+1}^0\subset\CU_{-n+1}^0$, it follows that 
$X\xrightarrow{f_1}Y'\to L\dashrightarrow$ is an $\fs^R_{\CN}$-triangle by \cref{lem:char_for_Sn-deflation}.
By the dual of \cref{lem:merged} {\rm (2)}, we obtain $Y'\in\CT^-$. 
Dually, since $R\in\CV_0^{n-j}\subset\CV_0^{n-1}$, it follows that 
$R\to Y'\xrightarrow{f_2}Y\dashrightarrow$ is an $\fs^L_{\CN}$-triangle, and hence $Y'\in\CT^+$. 
Thus we have $Y'\in\CH$. By \cref{cor:implication_Sigma_epi} and its dual, we see that $\overline{f_1}$ is a $j$-epimorphism and $\overline{f_2}$ is an $(n-j+1)$-monomorphism.
\end{proof}
\normalcolor

We next establish a characterization of %
$n$-epimorphisms in $\CH/[\CW]$ in \cref{prop:char_def}.

\begin{lemma}\label{lem:cocone_in_H2}
Let $X\xrightarrow{f}Y\xrightarrow{g}Z\xrightarrow{h}X[1]$ be a distinguished triangle in $\CT$ with $Y,Z\in\CH$. Let $\widetilde{V}\xrightarrow{v}W\xrightarrow{w}Z\xrightarrow{z} \widetilde{V}[1]$ be a distinguished triangle in $\CT$ with $W\in\CW$ and $\widetilde{V}\in\CV_0^{n-1}$, whose existence is guaranteed by the assumption $Z\in\CT^+$.
Then there exists an $\fs_\CN^L$-triangle \begin{equation}\label{seq:cocone_in_H2}
X'\to W\oplus Y\overset{\begin{bsmallmatrix}
w & g
\end{bsmallmatrix}}{\lra}Z\overset{h'}{\dra}
\end{equation}
such that %
$X'\in\CT^+$.
\darkblueedit{Moreover, if} $h\in[\CU_{-n+1}^0]$, then
\color{blue}
$X'\in\CH$, and 
\normalcolor
\eqref{seq:cocone_in_H2} is an $\fs_\CN$-triangle in $\CH$.
\end{lemma}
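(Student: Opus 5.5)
The natural construction is to complete $\begin{bsmallmatrix} w & g\end{bsmallmatrix}\colon W\oplus Y\to Z$ into a distinguished triangle
\[
X'\to W\oplus Y\xrightarrow{\begin{bsmallmatrix} w & g\end{bsmallmatrix}} Z\xrightarrow{h'} X'[1]
\]
in $\CT$. I would first relate $h'$ to $z$ and to $h$ by comparing this triangle with the two given ones. The projection $W\oplus Y\to W$, together with the identity on $Z$, gives a morphism of triangles to $\widetilde V\to W\to Z\xrightarrow{z}\widetilde V[1]$. Hence there is $p\colon X'\to\widetilde V$ with $p[1]\circ h'=z$. Similarly, the projection to $Y$ gives $q\colon X'\to X$ with $q[1]\circ h'=h$.

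Alternatively, and more usefully, I would apply the octahedron axiom to $W\oplus Y\to Z$ factored through the split pieces. Composing $W\to W\oplus Y\to Z$ gives $w$. The octahedron attached to $w=\begin{bsmallmatrix} w & g\end{bsmallmatrix}\circ\begin{bsmallmatrix}1\\0\end{bsmallmatrix}$ yields a triangle
\[
\widetilde V\to X'\to Y\to \widetilde V[1],
\]
since the cone of $\begin{bsmallmatrix}1\\0\end{bsmallmatrix}$ is $Y$ and the cocone of $w$ is $\widetilde V$. Thus $X'\in\widetilde V\ast Y\subset\CV_0^{n-1}\ast\CT^+=\CT^+$ by Lemma~\ref{lem:T-_*_<1>}(2). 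This gives the first membership claim.

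Next, to see that the triangle is an $\fs_\CN^L$-triangle, I use Lemma~\ref{lem:char_for_Sn-deflation}(2): since $X'\in\CT^+$, it suffices to show that $h'[-1]\in[\CV_0^{n-1}]$. From the octahedron, $h'$ factors through $z\colon Z\to\widetilde V[1]$, or I can deduce this directly. Indeed, $h'\circ w=0$, so $h'$ factors through the cone of $w$, namely $h'=e\circ z$ for some $e$. Then $h'[-1]$ factors through $\widetilde V\in\CV_0^{n-1}$, and the triangle is an $\fs^L_\CN$-triangle.

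For the second assertion, assume $h\in[\CU_{-n+1}^0]$. To get $X'\in\CT^-$, I would show that the triangle is also an $\fs^R_\CN$-triangle, i.e.\ that $h'\in[\CU_{-n+1}^0]$ via Lemma~\ref{lem:char_for_Sn-deflation}(1), using $Z\in\CT^-$. Then I conclude $X'\in\CT^-$ by the dual of Lemma~\ref{lem:merged}(2), since $W\oplus Y\in\CT^-$ and $Z\in\CT^-$. Altogether this gives $X'\in\CH$, and the triangle is an $\fs_\CN$-triangle with all terms in $\CH$.

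The main obstacle is showing $h'\in[\CU_{-n+1}^0]$. My plan is to compare with the original triangle. The morphism of triangles given by the inclusion $Y\to W\oplus Y$, with the identity on $Z$, yields $\xi\colon X\to X'$ with $h'=\xi[1]\circ h$. This is available because $\begin{bsmallmatrix}w&g\end{bsmallmatrix}\circ\begin{bsmallmatrix}0\\1\end{bsmallmatrix}=g$, so the square commutes and (TR3) gives the map. It follows that $h'$ factors through $h$, hence lies in $[\CU_{-n+1}^0]$. If this comparison behaves as expected, the hard step is straightforward, and the only delicate part is keeping the orientation of the octahedra and the signs correct.
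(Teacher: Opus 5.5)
Your construction of $X'$ and your proof that $X'\in\CT^+$ (the octahedron giving $\widetilde V\to X'\to Y\to\widetilde V[1]$, then $\CV_0^{n-1}\ast\CT^+=\CT^+$) are correct. So are the $\fs^L_\CN$ property via $h'=e\circ z$, and the identity $h'=\xi[1]\circ h$, which gives $h'\in[\CU^0_{-n+1}]$ and hence the $\fs^R_\CN$ property by Lemma~\ref{lem:char_for_Sn-deflation}(1). This is what the paper extracts from one diagram ($h'=d\circ h=e\circ z$).

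The gap is the step $X'\in\CT^-$. The dual of Lemma~\ref{lem:merged}(2) says: if $A\to B\to C\dashrightarrow$ is an $\fs^R_\CN$-triangle with $A\in\CT^-$, then $B\in\CT^-$ if and only if $C\in\CT^-$. Dualizing swaps the roles of the first and third terms, so the hypothesis sits on the \emph{first} term. The lemma therefore gives extension-closedness of $\CT^-$ in $\CT^R_\CN$ (Proposition~\ref{prop:H_is_extension-closed}(2)), not closure under cocones of $\fs^R_\CN$-deflations. Knowing $W\oplus Y,Z\in\CT^-$ tells you nothing about $X'$ through this lemma. So the real obstacle is not $h'\in[\CU^0_{-n+1}]$, which is immediate as you found, but $X'\in\CT^-$, and that needs a genuine argument.

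The paper argues as follows.
\begin{enumerate}
\item Since $h\in[\CU^0_{-n+1}]$, Proposition~\ref{prop:equivalence_Sigma_epi} ((5)$\Rightarrow$(4), $j=n$, applied to $g\in\CH(Y,Z)$) gives $M_g\in\CU^0_{-n+1}$.
\item The analogue of \eqref{octa_WM} gives a triangle $W^Y\to M_g\to X[1]\to W^Y[1]$.
\item A homotopy pullback against $h\circ w\colon W\to X[1]$, whose cocone is $X'$, yields triangles $X'\to W'\to M_g\to X'[1]$ and $W^Y\to W'\to W\to W^Y[1]$. The latter splits, so $W'\in\CW$.
\item Corollary~\ref{cor:TUT} then gives $X'\in\CT^-$.
\end{enumerate}

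Within your own setup you can shorten this. The morphism $\gamma=[w\ g]$ already lies in $\CH$, and its cone morphism $h'$ is in $[\CU^0_{-n+1}]$. So the same proposition gives $M_\gamma\in\CU^0_{-n+1}$. Rotating the column $W^{W\oplus Y}\to M_\gamma\to X'[1]\to W^{W\oplus Y}[1]$ of \eqref{octa_WM} gives $M_\gamma[-1]\to X'\to W^{W\oplus Y}\to M_\gamma$, and Corollary~\ref{cor:TUT} yields $X'\in\CT^-$.
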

\begin{proof}
Complete $h\circ w$ into a distinguished triangle
$X\to X'\to W\xrightarrow{h\circ w}X[1]$.
By the dual of \cite[Proposition~1.4.6]{Nee01}, we obtain a commutative diagram
\begin{equation*}
\begin{tikzcd}
&\wtil{V}\arrow[equal]{r}\arrow{d}&\wtil{V}\arrow{d}{v} &{}
\\
X\arrow{r}\arrow[equal]{d}&X'\arrow{r}\arrow{d}&W\arrow{d}{w}\arrow{r}{h\circ w} &X[1]\arrow[equal]{d}
\\
X\arrow{r}[swap]{f}&Y\arrow{r}{g}\arrow{d}{}&Z\arrow{d}{z}%
\arrow{r}{h} &X[1]\arrow{d}{d}
\\
&\wtil{V}[1]\arrow[equal]{r}&\wtil{V}[1]
\arrow{r}[swap]{e} &X'[1]
\end{tikzcd}
\end{equation*}
in $\CT$, in which $\widetilde{V}\to X'\to Y\to\widetilde{V}[1]$ and %
\eqref{seq:cocone_in_H2} are distinguished triangles, where we put $h'=d\circ h=e\circ z$.
By \cref{lem:merged} {\rm (2)}, \darkblueedit{we have} $X'\in\CT^+$.
Since $\wtil{V}\in\CV_0^{n-1}$, the sequence \eqref{seq:cocone_in_H2} is indeed an $\fs_\CN^L$-triangle by \cref{lem:char_for_Sn-deflation}~{\rm (2)}.

Moreover, if $h\in[\CU_{-n+1}^0]$, then we have $h'\in[\CU_{-n+1}^0]$, and hence \eqref{seq:cocone_in_H2} is an $\fs_\CN^R$-triangle by \cref{lem:char_for_Sn-deflation}~{\rm (1)}.
\color{blue}
For the object $M_g$ obtained by applying \cref{def:standard_right_triangle} to $g\in\CH(Y,Z)$, we have $M_g\in\CU_{-n+1}^0$ by \cref{prop:equivalence_Sigma_epi}.
Similarly to \eqref{octa_WM}, we have a distinguished triangle
\[ W^Y\to M_g\to X[1]\to Y[1]\]
with $W^Y\in\CW$.
By the octahedron axiom, we have a commutative diagram
\[
\begin{tikzcd}
{} & W^Y\arrow{d}\arrow[equal]{r} & W^Y\arrow{d} 
\\
X'\arrow{r}{}\arrow[equal]{d} & W'\arrow{d}[swap]{}\arrow{r} & M_g\arrow{d}{}
\\
X'\arrow{r} & W\arrow{r}[swap]{h\circ w} & X[1] %
\end{tikzcd}
\]
for some $W'\in\CT$, 
in which 
$W^Y\to W'\to W\to W^Y[1]$ and $X'\to W'\to M_g\to X'[1]$ are distinguished triangles.
Since $W^Y,W\in\CW$, the middle column splits, and $W'\in\CW$.
By \cref{cor:TUT} applied to $X'\to W'\to M_g\to X'[1]$, we obtain $X'\in\CT^-$.
Since $X'\in\CT^+$, it follows $X'\in\CH$.
\end{proof}

\normalcolor

\begin{proposition}\label{prop:char_def}
Let $\mathbf{g}\in(\CH/[\CW])(Y,Z)$ be any morphism.
The following are equivalent.
\begin{enumerate}
\item $\mathbf{g}$ is a $\ft$-deflation.
\item There exists a right triangle $X\xrightarrow{\mathbf{f}}Y\xrightarrow{\mathbf{g}}Z\xrightarrow{\mathbf{h}}\Sigma X$ that belongs to $\vartriangleright$.
\item %
$\mathbf{g}$ is an $n$-epimorphism.
\normalcolor
\end{enumerate}
\end{proposition}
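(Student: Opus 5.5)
We will prove the cycle $(1)\Rightarrow(2)\Rightarrow(3)\Rightarrow(1)$.

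\emph{$(1)\Rightarrow(2)$.} If $\mathbf{g}$ is a $\ft$-deflation, there is a $\ft$-triangle $X\xrightarrow{\mathbf{f}}Y\xrightarrow{\mathbf{g}}Z\overset{\delta}{\dashrightarrow}$. Then \cref{prop:construction_of_dell}~(2) shows that $X\xrightarrow{\mathbf{f}}Y\xrightarrow{\mathbf{g}}Z\xrightarrow{\partial^+_{Z,X}(\delta)}\Sigma X$ belongs to $\vartriangleright$.

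\emph{$(2)\Rightarrow(3)$.} By \cref{prop:pretriangulated}, $\CH/[\CW]$ is an $n$-truncated pretriangulated category, so $\Sigma^{n-1}(\Sigma X)=0$. Rotating the right triangle by (RT2) gives $Y\xrightarrow{\mathbf{g}}Z\xrightarrow{\mathbf{h}}\Sigma X\to\Sigma Y$, and the dual of \cref{lem:chara_n-mono_ker} then shows that $\mathbf{g}$ is an $n$-epimorphism. This is the same argument as in \cref{lem:defl_are_n-epi}.

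\emph{$(3)\Rightarrow(1)$.} This is the substantive direction. Choose a representative $g\in\CH(Y,Z)$ with $\mathbf{g}=\overline{g}$ and complete it to a distinguished triangle $X\xrightarrow{f}Y\xrightarrow{g}Z\xrightarrow{h}X[1]$ in $\CT$.
\begin{enumerate}
\item Apply \cref{prop:equivalence_Sigma_epi} with $j=n$, with its roles shifted so that the relevant triangle is the one starting at $Y\xrightarrow{g}Z$. Read this way, condition (5) says that the morphism following $g$ lies in $[\CU_{-n+1}^0]$, so the $n$-epimorphism hypothesis gives $h\in[\CU_{-n+1}^0]$. The internal argument of that proof gives the same conclusion: $M_g\in\CU_{-n+1}^0$, and $h$ factors through $M_g$ via the triangle $W^Y\to M_g\to X[1]$ used in the proof of \cref{lem:cocone_in_H2}.
\item Apply \cref{lem:cocone_in_H2} with a triangle $\widetilde{V}\to W\xrightarrow{w}Z\to\widetilde{V}[1]$. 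Because $h\in[\CU_{-n+1}^0]$, the lemma yields an $\fs_\CN$-triangle $X'\to W\oplus Y\xrightarrow{[w\ g]}Z\dashrightarrow$ lying entirely in $\CH$.
\item Pass to the quotient by $[\CW]$. Since $W\in\CW$, the object $W\oplus Y$ is isomorphic to $Y$ in $\CH/[\CW]$, and under this isomorphism $\overline{[w\ g]}$ becomes $\mathbf{g}$. So this triangle descends to a $\ft$-triangle $X'\to Y\xrightarrow{\mathbf{g}}Z\dashrightarrow$ in $(\CH/[\CW],\BF,\ft)$, by the construction of \cref{cor:proj-inj_in_H} via \cite[Prop.~3.30]{NP19}.
\item Conclude that $\mathbf{g}$ is a $\ft$-deflation. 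The class of deflations is closed under composition with isomorphisms, so the identification in the previous step is enough.
\end{enumerate}

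The main obstacle is the first step of $(3)\Rightarrow(1)$, namely extracting $h\in[\CU_{-n+1}^0]$ from the $n$-epimorphism hypothesis. \Cref{prop:equivalence_Sigma_epi} is phrased for the morphism $f$ and its cone $M_f$, so one must check that the indexing matches when it is applied to $g$, with $X$ the cocone. If it does not match directly, I would argue by hand from $M_g\in\CU_{-n+1}^0$: the octahedron~\eqref{octa_WM} for $g$ produces a triangle $W^Y\to M_g\to X[1]\to W^Y[1]$, and the connecting morphism $Z\to X[1]$ then factors through $M_g$.
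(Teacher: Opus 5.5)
Your proof is correct. The direction $(3)\Rightarrow(1)$ is exactly the paper's argument: \cref{prop:equivalence_Sigma_epi} followed by \cref{lem:cocone_in_H2}.

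Your worry about indexing is unfounded. \cref{prop:equivalence_Sigma_epi} is stated for an arbitrary morphism of $\CH$. Applying it to $g\in\CH(Y,Z)$ with the rotated triangle $Y\xrightarrow{g}Z\xrightarrow{h}X[1]\xrightarrow{-f[1]}Y[1]$, condition (5) reads literally $h\in[\CU_{-n+1}^0]$, so no adjustment is needed. Your fallback argument is just the implication (4)$\Rightarrow$(5) of that proposition.

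Where you differ from the paper is the overall decomposition. The paper proves $(1)\Leftrightarrow(2)$ and $(1)\Leftrightarrow(3)$ separately. For $(2)\Rightarrow(1)$ it works directly with the standard right triangle: it applies the octahedron axiom to $l_{M_f}\circ g_f$, shows that the cone $P$ lies in $\CU_{-n+1}^0$, and then invokes \cref{lem:cocone_in_H2}.

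You instead close a single cycle via $(2)\Rightarrow(3)$. This rests on the observation that the argument of \cref{lem:defl_are_n-epi} only needs $\mathbf{g}$ to sit in some right triangle, together with $\Sigma^n=0$ from \cref{prop:pretriangulated}. Your route is shorter and removes the separate octahedral computation. The paper's direct argument stays at the level of triangles in $\CT$ and exhibits the $\ft$-triangle realizing $\mathbf{g}$ straight from the standard right triangle. Both ultimately rest on the same lemmas.
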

\begin{proof}
$(1)\Rightarrow(2)$ This is immediate from Proposition~\ref{prop:construction_of_dell}.

$(2)\Rightarrow(1)$ It suffices to show for the standard right triangle
\[
X\xrightarrow{\overline{f}}Y\xrightarrow{\overline{l_{M_f}}\circ \overline{g_f}}(M_f)_+\xrightarrow{\overline{(h_f)}_+}\Sigma X
\]
associated to $f\in\CH(X,Y)$, with the notation given in Definition~\ref{def:standard_right_triangle}.
Complete $l_{M_f}\circ g_f$ into a distinguished triangle
\[
Y\xrightarrow{l_{M_f}\circ g_f}(M_f)_+\to P\to Y[1]
\]
in $\CT$, for some $P\in\CT$.
By the octahedron axiom, we obtain a commutative diagram
\[
\begin{tikzpicture}[>=stealth]
\node (1) at (-2.4,0) {$Y$};
\node (2) at (-0.42,0.88) {$M_f$};
\node (3) at (1.4,1.7) {$X\langle1\rangle$};
\node (4) at (0,0) {$(M_f)_+$};
\node (5) at (1.2,0) {$P$};
\node (6) at (0.98,-1.8) {$U'_{M_f}$};
\draw[->] (1) -- node[above,font=\scriptsize] {$g_f$} (2);
\draw[->] (1) -- node[below,font=\scriptsize] {$l_{M_f}\circ g_f$} (4);
\draw[->] (2) -- node[above,font=\scriptsize] {$h_f$} (3);
\draw[->] (2) -- node[right,font=\scriptsize] {$l_{M_f}$} (4);
\draw[->] (3) -- (5);
\draw[->] (4) -- (5);
\draw[->] (4) -- (6);
\draw[->] (5) -- (6);
\end{tikzpicture}
\]
in $\CT$, in which $X\langle1\rangle\to P\to U'_{M_f}\to (X\langle1\rangle)[1]$ is a distinguished triangle. Since $X\langle1\rangle\in\CU_{-n+1}^0$ and $U'_{M_f}\in\CU$, we have $P\in\CU_{-n+1}^0$.
If we put $g=l_{M_f}\circ g_f$ and apply \cref{lem:cocone_in_H2} to the distinguished triangle $P[-1]\to Y\xrightarrow{g}(M_f)_+\to P$, then we obtain an $\fs_{\CN}$-triangle
\[ X'\to W\oplus Y\xrightarrow{[w\ g]}(M_f)_+\dashrightarrow \]
for some $X'\in\CH$ and $W\in\CW$. Thus $\overline{g}$ is a $\ft$-deflation in $\CH/[\CW]$.

$(1)\Rightarrow(3)$ This follows from \cref{prop:construction_of_dell} and \cref{lem:defl_are_n-epi}. 

$(3)\Rightarrow(1)$ This follows from \cref{prop:equivalence_Sigma_epi} and \cref{lem:cocone_in_H2}.
\normalcolor
\end{proof}

\color{blue}

\begin{lemma}
\label{lem:partial_description}
Assume that $X,Z\in\CH$. 
Let $X\xrightarrow{a_X} W^X\xrightarrow{b_X} X\langle 1\rangle\xrightarrow{c_X} X[1]$ be the distinguished triangle given in \cref{def:functor_<1>}, and let $\delta\in \BF(Z,X)$. %
Then there exists a morphism $p\colon Z\to X\langle 1\rangle$ such that $c_X\circ p=\delta$ in $\CT$ and $\ovl{l_{X\langle 1\rangle}}\circ\ovl{p}=\partial^+(\delta)$ in $\CT/[\CW]$.
$$
\begin{tikzcd}
X\langle 1\rangle & & X[1] \\
&Z&
\Ar{2-2}{1-1}{"p"}
\Ar{2-2}{1-3}{"\delta"'}
\Ar{1-1}{1-3}{"c_X"}
\end{tikzcd}\quad\quad
\begin{tikzcd}
Z && \Sigma X \\
& X\langle 1\rangle
\Ar{1-1}{1-3}{"\partial^+(\delta)"}
\Ar{1-1}{2-2}{"\ovl{p}"'}
\Ar{2-2}{1-3}{"\ovl{l_{X\langle 1\rangle}}"'}
\end{tikzcd}
$$
\end{lemma}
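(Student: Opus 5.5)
The plan is to take $\delta\in\BF(Z,X)$ as represented by an element of $\BE_{\CN}(Z,X)$, i.e. a morphism $\delta\colon Z\to X[1]$ in $\CT$ occurring in a distinguished triangle $X\xrightarrow{f}Y\xrightarrow{g}Z\xrightarrow{\delta}X[1]$ with $Y\in\CH$. This is the reduction already used in the proof of \cref{prop:construction_of_dell}(2).

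By \cref{lem:char_for_Sn-deflation} we have $\delta\in[\CV_1^n]$, so $a_X[1]\circ\delta$ factors through an object of $\CV_1^n$ and lands in $W^X[1]$, hence vanishes. More concretely, as in that proof, $a_X\circ\delta[-1]=0$, so there is $a\colon Y\to W^X$ with $a\circ f=a_X$. Applying \cref{lem:for_right_triangle_replace}(1) then gives $p:=v\colon Z\to X\langle1\rangle$ with $c_X\circ p=\delta$ in $\CT$. Alternatively, and more directly, $a_X[1]\circ\delta=0$ lets $\delta$ factor through $c_X$ by exactness of $\CT(Z,-)$ on the triangle (\ref{triangle_<1>}). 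This settles the first requirement.

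For the second requirement I compare with \cref{def:construction_of_dell}. By \cref{lemma:construction_of_dell} (uniqueness via \cref{lem:H(f)_monom}), $H(c_X)\circ H(p)=H(\delta)$ forces $\wp_\delta=H(p)$. Thus $\partial^+(\delta)=\ovl{r_{\Sigma X}}\circ H(p)\circ\ovl{r_{Z_+}}^{-1}\circ\ovl{l_Z}$. I will unwind $H(p)=((\ovl{p})_+)_-$ using the naturality squares of the unit $\ovl{l}$ and counit $\ovl{r}$ in $\CT/[\CW]$. First, $(\ovl p)_+\circ\ovl{l_Z}=\ovl{l_{X\langle1\rangle}}\circ\ovl p$, with $(\ovl p)_+\colon Z_+\to X\langle1\rangle_+=\Sigma X$. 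Second, $\ovl{r_{\Sigma X}}\circ((\ovl p)_+)_-=(\ovl p)_+\circ\ovl{r_{Z_+}}$. Combining these gives $\partial^+(\delta)=(\ovl p)_+\circ\ovl{l_Z}=\ovl{l_{X\langle1\rangle}}\circ\ovl p$ in $\CT/[\CW]$. This is exactly the computation in the final diagram of the proof of \cref{prop:construction_of_dell}, where $\partial^+(\delta)=\ovl v_+\circ\ovl{l_Z}$.

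The main point needing care is bookkeeping: identifying $\Sigma X$ with $X\langle1\rangle_+$ on the nose, and making sure the isomorphisms $\ovl{r_{\Sigma X}}\colon(\Sigma X)_-\to\Sigma X$ and $\ovl{r_{Z_+}}$ are the ones used in \cref{def:construction_of_dell}, so that naturality of $\ovl r$ applies verbatim. No genuinely new argument is expected beyond that.
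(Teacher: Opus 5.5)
Your overall route is the paper's. You factor $\delta$ through $c_X$ using $a_X[1]\circ\delta=0$, then combine the uniqueness in \cref{lemma:construction_of_dell} with the naturality squares of $\ovl{l}$ and $\ovl{r}$. The paper phrases the last step as checking the defining diagram of $\partial^+(\delta)$ against the monomorphism $H(c_X)$, but it is the same computation. Your second half is correct: first $\wp_\delta=H(p)$, then $\partial^+(\delta)=(\ovl p)_+\circ\ovl{l_Z}=\ovl{l_{X\langle1\rangle}}\circ\ovl p$, with $\Sigma X=X\langle1\rangle_+$ on the nose.

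The flaw is your justification of $a_X[1]\circ\delta=0$. The fact $\delta\in[\CV_1^n]$ does not give it, because you would need $\CT(\CV_1^n,\CW[1])=0$. That vanishing fails as soon as $\CW\neq0$: for $W\in\CW$ we have $W[1]\in\CV[1]\subset\CV_1^n$, and $\id_{W[1]}\neq0$.

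For a concrete failure, take the cluster tilting case with $0\neq X\in\CC$ and the choice $a_X=\id_X$. Then $\id_{X[1]}$ lies in $[\CV_1^n]$, but $a_X[1]\circ\id_{X[1]}\neq0$.

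The vanishing actually comes from the other half of \cref{lem:char_for_Sn-deflation}, namely $\delta\in[\CU_{-n+1}^0]$. For $0\le k\le n-1$ we have $\CT(\CU[-k],\CW[1])\cong\CT(\CU,\CW[k+1])=0$, since $\CW\subset\CV$ and $\BE^{k+1}(\CU,\CV)=0$. This vanishing passes to iterated extensions, so $\CT(\CU_{-n+1}^0,\CW[1])=0$. This is exactly how the paper argues, and it is also how the proof of \cref{prop:construction_of_dell} that you cite obtains $a_X\circ\delta[-1]=0$. With that replacement your proof is complete.
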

\begin{proof}
Since $\delta\in \BF(Z,X)$, it factors through an object in $\CU_{-n+1}^0$ by \cref{lem:char_for_Sn-deflation}. Thus, the composite morphism $Z\xrightarrow{\delta}X[1]\xrightarrow{a_X[1]} W^X[1]$ is zero. Hence, there exists a morphism $p\colon Z\to X\langle 1\rangle$ such that $c_X\circ p=\delta$ in $\CT$. We next show that $\ovl{l_{X\langle 1\rangle}}\circ\ovl{p}=\partial^+(\delta)$ in $\CT/[\CW]$. By \cref{def:construction_of_dell}, it suffices to show that the following diagram commutes in $\CT/[\CW]$, since $H(c_X)$ is a monomorphism:
$$
\begin{tikzcd}
Z & Z_+ & H(Z) \\
\Sigma X & H(X\langle 1\rangle) & H(X[1])
\Ar{1-1}{1-2}{"\ovl{l_Z}"}
\Ar{1-3}{1-2}{"\ovl{r_{Z_+}}"',"\cong"}
\Ar{1-3}{2-3}{"H(\delta)"}
\Ar{2-2}{2-1}{"\ovl{r_{\Sigma X}}","\cong"'}
\Ar{2-2}{2-3}{"H(c_X)"'}
\Ar{1-1}{2-1}{"\ovl{l_{X\langle1\rangle}}\circ \ovl{p}"'}
\end{tikzcd}
$$
It follows from the following equations in $\CT/[\CW]$:
\begin{align*}
H(c_X)\circ \ovl{r_{\Sigma X}}^{-1}\circ \ovl{l_{X\langle 1\rangle}}\circ \ovl{p} 
&=H(c_X)\circ H(p) \circ \ovl{r_{Z_+}}^{-1}\circ \ovl{l_{Z}}\\
&=H(c_X\circ p) \circ \ovl{r_{Z_+}}^{-1}\circ \ovl{l_{Z}}\\ 
&=H(\delta) \circ \ovl{r_{Z_+}}^{-1}\circ \ovl{l_{Z}}\\ 
\end{align*}
\darkblueedit{The first equality follows from the definition of $H(p)$; the second equality follows from $c_X\circ p=\delta$ in $\CT$.} Note that $\ovl{r_{Z_+}}$ and $\ovl{r_{\Sigma X}}$ are isomorphisms in $\CT/[\CW]$ since $Z_+, \Sigma X\in \CT^-$ and the dual of \cref{prop:functor_+}.
\end{proof}

The following lemma is a dual version of \cref{lem:partial_description}.

\begin{lemma}
\label{lem:partial_description_dual}
Assume that $X,Z\in\CH$. 
Let $Z[-1]\xrightarrow{{}_Zc}Z\langle -1\rangle\xrightarrow{{}_Zb} {}^ZW\xrightarrow{{}_Za} Z$ be the distinguished triangle given in \cref{def:functor_<-1>}, and let $\delta\in \BF(Z,X)$.
Then there exists a morphism $q\colon Z\langle -1\rangle\to X$ such that $q\circ {}_Zc=\delta[-1]$ in $\CT$ and $\ovl{q}\circ\ovl{r_{Z\langle -1\rangle}}=\partial^-(\delta)$ in $\CT/[\CW]$.
\end{lemma}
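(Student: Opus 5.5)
The plan is to dualize the proof of \cref{lem:partial_description} step by step. The dual construction of $\partial^-$ runs as follows. \Cref{lemma:construction_of_dell} has a dual: there is a unique $\wp^-_\delta\in(\CH/[\CW])(H(Z\langle-1\rangle),H(X))$ with $\wp^-_\delta\circ H({}_Zc)=H(\delta[-1])$, and this uniqueness rests on $H({}_Zc)$ being an epimorphism (the dual of \cref{lem:H(f)_monom}). Then $\partial^-_{Z,X}(\delta)$ is the unique morphism $\Omega Z\to X$ fitting into the dual diagram of \cref{def:construction_of_dell}. That diagram involves the isomorphisms $\overline{r_{Z\langle-1\rangle}}\colon \Omega Z=(Z\langle-1\rangle)_-\to Z\langle-1\rangle$ (with target read in $H$), $\overline{r_X}$, and $\overline{l_{X_-}}$.

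\textbf{Existence of $q$.} Since $\delta\in\BF(Z,X)$ is represented by an element of $\BE_\CN(Z,X)$, \cref{lem:char_for_Sn-deflation} gives $\delta\in[\CV_1^n]$, so $\delta[-1]\in[\CV_0^{n-1}]$. Now ${}^ZW\in\CW\subset\CU$, and $\CT({}^ZW[-1],\CV_0^{n-1})=0$ because $\BE^k(\CU,\CV)=0$ for $1\le k\le n$. Hence $\delta[-1]\circ{}_Za[-1]=0$. The triangle \eqref{dtriXXWX} then shows that $\delta[-1]\colon Z[-1]\to X$ factors through ${}_Zc$, which gives $q\colon Z\langle-1\rangle\to X$ with $q\circ{}_Zc=\delta[-1]$.

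\textbf{Identification with $\partial^-(\delta)$.} The relation $q\circ{}_Zc=\delta[-1]$ gives $H(q)\circ H({}_Zc)=H(\delta[-1])$, so uniqueness yields $\wp^-_\delta=H(q)$. It remains to check that $\overline{q}\circ\overline{r_{Z\langle-1\rangle}}$ makes the defining diagram of $\partial^-(\delta)$ commute. Because $H({}_Zc)$ is epic, it suffices to verify the commutation after precomposing with $H({}_Zc)$, and this is a chain of equalities mirroring the one in the proof of \cref{lem:partial_description}. The first equality uses the naturality of $\overline{r}$ and $\overline{l}$ with respect to $q$, the second uses $q\circ{}_Zc=\delta[-1]$, and the isomorphisms $\overline{l_X}$ and $\overline{l_{\Omega Z}}$ are isomorphisms because $X,\Omega Z\in\CH$ (\cref{prop:functor_+}).

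\textbf{Main obstacle.} The main care is bookkeeping of the signs and shifts in the dual definition of $\partial^-$: whether the dual construction uses $\delta[-1]$ or $-\delta[-1]$, and the identification of $\eta^\CT$ via \eqref{dtriXXWX}. I would fix the convention so that $\partial^-$ is literally the formal dual of $\partial^+$. Under that convention the statement reduces to the dual of \cref{lem:partial_description}, and the proof is obtained by reversing arrows.
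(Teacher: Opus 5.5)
Your argument is correct and follows the paper's route. The paper gives no separate proof beyond calling the lemma the dual of \cref{lem:partial_description}, and you carry out that dualization explicitly: you factor $\delta[-1]$ through ${}_Zc$ using $\delta\in[\CV_1^n]$ and $\CT({}^ZW[-1],\CV_0^{n-1})=0$, obtain $\wp^-_\delta=H(q)$ from the fact that $H({}_Zc)$ is an epimorphism, and conclude by naturality of $l$ and $r$.

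One caveat on your last paragraph. Since \eqref{dtriXXWX} is a left triangle, whose connecting map as a right triangle is $-({}_Zc[1])\circ\eta^{\CT}_Z$, while $\delta$ is the connecting map of a right triangle, a literal reversal of arrows in \cref{lem:partial_description} yields $q\circ{}_Zc=-\delta[-1]$. So the plus sign in the statement is not inherited from formal duality; it is a convention you must impose when defining $\partial^-$. You do impose it, via $\wp^-_\delta\circ H({}_Zc)=H(\delta[-1])$, and this is also the sign \cref{lem:the_equation} relies on.
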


\begin{lemma}
\label{lem:c_X_monomorphism}
$\ovl{c_X}$ is a monomorphism in $\CT/[\CW]$ for any $X\in\CH$.
\end{lemma}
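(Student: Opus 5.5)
We need to show: for $X\in\CH$, $\ovl{c_X}\colon X\langle1\rangle\to X[1]$ is a monomorphism in $\CT/[\CW]$. Here $X\langle 1\rangle\in\CT^-$ while $X[1]$ is an arbitrary object of $\CT$, so the claim concerns morphisms into $X\langle1\rangle$ from arbitrary $A\in\CT$. We argue directly in $\CT$, in the style of Step 4 of \cref{lem:H(f)_monom}, using the triangle $X\xrightarrow{a_X}W^X\xrightarrow{b_X}X\langle1\rangle\xrightarrow{c_X}X[1]$.

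Let $A\in\CT$ and $\alpha\in\CT(A,X\langle1\rangle)$ with $\ovl{c_X}\circ\ovl{\alpha}=0$, i.e.\ $c_X\circ\alpha=w_2\circ w_1$ with $w_1\colon A\to W$, $w_2\colon W\to X[1]$, $W\in\CW$. The first key step is to lift $w_2$ through $c_X$. The obstruction is the composite $a_X[1]\circ w_2\colon W\to W^X[1]$, and this vanishes because $\CT(W,W^X[1])=0$ by $\BE^1(\CU,\CV)=0$ (as $n\ge1$). Hence $w_2=c_X\circ w_3$ for some $w_3\colon W\to X\langle1\rangle$. Then $c_X\circ(\alpha-w_3\circ w_1)=0$, so $\alpha-w_3\circ w_1=b_X\circ\beta$ for some $\beta\colon A\to W^X$. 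Since $b_X\circ\beta$ factors through $W^X\in\CW$ and $w_3\circ w_1$ factors through $W\in\CW$, we get $\alpha\in[\CW]$, i.e.\ $\ovl{\alpha}=0$.

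The only point needing care is the vanishing $\CT(W,W^X[1])=0$, which is immediate from $W\in\CU$, $W^X\in\CV$ and condition (2) of an $n$-cotorsion pair. Note that we did not even use $X\in\CH$ beyond the existence of the triangle \eqref{triangle_<1>} with $W^X\in\CW$, so the argument works for any $X\in\CT^-$. Thus $\ovl{c_X}$ is a monomorphism in $\CT/[\CW]$.
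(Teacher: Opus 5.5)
Your proof is correct and is essentially the paper's argument. Both lift the $\CW$-factorization through $c_X$ using $\CT(W,W^X[1])=0$, then use exactness at $X\langle1\rangle$ to factor the difference through $b_X$. Your remark that the argument only needs $X\in\CT^-$ is also accurate.
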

\begin{proof}
Consider the distinguished triangle $X\xrightarrow{a_X} W^X\xrightarrow{b_X} X\langle 1\rangle\xrightarrow{c_X} X[1]$ in $\CT$ given in \cref{def:functor_<1>}. Let $f\colon M\to X\langle 1\rangle$ be a morphism in $\CT$ such that $\ovl{c_X}\circ\ovl{f}=0$ in $\CT/[\CW]$. Then $c_X\circ f=v\circ u$ for some $u\colon M\to W$ and $v\colon W\to X[1]$, where $W\in\CW$. Since $W\in\CW$, we have $a_X[1]\circ v=0$. Hence there exists a morphism $f'\colon W\to X\langle 1\rangle$ such that $c_X\circ f'=v$. It follows that $c_X\circ(f-f'\circ u)=0$, so $f-f'\circ u$ factors through $b_X$. Thus, $\ovl{f}=\ovl{f'\circ u}=0$ in $\CT/[\CW]$. Therefore, $\ovl{c_X}$ is a monomorphism in $\CT/[\CW]$.
\end{proof}

In what follows, as in \cref{prop:adjoint_Sigma_Omega}, $\eta$ and
$\varepsilon$ denote the unit and counit of the adjunction
$\Sigma\dashv\Omega$, respectively.
\begin{lemma}
\label{lem:equation_counit}
The following diagram is commutative in $\CT/[\CW]$ for any $Z\in \CH$:
$$
\begin{tikzcd}
(\Omega Z)\langle 1\rangle & (\Omega Z)[1] & (Z\langle -1\rangle)[1] \\
\Sigma\Omega Z & Z & (Z\langle -1\rangle)[1]
\Ar{1-1}{1-2}{"\ovl{c_{(\Omega Z)}}"}
\Ar{1-2}{1-3}{"\ovl{r_{Z\langle -1\rangle}[1]}"}
\Ar{2-1}{2-2}{"\varepsilon_Z"}
\Ar{2-2}{2-3}{"{-\ovl{{}_Zc[1]}}"}
\Ar{1-1}{2-1}{"\ovl{l_{(\Omega Z)\langle 1\rangle}}"'}
\Ar{1-3}{2-3}{equal}
\end{tikzcd}
$$
Here, morphisms $c_{\Omega Z}$ and ${}_Zc$ are those given in \cref{def:functor_<1>,def:functor_<-1>}.
\end{lemma}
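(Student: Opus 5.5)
The plan is to unwind the definition of the counit $\varepsilon_Z=\Theta_{\Omega Z,Z}^{-1}(\id_{\Omega Z})$ from \cref{prop:adjoint_Sigma_Omega}, and to read the required square off the definition of $\theta$ in \cref{lem:adjoint_+-}. Put $X=\Omega Z=(Z\langle-1\rangle)_-\in\CH$. Since $\Theta_{X,Z}(\varepsilon_Z)=\id_X$, the definition of $\Theta$ as a composite of three bijections gives
\[
\theta_{X,Z}\bigl(\varepsilon_Z\circ\ovl{l_{X\langle1\rangle}}\bigr)=\ovl{r_{Z\langle-1\rangle}}\circ\id_X=\ovl{r_{Z\langle-1\rangle}}
\]
in $(\CT/[\CW])(X,Z\langle-1\rangle)$. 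Choose a representative $f\in\CT(X\langle1\rangle,Z)$ of $\varepsilon_Z\circ\ovl{l_{X\langle1\rangle}}$. By the construction of $\theta$ via \cref{lem:for_adjoint_pair_+-}, there is $g\in\CT(X,Z\langle-1\rangle)$ with
\[
-({}_Zc)[1]\circ f=g[1]\circ c_X
\]
in $\CT$, and $\ovl{g}=\ovl{r_{Z\langle-1\rangle}}$.

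The key step is to replace $g$ by $r_{Z\langle-1\rangle}$ in this equation, exactly in $\CT$. Write $g-r_{Z\langle-1\rangle}=\beta\circ\alpha$ with $\alpha\colon X\to W$ and $W\in\CW$. Consider the triangle $X\xrightarrow{a_X}W^X\xrightarrow{b_X}X\langle1\rangle\xrightarrow{c_X}X[1]$ of \cref{def:functor_<1>}. Here $X\langle1\rangle\in\CU_{-n+1}^0$, so $X\langle1\rangle[-1]\in\CU_{-n}^{-1}$ and $\CT(X\langle1\rangle[-1],W)=0$. Hence $\alpha$ factors through $a_X$, say $\alpha=\alpha'\circ a_X$. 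Since $a_X[1]\circ c_X=0$, it follows that $(g-r_{Z\langle-1\rangle})[1]\circ c_X=0$. Therefore
\[
r_{Z\langle-1\rangle}[1]\circ c_X=-({}_Zc)[1]\circ f
\]
holds in $\CT$, and so also in $\CT/[\CW]$.

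Finally, since $\ovl f=\varepsilon_Z\circ\ovl{l_{X\langle1\rangle}}$, the right-hand side equals $-\ovl{{}_Zc[1]}\circ\varepsilon_Z\circ\ovl{l_{(\Omega Z)\langle1\rangle}}$ in $\CT/[\CW]$. This is exactly the commutativity of the displayed square.

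The main obstacle is the bookkeeping in the first step: one has to check that $\Theta_{X,Z}(\varepsilon_Z)=\id$ really translates into $\theta_{X,Z}(\varepsilon_Z\circ\ovl{l_{X\langle1\rangle}})=\ovl{r_{Z\langle-1\rangle}}$, with the correct direction of the inverse map $(\ovl{r_{Z\langle-1\rangle}}\circ-)^{-1}$. One also has to track the sign convention $-({}_Yc)[1]$ built into $\theta$, which is what produces the minus sign in the bottom row of the square. The second step, lifting an equality mod $[\CW]$ to an equality in $\CT$, uses only the vanishing $\CT(X\langle1\rangle[-1],\CW)=0$; it could alternatively be phrased via \cref{lem:c_X_monomorphism}.
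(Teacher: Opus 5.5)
Your proposal is correct and is essentially the paper's own argument: the paper's proof consists only of tracing $\id_{\Omega Z}$ back through the bijections defining $\Theta_{\Omega Z,Z}$ and reading off the defining square of $\theta$ from \cref{lem:adjoint_+-}. Your extra step, which upgrades $\ovl{g}=\ovl{r_{Z\langle-1\rangle}}$ to the exact equality $g[1]\circ c_X=r_{Z\langle-1\rangle}[1]\circ c_X$ using $\CT(X\langle1\rangle[-1],\CW)=0$, fills in a point the paper leaves implicit (the shift $[1]$ does not descend to $\CT/[\CW]$); your side remark about \cref{lem:c_X_monomorphism} would not do this job, though, since that lemma lets you cancel $\ovl{c_X}$ from composites $\ovl{c_X}\circ(-)$, whereas here you need the vanishing of $(g-r_{Z\langle-1\rangle})[1]\circ c_X$.
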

\begin{proof}
This follows immediately by tracing the morphism corresponding to $\id_{\Omega Z}$ under the isomorphisms in the proof of \cref{prop:adjoint_Sigma_Omega}.
\end{proof}

\begin{lemma}
\label{lem:the_equation}
For every $\delta\in\BF(Z,X)$, one has
$
\Sigma\bigl(\partial^-(\delta)\bigr)
=
-\partial^+(\delta)\circ\varepsilon_Z
$
in $\CH/[\CW]$.
\end{lemma}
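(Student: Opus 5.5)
We will prove the identity by computing both sides as morphisms $\Sigma\Omega Z\to\Sigma X$ in $\CT/[\CW]$, using the explicit descriptions of $\partial^{\pm}$ from \cref{lem:partial_description,lem:partial_description_dual} and of the counit from \cref{lem:equation_counit}. Both sides will then be compared after composing with a monomorphism of the form $\ovl{c_X}$ (\cref{lem:c_X_monomorphism}), or with $H(c_X)$ (\cref{lem:H(f)_monom}).

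\emph{Left-hand side.} By \cref{lem:partial_description_dual}, choose $q\colon Z\langle-1\rangle\to X$ with $q\circ{}_Zc=\delta[-1]$ and $\partial^-(\delta)=\ovl{q}\circ\ovl{r_{Z\langle-1\rangle}}$. Write $\Omega Z=(Z\langle-1\rangle)_-$, and set $q'=q\circ r_{Z\langle-1\rangle}\colon\Omega Z\to X$. By \cref{prop:functor_Sigma}(1) with $k=1$ and \cref{rem:c_is_natural}, $\Sigma(\ovl{q'})$ is represented by $\ovl{l_{X\langle1\rangle}}\circ\ovl{q'}\langle1\rangle$ precomposed with the identification $\Sigma\Omega Z\cong((\Omega Z)\langle1\rangle)_+$. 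Here $y=q'\langle1\rangle$ satisfies $c_X\circ y=q'[1]\circ c_{\Omega Z}$.

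\emph{Right-hand side.} By \cref{lem:partial_description}, choose $p\colon Z\to X\langle1\rangle$ with $c_X\circ p=\delta$ and $\partial^+(\delta)=\ovl{l_{X\langle1\rangle}}\circ\ovl p$. By \cref{lem:equation_counit}, $\varepsilon_Z$ is characterized by
\[
-\ovl{{}_Zc[1]}\circ\varepsilon_Z\circ\ovl{l_{(\Omega Z)\langle1\rangle}}=\ovl{r_{Z\langle-1\rangle}[1]}\circ\ovl{c_{\Omega Z}}.
\]
Since $\ovl{l_{(\Omega Z)\langle1\rangle}}$ is the reflection unit, it suffices to prove
\[
-\ovl{l_{X\langle1\rangle}}\circ\ovl p\circ\varepsilon_Z\circ\ovl{l_{(\Omega Z)\langle1\rangle}}=\ovl{l_{X\langle1\rangle}}\circ\ovl y .
\]
This in turn reduces to showing $-\ovl p\circ e=\ovl y$ in $\CT/[\CW]$, where $e=\varepsilon_Z\circ\ovl{l}$ is viewed as a morphism $(\Omega Z)\langle1\rangle\to Z$ (choose a representative).

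\emph{Key step.} By \cref{lem:c_X_monomorphism}, it is enough to show $-\ovl{c_X}\circ\ovl p\circ e=\ovl{c_X}\circ\ovl y$. The left side equals $-\ovl{\delta}\circ e$. Now $\delta=(\delta[-1])[1]$ up to the fixed $\eta^\CT,\varepsilon^\CT$, so
\[
\delta=q[1]\circ{}_Zc[1].
\]
Hence $-\ovl\delta\circ e=\ovl{q[1]}\circ(-\ovl{{}_Zc[1]}\circ e)=\ovl{q[1]}\circ\ovl{r_{Z\langle-1\rangle}[1]}\circ\ovl{c_{\Omega Z}}$ by the counit identity. This equals $\ovl{q'[1]\circ c_{\Omega Z}}=\ovl{c_X\circ y}$, as required.

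The main obstacle is bookkeeping. First, the sign conventions (the minus sign in the left-triangle convention and the identification $\delta[-1][1]\cong\delta$ via $\eta^\CT,\varepsilon^\CT$) must be tracked so that the sign in \cref{lem:equation_counit} exactly produces the minus sign in the statement. Second, one must justify that monomorphy of $\ovl{c_X}$ in $\CT/[\CW]$ applies to morphisms with source $(\Omega Z)\langle1\rangle\in\CT^-$ rather than in $\CH$. This holds since \cref{lem:c_X_monomorphism} is stated for arbitrary sources $M$.
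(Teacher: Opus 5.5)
Your proposal is correct and is essentially the paper's own proof. You reduce, via the reflection unit $\ovl{l_{(\Omega Z)\langle1\rangle}}$, to the identity $(\ovl q\circ\ovl{r_{Z\langle-1\rangle}})\langle1\rangle=-\ovl p\circ\varepsilon_Z\circ\ovl{l_{(\Omega Z)\langle1\rangle}}$ in $\CT/[\CW]$, and you check it after composing with the monomorphism $\ovl{c_X}$ using \cref{rem:c_is_natural}, \cref{lem:equation_counit} and \cref{lem:partial_description,lem:partial_description_dual}; this is the same chain of equalities as in the paper, read in the opposite direction.
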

\begin{proof}
By \cref{prop:functor_+}(2), it is enough to prove the following equality
in $\CT/[\CW]$:
\[
\Sigma\bigl(\partial^-(\delta)\bigr)
\circ\ovl{l_{(\Omega Z)\langle1\rangle}}
=
-\partial^+(\delta)\circ\varepsilon_Z
\circ\ovl{l_{(\Omega Z)\langle1\rangle}}.
\]

In the following argument, the morphisms $p$ and $q$ are obtained from
$\delta$ by applying
\cref{lem:partial_description,lem:partial_description_dual}.
We first compute in $\CT/[\CW]$:
\begin{align*}
\ovl{c_X}
\circ
\bigl(
  \ovl q\circ\ovl{r_{Z\langle-1\rangle}}
\bigr)\langle1\rangle
&=
\ovl{q[1]}
\circ\ovl{r_{Z\langle-1\rangle}[1]}
\circ\ovl{c_{\Omega Z}}
&&
\text{by \cref{rem:c_is_natural}}
\\
&=
-\ovl{q[1]}
\circ\ovl{{}_Zc[1]}
\circ\varepsilon_Z
\circ\ovl{l_{(\Omega Z)\langle1\rangle}}
&&
\text{by \cref{lem:equation_counit}}
\\
&=
-\ovl\delta
\circ\varepsilon_Z
\circ\ovl{l_{(\Omega Z)\langle1\rangle}}
&&
\text{by \cref{lem:partial_description_dual}}
\\
&=
-\ovl{c_X}
\circ\ovl p
\circ\varepsilon_Z
\circ\ovl{l_{(\Omega Z)\langle1\rangle}}
&&
\text{by \cref{lem:partial_description}}.
\end{align*}

By \cref{lem:c_X_monomorphism}, the morphism $\ovl{c_X}$ is a
monomorphism in $\CT/[\CW]$. Hence
\[
\bigl(
  \ovl q\circ\ovl{r_{Z\langle-1\rangle}}
\bigr)\langle1\rangle
=
-\ovl p\circ\varepsilon_Z
\circ\ovl{l_{(\Omega Z)\langle1\rangle}}.
\]

We now compute in $\CT/[\CW]$:
\begin{align*}
\Sigma\bigl(\partial^-(\delta)\bigr)
\circ\ovl{l_{(\Omega Z)\langle1\rangle}}
&=
\Sigma\bigl(
  \ovl q\circ\ovl{r_{Z\langle-1\rangle}}
\bigr)
\circ\ovl{l_{(\Omega Z)\langle1\rangle}}
&&
\text{by \cref{lem:partial_description_dual}}
\\
&=
\Bigl(
  \bigl(
    \ovl q\circ\ovl{r_{Z\langle-1\rangle}}
  \bigr)\langle1\rangle
\Bigr)_+
\circ\ovl{l_{(\Omega Z)\langle1\rangle}}
&&
\text{by the definition of $\Sigma$}
\\
&=
\ovl{l_{X\langle1\rangle}}
\circ
\bigl(
  \ovl q\circ\ovl{r_{Z\langle-1\rangle}}
\bigr)\langle1\rangle
&&
\text{by naturality of the reflections}
\\
&=
-\ovl{l_{X\langle1\rangle}}
\circ\ovl p
\circ\varepsilon_Z
\circ\ovl{l_{(\Omega Z)\langle1\rangle}}
&&
\text{by the equality above}
\\
&=
-\partial^+(\delta)
\circ\varepsilon_Z
\circ\ovl{l_{(\Omega Z)\langle1\rangle}}
&&
\text{by \cref{lem:partial_description}}.
\end{align*}
This proves the desired equality.
\end{proof}

\normalcolor

By the argument so far, we obtain the following.
\begin{theorem}\label{thm:quasitri}
Let $(\CU,\CV)$ be any $n$-cotorsion pair. 
\color{blue}
The heart $\CH/[\CW]$ equipped with the pretriangulated structure $(\Sigma,\Omega,\vartriangleright,\vartriangleleft)$ and the extriangulated structure $(\BF,\ft)$ obtained so far is an abelian $n$-truncated category.
\normalcolor 
\end{theorem}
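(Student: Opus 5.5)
The proof will be an assembly of results already established, checking each clause of \cref{def:compatible_ET_PT} and \cref{def:abel_n_trun_cat} in turn.

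\emph{Ambient structures.} By \cref{prop:pretriangulated}, $(\CH/[\CW],\Sigma,\Omega,\vartriangleright,\vartriangleleft)$ is an $n$-truncated pretriangulated category. By \cref{cor:proj-inj_in_H}, $(\BF,\ft)$ is an extriangulated structure on $\CH/[\CW]$.

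\emph{Compatibility.} We use the natural transformations $\partial^+$ and $\partial^-$ of \cref{prop:construction_of_dell}.
\begin{itemize}
\item Condition (a) of \cref{def:compatible_ET_PT}(1) is the content of \cref{prop:construction_of_dell} together with its dual.
\item Condition (b) is exactly \cref{lem:the_equation}. Its equivalent reformulation $\Omega(\partial^+(\delta))=-\eta_X\circ\partial^-(\delta)$ follows by transposing along the adjunction $\Sigma\dashv\Omega$ of \cref{prop:adjoint_Sigma_Omega}.
\end{itemize}

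\emph{Quasi-triangulated axioms.} We must show that in every right triangle in $\vartriangleright$ the third morphism is a $\ft$-deflation, and dually for left triangles. The first claim is the implication $(2)\Rightarrow(1)$ of \cref{prop:char_def}. The second is its dual: if $\Omega Z\to X\xrightarrow{\mathbf f}Y\to Z$ lies in $\vartriangleleft$, then $\mathbf f$ is a $\ft$-inflation. The dual argument runs through the standard left triangle, $(\ )_-$, and the dual of \cref{lem:cocone_in_H2}. Since the whole construction is self-dual, with $\CU\leftrightarrow\CV$, $\langle1\rangle\leftrightarrow\langle-1\rangle$ and $(\ )_+\leftrightarrow(\ )_-$, this needs no new ideas.

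\emph{Abelian $n$-truncated.} Condition (2)(a) of \cref{def:abel_n_trun_cat} now holds. Condition (2)(b), that $\ft$-deflations are exactly the $n$-epimorphisms, is the equivalence $(1)\Leftrightarrow(3)$ in \cref{prop:char_def}. Condition (2)(c), that $\ft$-inflations are exactly the $n$-monomorphisms, is the dual statement. Alternatively, one direction of (2)(c) already follows from \cref{lem:defl_are_n-epi}. For the converse we use the dual of \cref{prop:equivalence_Sigma_epi} with $j=n$: an $n$-monomorphism $\ovl{g}$ admits a triangle whose connecting morphism lies in $[\CV_0^{n-1}]$. The dual of \cref{lem:cocone_in_H2} then produces an $\fs_\CN$-triangle in $\CH$ realizing $\ovl g$ as a $\ft$-inflation.

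\emph{Expected obstacle.} The only real risk is whether these dual statements, which the paper mostly asserts "dually", really hold with the sign conventions fixed by $\eta^{\CT}$ and $\varepsilon^{\CT}$. In particular, the dual of \cref{lem:cocone_in_H2} needs its hypothesis $h\in[\CU_{-n+1}^0]$ replaced by $h[-1]\in[\CV_0^{n-1}]$, matching \cref{lem:char_for_Sn-deflation}(2). I would check this point carefully; everything else is a direct citation.
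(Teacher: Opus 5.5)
Your proposal is correct and follows the paper's own proof. Like the paper, you combine \cref{prop:pretriangulated} for the $n$-truncated pretriangulated structure, \cref{prop:construction_of_dell} together with \cref{lem:the_equation} for compatibility, and \cref{prop:char_def} with its dual for both the quasi-triangulated axioms and the identification of $\ft$-deflations/inflations with $n$-epimorphisms/$n$-monomorphisms. Your alternative argument for (2)(c) is simply the dual of the paper's proof of $(3)\Rightarrow(1)$ in \cref{prop:char_def}.
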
%
\begin{proof}
$(\CH/[\CW],\Sigma,\Omega,\vartriangleright,\vartriangleleft)$ is an $n$-truncated pretriangulated category by \cref{prop:pretriangulated}. 
By \cref{prop:construction_of_dell} and {\color{blue}\cref{lem:the_equation}}, the extriangulated structure $(\BF,\ft)$ is compatible with the pretriangulated structure. 
By \cref{prop:char_def} and its dual, we see that $(\CH/[\CW],\Sigma,\Omega,\vartriangleright,\vartriangleleft)$ together with $(\BF,\ft)$ is an 
\color{blue}
abelian $n$-truncated 
\normalcolor
category.
\end{proof}
\normalcolor

\color{blue}
\begin{corollary}\label{cor:homotopy_category_abelian_truncated}
\begin{enumerate}
\item If $\mathcal{A}$ is a small abelian $(n,1)$-category in the sense of
\cite[Definition~6.2.4]{Ste23}, then its homotopy category $h\mathcal{A}$ is equivalent to an abelian $n$-truncated category.
\item If $\A$ is a small abelian $n$-truncated DG-category in the sense of
\cite[Definition~3.12]{Moc25}, then its homotopy category $H^0(\A)$ is equivalent to an abelian $n$-truncated category.
\end{enumerate}
\end{corollary}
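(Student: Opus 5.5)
The plan is to realize each homotopy category as the heart of an $n$-cotorsion pair on a suitable triangulated category, and then apply \cref{thm:quasitri}. Since being an abelian $n$-truncated category is invariant under equivalence of additive categories (all the structures transport along an equivalence), it is enough to exhibit an equivalence with $\CH/[\CW]$ for some $n$-cotorsion pair $(\CU,\CV)$. Concretely, I would aim to show $h\mathcal{A}\simeq t^{\ge0}\cap t^{\le n-1}$ (respectively $H^0(\A)\simeq t^{\ge0}\cap t^{\le n-1}$) for some $t$-structure $(t^{\le0},t^{\ge0})$ on a triangulated category $\CT$. Then \cref{ex:t_structure_n_cotorsion_pair}(1) shows that $(t^{\le-1},t^{\ge n})$ is an $n$-cotorsion pair with $\CW=0$ and heart exactly $t^{\ge0}\cap t^{\le n-1}$.

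For (1), I would invoke the realization theorem for abelian $(n,1)$-categories in \cite{Ste23}. It says that a small abelian $(n,1)$-category $\mathcal{A}$ embeds as the extended heart $\mathcal{C}^{\ge0}\cap\mathcal{C}^{\le n-1}$ of a $t$-structure on a stable $\infty$-category $\mathcal{C}$; I would take for $\mathcal{C}$ a suitable bounded derived or stable envelope. Passing to homotopy categories, $\CT=h\mathcal{C}$ is triangulated and inherits the $t$-structure. Moreover, $h$ of a full subcategory is the full subcategory of $h\mathcal{C}$ on the same objects, so $h\mathcal{A}\simeq t^{\ge0}\cap t^{\le n-1}$ in $\CT$.

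For (2), I would argue in the same way using \cite{Moc25}. There, a small abelian $n$-truncated DG-category $\A$ is realized as the $n$-extended heart of a $t$-structure on a pretriangulated DG-category, for instance inside its bounded derived DG-category. Taking $H^0$ gives a triangulated category $\CT$ with a $t$-structure whose $n$-extended heart is equivalent to $H^0(\A)$.

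The main obstacle is matching exactly the hypotheses and conventions of the cited realization theorems. In particular, I would check two things: that the extended heart is $t^{\ge0}\cap t^{\le n-1}$ rather than a shifted variant (a shift is harmless anyway, since $[k]$ is a triangle autoequivalence carrying $t$-structures to $t$-structures), and that the equivalence holds at the level of homotopy categories of full subcategories. Smallness is needed only so that the ambient stable/DG categories exist as in those theorems. Once this is verified, the conclusion follows directly from \cref{thm:quasitri}.
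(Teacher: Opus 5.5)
Your proposal is correct and follows the paper's proof. In both cases you realize the homotopy category, after a harmless shift, as $t^{\ge0}\cap t^{\le n-1}$ for a $t$-structure: on $h\Db_\infty(\mathcal{A})$ via \cite[Theorem~6.3.2(1) and Remark~6.3.3]{Ste23}, resp.\ on $H^0(\Db_{\dg}(\A))$, and then apply \cref{ex:t_structure_n_cotorsion_pair} and \cref{thm:quasitri}. The one correction is that for (2) the paper takes the realization theorem from \cite[Theorem~5.4]{Plo26}, which gives $\A\simeq\Db_{\dg}(\A)^{(-n,0]}$; \cite{Moc25} is cited only as the source of the definition.
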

\begin{proof}
By \cite[Theorem~6.3.2(1) and Remark~6.3.3]{Ste23}, there is a stable
$\infty$-category $\Db_\infty(\mathcal{A})$ endowed with a bounded
$t$-structure such that $\mathcal{A}$ is equivalent to its $n$-extended heart.
Thus, after shifting the $t$-structure if necessary to match our indexing
convention, there is an equivalence
\[
h\mathcal{A}\simeq
\bigl(h\Db_\infty(\mathcal{A})\bigr)^{\geq 0}
\cap
\bigl(h\Db_\infty(\mathcal{A})\bigr)^{\leq n-1}.
\]
By \cref{ex:t_structure_n_cotorsion_pair}, the category on the right-hand side
is the heart of the $n$-cotorsion pair
$\bigl(t^{\leq -1},t^{\geq n}\bigr)$.
Hence it is an abelian $n$-truncated category by \cref{thm:quasitri}.

For~{\rm (2)}, \cite[Theorem~5.4]{Plo26} gives a $t$-structure on the bounded
derived DG-category $\Db_{\dg}(\A)$ and a quasi-equivalence
\[
\A\simeq \Db_{\dg}(\A)^{(-n,0]}.
\]
Passing to homotopy categories and, if necessary, shifting the induced
$t$-structure to match our indexing convention, we obtain an equivalence
\[
H^0(\A)\simeq
\bigl(H^0(\Db_{\dg}(\A))\bigr)^{\geq 0}
\cap
\bigl(H^0(\Db_{\dg}(\A))\bigr)^{\leq n-1}.
\]
The same argument using \cref{ex:t_structure_n_cotorsion_pair,thm:quasitri}
shows that the category on the right-hand side is abelian $n$-truncated.
\end{proof}
\normalcolor

\subsubsection{A sufficient condition for the heart to have enough projectives}

By \cref{prop:cotors_1-n}, in particular $\CU_{-n}^{-1}\subset\CT$ is extension-closed.
In the following proposition, we regard $\CU_{-n}^{-1}$ as an extriangulated category endowed with the extriangulated structure induced by that on $\CT$. 
\begin{proposition}\label{prop:enough-proj}
Let $\CP\subset \CU_{-n}^{-1}$ denote the full subcategory of all projective objects in the extriangulated category $\CU_{-n}^{-1}$. The following holds.
\begin{enumerate}
\item For any $P\in\CP$, the object $H(P)\in\CH/[\CW]$ is projective in $\CH/[\CW]$.
\item If $\CU_{-n}^{-1}$ has enough projective objects, then the heart $\CH/[\CW]$ also has enough projective objects. Moreover, the full subcategory of its projective objects agrees with $\add H(\CP)$, where $H(\CP)\subset \CH/[\CW]$ denotes the essential image of $\CP$ by the functor $H$.
\end{enumerate}
\end{proposition}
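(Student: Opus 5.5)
Projectivity of $H(P)$ means that $\BF(H(P),-)=0$. So the plan for (1) is to show that $\BF(H(P),A)=0$ for every $A\in\CH$, and then to derive (2) by covering every object of the heart by an $H(P)$ through a $\ft$-deflation.

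For (1), I first replace $H(P)$ by a convenient model. Since $P\in\CU_{-n}^{-1}\subset\CT^-$, Lemma~\ref{lem:self-dual_H} together with Remark~\ref{rem:essentially_surjective_H} gives $H(P)\cong P_+$, and $P_+\in\CH$ by Proposition~\ref{prop:functor_+_property}. For $A\in\CH$, the ideal quotient structure from \cite[Prop.~3.30]{NP19} gives $\BF(P_+,A)=\BE_\CN(P_+,A)$. Proposition~\ref{prop:coreflection_tri1}(3) then yields $\BE_\CN(P_+,A)\cong\BE_\CN(P,A)$. So it suffices to show $\BE_\CN(P,A)=0$.

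To do this, take $h\in\BE_\CN(P,A)$. By Lemma~\ref{lem:char_for_Sn-deflation}, $h$ factors as $P\xrightarrow{a}\widetilde U\xrightarrow{b}A[1]$ with $\widetilde U\in\CU_{-n+1}^0$. Since $P\in\CU_{-n}^{-1}$, I plan to complete $b[-1]$ to a triangle in order to exhibit $h$ as the class of an $\BE$-triangle inside $\CU_{-n}^{-1}$. Concretely, $\widetilde U[-1]\in\CU_{-n}^{-1}$. I would write $A\in\CH\subset\CT^-=\CU[-n]\ast\CW_{-n+1}^0$, and use $\CT(\CU_{-n}^{-1},\CW)=0$ and $\CT(\CU_{-n}^{-1},\CV_1^n)=0$ to show that $h$ factors through $A'[1]\to A[1]$ for some $A'\in\CU_{-n}^{-1}$ that is a ``cover'' of $A$. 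The natural choice is a triangle $A'\to A\to W_A$ with $W_A\in\CW$, if one exists. Then $h$ lifts to an element of $\BE(P,A')$ with $P,A'\in\CU_{-n}^{-1}$, and projectivity of $P$ forces that element, hence $h$, to vanish. This factorization is the main obstacle. I would first try it with the $\CW$-approximation coming from Lemma~\ref{lem:equation_T-T+}. If that fails, I would instead use that $\widetilde U\in\CU_{-n+1}^0$ directly gives $\BE(P,\widetilde U[-1])=0$ and push $a$ through $b$.

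For (2), given $A\in\CH$, I use enough projectives in $\CU_{-n}^{-1}$ to take a conflation $K\to P\xrightarrow{p}A''\dashrightarrow$ with $A''\in\CU_{-n}^{-1}$ chosen so that $H(A'')\cong A$. Then $H(p)$ should be an $n$-epimorphism. By Proposition~\ref{prop:equivalence_Sigma_epi}(5), this reduces to checking that the cone of $p$ lies in $\CU_{-n+1}^0$, which holds because $K[1]\in\CU_{-n+1}^0$. Proposition~\ref{prop:char_def} then makes $H(p)$ a $\ft$-deflation. Finally, every projective object of the heart is a direct summand of such a cover, which gives the description $\add H(\CP)$.
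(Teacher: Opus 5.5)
Your part (1) is correct and is essentially the paper's argument: pass from $H(P)\cong P_+$ to $P$ along $l_P$, then kill $h$ using the projectivity of $P$ in $\CU_{-n}^{-1}$. The $\CW$-approximation you hedge about does exist, since $\CT^-=\CU_{-n}^{-1}\ast\CU\subset\CU_{-n}^{-1}\ast\CU[-1]\ast\CW=\CU_{-n}^{-1}\ast\CW$. Your fallback is even shorter than the paper's route. If $h=b\circ a$ with $a\colon P\to\widetilde U$ and $\widetilde U\in\CU_{-n+1}^0$, then already $a\in\CT(P,\widetilde U)\cong\BE(P,\widetilde U[-1])=0$. The vanishing $\CT(\CU_{-n}^{-1},\CV_1^n)=0$ that you list is false in general; for example $\CT(t^{\le n-1},t^{\ge 0})\neq 0$ in \cref{ex:t_structure_n_cotorsion_pair}(1). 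You never need it, though.

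The genuine gap is in (2): in general there is no $A''\in\CU_{-n}^{-1}$ with $H(A'')\cong A$.
\begin{itemize}
\item Take $n=1$ and $(\CU,\CV)=(\CC,\CC)$ with $\CC$ a $2$-cluster tilting subcategory. Then $\CU_{-1}^{-1}=\CC[-1]$ satisfies $\BE(\CC[-1],\CC[-1])\cong\CT(\CC,\CC[1])=0$, so all its objects are projective.
\item By your own part (1), every such $H(A'')$ is then projective in $\CT/[\CC]$, which is not semisimple in general.
\item The natural candidate is a triangle $A'\xrightarrow{z}A\to W_A\to A'[1]$ with $A'\in\CU_{-n}^{-1}$ and $W_A\in\CW$. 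It only gives $H(z)\colon H(A')\to A$, which is generally not an isomorphism.
\end{itemize}
The fix is to cover $A$ by the composite $z\circ p\colon P\to A$, where $K\to P\xrightarrow{p}A'\dashrightarrow$ is a conflation in $\CU_{-n}^{-1}$. By the octahedron, its cocone $Y$ lies in $\CU_{-n}^{-1}\ast\CW[-1]=\CU_{-n}^{-1}$.

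A second step is also missing. \cref{prop:equivalence_Sigma_epi} and \cref{cor:implication_Sigma_epi} only concern morphisms between objects of $\CH$, and $P$ need not lie in $\CT^+$ (e.g.\ $P\in t^{\le n-1}$ in the $t$-structure case). So write $z\circ p=g\circ l_P$ with $g\colon P_+\to A$ (\cref{prop:functor_+}(1)), and complete $g$ to a triangle $X\to P_+\xrightarrow{g}A\xrightarrow{h}X[1]$. The octahedron for $P\xrightarrow{l_P}P_+\xrightarrow{g}A$ shows that $h$ factors through $Y[1]\in\CU_{-n+1}^0$. The cone $X[1]$ itself is only known to lie in $\CU_{-n+1}^0\ast\CU[1]$, so what you must verify is this factorization of $h$, not that the cone lies in $\CU_{-n+1}^0$. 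That factorization is exactly the hypothesis of \cref{lem:cocone_in_H2} (equivalently, condition (5) of \cref{prop:equivalence_Sigma_epi}). It makes $\ovl{g}\colon P_+\to A$ a $\ft$-deflation, with $P_+\cong H(P)$. With this in place, your final step (a projective object splits off its cover, so the projectives form $\add H(\CP)$) goes through.
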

\begin{proof}
{\rm (1)} Let $P\in\CP$ be any object. Since $\CP\subset\CU_{-n}^{-1}\subset\CT^-$, we have $H(P)\cong P_+$ in $\CH/[\CW]$.
Thus, it suffices to show %
that 
$\BE_{\CN}(P_+,X)=0$
holds for all $X\in\CH$. 
Let $h\in\BE_{\CN}(P_+,X)$ be any element. 
By the definition of $(\ )_+$, there is a distinguished triangle
\[
U'_P[-1]\xrightarrow{u^{\prime\prime}}P\xrightarrow{l_P}P_+\to U'_P
\]
with $U'_P\in\CU$ and $u^{\prime\prime}\in[\CU]$.
By $u^{\prime\prime}\in[\CU]$, %
it induces an $\fs_{\CN}^L$-triangle
$P\xrightarrow{l_P}P_+\to U'_P\dashrightarrow$.
This gives rise to an exact sequence
\begin{equation}\label{exact_p}
0\to\BE_{\CN}^L(P_+,X)\xrightarrow{(l_P)^{\ast}}\BE_{\CN}^L(P,X),
\end{equation}
since $\BE_{\CN}^L(U'_P,X)=0$ by \cref{lem:merged} {\rm (1)}. 
Also, since $X\in\CH\subset \CT^-$, there is a distinguished triangle 
$\widetilde U\xrightarrow{x}X\xrightarrow{w}W\to \widetilde{U}[1]$
with $\widetilde{U}\in\CU_{-n}^{-1}$ and $W\in\CW$.
By the dual of \cref{lem:merged} {\rm (1)}, we have $\BE_{\CN}(P_+,W)\subset\BE_{\CN}^R(P_+,W)=0$.
Since $w[1]\circ h=w_{\ast}h\in\BE_{\CN}(P_+,W)$, this implies $w[1]\circ h=0$. Thus there exists $h'\in\CT(P_+,\widetilde{U}[1])$ such that $h=x[1]\circ h'$.  
Since $P$ is projective in $\CU_{-n}^{-1}$, we have $\CT(P,\widetilde{U}[1])=\BE(P,\widetilde{U})=0$. This forces $h'\circ l_P=0$, which implies $(l_P)^{\ast}h=0$ in $\BE_{\CN}(P,X)\subset\BE_{\CN}^L(P,X)$. By the exactness of \eqref{exact_p}, we obtain $h=0$ as desired. 

{\rm (2)} Let $Z\in\CH$ be any object. It is enough to show that there exists a deflation $Q\to Z$ in $\CH/[\CW]$ from some object $Q\in H(\CP)$. Since $Z\in\CH\subset\CT^-$, there is a distinguished triangle 
$\widetilde U\xrightarrow{z}Z\to W\to \widetilde{U}[1]$
with $\widetilde{U}\in\CU_{-n}^{-1}$ and $W\in\CW$.
Since $\CU_{-n}^{-1}$ has enough projective objects by the assumption, there exists a distinguished triangle
\[
\widetilde{U}'\xrightarrow{p'} P\xrightarrow{p} \widetilde{U}\to \widetilde{U}'[1]
\]
\darkblueedit{in $\CT$ for some} $\widetilde{U}'\in\CU_{-n}^{-1}$ and $P\in\CP$. Apply $(\ )_+$ to obtain $P_+\in\CH$.
By definition, there is a distinguished triangle
$
U'_P[-1]\xrightarrow{u^{\prime\prime}}P\xrightarrow{l_P}P_+\to U'_P
$
in $\CT$ with $U'_P\in\CU$ and $u^{\prime\prime}\in[\CU]$.
By the octahedron axiom, we obtain a commutative diagram
\[
\begin{tikzpicture}[>=stealth]
\node (1) at (-0.98,1.8) {$\widetilde{U}'$};
\node (2) at (-1.19,0) {$Y$};
\node (3) at (-1.4,-1.7) {$W[-1]$};
\node (4) at (0,0) {$P$};
\node (5) at (0.45,-0.9) {$\widetilde{U}$};
\node (6) at (2.4,0) {$Z$};
\draw[->] (1) -- node[left,font=\scriptsize] {} (2);
\draw[->] (1) -- node[right,font=\scriptsize] {$p'$} (4);
\draw[->] (2) -- node[left,font=\scriptsize] {} (3);
\draw[->] (2) -- node[below,font=\scriptsize] {} (4);
\draw[->] (3) -- (5);
\draw[->] (4) -- node[left,font=\scriptsize] {$p$} (5);
\draw[->] (4) -- node[above,font=\scriptsize] {$z\circ p$} (6);
\draw[->] (5) -- node[below,font=\scriptsize] {$z$} (6);
\end{tikzpicture}
\]
for some $Y$ in $\CT$, in which $Y\to P\xrightarrow{z\circ p}Z\to Y[1]$ and $\widetilde{U}'\to Y\to W[-1]\to \widetilde{U}'[1]$ are distinguished triangles. 
In particular we have $Y\in\CU_{-n}^{-1}\ast\CW[-1]=\CU_{-n}^{-1}$.
 
Since $Z\in\CH$, there exists $g\in\CT(P_+,Z)$ such that $z\circ p=g\circ l_P$ by \cref{prop:functor_+} {\rm (1)}.
Complete $g$ into a distinguished triangle $X\to P_+\xrightarrow{g}Z\xrightarrow{h} X[1]$. Then, we obtain a commutative diagram
\[
\begin{tikzpicture}[>=stealth]
\node (11) at (6.02,1.8) {$U'_P[-1]$};
\node (12) at (5.81,0) {$Y$};
\node (13) at (5.6,-1.7) {$X$};
\node (14) at (7,0) {$P$};
\node (15) at (7.45,-0.9) {$P_+$};
\node (16) at (9.4,0) {$Z$};
\draw[->] (11) -- node[left,font=\scriptsize] {} (12);
\draw[->] (11) -- node[right,font=\scriptsize] {$u^{\prime\prime}$} (14);
\draw[->] (12) -- node[left,font=\scriptsize] {} (13);
\draw[->] (12) -- node[below,font=\scriptsize] {} (14);
\draw[->] (13) -- (15);
\draw[->] (14) -- node[left,font=\scriptsize] {$l_P$} (15);
\draw[->] (14) -- node[above,font=\scriptsize] {$z\circ p$} (16);
\draw[->] (15) -- node[below,font=\scriptsize] {$g$} (16);
\end{tikzpicture}
\]
in $\CT$, in which $U_P'[-1]\to Y\to X\to U_P'$ is a distinguished triangle.
Since $h$ factors through $Y[1]\in\CU_{-n+1}^0$, 
we obtain an $\fs_{\CN}$-triangle
$X' \to W'\oplus P_+\to Z\dashrightarrow$
in $\CH$ for some $X'\in\CH$ and $W'\in\CW$, by \cref{lem:cocone_in_H2}. Thus $Z$ admits a deflation from $Q=W'\oplus P_+\in H(\CP)$ in $\CH/[\CW]$.
\end{proof}

\begin{corollary}
Let $\CC\subset\CT$ be an $(n+1)$-cluster tilting subcategory. Then the extriangulated category $\CT/[\CC]$ has enough projectives, and the full subcategory of its projective objects agrees with \darkblueedit{$\add(\CC[-n])\subset\CT/[\CC]$}.
\end{corollary}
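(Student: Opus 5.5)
The plan is to specialize \cref{prop:enough-proj} to the $n$-cotorsion pair $(\CU,\CV)=(\CC,\CC)$ of \cref{ex:t_structure_n_cotorsion_pair}(2). Recall that in this case $\CW=\CN=\CC$, $\CT^-=\CT^+=\CH=\CT$, and the heart is $\CT/[\CC]$. We must check two things: that $\CC_{-n}^{-1}$ has enough projectives, and that its projectives are identified so that $\add H(\CP)=\add(\CC[-n])$ in $\CT/[\CC]$.

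\textbf{Identifying the projectives.} First I will show that every object of $\CC[-n]$ is projective in the extriangulated category $\CC_{-n}^{-1}=\CC[-n]\ast\cdots\ast\CC[-1]$. For $C\in\CC$ and $Y\in\CC_{-n}^{-1}$, the group $\BE(C[-n],Y)=\CT(C[-n],Y[1])$ is filtered by the groups $\CT(C,C'[n+1-k])$ with $1\le k\le n$. These are $\CT(C,C'[i])$ for $1\le i\le n$, and they vanish by $(n+1)$-rigidity, i.e.\ by condition (2) in the definition of an $n$-cotorsion pair. A standard induction on the number of factors in the iterated extension then gives $\BE(C[-n],Y)=0$.

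\textbf{Enough projectives.} Next I will show that $\CC_{-n}^{-1}$ has enough projectives in the form $\CC[-n]$. Any $Y\in\CC_{-n}^{-1}$ fits in a triangle $C[-n]\to Y\to Y'\to C[-n+1]$ with $Y'\in\CC_{-n+1}^{-1}$, using the $\ast$-decomposition. Rotating gives
\[
Y'[-1]\to C[-n]\to Y\to Y'.
\]
Here $Y'[-1]\in\CC_{-n}^{-2}\subset\CC_{-n}^{-1}$, so this is a conflation in $\CC_{-n}^{-1}$ ending in $Y$ with projective middle term. Hence $\CC_{-n}^{-1}$ has enough projectives. The projectives are then precisely the direct summands of objects of $\CC[-n]$, and since $\CC$ is closed under summands, $\CP=\CC[-n]$.

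\textbf{Concluding.} By \cref{prop:enough-proj}(2), $\CT/[\CC]$ has enough projectives, and its projectives form $\add H(\CC[-n])$. Since $\CH=\CT$ and $\CW=\CC$, \cref{rem:essentially_surjective_H} shows that $H$ restricted to $\CH=\CT$ is naturally isomorphic to the quotient functor $\CT\to\CT/[\CC]$. Hence $H(\CC[-n])$ is just $\CC[-n]$ viewed in $\CT/[\CC]$. Taking $\add$ in the quotient gives $\add(\CC[-n])\subset\CT/[\CC]$.

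The main obstacle is the projectivity computation. One must track the shift indices carefully to confirm that only $\CT(\CC,\CC[i])$ with $1\le i\le n$ occur, and the extension-closed induction on the length of $\CC_{-n}^{-1}$ must be handled correctly. Everything else is formal from \cref{prop:enough-proj}.
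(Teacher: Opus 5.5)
Your proposal is correct and follows the paper's own route: you specialize Proposition~\ref{prop:enough-proj} to the $n$-cotorsion pair $(\CC,\CC)$, use that $\CC_{-n}^{-1}$ has enough projectives with $\CP=\CC[-n]$, and use that $H$ is naturally isomorphic to the quotient functor $\CT\to\CT/[\CC]$. The paper simply asserts the facts about $\CC_{-n}^{-1}$, whereas you verify them: projectivity of $\CC[-n]$ via the vanishing of $\CT(\CC,\CC[i])$ for $1\le i\le n$, and enough projectives via the rotated triangle $Y'[-1]\to C[-n]\to Y\to Y'$.
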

\begin{proof}
This immediately follows from \cref{prop:enough-proj} applied to the $n$-cotorsion pair $(\CC,\CC)$, since $\CC_{-n}^{-1}$ has enough projective objects, whose full subcategory of all projective objects is $\CC[-n]$, and since the functor $H$ is naturally isomorphic to the functor taking the ideal quotient $\CT\to\CT/[\CC]$.
\end{proof}

\normalcolor

\subsection{The heart via extriangulated quotients}\label{subsec:via_extri_quotient}

In this subsection, we assume that $\CT$ is skeletally small.
Let $\CT_{\CN}=(\CT,\BE_\CN,\fs_\CN)$ and $\SS_{\CN}$ be as in \cref{def:substructure_of_T,def:LRS}.
As a corollary of \cref{prop:fully_faithful}, we have the following.
\begin{corollary}\label{cor:equiv_by_H}
Let
\color{blue}
$Q\colon\CT\to\widetilde{\CT}$ be the localization functor with respect to $\SS_{\CN}$. %
Then, there exists a unique functor $\widetilde{H}\colon\widetilde{\CT}\to\CH/[\CW]$ such that $\widetilde{H}\circ Q=H$. Moreover, such $\widetilde{H}$ is an equivalence of categories.
\normalcolor
\end{corollary}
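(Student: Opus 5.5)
The plan is to verify the universal property of localization directly from \cref{prop:fully_faithful}, together with fullness and essential surjectivity of $H$. Existence and uniqueness of $\widetilde{H}$ come first. By \cref{prop:fully_faithful}, $H$ sends every morphism of $\SS_{\CN}$ to an isomorphism in $\CH/[\CW]$. Hence, by the universal property of the localization $Q\colon\CT\to\widetilde{\CT}=\CT[\SS_{\CN}^{-1}]$ (which exists as a category since $\CT$ is skeletally small), there is a unique functor $\widetilde{H}$ with $\widetilde{H}\circ Q=H$.

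To show that $\widetilde{H}$ is an equivalence, I will construct a quasi-inverse $G\colon\CH/[\CW]\to\widetilde{\CT}$ by showing that $Q|_{\CH}\colon\CH\to\widetilde{\CT}$ factors through the ideal quotient $\CH/[\CW]$. Take $f\in[\CW](X,Y)$ with $X,Y\in\CH$, say $f=w_2\circ w_1$ through $W\in\CW$. Since $W\in\CN$, the morphism $0\to W$ lies in $\SS_{\CN}$: the triangle $W[-1]\to0\to W\xrightarrow{\id}W$ has both outer maps in $[\CN]$, and \cref{def:LRS}(3) applies. So $Q(W)\cong0$ and $Q(f)=0$. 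This yields $G$ with $G\circ\pi|_{\CH}=Q|_{\CH}$. Then $\widetilde{H}\circ G\cong\id$, because by \cref{rem:essentially_surjective_H} the restriction $H|_{\CH}$ is isomorphic to the quotient functor.

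For the other composite, $G\circ\widetilde{H}\cong\id_{\widetilde{\CT}}$, it suffices to give a natural isomorphism $Q\cong G\circ H$ of functors $\CT\to\widetilde{\CT}$; uniqueness in the universal property of $Q$ then upgrades it to $\id\cong G\circ\widetilde{H}$. For each $X\in\CT$, the zigzag
\[
X\xrightarrow{l_X}X_+\xleftarrow{r_{X_+}}(X_+)_-
\]
consists of morphisms in $\SS_{\CN}$ by \cref{lem:fully_faithful1}, and $(X_+)_-\in\CH$ represents $H(X)$. Applying $Q$ therefore gives an isomorphism $Q(X)\cong Q((X_+)_-)=G(H(X))$.

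The step I expect to be the main obstacle is naturality of this isomorphism. Given $f\colon X\to Y$, diagram \eqref{dia_lr} provides $f'$ and $f''$ with commutative squares in $\CT$, and the remaining point is $Q(f'')=G(H(f))$. This holds because $\overline{f''}=H(f)$ in $\CH/[\CW]$, and $G$ is defined on classes modulo $[\CW]$, which $Q$ already kills by the previous paragraph. Once naturality is settled, $\widetilde{H}$ and $G$ are mutually quasi-inverse, so $\widetilde{H}$ is an equivalence.
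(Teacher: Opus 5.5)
Your proof is correct, but it is organized differently from the paper's. The paper obtains $\widetilde{H}$ from the universal property and then asserts in one line that $\widetilde{H}$ is fully faithful ``by \cref{prop:fully_faithful}'' and essentially surjective by \cref{rem:essentially_surjective_H}. You instead construct an explicit quasi-inverse $G\colon\CH/[\CW]\to\widetilde{\CT}$, $\ovl{f}\mapsto Q(f)$. This is the same functor the paper only introduces later, before \cref{thm:extri_heart}, where it uses this corollary to conclude that $G$ is quasi-inverse to $\widetilde{H}$. You then check $G\circ\widetilde{H}\cong\id$ using the zigzags $X\xrightarrow{l_X}X_+\xleftarrow{r_{X_+}}(X_+)_-$ of \cref{lem:fully_faithful1}, the diagram \eqref{dia_lr}, and the $2$-dimensional universal property of $Q$.

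Your route buys two things. First, you only need the implication (1)$\Rightarrow$(2) of \cref{prop:fully_faithful}, not the direction (2)$\Rightarrow$(1). Second, you never need a description of the hom-sets of $\widetilde{\CT}$. This matters because the fact that $H$ inverts exactly $\SS_{\CN}$, together with essential surjectivity, is not in general enough on its own to make the induced functor full. Your argument therefore makes explicit what the paper's one-line justification leaves implicit. The paper's formulation, in turn, is shorter and foregrounds that $\SS_{\CN}$ is precisely the class inverted by $H$.

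Two points to tighten.
\begin{enumerate}
\item To descend $Q|_{\CH}$ along $\CH\to\CH/[\CW]$ you need $Q(f)=Q(g)$ whenever $f-g\in[\CW]$. The fact that ``$Q$ kills $[\CW]$'' gives this only once you know $\widetilde{\CT}$ and $Q$ are additive, which holds by \cref{fact:Thm2.20}. Alternatively, argue directly: if $f-g=w_2\circ w_1$ through $W\in\CW$, then $f$ and $g$ are obtained from $[g\ w_2]\colon X\oplus W\to Y$ by composing with the two sections $\begin{bsmallmatrix}1\\ w_1\end{bsmallmatrix}$ and $\begin{bsmallmatrix}1\\ 0\end{bsmallmatrix}$ of the projection $X\oplus W\to X$. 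That projection lies in $\CR\subset\SS_{\CN}$, so $Q$ identifies the two sections.
\item Lifting $Q\cong G\circ\widetilde{H}\circ Q$ to $\id\cong G\circ\widetilde{H}$ uses that precomposition with $Q$ is fully faithful on functor categories (Gabriel--Zisman). Uniqueness of the induced functor alone is not enough.
\end{enumerate}
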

\begin{proof}
$\widetilde{H}$ is induced by the universality of the localization, and it is fully faithful by Proposition~\ref{prop:fully_faithful}.
Since $H$ is essentially surjective by Remark~\ref{rem:essentially_surjective_H}, so is
$\widetilde{H}$.
\end{proof}

We recall that $\CH/[\CW]$ is endowed with an extriangulated structure $(\BF,\ft)$ in \cref{cor:proj-inj_in_H}.
\color{blue}
In \cref{thm:extri_heart}, we show that $(\widetilde{\CT},\widetilde{\BE_{\CN}},\widetilde{\fs_{\CN}})$ and $(\CH/[\CW],\BF,\ft)$ \darkblueedit{are equivalent} as extriangulated categories.
\color{blue}
The following is from \cite{Oga24}.
\normalcolor
\begin{fact}\label{fact:Thm2.20}(\cite[Proposition~2.17, Theorem~2.20]{Oga24})
\color{blue}
Let $Q\colon\CT\to\widetilde{\CT}$ be the localization functor with respect to $\SS_{\CN}$.
Then, $\SS_{\CN}$ is saturated in the sense that $Q(f)$ is an isomorphism in $\widetilde{\CT}$ if and only if $f\in\SS_{\CN}$, for any morphism $f$ in $\CT$. 
Moreover, the image of $\SS_{\CN}$ in $\CT/[\CN]$
\normalcolor
satisfies conditions {\rm (MR1)},\,\ldots,\,{\rm (MR4)} introduced in \cite{NOS22}. As a consequence, 
\color{blue}
by \cite[Theorem~3.5]{NOS22}, the category $\widetilde{\CT}$ has a structure of an extriangulated category $(\widetilde{\CT},\widetilde{\BE_{\CN}},\widetilde{\fs_{\CN}})$. Moreover, the functor $Q$ gives an exact functor $(Q,\mu)\colon(\CT,\BE_{\CN},\fs_{\CN})\to(\widetilde{\CT},\widetilde{\BE_{\CN}},\widetilde{\fs_{\CN}})$, which satisfies the universality for exact functors $(F,\phi)\colon(\CT,\BE_{\CN},\fs_{\CN})\to(\CD,\BE',\fs')$ with $\CN\subset\Ker F$.
\color{blue}
\normalcolor
\end{fact}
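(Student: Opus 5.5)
This Fact is quoted from \cite{Oga24}, so the plan is to check that our data meet the hypotheses of \cite[Proposition~2.17, Theorem~2.20]{Oga24} and then transport the conclusions.

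\textbf{Hypotheses.} The standing hypothesis there is an extension-closed subcategory $\CN$ of a triangulated category, equipped with the relative structure $\BE_\CN=\BE^L_\CN\cap\BE^R_\CN$. In our situation:
\begin{itemize}
\item $\CN=\add(\CU\ast\CV)$ is extension-closed, because $\CU\ast\CV$ is extension-closed (as noted at the start of Section~\ref{subsection:ET_heart}).
\item $\CT$ is assumed skeletally small in this subsection, so the localization $Q$ exists.
\item \cite[Corollary~2.8]{Oga24} shows that $\CN$ is thick in $(\CT,\BE_\CN,\fs_\CN)$.
\end{itemize}
Consequently, the class $\SS_\CN$ described in Definition~\ref{def:LRS} is the class attached to $\CN$ in \cite[Definition~4.3]{NOS22}.

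\textbf{Saturation.} Here there is a self-contained argument that avoids \cite{Oga24}. Corollary~\ref{cor:equiv_by_H} gives an equivalence $\widetilde H$ with $\widetilde H\circ Q=H$, so $Q(f)$ is an isomorphism exactly when $H(f)$ is. Proposition~\ref{prop:fully_faithful} says $H(f)$ is an isomorphism exactly when $f\in\SS_\CN$. Together these give saturation.

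\textbf{Conditions (MR1)--(MR4).} These must be checked for the image $\overline{\SS_\CN}$ in $\CT/[\CN]$:
\begin{itemize}
\item (MR1), closure under composition and $2$-out-of-$3$, follows from \cite[Corollary~2.18]{Oga24}, which was already used in Proposition~\ref{prop:fully_faithful}.
\item (MR2), the Ore and cancellation conditions in $\CT/[\CN]$, will be verified with the octahedral axiom, using the decomposition $\SS_\CN=\CR\circ\CL$ and the triangle description in Definition~\ref{def:LRS}.
\item (MR3) and (MR4) require that $\SS_\CN$ be stable under compatible base and cobase change along $\fs_\CN$-triangles, and that composites of the form $\CR\circ(\text{deflation})\circ\CR$ be deflations up to $\SS_\CN$.
\end{itemize}

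\textbf{Main obstacle.} I expect (MR4) to be the hardest part. The approach is to take a $\fs_\CN$-deflation and a morphism in $\CR$ given by a triangle whose connecting map lies in $[\CN]$. One then glues them with the octahedral axiom and uses that $\BE_\CN$ is closed (\cite[Proposition~2.1]{Oga24}) to show that the resulting triangle is again an $\fs_\CN$-triangle. This is exactly the content of \cite[Theorem~2.20]{Oga24}, so in the paper it suffices to cite it.

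\textbf{Conclusion.} Once (MR1)--(MR4) hold, \cite[Theorem~3.5]{NOS22} directly supplies the extriangulated structure $(\widetilde\CT,\widetilde{\BE_\CN},\widetilde{\fs_\CN})$ and the exact functor $(Q,\mu)$, together with the universal property for exact functors $F$ with $\CN\subset\Ker F$.
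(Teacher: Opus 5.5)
Structurally you do what the paper does. The paper gives no argument for this Fact beyond quoting \cite[Proposition~2.17, Theorem~2.20]{Oga24} and \cite[Theorem~3.5]{NOS22}, and for (MR1)--(MR4) and the final conclusion you cite the same results. Your one departure is deriving saturation from \cref{cor:equiv_by_H} and \cref{prop:fully_faithful}. This is valid in the paper's logical order, since both results precede the Fact and do not use it. It also needs less than you invoke: only the factorization $\widetilde{H}\circ Q=H$ is used, not that $\widetilde{H}$ is an equivalence. Indeed, if $Q(f)$ is invertible then so is $H(f)=\widetilde{H}(Q(f))$, hence $f\in\SS_{\CN}$ by \cref{prop:fully_faithful}; the converse holds by the definition of $Q$. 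This spares you the full faithfulness part of \cref{cor:equiv_by_H}. The route ties saturation to the heart via $H$, but only for this particular $\CN$, whereas \cite{Oga24} treats an arbitrary extension-closed subcategory.

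However, the argument does not ``avoid \cite{Oga24}'' as you claim. The proof of \cref{prop:fully_faithful} rests on the triangle description of $\SS_{\CN}$ from \cite[Proposition~2.11]{Oga24}. In its reduction to $X,Y\in\CH$, it also rests on the 2-out-of-3 property of $\SS_{\CN}$ from \cite[Corollary~2.18]{Oga24}. Given saturation, 2-out-of-3 is immediate, because isomorphisms in $\widetilde{\CT}$ satisfy it. That is presumably why it is numbered in \cite{Oga24} as a corollary directly after Proposition~2.17. So unless you prove 2-out-of-3 independently, your argument cannot replace the citation of that proposition.

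There are also two smaller points. First, 2-out-of-3 in $\CT$ is not yet (MR1), which concerns the image of $\SS_{\CN}$ in $\CT/[\CN]$. You also need membership in $\SS_{\CN}$ to be insensitive to adding morphisms in $[\CN]$. This does follow from saturation: $0\to N$ lies in $\CL\subset\SS_{\CN}$ for $N\in\CN$, so $Q$ annihilates $[\CN]$. Second, your paraphrases of (MR3) and (MR4) do not match \cite{NOS22}. (MR3) asks that end morphisms in $\overline{\SS}$ between two $\fs$-triangles, compatible with their extensions, be completed by a middle morphism in $\overline{\SS}$. (MR4) asks that $\overline{\SS}\circ\{\text{inflations}\}\circ\overline{\SS}$ and its deflation analogue be closed under composition in $\CT/[\CN]$. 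Since you defer these to \cite[Theorem~2.20]{Oga24} anyway, this does no harm to the proof, but your sketch does not describe what actually has to be checked.
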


\color{blue}
\begin{remark}
In the above, the notion of an \emph{exact functor} between extriangulated categories is nothing but that of an \emph{extriangulated functor} introduced in \cite[Definition~2.23]{B-TS21}. See \cite[Definition~2.11 and Remark~2.12]{NOS22} for details.
\end{remark}

\color{blue}
We note 
\normalcolor
that the functor taking the ideal quotient $\textcolor{blue}{\pi}\colon \CH\to\CH/[\CW]$ and the inclusion of the extension-closed subcategory $i\colon\CH\to\CT$ can be endowed with a natural structure of exact functors
\[
\textcolor{blue}{(\pi,\nu)}\colon(\CH,\BE_{\CN}|_{\CH},\fs_{\CN}|_{\CH})\to(\CH/[\CW],\BF,\ft)
\quad  \text{and} \quad
(i,\iota)\colon (\CH,\BE_{\CN}|_{\CH},\fs_{\CN}|_{\CH})\to(\CT,\BE_{\CN},\fs_{\CN}).
\]
As stated in \cite[Remark~3.35]{NOS22}, the exact functor $\textcolor{blue}{(\pi,\nu)}$ can be also regarded as a quotient of extriangulated functors. By its universality, we obtain an exact functor $(G,\psi)\colon (\CH/[\CW],\BF,\ft)\to(\widetilde{\CT},\widetilde{\BE_{\CN}},\widetilde{\fs_{\CN}})$ that makes the diagram
\[
\begin{tikzpicture}[>=stealth]
\node (1) at (-1.6,0.8) {$(\CH,\BE_{\CN}|_{\CH},\fs_{\CN}|_{\CH})$};
\node (2) at (1.6,0.8) {$(\CT,\BE_{\CN},\fs_{\CN})$};
\node (3) at (-1.6,-0.8) {$(\CH/[\CW],\BF,\ft)$};
\node (4) at (1.6,-0.8) {$(\widetilde{\CT},\widetilde{\BE_{\CN}},\widetilde{\fs_{\CN}})$};
\draw[->] (1) -- node[above,font=\scriptsize] {$(i,\iota)$} (2);
\draw[->] (1) -- node[left,font=\scriptsize] {$\textcolor{blue}{(\pi,\nu)}$} (3);
\draw[->] (2) -- node[right,font=\scriptsize] {$(Q,\mu)$} (4);
\draw[->] (3) -- node[below,font=\scriptsize] {$(G,\psi)$} (4);
\end{tikzpicture}
\]
commutative. 
Explicitly, the functor $G\colon\CH/[\CW]\to\widetilde{\CT}$ sends each object $X$ in $\CH/[\CW]$ to $X$ in $\widetilde{\CT}$, and each morphism $\mathbf{f}=\overline{f}$ in $\CH/[\CW]$ to $Q(f)=Q\circ i(f)$ in $\widetilde{\CT}$. The natural transformation $\psi\colon\BF\Rightarrow\widetilde{\BE_{\CN}}$ sends each $\delta\in\BF(Z,X)$ to 
\[
[Z\xleftarrow{\id}Z\overset{\textcolor{blue}{\widehat{\delta}}}{\dashrightarrow}X\xleftarrow{\id}X] \in \widetilde{\BE_{\CN}}(Z,X),
\]
for all $X,Z\in\CH/[\CW]$.

\color{blue}
To explain the notation above, we briefly recall the construction of the functor
$\widetilde{\BE_{\CN}}\colon\widetilde{\CT}^{\op}\times\widetilde{\CT}\to\Ab$
following the construction in \cite{NOS22}.
First, define an additive subfunctor
$\CK$ of $\BE_{\CN}\colon\CT^{\op}\times\CT\to\Ab$ by
\begin{eqnarray*}
\CK(C,A)
&=&
\{\delta\in\BE_{\CN}(C,A)\mid s_{\ast}\delta=0 \ \text{for some}\ s\in\SS_{\CN}(A,A')\}\\
&=&
\{\delta\in\BE_{\CN}(C,A)\mid t^{\ast}\delta=0 \ \text{for some}\ t\in\SS_{\CN}(C',C)\}
\end{eqnarray*}
for all $A,C\in\Ob(\CT)$ (\cite[Definition~3.14 and Proposition~3.15]{NOS22}),
where we put
\[
\SS_{\CN}(X,Y)
=
\{s\in\CT(X,Y)\mid s\in\SS_{\CN}\}
\]
for each $X,Y\in\CT$. Then we obtain a biadditive functor
\[ \overline{\BE_{\CN}}\colon(\CT/[\CN])^{\op}\times(\CT/[\CN])\to\Ab \]
defined by the following {\rm (i)}, {\rm (ii)} and {\rm (iii)}. See \cite[Definition~3.16]{NOS22} for details. To avoid confusion with the image $\overline{f}$ in $\CH/[\CW]$, in this article we shall henceforth write $\widehat{f}$ for the image of $f\in\CT(X,Y)$ in $(\CT/[\CN])(X,Y)$, and $\widehat{\delta}$ for the image of $\delta\in\BE_{\CN}(Z,X)$ in $\BE_{\CN}(Z,X)/\CK(Z,X)$.
\begin{enumerate}
\renewcommand{\labelenumi}{(\roman{enumi})}
\item $\overline{\BE_{\CN}}(C,A)=\BE_{\CN}(C,A)/\CK(C,A)$ for all $A,C\in\Ob(\CT/[\CN])=\Ob(\CT)$.
\item $\widehat{a}_{\ast}\widehat{\delta}=\widehat{a_{\ast}\delta}$ for any $\widehat{\delta}\in\overline{\BE_{\CN}}(C,A)$ and $\widehat{a}\in(\CT/[\CN])(A,A')$.
\item $\widehat{c}^{\ast}\widehat{\delta}=\widehat{c^{\ast}\delta}$ for any $\widehat{\delta}\in\overline{\BE_{\CN}}(C,A)$ and $\widehat{c}\in(\CT/[\CN])(C',C)$.
\end{enumerate}

\begin{remark}
Since $\SS_{\CN}$ is saturated, any isomorphism in $\CT/[\CN]$ belongs to the image of $\SS_{\CN}$ in $\CT/[\CN]$. In particular, $\overline{\SS_{\CN}}$ in \cite{NOS22} agrees with the image of $\SS_{\CN}$ in $\CT/[\CN]$. In fact, for any morphism $s$ in $\CT$, we have $\widehat{s}\in\overline{\SS_{\CN}}$ if and only if $s\in\SS_{\CN}$.
\end{remark}

\begin{lemma}\label{lem:equalF}
If $X,Z\in\CH$, then we have $\CK(Z,X)=0$. Thus, we may naturally identify $\overline{\BE_{\CN}}(Z,X)=\BE_{\CN}(Z,X)=\BF(Z,X)$ for all $X,Z\in\CH$.
\end{lemma}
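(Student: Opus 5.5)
We must show: for $X,Z\in\CH$ and $\delta\in\BE_{\CN}(Z,X)$, if $s_{\ast}\delta=0$ for some $s\in\SS_{\CN}(X,X')$, then $\delta=0$. The plan is to push $\delta$ into the heart via $l_{X'}$ and $H$, and then use the injectivity results for $(\ )_+$ and $(\ )_-$ proved above.

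\emph{Step 1: reduce $X'$ to an object of $\CH$.} Starting from $s\colon X\to X'$ in $\SS_{\CN}$ with $s_{\ast}\delta=0$, compose with $l_{X'}\colon X'\to X'_+$ and then with a suitable map to $H(X')=(X'_+)_-$. Because $X\in\CH$, the adjunction properties give a morphism $s'\colon X\to H(X')$ in $\CT$ with $r_{X'_+}\circ s'$ equal to $l_{X'}\circ s$ up to $[\CW]$. The correction term factors through $\CW$, and $\BE_{\CN}(Z,W)=0$ for $W\in\CW$ (projective–injectivity, \cref{cor:proj-inj_in_H}). So we still get $(l_{X'}\circ s)_{\ast}\delta=0$, and hence $(r_{X'_+}\circ s')_{\ast}\delta=0$.

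\emph{Step 2: strip off $r_{X'_+}$.} By \cref{prop:coreflection_tri1} (dual part), $(r_{X'_+})_{\ast}\colon\BE_{\CN}(Z,(X'_+)_-)\to\BE_{\CN}(Z,X'_+)$ is an isomorphism since $Z\in\CT^-$. Hence $s'_{\ast}\delta=0$ with $s'\colon X\to Y:=H(X')\in\CH$. By the 2-out-of-3 property of $\SS_{\CN}$ and \cref{lem:fully_faithful1}, $s'\in\SS_{\CN}$. By \cref{prop:fully_faithful}, $\overline{s'}$ is an isomorphism in $\CH/[\CW]$.

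\emph{Step 3: conclude $\delta=0$.} Choose $t\in\CH(Y,X)$ with $\overline{t}\,\overline{s'}=\id$, so $\id_X-t\circ s'$ factors through some $W\in\CW$. Then
\[
\delta=(t\circ s')_{\ast}\delta+(\id_X-t\circ s')_{\ast}\delta .
\]
The first term vanishes because $s'_{\ast}\delta=0$. The second vanishes because it factors through $\BE_{\CN}(Z,W)=0$. Hence $\CK(Z,X)=0$.

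\emph{Identification.} Once $\CK(Z,X)=0$, we have $\overline{\BE_{\CN}}(Z,X)=\BE_{\CN}(Z,X)$. Moreover $\BF(Z,X)=\BE_{\CN}|_{\CH}(Z,X)$, since the ideal quotient by projective–injectives does not change $\BE$ (\cite[Prop.~3.30]{NP19}). These identifications are compatible with the actions of morphisms in $\CH/[\CW]$.

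\emph{Main obstacle.} The delicate point is Step 1: passing from $l_{X'}\circ s$ in $\CT$ to $r_{X'_+}\circ s'$ correctly, given that equalities only hold modulo $[\CW]$. One must check that every correction term factors through $\CW$ on the side where $\BE_{\CN}$ vanishes, namely the second variable, which needs injectivity of $W$. If this route becomes awkward, the fallback is to apply \cref{prop:coreflection_tri1}(3) directly in the first variable using the equivalent description of $\CK$ via $t^{\ast}\delta=0$.
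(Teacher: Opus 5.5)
Your proof is correct and follows the paper's argument. You lift $l_{X'}\circ s$ along $r_{X'_+}$, strip off $(r_{X'_+})_{\ast}$ using Proposition~\ref{prop:coreflection_tri1}, and get $\overline{s'}$ invertible from 2-out-of-3 and Proposition~\ref{prop:fully_faithful}. Your final use of the injectivity of $\CW$ is exactly what the paper's step $\delta=(\overline{a'})^{-1}_{\ast}\overline{a'}_{\ast}\delta$ in $\BF(Z,X)=\BE_{\CN}(Z,X)$ encodes.

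Your ``main obstacle'' does not actually arise. Since $X\in\CT^-$, the dual of Proposition~\ref{prop:functor_+}~(1) gives $s'$ with $r_{X'_+}\circ s'=l_{X'}\circ s$ exactly in $\CT$. That exact equality is also what makes your 2-out-of-3 step legitimate as stated; an equality only modulo $[\CW]$ would not suffice there.
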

\begin{proof}
Suppose that an element $\delta\in\BE_{\CN}(Z,X)$ belongs to $\CK(Z,X)$. By definition, there exist $A\in\CT$ and $a\in\SS_{\CN}(X,A)$ such that $a_{\ast}\delta=0$ in $\BE_{\CN}(Z,A)$.
Take $l_A\colon A\to A_+$ and $r_{(A_+)}\colon (A_+)_-\to A_+$ as in \cref{def:functor_+,def:functor_-}. 
By the dual of \cref{prop:functor_+} {\rm (1)}, there exists $a'\in\CT(X,(A_+)_-)$ such that $r_{(A_+)}\circ a'=l_A\circ a$.
By the $2$-out-of-$3$ property of $\SS_{\CN}$ shown in \cite[Corollary~2.18]{Oga24} and \cref{lem:fully_faithful1}, we obtain $a'\in\SS_{\CN}$.
Also, we have
\[
(r_{(A_+)})_{\ast}a'_{\ast}\delta=(l_A)_{\ast}a_{\ast}\delta=0
\]
in $\BE_{\CN}(Z,A_+)$.
Since $(r_{(A_+)})_{\ast}\colon\BE_{\CN}(Z,(A_+)_-)\to \BE_{\CN}(Z,A_+)$ is an isomorphism by \cref{prop:coreflection_tri1}, this implies $a'_{\ast}\delta=0$ in $\BE_{\CN}(Z,(A_+)_-)$.
From $a'\in\SS_{\CN}$, it follows that $H(a')$ is an isomorphism in $\CH/[\CW]$ by \cref{prop:fully_faithful}.
Since $Z,(A_+)_-\in\CH$, this means that $\overline{a'}$ is an isomorphism in $\CH/[\CW]$.
Thus we have
\[
\delta=(\overline{a'})^{-1}_{\ast}\overline{a'}_{\ast}\delta=0
\]
in $\BF(Z,X)$, and hence $\delta=0$ holds in $\BE_{\CN}(Z,X)$.
\end{proof}

For any pair of objects $A,C\in\Ob(\widetilde{\CT})=\Ob(\CT)$, we define $\widetilde{\BE_{\CN}}(C,A)$ to be the set of equivalence classes of triples
\[ (C\xleftarrow{\widehat{t}}C'\overset{\widehat{\delta}}{\dashrightarrow}A'\xleftarrow{\widehat{s}}A) \]
with $A',C'\in\Ob(\CT)$, $s,t\in\SS_{\CN}$ and $\delta\in\BE_{\CN}(C',A')$, where the equivalence relation is given as follows. See \cite[Proposition~3.19]{NOS22} for details.
\begin{itemize}
\item Triples
$(C\xleftarrow{\widehat{t_i}}C_i\overset{\widehat{\delta_i}}{\dashrightarrow}A_i\xleftarrow{\widehat{s_i}}A)$
with $A_i,C_i\in\Ob(\CT)$, $s_i,t_i\in\SS_{\CN}$ and $\delta_i\in\BE_{\CN}(C_i,A_i)$ for $i=1,2$, are equivalent if and only if there exists a triple
$(C\xleftarrow{\widehat{t}}C'\overset{\widehat{\delta}}{\dashrightarrow}A'\xleftarrow{\widehat{s}}A)$
with $A',C'\in\Ob(\CT)$, $s,t\in\SS_{\CN}$, $\delta\in\BE_{\CN}(C',A')$, and morphisms $c_i\in\SS_{\CN}(C',C_i)$, $a_i\in\SS_{\CN}(A_i,A')$ such that 
\[
\widehat{s}=\widehat{a_i}\circ\widehat{s_i},\quad
\widehat{t}=\widehat{t_i}\circ\widehat{c_i},\quad \text{and}\quad 
\widehat{\delta}=\widehat{a_i}_{\ast}\widehat{c_i}^{\ast}\widehat{\delta_i}
\]
for $i=1,2$.
This situation is illustrated by the following diagram.
\[
\begin{tikzpicture}[>=stealth]
\node (1) at (-1.2,0) {$C'$};
\node (2) at (-1.2,1.5) {$C_1$};
\node (3) at (-1.2,-1.5) {$C_2$};
\node (4) at (-3.4,0) {$C$};
\node (5) at (3.4,0) {$A$};
\node (6) at (1.2,1.5) {$A_1$};
\node (7) at (1.2,-1.5) {$A_2$};
\node (8) at (1.2,0) {$A'$};
\draw[->] (1) -- node[right,font=\scriptsize] {$\widehat{c_1}$} (2);
\draw[->] (1) -- node[right,font=\scriptsize] {$\widehat{c_2}$} (3);
\draw[->] (1) -- node[above,font=\scriptsize] {$\widehat{t}$} (4);
\draw[->] (2) -- node[above,font=\scriptsize] {$\widehat{t_1}$} (4);
\draw[->] (3) -- node[below,font=\scriptsize] {$\widehat{t_2}$} (4);
\draw[->, dashed] (2) -- node[above,font=\scriptsize] {$\widehat{\delta_1}$} (6);
\draw[->, dashed] (3) -- node[below,font=\scriptsize] {$\widehat{\delta_2}$} (7);
\draw[->, dashed] (1) -- node[above,font=\scriptsize] {$\widehat{\delta}$} (8);
\draw[->] (5) -- node[above,font=\scriptsize] {$\widehat{s_1}$} (6);
\draw[->] (5) -- node[below,font=\scriptsize] {$\widehat{s_2}$} (7);
\draw[->] (5) -- node[above,font=\scriptsize] {$\widehat{s}$} (8);
\draw[->] (6) -- node[left,font=\scriptsize] {$\widehat{a_1}$} (8);
\draw[->] (7) -- node[left,font=\scriptsize] {$\widehat{a_2}$} (8);
\end{tikzpicture}
\]
\end{itemize}
We denote the equivalence class of $(C\xleftarrow{\widehat{t}}C'\overset{\widehat{\delta}}{\dashrightarrow}A'\xleftarrow{\widehat{s}}A)$ by $[C\xleftarrow{\widehat{t}}C'\overset{\widehat{\delta}}{\dashrightarrow}A'\xleftarrow{\widehat{s}}A]$.
By definition, we have
\begin{equation}\label{ENT}
\widetilde{\BE_{\CN}}(C,A)=\Set{[C\xleftarrow{\widehat{t}}C'\overset{\widehat{\delta}}{\dashrightarrow}A'\xleftarrow{\widehat{s}}A]\,|\, \begin{array}{l}A',C'\in\Ob(\CT),\\ s,t\in\SS_{\CN},\\  \delta\in\BE_{\CN}(C',A')\end{array}}
\end{equation}

\begin{remark}
In \cite{NOS22}, we assumed that $\CT$ is small.
If $\CT$ is skeletally small with skeleton $\mathrm{sk}\CT$, then in \eqref{ENT} it suffices to consider $A',C'\in\mathrm{sk}\CT$. 
Hence $\widetilde{\BE_{\CN}}$ may be defined by considering only such representatives.
\end{remark}

\normalcolor

\begin{theorem}\label{thm:extri_heart}
The exact functor $(G,\psi)\colon (\CH/[\CW],\BF,\ft)\to(\widetilde{\CT},\widetilde{\BE_{\CN}},\widetilde{\fs_{\CN}})$ gives an equivalence of extriangulated categories.
\end{theorem}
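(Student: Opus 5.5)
To show that $(G,\psi)$ is an equivalence of extriangulated categories, I would verify two things. First, $G$ is an equivalence of the underlying categories. Second, $\psi_{Z,X}\colon\BF(Z,X)\to\widetilde{\BE_{\CN}}(Z,X)$ is an isomorphism for all $X,Z\in\CH/[\CW]$. By the general theory of exact functors (an exact functor whose underlying functor is an equivalence and whose $\psi$ is a natural isomorphism is an equivalence of extriangulated categories, with quasi-inverse obtained by transport of structure), this suffices.

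For the first point, I would compare $G$ with the equivalence $\widetilde{H}\colon\widetilde{\CT}\to\CH/[\CW]$ of \cref{cor:equiv_by_H}. For $X\in\CH$ and $\overline{f}\in(\CH/[\CW])(X,Y)$ we have
\[
\widetilde{H}G(\overline{f})=\widetilde{H}Q(f)=H(f),
\]
and $H|_{\CH}$ is naturally isomorphic to the quotient functor $\pi$ by \cref{rem:essentially_surjective_H}. Hence $\widetilde{H}\circ G\cong\id_{\CH/[\CW]}$. Since $\widetilde{H}$ is an equivalence, so is $G$.

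For the second point, injectivity and surjectivity of $\psi_{Z,X}$ are handled separately.

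\emph{Injectivity.} Suppose $\psi(\delta)=[Z\xleftarrow{\id}Z\overset{\widehat\delta}{\dashrightarrow}X\xleftarrow{\id}X]$ is zero. By the description of the equivalence relation (or \cite[Proposition~3.19]{NOS22}), this means that after composing with some $s\in\SS_{\CN}$ on the $X$-side and some $t\in\SS_{\CN}$ on the $Z$-side, the image of $\delta$ in $\overline{\BE_{\CN}}$ vanishes. Equivalently, $s_\ast t^\ast\delta\in\CK$, so there is a further $s'\in\SS_{\CN}$ with $s'_\ast s_\ast t^\ast\delta=0$. Composing the morphisms of $\SS_{\CN}$ kills $\delta$ itself in the sense of the definition of $\CK$, giving $\delta\in\CK(Z,X)$. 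By \cref{lem:equalF}, $\CK(Z,X)=0$, so $\delta=0$.

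\emph{Surjectivity.} Take a class $[Z\xleftarrow{\widehat t}C'\overset{\widehat\delta}{\dashrightarrow}A'\xleftarrow{\widehat s}X]$ with $X,Z\in\CH$. I would replace $C'$ and $A'$ by objects of $\CH$ using the reflections.

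On the right, use $A'\to A'_+\leftarrow(A'_+)_-$. By \cref{prop:coreflection_tri1}, $(l_{A'})_\ast$ composed with the inverse of $(r_{A'_+})_\ast$ moves $\delta$ to $\BE_{\CN}(C',(A'_+)_-)$; this requires $C'\in\CT^-$.

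So first, on the left, use $C''\to C'$ via $r_{C'}$ and then $l$. More precisely, replace $C'$ by $(C'_-)_+\in\CH$, using $(r_{C'})^\ast$ followed by the inverse of $(l_{C'_-})^\ast$. The latter is an isomorphism for targets in $\CT^+$ by \cref{prop:coreflection_tri1}(3). One must perform the right-hand replacement of $A'$ into $\CT^+$ first, via $l_{A'}$.

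All connecting maps lie in $\SS_{\CN}$ by \cref{lem:fully_faithful1}. Hence the class equals one of the form $[Z\xleftarrow{\widehat{t'}}C''\overset{\widehat{\delta'}}{\dashrightarrow}A''\xleftarrow{\widehat{s'}}X]$ with $C'',A''\in\CH$ and $s',t'\in\SS_{\CN}$. By \cref{prop:fully_faithful}, $\overline{s'}$ and $\overline{t'}$ are isomorphisms in $\CH/[\CW]$. Then
\[
\delta_0:=(\overline{s'})^{-1}_\ast(\overline{t'}^{-1})^\ast\delta'\in\BF(Z,X)
\]
satisfies $\psi(\delta_0)=$ the given class. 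To check this, I would use naturality of $\psi$ together with the identity $Q(s)^{-1}_\ast[\ldots]$ relations in $\widetilde{\BE_{\CN}}$, which translate invertible $Q(s')$ into the roof legs.

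I expect the surjectivity step to be the main obstacle. The order of replacements must be arranged so that \cref{prop:coreflection_tri1}(3) and its dual apply: the first needs the target in $\CT^+$, the second needs the source in $\CT^-$. One also has to check carefully that the resulting triples are equivalent in the sense of \cite[Proposition~3.19]{NOS22}. Since \cref{fact:Thm2.20} gives saturation, and $\widehat{s}$ and $\widehat t$ become invertible, this should reduce to routine bookkeeping with the roof calculus.
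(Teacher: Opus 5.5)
Your proposal is correct and follows essentially the paper's proof. $G$ is a quasi-inverse of $\widetilde{H}$. Injectivity of $\psi_{Z,X}$ reduces to $\CK(Z,X)=0$ from \cref{lem:equalF}, via the two equivalent descriptions of $\CK$. Surjectivity moves the roof to one through $(C_-)_+$ and $(A_+)_-$ using \cref{prop:coreflection_tri1}, in exactly the order you eventually settle on.

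A few details are left implicit in your write-up. The new legs $t'$ and $s'$ come from \cref{prop:functor_+}(1) and its dual, and they lie in $\SS_{\CN}$ by the $2$-out-of-$3$ property. The paper closes the last step concretely, checking $s'_{\ast}t'^{\ast}\rho=\delta''$ using that $\CW$ is projective-injective. That check is equivalent to your naturality-of-$\psi$ argument.
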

\begin{proof}
By \cite[Proposition~2.13]{NOS22}, it suffices to show that $G$ is an equivalence of categories \darkblueedit{and that} $\psi$ is a natural isomorphism.
We have $\widetilde{H}\circ G\circ \textcolor{blue}{\pi}\cong \textcolor{blue}{\pi}$, and hence $\widetilde{H}\circ G\cong \id$. This means that $G$ is a quasi-inverse of the equivalence $\widetilde{H}$. In particular, the functor $G$ itself is an equivalence of categories.

\color{blue}
It suffices 
\normalcolor
to show that $\psi_{Z,X}\colon \BF(Z,X)\to \widetilde{\BE_{\CN}}(Z,X)$ \darkblueedit{is an isomorphism for any} $X,Z\in\CH/[\CW]$.
First, we show the surjectivity. 
\color{blue}
Let $\alpha=[Z\xleftarrow{\widehat{t}}C\overset{\widehat{\delta}}{\dashrightarrow}A\xleftarrow{\widehat{s}}X]\in\widetilde{\BE_{\CN}}(Z,X)$ be any element, with $s,t\in\SS_{\CN}$ and $\delta\in\BE_{\CN}(C,A)$.
Let $l_A\colon A\to A_+$ and $r_C\colon C_-\to C$ be as in \cref{def:functor_+,def:functor_-}, and put
$\delta'=(r_C)^{\ast}(l_A)_{\ast}\delta\in\BE_{\CN}(C_-,A_+)$.
Then, take $l_{(C_-)}\colon C_-\to (C_-)_+$ and $r_{(A_+)}\colon (A_+)_-\to A_+$ again as in \cref{def:functor_+,def:functor_-}. 
By \cref{prop:functor_+_property} and its dual, we have $(C_-)_+,(A_+)_-\in\CH$.
By \cref{prop:coreflection_tri1},
\[
(l_{(C_-)})^{\ast}\colon\BE_{\CN}((C_-)_+,A_+)\to\BE_{\CN}(C_-,A_+)
\quad\text{and}\quad
(r_{(A_+)})_{\ast}\colon\BE_{\CN}((C_-)_+,(A_+)_-)\to\BE_{\CN}((C_-)_+,A_+)
\]
are isomorphisms. Thus there exists $\delta^{\prime\prime}\in\BE_{\CN}((C_-)_+,(A_+)_-)$ 
such that $\delta'=(l_{(C_-)})^{\ast}(r_{(A_+)})_{\ast}\delta^{\prime\prime}$ in $\BE_{\CN}(C_-,A_+)$. By \cref{prop:functor_+} {\rm (1)} and its dual, there exist $t'\in\CT((C_-)_+,Z)$ and $s'\in\CT(X,(A_+)_-)$ such that $t'\circ l_{(C_-)}=t\circ r_C$ and $r_{(A_+)}\circ s'=l_A\circ s$.
By the $2$-out-of-$3$ property of $\SS_{\CN}$,
it follows that $s',t'\in\SS_{\CN}$.
Thus we obtain the following diagram.
\[
\begin{tikzpicture}[>=stealth]
\node (1) at (-1.2,0) {$C_-$};
\node (2) at (-1.2,1.5) {$C$};
\node (3) at (-1.2,-1.5) {$(C_-)_+$};
\node (4) at (-3.4,0) {$Z$};
\node (5) at (3.4,0) {$X$};
\node (6) at (1.2,1.5) {$A$};
\node (7) at (1.2,-1.5) {$(A_+)_-$};
\node (8) at (1.2,0) {$A_+$};
\draw[->] (1) -- node[right,font=\scriptsize] {$\widehat{r_C}$} (2);
\draw[->] (1) -- node[right,font=\scriptsize] {$\widehat{l_{(C_-)}}$} (3);
\draw[->] (1) -- node[above,font=\scriptsize] {$\widehat{t\circ r_C}$} (4);
\draw[->] (2) -- node[above,font=\scriptsize] {$\widehat{t}$} (4);
\draw[->] (3) -- node[below,font=\scriptsize] {$\widehat{t'}$} (4);
\draw[->, dashed] (2) -- node[above,font=\scriptsize] {$\widehat{\delta}$} (6);
\draw[->, dashed] (3) -- node[below,font=\scriptsize] {$\widehat{\delta^{\prime\prime}}$} (7);
\draw[->, dashed] (1) -- node[above,font=\scriptsize] {$\widehat{\delta'}$} (8);
\draw[->] (5) -- node[above,font=\scriptsize] {$\widehat{s}$} (6);
\draw[->] (5) -- node[below,font=\scriptsize] {$\widehat{s'}$} (7);
\draw[->] (5) -- node[above,font=\scriptsize] {$\widehat{l_A\circ s}$} (8);
\draw[->] (6) -- node[left,font=\scriptsize] {$\widehat{l_A}$} (8);
\draw[->] (7) -- node[left,font=\scriptsize] {$\widehat{r_{(A_+)}}$} (8);
\end{tikzpicture}
\]
\darkblueedit{Then we have}
\begin{equation}\label{eqalpha}
\alpha=[Z\xleftarrow{\widehat{t}}C\overset{\widehat{\delta}}{\dashrightarrow}A\xleftarrow{\widehat{s}}X]
=[Z\xleftarrow{\widehat{t'}}(C_-)_+\overset{\widehat{\delta^{\prime\prime}}}{\dashrightarrow}(A_+)_-\xleftarrow{\widehat{s'}}X]
\end{equation}
in $\widetilde{\BE_{\CN}}(Z,X)$.
By \cref{prop:fully_faithful}, we see that $H(s')$ and $H(t')$ are isomorphisms in $\CH/[\CW]$. Since 
$s'$ and $t'$ are morphisms in $\CH$,
this means that $\overline{s'}$ and $\overline{t'}$ are isomorphisms in $\CH/[\CW]$.
Take morphisms $a\in\CH((A_+)_-,X)$ and $b\in\CH(Z,(C_-)_+)$ that give $\overline{a}=(\overline{s'})^{-1}$ and $\overline{b}=(\overline{t'})^{-1}$ in $\CH/[\CW]$. Put $\rho=a_{\ast}b^{\ast}\delta^{\prime\prime}\in\BE_{\CN}(Z,X)$. Then it satisfies
\[
s'_{\ast}t^{\prime\ast}\rho=(s'\circ a)_{\ast}(b\circ t')^{\ast}\delta^{\prime\prime}=\id_{\ast}\id^{\ast}\delta^{\prime\prime}=\delta^{\prime\prime}
\]
in $\BE_{\CN}((C_-)_+,(A_+)_-)$. Indeed, this follows from 
\[
s'\circ a-\id,\ b\circ t'-\id\in[\CW]
\]
and the fact that $\CW$ consists of projective-injective objects in $(\CH,\BE_{\CN}|_{\CH},\fs_{\CN}|_{\CH})$. 
Thus we obtain
\[
[Z\xleftarrow{\widehat{t'}}(C_-)_+\overset{\widehat{\delta^{\prime\prime}}}{\dashrightarrow}(A_+)_-\xleftarrow{\widehat{s'}}X]=[Z\xleftarrow{\id}Z\overset{\widehat{\rho}}{\dashrightarrow}X\xleftarrow{\id}X]
=\psi_{Z,X}(\rho),
\]
hence we have $\alpha=\psi_{Z,X}(\rho)$ by \eqref{eqalpha}.
This shows the surjectivity of $\psi_{Z,X}$.

It remains to show the injectivity of $\psi_{Z,X}$. 
\color{blue}
Suppose that $\rho\in\BF(Z,X)$ satisfies
$\psi_{Z,X}(\rho)=0$ in $\widetilde{\BE_{\CN}}(Z,X)$.
This means that there exists a diagram
\[
\begin{tikzpicture}[>=stealth]
\node (1) at (-1.2,0) {$C$};
\node (2) at (-1.2,1.5) {$Z$};
\node (3) at (-1.2,-1.5) {$Z$};
\node (4) at (-3.4,0) {$Z$};
\node (5) at (3.4,0) {$X$};
\node (6) at (1.2,1.5) {$X$};
\node (7) at (1.2,-1.5) {$X$};
\node (8) at (1.2,0) {$A$};
\draw[->] (1) -- node[right,font=\scriptsize] {$\widehat{z_1}$} (2);
\draw[->] (1) -- node[right,font=\scriptsize] {$\widehat{z_2}$} (3);
\draw[->] (1) -- node[above,font=\scriptsize] {$\widehat{t}$} (4);
\draw[->] (2) -- node[above,font=\scriptsize] {$\id$} (4);
\draw[->] (3) -- node[below,font=\scriptsize] {$\id$} (4);
\draw[->, dashed] (2) -- node[above,font=\scriptsize] {$\widehat{\rho}$} (6);
\draw[->, dashed] (3) -- node[below,font=\scriptsize] {$0$} (7);
\draw[->, dashed] (1) -- node[above,font=\scriptsize] {$\widehat{\delta}$} (8);
\draw[->] (5) -- node[above,font=\scriptsize] {$\id$} (6);
\draw[->] (5) -- node[below,font=\scriptsize] {$\id$} (7);
\draw[->] (5) -- node[above,font=\scriptsize] {$\widehat{s}$} (8);
\draw[->] (6) -- node[left,font=\scriptsize] {$\widehat{x_1}$} (8);
\draw[->] (7) -- node[left,font=\scriptsize] {$\widehat{x_2}$} (8);
\end{tikzpicture}
\]
with $A,C\in\CT$, $s,t\in\SS_{\CN}$, $\delta\in\BE_{\CN}(C,A)$ and $x_1,x_2,z_1,z_2\in\SS_{\CN}$.
By the commutativity, we have $\widehat{x_1}=\widehat{x_2}=\widehat{s}$ and $\widehat{z_1}=\widehat{z_2}=\widehat{t}$ in $\CT/[\CN]$. Also, we have
\[ \widehat{\delta}=\widehat{x_2}_{\ast}\widehat{z_2}^{\ast}0=0 
\ \  \text{and hence}\quad
\widehat{x_1}_{\ast}\widehat{z_1}^{\ast}\widehat{\rho}=0
\]
in $\overline{\BE_{\CN}}(C,A)$. Since $x_1,z_1\in\SS_{\CN}$, this implies $\widehat{\rho}=0$ in $\overline{\BE_{\CN}}(Z,X)$ by the definition of $\CK(Z,X)$, as noted in \cite[Remark~3.17]{NOS22}.
By \cref{lem:equalF}, this means that $\rho=0$ holds in $\BE_{\CN}(Z,X)=\BF(Z,X)$ since $X,Z\in\CH$. Thus the injectivity of $\psi_{Z,X}$ is shown.

\normalcolor
\end{proof}
\normalcolor

\medskip
\noindent
{\bf Acknowledgement.}
N.M. %
is financially supported by JST SPRING, Grant Number JPMJSP2125.
H.N. is supported by JSPS KAKENHI Grant Numbers JP24K06645, JP24KK0250.
Y.O. is supported by JSPS KAKENHI (grant JP22K13893).

\bibliographystyle{mybstwithlabels}
\bibliography{references}

\end{document}